\let\Oldsection\section
\renewcommand{\section}{\FloatBarrier\Oldsection}
\let\Oldsubsection\subsection
\renewcommand{\subsection}{\FloatBarrier\Oldsubsection}
\let\Oldsubsubsection\subsubsection
\renewcommand{\subsubsection}{\FloatBarrier\Oldsubsubsection}
\begin{document}

\newtheorem{theorem}{Theorem}[section]
\newtheorem{prop}[theorem]{Proposition}
\newtheorem{lemma}[theorem]{Lemma}
\newtheorem{cor}[theorem]{Corollary}
\newtheorem{cond}[theorem]{Condition}
\newtheorem{ing}[theorem]{Ingredients}
\newtheorem{conj}[theorem]{Conjecture}
\newtheorem{claim}[theorem]{Claim}
\newtheorem{constr}[theorem]{Construction}
\newtheorem{rem}[theorem]{Remark}
\newtheorem{scheme}[theorem]{Scheme}
\newtheorem{assume}[theorem]{Standing Assumption}
\newtheorem{problem}{Problem}
\newtheorem*{question}{Question}

\newtheorem*{theorem*}{Theorem}
\newtheorem*{modf}{Modification for arbitrary $n$}
\newtheorem{qn}[theorem]{Question}
\newtheorem{condn}[theorem]{Condition}

\theoremstyle{definition}
\newtheorem{defn}[theorem]{Definition}
\newtheorem{eg}[theorem]{Example}
\newtheorem{rmk}[theorem]{Remark}

\theoremstyle{definition}
\newtheorem{thmx}{Theorem}
\renewcommand{\thethmx}{\Alph{thmx}} 

\newcommand{\map}{\rightarrow}
\newcommand{\pwfm}{\textrm{piecewise Fuchsian Markov\,}}
\newcommand{\cfm}{\textrm{completely folding map\,}}
\newcommand{\hdm}{\textrm{higher degree map without folding\,}}
\newcommand{\boundary}{\partial}
\newcommand{\C}{{\mathbb C}}
\newcommand{\da}{{\mathbf d}A}
\newcommand{\hA}{{\widehat A}}
\newcommand{\disk}{{\mathbb D}}
\newcommand{\integers}{{\mathbb Z}}
\newcommand{\natls}{{\mathbb N}}
\newcommand{\ratls}{{\mathbb Q}}
\newcommand{\reals}{{\mathbb R}}
\newcommand{\proj}{{\mathbb P}}
\newcommand{\lhp}{{\mathbb L}}
\newcommand{\tr}{{\operatorname{Tread}}}
\newcommand{\rs}{{\operatorname{Riser}}}
\newcommand{\tube}{{\mathbb T}}
\newcommand{\bS}{{\mathbb S}}
\newcommand{\goa}{{GO_A}}
\newcommand{\goah}{{\hhat{GO_A}}}
\newcommand{\boa}{{BO_A}}
\newcommand{\dbS}{{\mathbf d}{\mathbb S}}
\newcommand\AAA{{\mathcal A}}
\newcommand\BB{{\mathcal B}}
\newcommand\CC{{\mathcal C}}
\newcommand\ccd{{{\mathcal C}_\Delta}}
\newcommand\DD{{\mathcal D}}
\newcommand\EE{{\mathcal E}}
\newcommand\FF{{\mathcal F}}
\newcommand\GG{{\mathcal G}}
\newcommand\HH{{\mathcal H}}
\newcommand\II{{\mathcal I}}
\newcommand\JJ{{\mathcal J}}
\newcommand\KK{{\mathcal K}}
\newcommand\LL{{\mathcal L}}
\newcommand\MM{{\mathcal M}}
\newcommand\NN{{\mathcal N}}
\newcommand\OO{{\mathcal O}}
\newcommand\PP{{\mathcal P}}
\newcommand\QQ{{\mathcal Q}}
\newcommand\RR{{\mathcal R}}
\newcommand\SSS{{\mathcal S}}
\newcommand\TT{{\mathcal T}}
\newcommand\ttt{{\mathcal T}_T}
\newcommand\tT{{\widetilde T}}
\newcommand\UU{{\mathcal U}}
\newcommand\VV{{\mathcal V}}
\newcommand\WW{{\mathcal W}}
\newcommand\XX{{\mathcal X}}
\newcommand\YY{{\mathcal Y}}
\newcommand\ZZ{{\mathcal Z}}
\newcommand\CH{{\CC\HH}}
\newcommand\TC{{\TT\CC}}
\newcommand\EXH{{ \EE (X, \HH )}}
\newcommand\GXH{{ \GG (X, \HH )}}
\newcommand\GYH{{ \GG (Y, \HH )}}
\newcommand\PEX{{\PP\EE  (X, \HH , \GG , \LL )}}
\newcommand\MF{{\MM\FF}}
\newcommand\PMF{{\PP\kern-2pt\MM\FF}}
\newcommand\ML{{\MM\LL}}
\newcommand\mr{{\RR_\MM}}
\newcommand\tmr{{\til{\RR_\MM}}}
\newcommand\PML{{\PP\kern-2pt\MM\LL}}
\newcommand\GL{{\GG\LL}}
\newcommand\Pol{{\mathcal P}}
\newcommand\half{{\textstyle{\frac12}}}
\newcommand\Half{{\frac12}}
\newcommand\Mod{\operatorname{Mod}}
\newcommand\Area{\operatorname{Area}}
\newcommand\ep{\epsilon}
\newcommand\hhat{\widehat}
\newcommand\Proj{{\mathbf P}}
\newcommand\U{{\mathbf U}}
 \newcommand\Hyp{{\mathbb H}}
\newcommand\D{{\mathbb D}}
\newcommand\Z{{\mathbb Z}}
\newcommand\R{{\mathbb R}}
\newcommand\s{{\Sigma}}
\renewcommand\P{{\mathbb P}}
\newcommand\Q{{\mathbb Q}}
\newcommand\E{{\mathbb E}}
\newcommand\til{\widetilde}
\newcommand\length{\operatorname{length}}
\newcommand\BU{\operatorname{BU}}
\newcommand\gesim{\succ}
\newcommand\lesim{\prec}
\newcommand\simle{\lesim}
\newcommand\simge{\gesim}
\newcommand{\simmult}{\asymp}
\newcommand{\simadd}{\mathrel{\overset{\text{\tiny $+$}}{\sim}}}
\newcommand{\ssm}{\setminus}
\newcommand{\diam}{\operatorname{diam}}
\newcommand{\pair}[1]{\langle #1\rangle}
\newcommand{\T}{{\mathbf T}}
\newcommand{\inj}{\operatorname{inj}}
\newcommand{\pleat}{\operatorname{\mathbf{pleat}}}
\newcommand{\short}{\operatorname{\mathbf{short}}}
\newcommand{\vertices}{\operatorname{vert}}
\newcommand{\collar}{\operatorname{\mathbf{collar}}}
\newcommand{\bcollar}{\operatorname{\overline{\mathbf{collar}}}}
\newcommand{\I}{{\mathbf I}}
\newcommand{\tprec}{\prec_t}
\newcommand{\fprec}{\prec_f}
\newcommand{\bprec}{\prec_b}
\newcommand{\pprec}{\prec_p}
\newcommand{\ppreceq}{\preceq_p}
\newcommand{\sprec}{\prec_s}
\newcommand{\cpreceq}{\preceq_c}
\newcommand{\cprec}{\prec_c}
\newcommand{\topprec}{\prec_{\rm top}}
\newcommand{\Topprec}{\prec_{\rm TOP}}
\newcommand{\fsub}{\mathrel{\scriptstyle\searrow}}
\newcommand{\bsub}{\mathrel{\scriptstyle\swarrow}}
\newcommand{\fsubd}{\mathrel{{\scriptstyle\searrow}\kern-1ex^d\kern0.5ex}}
\newcommand{\bsubd}{\mathrel{{\scriptstyle\swarrow}\kern-1.6ex^d\kern0.8ex}}
\newcommand{\fsubeq}{\mathrel{\raise-.7ex\hbox{$\overset{\searrow}{=}$}}}
\newcommand{\bsubeq}{\mathrel{\raise-.7ex\hbox{$\overset{\swarrow}{=}$}}}
\newcommand{\tw}{\operatorname{tw}}
\newcommand{\base}{\operatorname{base}}
\newcommand{\trans}{\operatorname{trans}}
\newcommand{\Int}{\operatorname{int}}
\newcommand{\rest}{|_}
\newcommand{\bbar}{\overline}
\newcommand{\UML}{\operatorname{\UU\MM\LL}}
\renewcommand{\d}{\operatorname{dia}}
\newcommand{\hs}{{\operatorname{hs}}}
\newcommand{\vT}{{V(T)}}
\newcommand{\EL}{\mathcal{EL}}
\newcommand{\tsum}{\sideset{}{'}\sum}
\newcommand{\tsh}[1]{\left\{\kern-.9ex\left\{#1\right\}\kern-.9ex\right\}}
\newcommand{\Tsh}[2]{\tsh{#2}_{#1}}
\newcommand{\qeq}{\mathrel{\approx}}
\newcommand{\Qeq}[1]{\mathrel{\approx_{#1}}}
\newcommand{\qle}{\lesssim}
\newcommand{\Qle}[1]{\mathrel{\lesssim_{#1}}}
\newcommand{\simp}{\operatorname{simp}}
\newcommand{\vsucc}{\operatorname{succ}}
\newcommand{\vpred}{\operatorname{pred}}
\newcommand\fhalf[1]{\overrightarrow {#1}}
\newcommand\bhalf[1]{\overleftarrow {#1}}
\newcommand\sleft{_{\text{left}}}
\newcommand\sright{_{\text{right}}}
\newcommand\sbtop{_{\text{top}}}
\newcommand\sbot{_{\text{bot}}}
\newcommand\sll{_{\mathbf l}}
\newcommand\srr{_{\mathbf r}}
\newcommand\geod{\operatorname{\mathbf g}}
\newcommand\mtorus[1]{\boundary U(#1)}
\newcommand\A{\mathbf A}
\newcommand\Aleft[1]{\A\sleft(#1)}
\newcommand\Aright[1]{\A\sright(#1)}
\newcommand\Atop[1]{\A\sbtop(#1)}
\newcommand\Abot[1]{\A\sbot(#1)}
\newcommand\boundvert{{\boundary_{||}}}
\newcommand\storus[1]{U(#1)}
\newcommand\Momega{\omega_M}
\newcommand\nomega{\omega_\nu}
\newcommand\twist{\operatorname{tw}}
\newcommand\modl{M_\nu}
\newcommand\MT{{\mathbb T}}
\newcommand\dw{{d_{weld}}}
\newcommand\dt{{d_{te}}}
\newcommand\Teich{{\operatorname{Teich}}}
\renewcommand{\Re}{\operatorname{Re}}
\renewcommand{\Im}{\operatorname{Im}}
\newcommand{\mc}{\mathcal}
\newcommand{\ccs}{{\CC(S)}}
\newcommand{\mtdw}{{({M_T},\dw)}}
\newcommand{\tmtdw}{{(\til{M_T},\dw)}}
\newcommand{\tmldw}{{(\til{M_l},\dw)}}
\newcommand{\mtdt}{{({M_T},\dt)}}
\newcommand{\tmtdt}{{(\til{M_T},\dt)}}
\newcommand{\tmldt}{{(\til{M_l},\dt)}}
\newcommand{\trvw}{{\tr_{vw}}}
\newcommand{\ttrvw}{{\til{\tr_{vw}}}}
\newcommand{\but}{{\BU(T)}}
\newcommand{\ilkv}{{i(lk(v))}}
\newcommand{\pslc}{{\mathrm{PSL}_2 (\mathbb{C})}}
\newcommand{\tttt}{{\til{\ttt}}}
\newcommand{\bcomment}[1]{\textcolor{magenta}{#1}}
\newcommand{\jfm}[1]{\marginpar{#1\quad -jfm}}
\newcommand{\mate}{\bot \!\! \! {\bot}} 

\makeatletter
\DeclareFontFamily{U}{tipa}{}
\DeclareFontShape{U}{tipa}{m}{n}{<->tipa10}{}
\newcommand{\arc@char}{{\usefont{U}{tipa}{m}{n}\symbol{62}}}%

\newcommand{\arc}[1]{\mathpalette\arc@arc{#1}}

\newcommand{\arc@arc}[2]{%
  \sbox0{$\m@th#1#2$}%
  \vbox{
    \hbox{\resizebox{\wd0}{\height}{\arc@char}}
    \nointerlineskip
    \box0
  }%
}
\makeatother

\makeatletter
\providecommand{\bigsqcap}{%
  \mathop{%
    \mathpalette\@updown\bigsqcup
  }%
}
\newcommand*{\@updown}[2]{%
  \rotatebox[origin=c]{180}{$\m@th#1#2$}%
}
\makeatother

\newcommand{\leftrarrows}{\mathrel{\raise.75ex\hbox{\oalign{%
  $\scriptstyle\leftarrow$\cr
  \vrule width0pt height.5ex$\hfil\scriptstyle\relbar$\cr}}}}
\newcommand{\lrightarrows}{\mathrel{\raise.75ex\hbox{\oalign{%
  $\scriptstyle\relbar$\hfil\cr
  $\scriptstyle\vrule width0pt height.5ex\smash\rightarrow$\cr}}}}
\newcommand{\Rrelbar}{\mathrel{\raise.75ex\hbox{\oalign{%
  $\scriptstyle\relbar$\cr
  \vrule width0pt height.5ex$\scriptstyle\relbar$}}}}
\newcommand{\longleftrightarrows}{\leftrarrows\joinrel\Rrelbar\joinrel\lrightarrows}

\makeatletter
\def\leftrightarrowsfill@{\arrowfill@\leftrarrows\Rrelbar\lrightarrows}
\newcommand{\xleftrightarrows}[2][]{\ext@arrow 3399\leftrightarrowsfill@{#1}{#2}}
\makeatother

\title[Combining Rational maps and Kleinian groups]{Combining rational maps and Kleinian groups via orbit equivalence}

\author{Mahan Mj}

\address{School of Mathematics, Tata Institute of Fundamental Research, Mumbai-400005, India}
\email{mahan@math.tifr.res.in, mahan.mj@gmail.com}

\author{Sabyasachi Mukherjee}

\address{School of Mathematics, Tata Institute of Fundamental Research, Mumbai-400005, India}
\email{sabya@math.tifr.res.in, mukherjee.sabya86@gmail.com}

\subjclass[2010]{37F10, 37F32, 30F60 (primary); 30C62,  30D05, 30F40, 30J10,  37F31, 57M50 (secondary)}
\keywords{Rational dynamics, Kleinian group, Mating, Bowen-Series map, Cannon-Thurston map}


\thanks{Both authors were  supported by  the Department of Atomic Energy, Government of India, under project no.12-R\&D-TFR-5.01-0500 as also  by an endowment of the Infosys Foundation.
MM was also supported in part by   a DST JC Bose Fellowship. SM was supported in part by SERB research project grant  SRG/2020/000018. } 
\date{\today}

\begin{abstract}  We develop a new orbit equivalence framework for holomorphically 
	mating the dynamics of complex polynomials with that of Kleinian surface groups. We show that the only torsion-free Fuchsian groups that can be thus mated are punctured sphere groups. We describe a new class of maps that
	are orbit equivalent to Fuchsian punctured sphere  groups. We call these \emph{higher Bowen-Series} maps. The existence of this class ensures  that the Teichm\"uller space of matings has one component corresponding to Bowen-Series maps and one corresponding to higher Bowen-Series maps. We also show that, unlike in higher dimensions, topological orbit equivalence rigidity fails for Fuchsian groups acting on the circle.
	We also classify the collection of Kleinian Bers boundary groups that are mateable in our framework.
\end{abstract}

\maketitle

\setcounter{tocdepth}{1}

\section{Introduction}\label{sec-intro}

Various connections and philosophical analogies exist between two branches of conformal dynamics; namely, rational dynamics on the Riemann sphere and actions of Kleinian groups. Fatou had already observed similarities between limit sets of Kleinian groups and Julia sets of rational maps in the 1920s, which led him to conjecture the following \cite[p. 22]{Fatou29}:
\begin{qn}\label{qn-fatou}
	L'analogie remarque{\'e} entre les ensembles de points limites des groupes Kleineens et ceux qui sont constitu{\'e}s par les fronti{\`e}res des r{\'e}gions de convergence des it{\'e}r{\'e}es d'une fonction rationnelle ne parait d'ailleurs pas fortuite et il serait probablement possible d'en faire la synt{\'e}se dans une th{\'e}orie g{\'e}n{\'e}rale des groupes discontinus des substitutions algr{\'e}briques.
\end{qn}

In the 1980s, Sullivan discovered deep connections between the iteration theory of rational functions and the theory of Kleinian groups. This became known as the \emph{Sullivan Dictionary} \cite[p. 405]{sullivan-dict}. Since then, several efforts to draw direct connections between these two branches of conformal dynamics have been made (see for example \cite{bullett-penrose, ctm-classification, lyubich-minsky, pilgrim}) and, in particular, lines to the dictionary have been added by McMullen \cite{ctm-renorm}.

An explicit framework for mating the modular group with rational maps has been developed by Bullett-Penrose-Lomonaco \cite{bullett-penrose,BL20,BL22}; it extends to Kleinian groups abstractly isomorphic to the modular group \cite{BH00,BH07}, and more generally to all Kleinian groups which are free products of two finite cyclic groups \cite{B00}. In the approach in these articles the matings are holomorphic correspondences, multi-valued algebraic functions defined by polynomial relations between two variables. In this paper we adopt a completely different approach to construct matings of Kleinian groups and rational maps that are single-valued analytic functions of a single complex variable.

The dynamical plane of a rational map $R$ admits a natural invariant partition: the \emph{Fatou set} $\mathcal{F}(R)$ (the largest open set on which the iterates of the map form a normal family) and the \emph{Julia set} $\mathcal{J}(R)$ (the complement of the Fatou set). On the other hand, the dynamical plane of a Kleinian group $\Gamma$ can be divided into two group-invariant subsets: the \emph{domain of discontinuity} $\Omega(\Gamma)$ (the largest open set on which the group acts properly discontinuously) and the \emph{limit set} $\Lambda(\Gamma)$ (the complement of the domain of discontinuity). 

We now reformulate Fatou's question as a  {\bf mating framework.}
\smallskip

\noindent {\emph{The group side of the dictionary-Bers slice closure groups:}}  Fix a Fuchsian group $\Gamma_0$ (of the first kind).  The \emph{Bers slice} $\mathcal{B}(\Gamma_0)$ is the space of quasi-Fuchsian simultaneous uniformizations 
of pairs $(\Gamma_0,\Gamma)$, where $\Gamma$ ranges over all Fuchsian groups isomorphic to $\Gamma_0$. (Here we require the isomorphisms to induce homeomorphisms between the quotient surfaces $\D/\Gamma$ and $\D/\Gamma_0$.) The induced
representations of (the abstract group underlying) $\Gamma_0$ into $\textrm{PSL}_2(\C)$ 
are, in fact, induced by quasiconformal homeomorphisms of $\widehat{\C}$ that are conformal on $\widehat{\C}\setminus\overline{\D}$. This gives $\mathcal{B}(\Gamma_0)$ a complex-analytic structure.
The Bers slice $\mathcal{B}(\Gamma_0)$ is thus a complex-analytic realization of the \emph{Teichm{\"u}ller space} $\textrm{Teich}(\Gamma_0)$. Recall that $\textrm{Teich}(\Gamma_0)$ is the space of marked hyperbolic
structures on the topological surface $\Sigma$ underlying $\D/\Gamma_0$. Equivalently, it is the space of equivalence classes of
discrete, faithful representations of $\pi_1(\D/\Gamma_0)$ into $\textrm{Aut}(\D)\cong\textrm{PSL}_2(\mathbb{R})$ such that boundary components of $\Sigma$ go to parabolics. Here, two representations are said to be equivalent, if they are conjugate in $\textrm{Aut}(\D)$.
For ease of exposition, we shall from now on identify 
$\pi_1(\D/\Gamma_0)$ with the base group $\Gamma_0$ and
$\Sigma$ with the resulting base hyperbolic surface.
An important property of the Bers slice is its pre-compactness in the discreteness locus of $\Gamma_0$, the space of all equivalence classes of  discrete, faithful representations into $\textrm{PSL}_2(\C)$ equipped with the topology of algebraic convergence. Here two representations are regarded as equivalent if they are conjugate in $\textrm{PSL}_2(\C)$.
The boundary 
$\partial \mathcal{B}(\Gamma_0)$ of the Bers slice $\mathcal{B}(\Gamma_0)$ in the discreteness locus is called the \emph{Bers boundary} of $\Gamma_0$.

	By the Bers density theorem \cite{minsky-elc2} due to Brock-Canary-Minsky, a finitely generated, non-elementary Kleinian group admits a simply connected invariant component in its domain of discontinuity if and only if it lies in the Bers slice closure of a Fuchsian lattice (in the classical literature, such groups were called $B$-groups, see \cite{maskit}).
	
	\smallskip

	\noindent  
{\emph{The rational dynamics side of the dictionary-Bers rational maps:}} 
 In the complex dynamics world, the analogous objects are rational maps with a simply connected completely invariant Fatou component. We shall henceforth call a rational map with the above property a {\bf Bers rational map}. It follows from the standard classification of Fatou components of rational maps that a completely invariant Fatou component of a Bers rational map is the basin of attraction of a (super)attracting or parabolic fixed point. 
	
	Prototypical examples of Bers rational maps are given by complex polynomials with connected Julia sets.
A polynomial is said to be \emph{hyperbolic} if each of its critical points converges to an attracting cycle under forward iteration. The set of all hyperbolic polynomials (of a given degree) is open in the parameter space. A connected component of degree $d$ hyperbolic polynomials is called a \emph{hyperbolic component} in the parameter space of degree $d$ polynomials. The hyperbolic component of degree $d$ polynomials containing the map $z^d$ is called the \emph{principal hyperbolic component}, and is denoted by $\mathcal{H}_d$. The Julia set of each map in $\mathcal{H}_d$ is a quasicircle, and the Julia dynamics of such a map is quasisymmetrically conjugate to the action of $z^d$ on $\bS^1$. Thus, principal hyperbolic components can be thought of as an analog of Bers slices in the polynomial dynamics world.

	In this paper, we set up a framework (See Sections \ref{mateable_gen_subsec}  and \ref{mat_def_subsec} for details) that allows one to potentially combine the actions of Kleinian groups in Bers slice closures with the dynamics of Bers rational maps, and focus on a reformulation of Fatou's Question \ref{qn-fatou} in the following  case.

\begin{qn}\label{problem_1} 
	Are there `dynamically natural' homeomorphisms between Julia sets of Bers rational maps and limit sets of Kleinian groups in the closure of a Bers slice? Further, can such a dynamically natural homeomorphism be used to complex analytically combine (or mate) the dynamics
	of a  Bers rational map with that of a Kleinian group along the lines of Douady
	and Hubbard \cite{douady-mating}? What is the parameter space of such matings?
\end{qn}

Theorem \ref{thm_mating_intro} below gives an affirmative answer to Question \ref{problem_1} for Bers
slice groups. Theorem \ref{thm_moduli_components_intro} concerns  the parameter space of such matings. Theorems \ref{thm_boundary_groups_1_intro} and \ref{thm_boundary_groups_2_intro} answer
Question \ref{problem_1} for groups on the Bers boundary in terms of existence of matings and their moduli.

	\emph{All Kleinian groups in this paper will be finitely generated non-elementary, unless mentioned otherwise.}
There are three natural  combination theorems that provide the background and the inspiration for Question \ref{problem_1}:\\
\noindent 1)  Bers Simultaneous Uniformization Theorem \cite{Bers60}\\
\noindent 2)  Thurston's Double Limit Theorem \cite{thurston-hypstr2}\\
\noindent 3)  Mating of complex polynomials \cite{douady-mating,shishikura-mating, shishikura-tan-mating,petersen-mayer-mating,hubbard-mating}.

The key point in Question \ref{problem_1} is the term `dynamically natural'. Indeed, the Julia set of any polynomial in the hyperbolic component containing $z^d$  is homeomorphic to the limit set of any group in a Bers slice (since they are both quasicircles). However, such a homeomorphism typically has no dynamical significance. The first obvious obstacle to formalizing the notion of a dynamically natural homeomorphism between a limit set and a Julia set is that the dynamics of a polynomial cannot be conjugated to that of a group. 

The first step to address the inherent mismatch between the two worlds of Kleinian groups and rational maps will be to associate a single map $A_\Gamma$ to a group $\Gamma$ on the Bers slice closure (of a given Fuchsian group $\Gamma_0$) such that $A_\Gamma$ is \emph{orbit equivalent} to the group $\Gamma$ on its limit set; i.e.,  $A_\Gamma$ is a continuous map such that the grand orbits of $A_\Gamma$ agree with the orbits of $\Gamma$ acting on $\Lambda(\Gamma)$. Orbit equivalence between $A_\Gamma$ and $\Gamma$ is possibly the weakest property that one can require $A_\Gamma$ to satisfy to address  meaningfully the mateability Question \ref{problem_1}. In fact, one needs to impose further regularity and compatibility conditions on $A_\Gamma$ for the purpose of mateability. These conditions are introduced in Sections~\ref{mat_fuch_subsec} and ~\ref{mateable_gen_subsec}, where such maps $A_\Gamma$ are termed \emph{mateable maps}. 
 The next step is to formulate a precise notion of mateability between a Bers rational map $R$ and a continuous self-map $A_\Gamma$ on the limit set of a Bers slice closure group $\Gamma$ (such that $\Gamma$ and $A_\Gamma$ are orbit equivalent) along the lines of Thurston's Double Limit Theorem and Douady-Hubbard theory of polynomial mating. This is done in Section \ref{mat_def_subsec}.

As an aside, we note that
	the action of $z^d$ on $\bS^1$ preserves the Lebesgue measure, and gives a measurable dynamical system of type $III_{\log d}$; whereas the action of a finite co-volume Fuchsian group on $\bS^1$ preserves the Lebesgue measure class, giving rise to a measurable dynamical system of type $III_1$ \cite{spatzier}. (We thank Caroline Series for explaining this to us.) Hence, topological,  not  measurable orbit equivalence, is the right framework for   Question \ref{problem_1}.

Most of the rest of the paper addresses Question \ref{problem_1} by exploring the question of existence and moduli of mateable maps. We first explicitly describe families of mateable examples and show that Bowen-Series maps of  punctured spheres are mateable (Section \ref{bs_sec}). We need to focus here on topological properties of Bowen-Series maps.
In Section \ref{sec-fold}, we describe a  new class of mateable maps that we term \emph{higher  Bowen-Series maps}. 
 Roughly, a higher Bowen-Series map of a Fuchsian group is obtained by `gluing together' several Bowen-Series maps of the same Fuchsian group with overlapping fundamental domains (see Proposition~\ref{hbs_alternative_prop} and Corollary~\ref{second_iterate_hbs_cor}).

Our first mateability theorem can now be  stated as follows (see Theorems~\ref{moduli_interior_mating_thm} and~\ref{thm-cfdmateable} for the precise formulations).

\begin{thmx}\label{thm_mating_intro}
	Bowen-Series maps and higher Bowen-Series maps of Fuchsian groups uniformizing punctured spheres (possibly with one/two orbifold points of order two) can be conformally mated with polynomials lying in principal hyperbolic components (of appropriate degree).
\end{thmx}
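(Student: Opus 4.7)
The plan is to verify that Bowen-Series (BS) and higher Bowen-Series (HBS) maps of the stated Fuchsian groups satisfy the mateability conditions set out in Section~\ref{sec-mat}, and then to perform a conformal surgery that glues each such map holomorphically with a polynomial in the principal hyperbolic component $\mathcal{H}_d$ of the appropriate degree. The degree $d$ is read off from the combinatorics of the BS/HBS Markov partition, that is, the number of pieces into which the partition cuts $\bS^1$.

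For the mateability check, I would show that each such $A$ is (i) piecewise Fuchsian with pieces in $\Gamma$, (ii) Markov, (iii) topologically orbit equivalent to $\Gamma$ on $\bS^1$, and (iv) expansive and topologically conjugate to $z\mapsto z^d$. For BS maps the required facts are classical; for the HBS construction of Section~\ref{sec-fold} I would exploit the two alternative descriptions already announced in the paper: as a gluing of several BS maps with overlapping fundamental domains (Proposition~\ref{hbs_alternative_prop}) and as the second iterate of a BS map of a sphere with roughly half as many punctures together with one or two order-two orbifold points (Corollary~\ref{second_iterate_hbs_cor}). Both descriptions give an explicit expansion factor and an explicit Markov structure, from which topological conjugacy to $z^d$ follows because any piecewise-Markov orientation-preserving expanding self-covering of $\bS^1$ of degree $d$ is conjugate to $z^d$.

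Given a polynomial $P\in\mathcal{H}_d$ and a mateable $A$, the next step is to assemble the topological mating and then straighten it. Identify $\overline{\D}$ with a ``group side'' carrying the action of $\Gamma$ on $\D$ and of $A$ on $\bS^1$, and identify $\widehat{\C}\setminus\D$ with a ``polynomial side'' carrying the dynamics of $P$ on its basin of infinity (Böttcher-straightened so that $\mathcal{J}(P)$ becomes $\bS^1$). Because both $A$ and $P|_{\mathcal{J}(P)}$ are topologically conjugate to $z^d$, there is a circle homeomorphism $h$ of $\bS^1$ intertwining them, and the two models glue along $\bS^1$ via $h$ to produce a topological branched covering of $\widehat{\C}$ whose grand orbits on the group side agree with the $\Gamma$-orbits. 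To upgrade this to a holomorphic object, I would extend $h$ to a homeomorphism of $\overline{\D}$, pull back the standard complex structure from the polynomial side, and integrate the resulting Beltrami/David coefficient; since $\Gamma$ has parabolic fixed points, $h$ fails to be quasi-symmetric there, so the natural category is that of David homeomorphisms and the integration must use the David version of the measurable Riemann mapping theorem. The resulting uniformization then conjugates the topological model to a genuine holomorphic self-map of $\widehat{\C}$ that is conformally conjugate to $P$ on the polynomial side and implements the uniformization of $\D/\Gamma$ on the group side, thereby realizing the conformal mating.

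The main obstacle will be controlling the distortion of $h$ (and of its extension to $\D$) near parabolic fixed points, where $A$ has derivative $1$ while $z^d$ expands by a factor of $d$. The required David estimate, namely that the area of the set where the dilatation exceeds $K$ decays exponentially in $K$, must be derived from the quantitative asymptotic geometry of $\Gamma$ at its cusps, and for HBS maps the fold/overlap structure of the partition produces additional combinatorial bookkeeping around the parabolic pieces that needs to be carried through the estimate. Once the David regularity of the extension is established, the remainder of the argument is a by-now standard application of the David integration theorem combined with identification of the straightened map as a member of $\mathcal{H}_d$.
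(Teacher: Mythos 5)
Your plan to verify the mateability axioms for Bowen--Series and higher Bowen--Series maps (piecewise Fuchsian, Markov, orbit equivalent, expansive hence conjugate to $z^d$, all periodic break-points parabolic) matches the paper's route, and using Proposition~\ref{hbs_alternative_prop} / Corollary~\ref{second_iterate_hbs_cor} for the higher Bowen--Series case is a reasonable variant of Proposition~\ref{prop-cfdhd-orbeq-gen}. (Two small slips there: these maps are expansive but not expanding -- the derivative is $1$ at the parabolic points, so there is no ``explicit expansion factor'' -- and the degree is not the number of Markov pieces; e.g.\ $A_{G_d}$ has $2d$ pieces but degree $2d-1$, and the higher Bowen--Series map has degree $(k-1)^2$.)

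The genuine gap is in the gluing step: you mate $\widehat{A}$ on $\overline{\D}$ with the dynamics of $P$ \emph{on its basin of infinity}, B\"ottcher-straightened to $z^d$ on $\widehat{\C}\setminus\D$. That half of the polynomial plane carries no conformal information about $P$ beyond its degree (every $P\in\mathcal{H}_d$ is conformally conjugate to $z^d$ there), so your construction is independent of $P$ and cannot produce what the theorem asserts, namely a holomorphic $F$ together with a conformal map $\phi^{\mathrm{out}}:\mathcal{K}(P)\to\overline{\mathbf{D}^{\mathrm{out}}}$ conjugating $P|_{\mathcal{K}(P)}$ to $F$ (Theorem~\ref{moduli_interior_mating_thm}, Proposition~\ref{conformal_mating_general_prop}). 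Following the Douady--Hubbard convention, the basin of infinity is the half that gets \emph{discarded}; the polynomial must enter through its filled Julia set, uniformized by a degree-$d$ Blaschke product $B$ with an attracting fixed point in $\overline{\D}^c$ (via a Riemann map $\eta:\mathcal{K}(P)\to\D^c$ and a quasisymmetric conjugacy $h$ between $B|_{\bS^1}$ and $z^d$, as in the paper). A second, related point you gloss over is the direction-sensitivity of the welding: by Remark~\ref{distortion_remark} only the circle homeomorphism conjugating $z^d$ \emph{to} $A$ is known to admit a David extension ([LMMN, Theorem~4.12], applicable precisely because all periodic break-points are symmetrically parabolic); the inverse direction is not known to be David. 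So ``extend $h$ to $\overline{\D}$ and pull back the standard structure'' must be arranged so that the only extension used is $\psi\circ h$ with $\psi$ going from the $z^d$-model to $A$ (the paper's $\mu$ is standard on $\D^c$ and $(\psi\circ h)^*(\mu_0)$ on $\D$, with the group-side dynamics appearing in the model as the pullback of $\widehat{A}$, and conformality recovered afterwards via $W^{1,1}$-removability of the welding curve). Re-deriving the David estimate from cusp geometry, as you propose, is possible in principle but is exactly the content of the quoted extension theorem rather than routine bookkeeping.
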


Existence of parabolic fixed points prevents us from using classical conformal welding techniques to construct the desired conformal matings. We use instead the theory of \emph{David homeomorphisms} \cite{David88}.
The examples in \cite{LLMM1, LMM2, LMMN} may  be viewed as anti-holomorphic precursors of the holomorphic mating construction of Theorem~\ref{thm_mating_intro}. We should, however, point out that there is no natural anti-holomorphic analog of higher Bowen-Series maps for  reflection groups.

After obtaining explicit examples of mateable maps, we turn to the problem of identifying their moduli. 
Surprisingly, it turns out that the only Fuchsian groups $\Gamma$ that are mateable correspond (in the torsion-free case) to spheres with punctures (see Proposition~\ref{prop-goah-dichotomy}). 
 It turns out that there are at least two distinct components, one corresponding to the Bowen-Series map
and one to the higher  Bowen-Series map (Theorem \ref{thm-moduli-eoe}):

\begin{thmx}\label{thm_moduli_components_intro}
	The moduli space of matings of the fundamental group
	of a punctured sphere $S_{0,k}$ ($k \geq 3$) with polynomials in $\mathcal{H}_d$ has at least two components corresponding to $d = 1-2 \chi(S_{0,k})$ and $d =  (1-\chi(S_{0,k}))^2$.
\end{thmx}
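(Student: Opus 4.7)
The plan is to produce two families of conformal matings for the Fuchsian group $\Gamma_0$ uniformizing $S_{0,k}$ and to separate them by the degree of the polynomial factor. Since this degree is a discrete invariant of a mating, the two families will automatically lie in disjoint subsets of the moduli space, giving the two required components.

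I would first apply Theorem~\ref{thm_mating_intro} to the Bowen-Series map $A_{\mathrm{BS}}$ of $\Gamma_0$, producing a family of conformal matings whose polynomial factors lie in a principal hyperbolic component $\mathcal{H}_{d_1}$, where $d_1$ is the topological degree of $A_{\mathrm{BS}}$ on $\bS^1$. A second application, this time to the higher Bowen-Series map $A_{\mathrm{HBS}}$ of the same group $\Gamma_0$, yields a family of matings whose polynomial factors lie in $\mathcal{H}_{d_2}$, where $d_2$ is the topological degree of $A_{\mathrm{HBS}}$. In both cases, the mating constructed in Theorem~\ref{thm_mating_intro} is conformally conjugate on one dynamically invariant hemisphere to the polynomial $P$ acting on its filled Julia set, so $\deg P$ equals the topological degree of the circle map being mated.

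The central computation is therefore pinning down $d_1$ and $d_2$. For the Bowen-Series map I would use the explicit description from Section~\ref{bs_sec} as a piecewise Möbius map whose pieces are the side-pairing generators of a symmetric ideal fundamental polygon for $\Gamma_0$; counting orbits of sides (an Euler-characteristic bookkeeping) gives $d_1 = 1-2\chi(S_{0,k}) = 2k-3$. For the higher Bowen-Series map I would invoke Corollary~\ref{second_iterate_hbs_cor}, which realises $A_{\mathrm{HBS}}$ as the second iterate of the Bowen-Series map of an auxiliary orbifold sphere carrying roughly $k/2$ punctures and zero, one, or two order-two cone points. Applying the same Euler-characteristic count on this auxiliary orbifold gives a Bowen-Series degree of $k-1 = 1-\chi(S_{0,k})$, and therefore $d_2 = (k-1)^2 = (1-\chi(S_{0,k}))^2$.

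Finally, for every $k \geq 3$ we have $d_2-d_1 = (k-1)^2-(2k-3) = (k-2)^2 > 0$, so $d_1 \neq d_2$; the two families live over different principal hyperbolic components and hence over disjoint pieces of the moduli space. Because the polynomial degree is locally constant on moduli (it is fixed by the topological conjugacy class of the circle map, which is constant on a component by the framework of Theorem~\ref{thm-moduli-eoe}), the two families contribute at least two distinct connected components. The main obstacle is the degree accounting for $A_{\mathrm{HBS}}$: one has to identify the correct auxiliary orbifold sphere so that Corollary~\ref{second_iterate_hbs_cor} applies cleanly and converts the count to a standard Bowen-Series degree, and one must verify that no deformation of mateable maps within the moduli space can bridge the two degrees --- this is ensured by the combinatorial characterisations of $A_{\mathrm{BS}}$ and $A_{\mathrm{HBS}}$ in Propositions~\ref{bs_char_1} and~\ref{higher_bs_char_prop}, which keep the two classes separated.
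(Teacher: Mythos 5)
Your argument is correct for the statement as literally phrased, but it reaches it by a lighter route than the paper does. The paper obtains Theorem~\ref{thm_moduli_components_intro} as a summary of Theorem~\ref{thm-moduli-eoe}, whose proof runs through the two-point (edge-orbit) characterization machinery of Section~\ref{sec-pattern}: Theorem~\ref{thm-aisbs} shows that every backward edge-orbit equivalent mateable map is a Bowen-Series map (degree $1-2\chi(S_{0,k})=2k-3$), and Theorem~\ref{thm-char-hdm-eoe} shows that in the unfolded edge-orbit equivalent case the only other possibility is the higher Bowen-Series map (degree $(\chi(S_{0,k})-1)^2=(k-1)^2$, via Equation~\eqref{eq-degcfm}); this yields the sharper ``exactly one, respectively exactly two, components'' statements for the constrained moduli spaces. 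You instead use only the existence half --- Theorems~\ref{moduli_interior_mating_thm} and~\ref{thm-cfdmateable} --- together with the degree computations and the discreteness of the degree, which indeed suffices for ``at least two components'' and avoids Sections~\ref{sec-combchar}--\ref{sec-pattern} entirely; the trade-off is that your argument cannot rule out further components, which is precisely what the paper's characterizations buy. Two small corrections: the separation of the two families is carried entirely by the degree invariant (as in the paper), not by Propositions~\ref{bs_char_1} and~\ref{higher_bs_char_prop} --- those combinatorial characterizations feed into the dynamical Theorems~\ref{thm-aisbs} and~\ref{thm-char-hdm-eoe} rather than directly controlling deformations in moduli; and the degree $(k-1)^2$ of the higher Bowen-Series map is available directly from Equation~\eqref{eq-degcfm}, so the detour through Corollary~\ref{second_iterate_hbs_cor} (which requires choosing the auxiliary orbifold group $G_{k/2}$ or $G_{(k+1)/2,1}$ according to the parity of $k$) is a legitimate but unnecessary complication for the degree count.
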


\begin{rmk}\label{rmk-clarintro}
Lest we mislead the reader, it is worth pointing out that the two different components Theorem \ref{thm_moduli_components_intro} refers to may equivalently be thought of as  two different moduli spaces:
\begin{enumerate}
\item the space of matings between the Bowen-Series map of a punctured sphere group (with a particular choice of a fundamental domain) and
polynomials in a particular $\mathcal{H}_d$, 
\item the space of matings between a higher Bowen-Series map associated to the same group, 
and polynomials in $\mathcal{H}_d$ for a different $d$.
\end{enumerate}
\end{rmk}

Sections \ref{bs_sec}, \ref{sec-fold}, \ref{sec-combchar} and \ref{sec-pattern} deal with groups in the  Bers slice. 
In Section \ref{limit_julia_homeo_sec}, we investigate which Kleinian groups on the boundary of a Bers slice $\mathcal{B}(\Gamma_0)$ can be mated with polynomials. 
The following theorem explicates topological obstructions to mating (see Lemmas~\ref{pwfm_invariant_lami_lem},~\ref{inv_lami_pwfm}, Proposition~\ref{b_s_inv_simp_closed_geod_lem}, and Theorem~\ref{first_return_conf_model_thm}).

\begin{thmx}\label{thm_boundary_groups_1_intro} 
	Let $\Gamma_0$ be a punctured sphere Fuchsian group. Then, there are only finitely many quasiconformal conjugacy classes of groups $\Gamma\in\partial\mathcal{B}(\Gamma_0)$ for which the Cannon-Thurston map of $\Gamma$ semi-conjugates the Bowen-Series map of $\Gamma_0$ to a self-map of $\Lambda(\Gamma)$ that is orbit equivalent to $\Gamma$ (we call this map the \emph{Bowen-Series map} of $\Gamma$). These Kleinian groups arise out of pinching finitely many disjoint, simple, closed curves (on the surface $\D/\Gamma_0$) out of an explicit finite list. In particular, all such groups $\Gamma$ are geometrically finite. Moreover, there exist groups $\Gamma$ of the above type for which the first return map of the Bowen-Series map to certain circles contained in $\Lambda(\Gamma)$ is a  higher Bowen-Series map.
	
	On the contrary, simply degenerate Kleinian groups \emph{do not} admit such orbit equivalent maps. 
\end{thmx}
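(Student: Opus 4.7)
The plan is to exploit the characterization of Cannon--Thurston identifications. Let $\partial\iota:\bS^1\to\Lambda(\Gamma)$ denote the Cannon--Thurston map for $\Gamma\in\partial\mathcal{B}(\Gamma_0)$, and let $\sim$ be the equivalence relation $x\sim y\Longleftrightarrow \partial\iota(x)=\partial\iota(y)$. For the Bowen--Series map $A_{\Gamma_0}$ to descend to a continuous self-map $A_\Gamma$ of $\Lambda(\Gamma)$ making the Cannon--Thurston map a semi-conjugacy, the relation $\sim$ must be $A_{\Gamma_0}$-invariant. By the structure theorems for Cannon--Thurston maps (Cannon--Thurston, Minsky, Mj), $x\sim y$ holds precisely when $x,y$ are ideal endpoints of a leaf of the ending lamination $\EL$ of a degenerate end of $\Gamma$, together with axis endpoints of accidental parabolics and their natural closures in the geometrically finite case. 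The proof then splits according to which of these cases occurs.

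First I would dispatch the simply degenerate case. Here $\EL$ is a minimal filling lamination with leaves dense in $\bS^1\times\bS^1\setminus\Delta$, and for a.e.\ $x\in\bS^1$ there is a unique $y\neq x$ with $x\sim y$. The Bowen--Series map $A_{\Gamma_0}$ is piecewise M\"obius with a finite Markov partition $\{I_j\}$, on which $A_{\Gamma_0}|_{I_j}=g_j\in\Gamma_0$. Since each $g_j$ permutes $\EL$ (which is $\Gamma_0$-invariant), the descent condition is automatic for leaves whose two endpoints lie in the interior of a single $I_j$. Density of leaves, however, forces a positive-measure set of leaves $\ell$ with endpoints $x\in\Int(I_j)$, $y\in\Int(I_k)$ for $j\neq k$; for such leaves, $(g_j(x),g_k(y))$ is the image of $(x,y)$, and by minimality and unique ergodicity of $\EL$, the set of pairs of the form $(g_j(a),g_k(b))$ arising this way cannot all lie in $\EL$. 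Combining this with the ergodicity of $A_{\Gamma_0}$ (Lebesgue class) yields a contradiction, ruling out simply degenerate $\Gamma$.

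In the geometrically finite case, pinching a disjoint multicurve $\mathcal{C}=\{\gamma_1,\dots,\gamma_k\}$ on $\D/\Gamma_0$ makes each $\gamma_i$ parabolic, and $\sim$ identifies axis endpoints of $\Gamma_0$-conjugates of the $\gamma_i$ (and the appropriate boundary limits of these). Since the axis endpoints of a hyperbolic $\gamma\in\Gamma_0$ are eventually periodic under $A_{\Gamma_0}$ with a definite combinatorial type determined by the Markov partition, $A_{\Gamma_0}$-invariance of $\sim$ reduces to a finite combinatorial condition on $\mathcal{C}$: each pair of identified axis endpoints must be sent by the appropriate $g_j$ to another identified pair. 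Enumerating compatible $\mathcal{C}$ using the explicit Markov structure of the Bowen--Series map of a punctured sphere (Section \ref{bs_sec}) produces the explicit finite list of allowable multicurves. Because QC conjugacy classes of groups in $\partial\mathcal{B}(\Gamma_0)$ obtained by such pinching are determined by the isotopy class of $\mathcal{C}$, finiteness of QC classes follows, and geometric finiteness is immediate since pinching finitely many disjoint simple closed curves yields a geometrically finite group.

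For the higher Bowen--Series clause, I would exhibit pinchings $\mathcal{C}$ on the explicit list that separate off a subsurface $S'\subset \D/\Gamma_0$ with a Fuchsian subgroup $\Gamma'<\Gamma_0$ whose limit set maps, under $\partial\iota$, to a round circle $C\subset\Lambda(\Gamma)$. The first return map of $A_\Gamma$ to $C$ is induced by first return of $A_{\Gamma_0}$ to the preimage arc, and via Corollary~\ref{second_iterate_hbs_cor} (higher Bowen--Series maps as second iterates of Bowen--Series maps of smaller groups) this is identified with the higher Bowen--Series map of $\Gamma'$. The main obstacle I anticipate is the geometrically finite analysis: one must carry out the combinatorial enumeration of admissible multicurves carefully and, crucially, verify continuity of the descended map $A_\Gamma$ at the images of the Markov discontinuities of $A_{\Gamma_0}$, since these are precisely the points at which different M\"obius pieces abut and where distinct parabolic fixed points of $\Gamma$ can collide under $\partial\iota$; ensuring that every admissible $\mathcal{C}$ on the list produces a bona fide continuous orbit-equivalent $A_\Gamma$ (and no spurious one is admitted) is the delicate endpoint of the argument.
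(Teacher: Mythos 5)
Your reduction of the problem to $A_{\Gamma_0}$-invariance of the Cannon--Thurston equivalence relation $\sim$ is exactly the paper's starting point (Lemma~\ref{pwfm_invariant_lami_lem}), but after that there is a genuine gap: you never identify the mechanism that forces finiteness, and your treatment of the simply degenerate case does not work as stated. The paper's key step (Lemma~\ref{inv_lami_pwfm}) is a short algebraic chase: if a leaf $\ell$ of the invariant lamination has endpoints $x\in I_i$, $y\in I_j$ with $i\neq j$, then $A$-invariance gives $h_i(x)\sim h_j(y)$, and $\Gamma_0$-invariance of $\sim$ then shows that the image in $\Gamma$ of $h_i^{-1}\circ h_j$ fixes the single point of $\Lambda(\Gamma)$ corresponding to $\{x,y\}$; since $x\neq y$, this element must be an accidental parabolic and $x,y$ are precisely its fixed points, i.e.\ $\ell$ is the axis of $h_i^{-1}\circ h_j$. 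This one observation simultaneously (a) bounds the pinched multicurve to lie in the explicit finite set of classes represented by words of length at most two in the pieces (whence the finite list and finitely many quasiconformal classes, made explicit in Proposition~\ref{b_s_inv_simp_closed_geod_lem}), (b) gives geometric finiteness, and (c) rules out simply degenerate groups, since leaves of a minimal filling ending lamination are not axes of elements of $\Gamma_0$. Your geometrically finite analysis instead asserts that "enumerating compatible $\mathcal{C}$ using the explicit Markov structure produces the explicit finite list," but eventual periodicity of axis endpoints under $A_{\Gamma_0}$ holds for infinitely many isotopy classes of simple closed curves, so without the word-length-two constraint nothing in your argument prevents an infinite family of admissible multicurves; the finiteness claim is exactly what needs proof and is missing.

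For the simply degenerate case, your argument rests on the assertion that, by minimality and unique ergodicity of the ending lamination together with ergodicity of $A_{\Gamma_0}$ in the Lebesgue class, the image pairs $(g_j(x),g_k(y))$ of leaves straddling two Markov pieces "cannot all lie in $\EL$." This is precisely the statement to be proved, and it is not clear how unique ergodicity (a statement about transverse measures) or Lebesgue ergodicity of $A_{\Gamma_0}$ (the lamination is Lebesgue-null, and "a positive-measure set of leaves" is not defined) would yield it; the paper needs no ergodic input at all, since the algebraic argument above applies to every leaf crossing the fundamental domain. Your sufficiency concern (that each admissible multicurve genuinely yields a continuous orbit-equivalent $A_\Gamma$) is real but is handled in the paper by the explicit verification in Proposition~\ref{b_s_inv_simp_closed_geod_lem}. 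Finally, your sketch of the higher Bowen--Series clause via a Fuchsian subsurface subgroup and Corollary~\ref{second_iterate_hbs_cor} is in the right spirit, but the paper's route is through the conformal models of the first return maps on polar principal components (Theorem~\ref{first_return_conf_model_thm}, via Corollary~\ref{hbs_first_return_cor}); as written, your identification of the return map with a second iterate of a Bowen--Series map of the subgroup is not justified.
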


Once we have a collection of Bers boundary groups equipped with Bowen-Series maps (that are orbit equivalent to the corresponding groups) at our disposal, we proceed to answer Question~\ref{problem_1} affirmatively for such groups.

\begin{thmx}\label{thm_boundary_groups_2_intro}
	Let $\Gamma_0$ be a Fuchsian group uniformizing $S_{0,k}$, and $\Gamma\in\partial\mathcal{B}(\Gamma_0)$ be a group  that admits a Bowen-Series map. Then the following hold.
	\begin{enumerate}
		\item There exists a complex polynomial $P_\Gamma$ (of degree $1-2 \chi(S_{0,k})=2k-3$) such that the action of the Bowen-Series map of $\Gamma$ on its limit set is topologically conjugate to the action of $P_\Gamma$ on its Julia set.
		
		\item The canonical extension of the Bowen-Series map of $\Gamma$ can be conformally mated with polynomials lying in the principal hyperbolic component $\mathcal{H}_{2k-3}$.
	\end{enumerate}
\end{thmx}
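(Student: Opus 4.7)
The plan is to deduce both parts from the interior mating result Theorem~\ref{thm_mating_intro} by transporting it along the pinching that produces $\Gamma$ from $\Gamma_0$; the key technical tool throughout is a David homeomorphism of $\widehat\C$. Set $d := 2k-3 = 1-2\chi(S_{0,k})$.

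For part~(1), I would begin with the topological conjugacy $h : \bS^1 \to \bS^1$ between $A_{\Gamma_0}$ and $z^d|_{\bS^1}$ provided by Theorem~\ref{thm_mating_intro} applied to $\Gamma_0$. The hypothesis on $\Gamma$ together with Theorem~\ref{thm_boundary_groups_1_intro} supplies a Cannon-Thurston semiconjugacy $\pi_\Gamma : \bS^1 \to \Lambda(\Gamma)$ intertwining $A_{\Gamma_0}$ and $A_\Gamma$, and identifies $\Gamma$ as geometrically finite, obtained from $\Gamma_0$ by pinching finitely many disjoint simple closed curves drawn from an explicit list. Push the fibres of $\pi_\Gamma$ across $h$ to obtain a $z^d$-invariant closed equivalence relation $\sim$ on $\bS^1$ whose non-trivial classes are the grand orbits of finitely many chord pairs of a rational geodesic lamination. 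I would then produce $P_\Gamma$ as the quotient polynomial of this relation: concretely, start from any $P_0 \in \mathcal{H}_d$ and perform a Haissinsky-type pinching surgery on the repelling cycles indexed by $\sim$, using a David homeomorphism whose Beltrami coefficient is supported near the grand orbit of those cycles. The resulting $P_\Gamma$ has degree $d$, parabolic cycles at the identified classes, and $\mathcal{J}(P_\Gamma) \cong \bS^1/{\sim}$; pushing $h$ down through $\pi_\Gamma$ yields the desired topological conjugacy.

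For part~(2), fix $P \in \mathcal{H}_d$. Composing the conjugacy of~(1) with the landing-ray semiconjugacy between $P|_{\mathcal{J}(P)}$ and $P_\Gamma|_{\mathcal{J}(P_\Gamma)}$ (available since $P, P_\Gamma$ lie in $\overline{\mathcal{H}_d}$) produces a continuous surjection $\psi : \mathcal{J}(P) \to \Lambda(\Gamma)$ intertwining $P$ and $A_\Gamma$. I would glue $\mathcal{K}(P)$ to the filled limit set $\mathcal{K}_\Gamma$ along $\psi$ to form a quotient space $X$; a standard ray-equivalence argument, using geometric finiteness of $\Gamma$ and the quasicircle nature of $\mathcal{J}(P)$, shows that $X \cong \bS^2$, and the piecewise map $F := P \sqcup \widehat{A}_\Gamma$ is a continuous degree-$d$ branched covering on $X$: the topological mating. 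Equip $X$ with the $F$-invariant almost-complex structure given by the standard structure on $\Int\mathcal{K}(P)$ and the pullback of the standard structure on $\widehat\C\setminus\mathcal{K}_\Gamma$ through iterates of $\widehat{A}_\Gamma$, interpolated across the welding locus; if the resulting Beltrami coefficient $\mu$ is David, then David's uniformization theorem~\cite{David88} conjugates $(X,F)$ to a rational map realising the conformal mating.

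The hard part, in both constructions, will be verifying the David integrability condition $|\{|\mu|>1-\varepsilon\}| \le C e^{-\alpha/\varepsilon}$ near parabolic fixed points and their grand orbits under $F$. The welding map $\psi$ is never quasisymmetric there (as in Remark~\ref{distortion_remark} and Remark~\ref{rmk-oe}), so $\mu$ accumulates to $1$ on a countable dense set. I expect the required exponential decay to follow by a parabolic-flower calculation in fundamental domains around each cusp, essentially identical to the anti-holomorphic estimates of \cite{LLMM1,LMM2,LMMN}: parabolic local normal forms depend only on the multiplier, so those estimates transplant to the holomorphic setting here, and geometric finiteness (Theorem~\ref{thm_boundary_groups_1_intro}) ensures that only finitely many parabolic orbits require attention before the estimate is propagated across the grand orbit by equicontinuity of iterates of $F$.
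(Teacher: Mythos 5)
There is a genuine gap in both halves of your proposal, even though the overall ``David surgery'' spirit is right. For part~(1), your entire construction of $P_\Gamma$ rests on the assertion that a ``Haissinsky-type pinching surgery'' on some $P_0\in\mathcal{H}_d$, with a David coefficient supported near the grand orbits of the classes of $\sim$, produces a degree-$d$ polynomial with $\mathcal{J}(P_\Gamma)\cong\bS^1/\!\sim$. That realization step is precisely the nontrivial content of part~(1) and you have not argued it: pinching internal geodesics of a Jordan-curve Julia set so as to realize a prescribed invariant lamination is not the standard Haissinsky attracting-to-parabolic surgery, and it needs a proof of convergence and of the identification of the quotient. The paper instead pushes the Cannon--Thurston relation $\sim$ through the conjugacy with $z^{2d-1}$, checks that the resulting relation is a postcritically finite $\reals$eal lamination with no rotation curves (the ``no rotation curves'' check itself uses the first-return analysis of Lemma~\ref{period_one_two_lem} and Theorem~\ref{first_return_conf_model_thm}), and then invokes the realization theorem (Theorem~\ref{poly_laminations_realized_thm}, i.e.\ \cite[Theorem~7.7]{LMMN}). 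Note also that the resulting $P_\Gamma$ is postcritically finite, so the cut points are repelling, not parabolic as you predict; this is not fatal to an alternative construction, but it signals that your surgery picture is not the one that gets realized.

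For part~(2), two things are missing. First, the David integrability theorem applies to Beltrami coefficients on $\widehat{\C}$, so defining $\mu$ on the abstract glued sphere $X$ and ``interpolating across the welding locus'' is not meaningful as stated: you need a concrete global model in $\widehat{\C}$ in which to express $\mu$ and in which the image of the Julia set is conformally removable. The paper uses the sphere of $P_\Gamma$ from part~(1) as this model, replacing $P_\Gamma$ by $P$ on the basin of infinity and by the Bowen--Series dynamics on the bounded periodic Fatou components. Second, and more importantly, the actual mechanism for verifying the David condition is the identification of the first-return maps of $\widehat{A}_{\Gamma,\mathrm{BS}}$ on the periodic components of $\Int{K(\Gamma)}$ as Bowen--Series maps (equatorial case) and \emph{higher} Bowen--Series maps (polar case) of punctured-sphere Fuchsian groups (Theorem~\ref{first_return_conf_model_thm}), matched against the $z^k$ and fiberwise $z^k$ models on the corresponding Fatou components of $P_\Gamma$ (Proposition~\ref{dyn_p_gamma_prop}); the boundary conjugacies then admit David extensions by Proposition~\ref{extension_david_normal_fibered} (\cite[Theorem~4.12]{LMMN}). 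Your substitute --- a parabolic-flower computation propagated ``by equicontinuity of iterates of $F$'' --- does not work: iterates of the mating are not equicontinuous near the Julia set, and the global exponential-decay estimate over the infinitely many strictly preperiodic components is obtained in the paper by a summable area argument (uniform John domains plus Koebe distortion, inequalities~\eqref{koebe_control} and~\eqref{diam_area_inequality}), not by any equicontinuity. Without the first-return/higher Bowen--Series structure and without a removability statement for the welding locus (Jones--Smirnov for the John domain $\mathcal{B}_\infty(P_\Gamma)$), your sketch does not yield analyticity of the straightened map nor the David bound.
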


We refer the reader to Theorems~\ref{julia_limit_dyn_equiv_thm} and~\ref{bers_bdry_mating_thm} for precise statements.

\begin{rmk}
We point out that while Theorems~\ref{thm_mating_intro} and~\ref{thm_boundary_groups_2_intro} guarantee the existence of conformal matings, they do not characterize the class of holomorphic maps arising in this process. An explicit description of conformal matings between Bowen-Series maps of punctured sphere groups and polynomials in principal hyperbolic components would be useful for a finer algebraic/analytic study of their parameter spaces. This will be taken up in a subsequent paper.
\end{rmk}

Section \ref{sec-coe} provides an application to the problem of topological orbit equivalence rigidity (see \cite{fisherwhyte_gafa}
for instance for very general positive results).
As a fallout of the methods of this paper, we obtain the unexpected conclusion that
topological orbit equivalence rigidity  fails for Fuchsian groups (Theorem \ref{thm-goe-fuchsian}).

\begin{thmx}\label{thm_goe_fuchsian_intro}
For	any two punctured sphere Fuchsian groups $\Gamma_1$, $\Gamma_2$, the actions of $\Gamma_1, \Gamma_2$ on $\bS^1$  are
		topologically orbit equivalent. 
\end{thmx}

\noindent {\bf Acknowledgments:} 
The authors thank Kingshook Biswas, David Fisher, Etienne Ghys, Sam Kim, Yusheng Luo, and Katie Mann for helpful correspondence. The stronger version of Lemma \ref{lemma-referee} in the present paper as opposed to the corresponding one in an earlier draft was kindly supplied to us by an anonymous referee. We also thank the referees for other helpful comments.
Special thanks are due to Caroline Series for explaining some important aspects of orbit equivalence to us. SM gratefully acknowledges the collaboration with Seung-Yeop Lee, Mikhail Lyubich, and Nikolai Makarov on matings of the ideal triangle reflection group with anti-holomorphic polynomials in \cite{LLMM1}. 
This led us to the idea that the Bowen-Series map could be put to use to mate rational maps and Kleinian groups.

There are two  major intellectual debts that we ought to mention at the outset. The first is to the paper \cite{bowen-series} of Bowen and Series that provides the starting point of a map orbit equivalent to a group, though they work in the framework of measurable orbit equivalence as opposed to the  framework of topological orbit equivalence that we use in this paper. 
The other is the work of Bullett-Penrose-Lomonaco \cite{bullett-penrose,BL20} that provides a correspondence-theoretic framework for the mating of the modular group with quadratic polynomials in terms of algebraic correspondences.

\section{Mateable maps and a mateability framework}\label{sec-mat}
We denote the group of all conformal automorphisms of the unit disk $\disk$ by $\textrm{Aut}({\disk})$. 
The aim of this section is to associate \emph{mateable} maps $A_\Gamma:\Lambda(\Gamma)\rightarrow\Lambda(\Gamma)$ to groups $\Gamma$ in the Bers slice closure of a Fuchsian group $\Gamma_0$, and to lay out precisely what it means in this paper to say that $A_\Gamma$ is topologically/conformally mateable with a Bers rational map $R$.

\subsection{Mateable maps for Fuchsian groups}\label{mat_fuch_subsec}

\subsubsection{Orbit equivalence}

A basic problem that arises in trying to make sense of what it means to mate a polynomial $P$
with
a Fuchsian group $\Gamma$ is purely algebraic in nature. On one side of the picture we have the semigroup
$\langle P \rangle$
generated by $P$, while
on the other side we have a non-commutative group
$\Gamma$ generated by more than one element. Thus, we need
to replace $\Gamma$ by a single map $A$ that captures at least the topological dynamics of $\Gamma$. This leads us to the notion of orbit equivalence.

\begin{defn}\label{defn-goe-A}
	Let $A:\bS^1\to \bS^1$. The \emph{grand orbit} of a point $x\in\bS^1$ under $A$ is defined as 
	$
	\mathrm{GO}_A(x):=\{ x'\in\bS^1: A^{\circ m}(x)=A^{\circ n}(x'),\ \textrm{for\ some}\ m, n\geq 0\}.
	$
	We say that two continuous maps
	$A_i:\mathbb{S}^1\to\mathbb{S}^1$ ($i=1,2$) are \emph{topologically orbit equivalent}  if there exists a homeomorphism $\phi: \bS^1 \to \bS^1$ such that for every $x\in \bS^1$, $\phi(\mathrm{GO}_{A_1}(x)) = \mathrm{GO}_{A_2}(\phi(x))$. The homeomorphism $\phi$ is called a \emph{topological orbit equivalence} between $A_1$ and $A_2$.
\end{defn}

\begin{defn}\label{orbit_equiv_def}
	Let $\Gamma$ be a Fuchsian group with limit set equal to $\Lambda \subset \mathbb{S}^1$. We say that a continuous map
	$A:\mathbb{S}^1\to\mathbb{S}^1$ is \emph{orbit equivalent} to $\Gamma$ on the limit set if for every $x\in\Lambda$, the $\Gamma$-orbit of $x$ is equal to the grand orbit of $x$ under $A$; i.e., $\Gamma\cdot x= \mathrm{GO}_A(x)$.
\end{defn}

The notion of orbit equivalence turns out to be  flexible enough for our purposes, as illustrated  by Theorem \ref{thm_goe_fuchsian_intro} and Lemma \ref{lemma-referee}.

\subsubsection{Regularity}

The classical theorem on mating purely in the context of Fuchsian and Kleinian groups is the Bers' simultaneous uniformization theorem. Roughly speaking, this theorem asserts that two Fuchsian groups, acting on two copies of $\disk$, can be conformally combined to obtain a quasi-Fuchsian group (acting on the Riemann sphere $\widehat{\C}$). 

Let $A:\bS^1 \to \bS^1$ be a continuous map orbit equivalent to a Fuchsian group $\Gamma$. In the spirit of the Bers' simultaneous uniformization theorem, to conformally mate $A$ with complex polynomials, we would like to augment $A$ to a `conformal dynamical system'. More precisely, we want to extend $A$ to a complex analytic map $\widehat{A}$ defined on a subset of $\disk$. 

Now suppose that there exists a complex-analytic map $\widehat{A}$ defined on the open set $\{z: r<\vert z\vert<1\}$ (for some $r\in\left(0,1\right)$) that continuously extends $A:\bS^1\to\bS^1$.  Then by the Schwarz reflection principle,  $\widehat{A}$ can be extended to a complex analytic map on an annular neighborhood of $\bS^1$. Orbit equivalence between $A$ and $\Gamma$ combined with the identity principle for complex analytic maps now implies that $A$ must equal a single M{\"o}bius map in $\Gamma$ (compare the proof of Lemma~\ref{pwa_is_pwm_lem}). But this would force $\Gamma$ to be generated by a single M{\"o}bius map, making it evident that such an extension $\widehat{A}$ is too much to ask for. To tackle this problem, we allow $A$ to be a continuous piecewise real analytic map of $\mathbb{S}^1$, so that each piece of $A$ admits a complex analytic extension to a neighborhood of its domain of definition.

\begin{defn}\label{pwm_def}
	We say that a map $A:\mathbb{S}^1\to\mathbb{S}^1$ is \emph{piecewise M{\"o}bius} if there exist $k\in\mathbb{N}$, closed arcs $I_j\subset\mathbb{S}^1$, and $g_j\in\textrm{Aut}({\disk})$, $j\in\{1,\cdots, k\}$,  such that
	\begin{enumerate}
		\item $\displaystyle\mathbb{S}^1=\bigcup_{j=1}^k I_j,$
		
		\item $\Int{I_m}\cap\Int{I_n}=\emptyset$ for $m\neq n$, and
		
		\item $A\vert_{I_j}=g_j$.
	\end{enumerate}
	
	\noindent A piecewise M{\"o}bius map is called \emph{piecewise Fuchsian} if $g_1,\cdots, g_k$ generate a Fuchsian group, which we denote by $\Gamma_A$.
	
	\noindent	 In the above definition, if the maps $g_j$ are assumed only to be complex-analytic in some small neighborhoods of $\Int{I_j}$ (without requiring them to be M{\"o}bius), then $f$ is said to be \emph{piecewise analytic}. 
\end{defn}

Formally speaking, a piecewise M{\"o}bius/analytic map $A$ is a pair $\left(\{g_j\}_{j=1}^k, \{I_j\}_{j=1}^k\right)$; i.e., the partition of $\bS^1$ into the closed arcs $\{I_j\}$ is a part of the definition of $A$.
The maps $g_j$ will be called the \emph{pieces} of $A$. We shall occasionally refer to the domains $I_j$ of $g_j$ also as  \emph{pieces} of $A$ when there is no scope for confusion.

\begin{lemma}\label{pwa_is_pwm_lem}
	Let $A:\mathbb{S}^1\to\mathbb{S}^1$ be a piecewise analytic map that is orbit equivalent to a finitely generated Fuchsian group $\Gamma$. Then, $A$ is piecewise Fuchsian, and the pieces of $A$ form a generating set for $\Gamma$.
\end{lemma}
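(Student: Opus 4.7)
The plan is to upgrade each real-analytic piece $g_j$ to a M\"obius element of $\Gamma$ by invoking the identity principle, and then to use orbit equivalence a second time to identify the subgroup generated by the pieces with $\Gamma$ itself. By Remark~\ref{rmk-lattice} we may operate under the standing assumption that $\Lambda(\Gamma)=\bS^1$, so that every arc in $\bS^1$ is uncountable as a subset of the limit set.

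First I would fix a piece $I_j$ of $A$ and its real-analytic representative $g_j$, which by hypothesis extends to a holomorphic map on some connected complex neighborhood $U_j$ of $\Int I_j$. For each $x\in\Int I_j$, orbit equivalence yields $g_j(x)=A(x)\in\mathrm{GO}_A(x)=\Gamma\cdot x$, so there exists $\gamma_x\in\Gamma$ with $g_j(x)=\gamma_x(x)$. Since $\Gamma$ is countable (being discrete and finitely generated) while $\Int I_j$ is uncountable, pigeonhole produces a single element $\gamma\in\Gamma$ whose coincidence set $\{x\in\Int I_j:g_j(x)=\gamma(x)\}$ is uncountable and hence admits an accumulation point in $U_j$. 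Elements of $\textrm{Aut}(\disk)$ are holomorphic on a neighborhood of $\overline{\disk}$; so after possibly shrinking $U_j$ to avoid the pole of $\gamma$, the identity theorem applied to $g_j$ and $\gamma$ on the connected open set $U_j$ forces $g_j\equiv\gamma$ on $U_j$, and hence on the closed arc $I_j$ by continuity. Repeating this for every $j$ shows that $A$ is piecewise M\"obius with all pieces drawn from $\Gamma$, so $A$ is piecewise Fuchsian and $\Gamma_A:=\langle g_1,\ldots,g_k\rangle$ is a subgroup of $\Gamma$.

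To show the reverse inclusion $\Gamma\subseteq\Gamma_A$, I would first observe that for any $x\in\bS^1$ every forward $A$-iterate of $x$ is obtained by successively applying elements of $\{g_1,\ldots,g_k\}$, and every $A$-preimage of $x$ has the form $g_j^{-1}(x)$ for some $j$; hence $\mathrm{GO}_A(x)\subseteq\Gamma_A\cdot x$. Combined with orbit equivalence this gives $\Gamma\cdot x=\mathrm{GO}_A(x)\subseteq\Gamma_A\cdot x\subseteq\Gamma\cdot x$, so the $\Gamma$- and $\Gamma_A$-orbits of every $x$ coincide. Choosing $x\in\bS^1$ outside the countable set of fixed points of non-identity elements of $\Gamma$, I would then conclude that for each $\gamma\in\Gamma$ there is some $h\in\Gamma_A$ with $h^{-1}\gamma$ fixing $x$, forcing $h^{-1}\gamma=\mathrm{id}$ and therefore $\gamma\in\Gamma_A$.

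The main technical point is the identity-principle step in the second paragraph; the rest is essentially bookkeeping. The one delicate ingredient is that the coincidence set on each piece must be large enough to force analytic equality, and this is precisely where the standing assumption $\Lambda(\Gamma)=\bS^1$ enters the argument in a crucial way.
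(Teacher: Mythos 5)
Your proof is correct and follows essentially the same route as the paper: countability of $\Gamma$ plus uncountability of each piece forces, via the identity theorem, that each analytic piece coincides with a single element of $\Gamma$, and then the inclusion $\mathrm{GO}_A(x)\subseteq\Gamma_A\cdot x$ together with orbit equivalence and a point with trivial stabilizer yields $\Gamma_A=\Gamma$. Your explicit invocation of the standing assumption $\Lambda(\Gamma)=\bS^1$ (Remark~\ref{rmk-lattice}) is consistent with how the paper implicitly uses it, so no gap remains.
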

\begin{proof}
	Let $k\in\mathbb{N}$, $I_j\subset\mathbb{S}^1$, $j\in\{1,\cdots, k\}$, be as in Definition~\ref{pwm_def}. 
	
	Fix $j\in\{1,\cdots, k\}$. Since $A$ is orbit equivalent to $\Gamma$, for each $x\in I_j$, there exists some $g\in\Gamma$ such that $A(x)=g(x)$. It follows from countability of $\Gamma$ and uncountability of $I_j$ that $A$ agrees with some $g_j\in\Gamma$ on an uncountable subset of $I_j$. The identity theorem for holomorphic functions now implies that $A\equiv g_j$ on $I_j$. This proves that $A$ is piecewise M{\"o}bius. Moreover, the M{\"o}bius maps that define $A$ belong to the Fuchsian group $\Gamma$, so $A$ is piecewise Fuchsian.
	
	Clearly, the grand orbit of any $x\in\mathbb{S}^1$ under $A$ is contained in the $\Gamma'$-orbit of $x$, where $\Gamma'$ is the subgroup of $\Gamma$ generated by $g_1,\cdots, g_k$. Therefore, orbit equivalence of $A$ and $\Gamma$ implies that $\Gamma\cdot x=\Gamma'\cdot x$, for each $x\in\mathbb{S}^1$. Hence, $\Gamma'=\Gamma$ (for instance, by choosing $x\in\mathbb{S}^1$ with trivial stabilizer in both $\Gamma$ and $\Gamma'$).
\end{proof}

Suppose that $x_1, \cdots, x_k$ are a
cyclically ordered collection of $k$ points on $\bS^1$ defining the pieces $I_j=[x_j,x_{j+1}]$ of $A$ ($j+1$ taken modulo $k$).
\begin{defn}\label{def-minimal}
	We shall say that $A$ is minimal, if the decomposition of $\bS^1$ given by $x_1, \cdots, x_k$ is minimal;
	i.e., there does not exist $i$ and $h \in \Gamma_A$ such that
	\begin{enumerate}
		\item $A|_{[x_i,x_{i+1}]} = h|_{[x_i,x_{i+1}]}$, and
		\item $A|_{[x_{i-1},x_{i}]} = h|_{[x_{i-1},x_{i}]}$.
	\end{enumerate}
	(Thus, there are no superfluous break-points in a minimal $A$.)
\end{defn}

We now define a canonical extension of a piecewise M{\"o}bius map. 

\begin{defn}\label{def-canonicalextension}
	Let $A$ be a continuous 
	piecewise M\"obius map on the circle. Let $\D$ denote the unit disk.
	Let $I_1, \cdots, I_k$
	be a circularly ordered family of intervals with disjoint interiors such that
	\begin{enumerate}
		\item $I_j \cap I_{j+1} = \{x_{j+1}\}$ (the indices being taken mod $k$).
		\item $A |_{I_j} = g_j$.
	\end{enumerate}
	Let $\gamma_j$ be the semi-circular arc in $\D$ between $x_{j}, x_{j+1}$ meeting $\bS^1$ at right angles at $x_{j}, x_{j+1}$, and let $\mathcal{D}_j \subset \bbar{\D}$ be the closed region bounded by $I_j$ and $\gamma_j$. Then 
	$\widehat{A}$, the \emph{canonical extension of $A$ in $\overline{\D}$} 
	is defined on $\cup_j \mathcal{D}_j$ as $\widehat A = g_j$ on $\mathcal{D}_j$.
	
	Next, denote the (full) Euclidean circle containing  $\gamma_j$ by $C_j$, and the open round disk bounded by $C_j$ by $D_j$.
	Then $\widehat{A}$, as defined above, admits a natural extension 
	$$
	\widehat{A}:\bigcup_{j=1}^k \overline{D_j}\to\widehat{\mathbb{C}},\ z\mapsto g_j(z),\ \textrm{if}\ z\in\overline{D_j}.
	$$
	We shall refer to this further extension  as the  \emph{canonical extension of $A$ in $\widehat{\C}$}.
\end{defn}
It will be clear from the context whether we are taking the 
canonical extension of $A$ in $\overline{\D}$ or $\widehat{\C}$, and we shall often omit
mentioning this explicitly.

\begin{defn}\label{def-canonicalextension-domain}
	We let $\mathcal{D}=\cup_j \mathcal{D}_j$ and call $\mathcal{D}$ the \emph{canonical domain of definition} of $\widehat A$ (in $\overline{\D}$).
	Let $R = \D \setminus \mathcal{D}$. We refer to $R$ as the 
	\emph{fundamental domain} of $A$, and also as the fundamental domain of $\widehat{A}$.
	Each bi-infinite geodesic contained in the boundary $\partial R$ will be called an \emph{edge} of $R$.
	The ideal endpoints of $R$ will be called the \emph{vertices} of $R$. The set of vertices of $R$ will be denoted by $S$.
	A pair of adjacent vertices  in $S$ (joined by an edge) will also occasionally be referred to as
	an  \emph{edge} of $R$.
	We shall refer to pairs of non-adjacent points in $S$ (the ideal endpoints of $R$), or equivalently the bi-infinite geodesic joining them in $R$ as a \emph{diagonal} of $R$.
\end{defn}
Note that $\widehat A : \mathcal{D} \to \overline{\D}$.

\subsubsection{Expansivity and degree}

Consider the (simplest) degree $d$ polynomial $z^d$ ($d>1$). It preserves the unit disk $\disk$ and the unit circle $\bS^1$ (which is the \emph{Julia set} of $z^d$). We are interested in conformal matings between $z^d$ and $\widehat{A}$ (where $A:\bS^1\to\bS^1$ is a piecewise Fuchsian map). The first step in this direction is to `topologically glue' the two dynamical systems $z^d:\overline{\disk}\to\overline{\disk}$ and $\widehat{A}:\Omega\to\overline{\disk}$ along the unit circle to obtain a topological dynamical system defined on a subset of $\widehat{\C}$. Clearly, the `welding homeomorphism' used to identify the two copies of $\bS^1$ must respect the dynamics of $A$ and $z^d$; i.e., it must conjugate $z^d\vert_{\bS^1}$ to $A$ (thus, in particular, $A$ must be a covering of $\bS^1$ of degree at least two). According to \cite{coven-reddy}, this is equivalent to saying that $A$ is an expansive covering of $\mathbb{S}^1$ of degree at least two (more generally, two homotopic expansive maps of compact manifolds are topologically conjugate).

\begin{defn}\label{expansive_def}
	A continuous map $f\colon \mathbb{S}^1\to \mathbb{S}^1$ is called \emph{expansive} if there exists a constant $\delta>0$ such that for any $a,b\in \mathbb{S}^1$ with $a\neq b$ we have $d(f^{\circ n}(a), f^{\circ n}(b))>\delta$ for some $n\in \mathbb{N}$.
\end{defn}

\begin{defn}\label{para_hyp_def}
	Let $A$ be a piecewise M{\"o}bius, expansive circle covering having $x_1,\cdots, x_k$ as the break-points of its piecewise definition. Further, let $x_j$ be a periodic point of period $n$ under $A$. We say that $x_j$ is \textit{parabolic on the right} (resp., \textit{on the left}) if $(A^{\circ n})'(x_j^+)=1$ (resp., $(A^{\circ n})'(x_j^-)=1$). Likewise, $x_j$ is \textit{hyperbolic on the right} (respectively, \textit{on the left}) if $(A^{\circ n})'(x_j^+)>1$ (resp., $(A^{\circ n})'(x_j^-)>1$). 
	
	We say that $x_j$ is \textit{symmetrically parabolic} (respectively, \textit{symmetrically hyperbolic}) if $(A^{\circ n})'(x_j^+)=(A^{\circ n})'(x_j^-)=1$ (respectively, if $(A^{\circ n})'(x_j^+)=(A^{\circ n})'(x_j^-)>1$). 
	
	$x_j$ is called \textit{asymmetrically hyperbolic} if it is hyperbolic on both sides, but $(A^{\circ n})'(x_j^+)\neq (A^{\circ n})'(x_j^-)$.
	
	Finally, $x_j$ is said to be a \textit{periodic point of mixed type} if it is hyperbolic on one side, but parabolic on the other.
\end{defn}

\begin{lemma}\label{no_mixed}
	Let $A:\mathbb{S}^1\to\mathbb{S}^1$ be a piecewise Fuchsian expansive covering map having $x_1,\cdots, x_k$ as the break-points of its piecewise definition. Further, let $x_j$ be a periodic point of $A$. Then, $x_j$ is not of mixed type.
\end{lemma}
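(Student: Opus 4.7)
The plan is to identify the one-sided iterates of $A$ near $x_j$ with elements of the Fuchsian group $\Gamma_A$, and then appeal to the classical rigidity statement that the stabilizer of a boundary point in a Fuchsian group cannot simultaneously contain a parabolic element and a hyperbolic element. Let $n$ denote the period of $x_j$ and set $F=A^{\circ n}$. Since $A$ has only finitely many break-points, on a sufficiently small right-sided neighborhood of $x_j$ the forward orbit under $A$ traverses a definite sequence of pieces $g_{i_1},\dots,g_{i_n}$, and consequently $F$ agrees on that neighborhood with the M\"obius map $F_+:=g_{i_n}\circ\cdots\circ g_{i_1}$. By Definition~\ref{pwm_def}, each $g_{i_k}$ lies in $\Gamma_A$, hence $F_+\in\Gamma_A$; the analogous construction on the left yields $F_-\in\Gamma_A$. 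Both $F_\pm$ fix $x_j$, and their derivatives at $x_j$ are precisely the one-sided multipliers $(A^{\circ n})'(x_j^+)$ and $(A^{\circ n})'(x_j^-)$.

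Assume, for contradiction, that $x_j$ is of mixed type; without loss of generality, $F_+'(x_j)=1$ while $F_-'(x_j)>1$. Expansivity forbids $F_+=\mathrm{id}$, since otherwise $A^{\circ n}$ would be the identity on a right half-neighborhood of $x_j$, producing uncountably many periodic points of period $n$ on an arc and violating Definition~\ref{expansive_def}. Hence $F_+$ is a non-trivial element of $\textrm{Aut}(\disk)$ fixing $x_j\in\bS^1$ with unit derivative there, which forces $F_+$ to be parabolic with unique fixed point $x_j$. Similarly, $F_-$ is a hyperbolic element of $\Gamma_A$ having $x_j$ as a repelling fixed point.

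The heart of the argument is then the classical fact that a discrete subgroup of $\mathrm{PSL}_2(\R)$ cannot contain a parabolic element $P$ and a hyperbolic element $H$ that share a fixed point on $\bS^1$. I would reproduce this briefly by conjugating so that the common fixed point is $\infty$ in the upper half-plane, writing $P(z)=z+c$ with $c\neq 0$ and $H(z)=\mu z$ with $\mu\neq 1$, and observing that $H^{\circ k}\circ P\circ H^{\circ(-k)}(z)=z+\mu^k c$ are parabolic translations in the group whose translation lengths tend to $0$ as $k\to\pm\infty$ (choosing the sign of $k$ according to whether $\mu>1$ or $\mu<1$), contradicting the discreteness of $\Gamma_A$. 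Applying this with $P=F_+$ and $H=F_-$ inside $\Gamma_A$ completes the proof. I do not anticipate serious technical difficulties; the only delicate points are verifying that $F_\pm$ genuinely lie in $\Gamma_A$ (immediate from piecewise Fuchsianness) and that $F_+$ is non-trivial (which is where expansivity is used).
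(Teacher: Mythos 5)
Your proof is correct and follows essentially the same route as the paper: the one-sided germs of $A^{\circ n}$ at $x_j$ are elements of $\Gamma_A$ fixing $x_j$, and a parabolic and a hyperbolic element of a Fuchsian group cannot share a fixed point. The paper simply cites this classical fact rather than reproving it, and leaves the non-triviality of the parabolic germ (your expansivity remark) implicit; otherwise the arguments coincide.
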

\begin{proof}
	That $x_j$ is periodic under $A$ implies that there exist elements $h_1, h_2$ in the Fuchsian group $\Gamma_A$ (generated by the M{\"o}bius maps that define $A$) such that $h_1(x_j)=h_2(x_j)=x_j$. But $x_j$ cannot be fixed by a hyperbolic element and a parabolic element both of which lie in a Fuchsian group. The result follows.
\end{proof}

\subsubsection{Markov property}

The polynomial
map $z \mapsto z^d$, restricted to the unit circle $\mathbb{S}^1$, admits a Markov partition. Since $A$ is required to be topologically 
conjugate to $z \mapsto z^d$ (for some $d\geq 2$), $A: \bS^1 \to \bS^1$ must also admit
a Markov partition. There are two issues we now need to address:
\begin{enumerate}
	\item Non-uniqueness of the Markov partition for $z \mapsto z^d$.
	\item Potential incompatibility between the pieces (intervals of definition) of $A$ and the Markov partition.
\end{enumerate}
The following definition addresses these issues by declaring that the Markov partition for $A$ is compatible with
the pieces of $A$:

\begin{defn}\label{def-pwfm}
	We call $A: \bS^1 \to \bS^1$ a \emph{\pwfm} map if it is a piecewise Fuchsian expansive covering map (of degree at least two) such that the pieces (intervals of definition) of $A$ in $\bS^1$ give a Markov partition for $A: \bS^1 \to \bS^1$.
\end{defn} 

\subsubsection{Mateable maps}

\begin{defn}\label{def-mateable}
	A \pwfm map $A: \bS^1 \to \bS^1$ is said to be \emph{mateable} if
	
	\begin{enumerate}
		\item\label{cond_orbit_equiv} $A$ is orbit equivalent to the Fuchsian group $\Gamma_A$ generated by its pieces, and
		
		\item\label{cond-asymhyp} none of the periodic break-points of $A$ is asymmetrically hyperbolic.
	\end{enumerate} 
\end{defn}

\begin{lemma}\label{lem-gammalattice}
	If $A$ is mateable, then $\Gamma_A$ is a lattice (or equivalently, a finitely generated Fuchsian group of the first kind).
\end{lemma}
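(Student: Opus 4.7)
The plan is to establish that the limit set $\Lambda(\Gamma_A)$ equals the entire circle $\mathbb{S}^1$, i.e., that $\Gamma_A$ is Fuchsian of the first kind; since $\Gamma_A$ is generated by the finitely many pieces of $A$ by Lemma~\ref{pwa_is_pwm_lem}, it is then a finitely generated Fuchsian group of the first kind, and hence a lattice by the classical Siegel finiteness theorem for Fuchsian groups.

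The crucial input is expansivity. Because $A$ is an expansive degree-$d$ covering of $\mathbb{S}^1$ with $d\geq 2$, it is topologically conjugate to $z\mapsto z^d$ by \cite{coven-reddy}; consequently, for every $x\in\mathbb{S}^1$, the backward orbit $\bigcup_{n\geq 0}A^{-n}(x)$ is dense in $\mathbb{S}^1$ (the analogous density for $z\mapsto z^d$ is obvious, since the $n$-th preimages of any point are the $d^n$-th roots of that point). Moreover, each piece $g_j$ of $A$ lies in $\Gamma_A$, so $\mathrm{GO}_A(x)\subseteq\Gamma_A\cdot x$ for every $x\in\mathbb{S}^1$: forward iteration of $A$ applies elements of $\Gamma_A$, and the inverse branches of $A$ apply their inverses.

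Before exploiting the limit set, I would rule out $\Gamma_A$ being elementary. Any nontrivial elementary $\Gamma_A$ (infinite cyclic hyperbolic or parabolic) has at least one common fixed point $p$ in $\Lambda(\Gamma_A)$, with $\Gamma_A\cdot p=\{p\}$; but $\mathrm{GO}_A(p)\supseteq A^{-1}(p)$ has cardinality $d\geq 2$, violating orbit equivalence (Definition~\ref{orbit_equiv_def}). The finite case ($\Gamma_A$ a finite rotation group, with $\Lambda(\Gamma_A)=\emptyset$) is ruled out separately by noting that any continuous piecewise-rotation self-map of $\mathbb{S}^1$ has topological degree one, incompatible with $d\geq 2$. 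Now pick any $x_0\in\Lambda(\Gamma_A)$. Orbit equivalence yields
\[
\Gamma_A\cdot x_0 \;=\; \mathrm{GO}_A(x_0) \;\supseteq\; \bigcup_{n\geq 0}A^{-n}(x_0),
\]
which is dense in $\mathbb{S}^1$. Since $\Lambda(\Gamma_A)$ is closed and $\Gamma_A$-invariant, $\Gamma_A\cdot x_0\subseteq\Lambda(\Gamma_A)$; combined with the density just established, this forces $\Lambda(\Gamma_A)=\mathbb{S}^1$.

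The heart of the argument is the transfer, via orbit equivalence, of the dynamical density of backward $A$-iterates (coming from expansivity and the degree-$d$ hypothesis) into density of a single $\Gamma_A$-orbit in $\mathbb{S}^1$; the remainder is a standard invocation of the structure theory of finitely generated Fuchsian groups. I expect the main subtlety to lie in the elementary check, since orbit equivalence is postulated only on $\Lambda(\Gamma_A)$ and so furnishes no direct information when that set is empty or a single point; the degree-$d$ hypothesis (and, in the finite case, the degree-one rigidity of piecewise rotations) is what is leveraged to close this gap.
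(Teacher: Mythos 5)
Your argument is correct and follows essentially the same route as the paper's proof: topological conjugacy of $A$ with $z^d$ makes every grand orbit of $A$ dense in $\mathbb{S}^1$, orbit equivalence converts this into density of a $\Gamma_A$-orbit, and since the limit set is closed and $\Gamma_A$-invariant it must be all of $\mathbb{S}^1$, so $\Gamma_A$ is a finitely generated Fuchsian group of the first kind, i.e.\ a lattice. Your additional treatment of the degenerate cases (which the paper elides) is fine; only note that a nontrivial elementary group with two limit points need not be cyclic (it may contain an involution swapping the two fixed points, so the orbit of a limit point is $\{p,q\}$ rather than $\{p\}$), but the same contradiction persists because that orbit is finite while the grand orbit of $p$ under $A$ is infinite (indeed dense).
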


\begin{proof}
	Note that the maps $A$ and $z^d$ are topologically conjugate on $\bS^1$. Clearly, the topological conjugacy carries grand orbits of $A$ to those of $z^d$. Hence, all grand orbits of $A$
	are dense in $\bS^1$. By Condition \ref{cond_orbit_equiv} of Definition \ref{def-mateable}, $\Gamma_A$-orbits are dense in $\bS^1$. Hence the limit set of $\Gamma_A$ is $\bS^1$; i.e., $\Gamma_A$ is a Fuchsian group of the first kind. Since $\Gamma_A$ is generated by the pieces of $A$, the Lemma follows.
\end{proof}

\subsection{Mateable maps for Bers slice closure groups}\label{mateable_gen_subsec}

We  recall the notion of Cannon-Thurston maps \cite{CTpub}.

\begin{theorem}\label{ctsurf}\cite{mahan-split,mahan-elct,mahan-red,mahan-kl}
	Let $\rho (\pi_1({\Sigma})) =\Gamma \subset   \pslc$ be a  Kleinian surface  group (possibly with accidental parabolics). Let $M=\Hyp^3/\Gamma$ and $i: {\Sigma} \to M$ be an embedding inducing a homotopy equivalence. Let $\til{i} : \Hyp^2 \to \Hyp^3$ denote a lift of $i$ between universal covers of ${\Sigma},M$. Let $\bbar{\mathbb{H}^2}, \bbar{\mathbb{H}^3}$ denote the compactifications of $\Hyp^2, \Hyp^3$
	respectively. Then a continuous extension $\hhat{i}: \bbar{\mathbb{H}^2} \to \bbar{\mathbb{H}^3}$ exists.

	Let $\partial i: \bS^1 \to \bS^2$ denote the restriction of $\hhat{i}$ to the ideal boundaries. For $\Gamma$ a Bers boundary group, $p\neq q \in \bS^1$, $\partial i(p) = \partial i(q)$ if and only if $p, q$ are the ideal endpoints of a leaf of an ending lamination or ideal endpoints of a complementary ideal polygon of an ending lamination.
\end{theorem}

A continuous extension $\hhat{i}$ as in Theorem~\ref{ctsurf} is called a \emph{Cannon-Thurston map}.

For the remainder of this subsection, let us fix a finitely generated Fuchsian group $\Gamma_0$ of the first kind and a mateable map $A_{\Gamma_0}:\bS^1\to\bS^1$ whose pieces generate $\Gamma_0$ (thus by definition, $A_{\Gamma_0}$ is orbit equivalent to $\Gamma_0$). We denote the canonical extension of $A_{\Gamma_0}$ by $\widehat{A}_{\Gamma_0}:\mathcal{D}_{\Gamma_0}\to\overline{\D}$ where $\mathcal{D}_{\Gamma_0}$ is the canonical domain of definition of $\widehat{A}_{\Gamma_0}$ in $\overline{\D}$ (see Definition~\ref{def-canonicalextension}).

For any $\Gamma\in\overline{\mathcal{B}(\Gamma_0)}$, we set $K(\Gamma):=\widehat{\C}\setminus\Omega_\infty(\Gamma)$ where $\Omega_\infty(\Gamma)$ is the simply connected invariant component of $\Omega(\Gamma)$ on which the $\Gamma$-action is conformally conjugate to the $\Gamma_0$-action on $\widehat{\C}\setminus\overline{\D}$ (cf. \cite[\S 8]{bers-boundary}). 

Let $\Gamma$ be a Kleinian group in the Bers slice $\mathcal{B}(\Gamma_0)$. Then the representation $\Gamma_0\to\Gamma$ is induced by an equivariant quasiconformal homeomorphism $\phi_\Gamma$ (which is unique up to M{\"o}bius maps); more precisely, the group isomorphism is given by $g\mapsto \phi_\Gamma\circ g \circ\phi_\Gamma^{-1}$. Note that $K(\Gamma)=\phi_\Gamma(\overline{\D})$. We further set $\mathcal{D}_\Gamma:=\phi_\Gamma(\mathcal{D}_{\Gamma_0})$. We call the map
$$
A_\Gamma:=\phi_\Gamma\circ A_{\Gamma_0}\circ\phi_\Gamma^{-1}:\Lambda(\Gamma)\to\Lambda(\Gamma)
$$
the \emph{mateable map associated to $\Gamma\in \mathcal{B}(\Gamma_0)$ compatible with $A_{\Gamma_0}$}, and define its \emph{canonical extension} to be
$$
\widehat{A}_\Gamma:=\phi_\Gamma\circ \widehat{A}_{\Gamma_0}\circ\phi_\Gamma^{-1}:\mathcal{D}_\Gamma\to K(\Gamma).
$$ 
Thus, a mateable map orbit equivalent to the Fuchsian group $\Gamma_0$ automatically determines compatible mateable maps for all groups in the Bers slice of $\Gamma_0$. By construction, such an $A_\Gamma$ is a piecewise M{\"o}bius, Markov covering map of $\Lambda(\Gamma)$ that is orbit equivalent to $\Gamma$.

We now turn our attention to groups on the boundary of $\mathcal{B}(\Gamma_0)$. Let $\Gamma\in\partial\mathcal{B}(\Gamma_0)$ with associated group isomorphism $\rho_\Gamma:\Gamma_0\rightarrow\Gamma$. We say that $\Gamma$ admits a \emph{mateable map compatible with $A_{\Gamma_0}$} if the Cannon-Thurston map $\phi_\Gamma:\Lambda(\Gamma_0)=\bS^1\rightarrow \Lambda(\Gamma)$ semi-conjugates $A_{\Gamma_0}$ to a continuous self-map of $\Lambda(\Gamma)$, and denote this self-map (if it exists) by $A_\Gamma$. By the equivariance property of Cannon-Thurston maps, such an $A_\Gamma$ is a piecewise M{\"o}bius map of $\Lambda(\Gamma)$ that is orbit equivalent to $\Gamma$ (see Lemma~\ref{pwfm_invariant_lami_lem}). As $A_\Gamma$ is piecewise M{\"o}bius, we can extend it complex-analytically to a set $\mathcal{D}_\Gamma$ such that $\Lambda(\Gamma)\subset\mathcal{D}_\Gamma\subset K(\Gamma)$, and $\mathcal{D}_\Gamma$ contains relatively open neighborhoods of all the pieces of $A_\Gamma$ possibly after a further finite subdivision (cf. Subsection~\ref{bs_extension_bdry_subsec}).
We call this extended map $\widehat{A}_\Gamma$, and note that the domain of definition of this extension is not canonical.

\subsection{A framework for topological and conformal mating}\label{mat_def_subsec}

Let $k:= \deg{A_{\Gamma_0}:\bS^1\to\bS^1}$.
Consider the pair of dynamical systems:
\begin{enumerate}
	\item a mateable map $\widehat{A}_\Gamma:\mathcal{D}_\Gamma\to K(\Gamma)$ associated to $\Gamma\in\overline{\mathcal{B}(\Gamma_0)}$ compatible with $A_{\Gamma_0}$, and
	
	\item a Bers rational map $R$ of degree $k$ with locally connected Julia set.
\end{enumerate}

We set $\mathcal{K}(R):=\widehat{\C}\setminus\mathcal{B}(R)$ where $\mathcal{B}(R)$ is a marked simply connected completely invariant Fatou component, and call it the \emph{filled Julia set} of $R$.

We now state what it means to topologically (respectively, conformally) combine the above two dynamical systems. Our definition conforms to the Douady-Hubbard convention of polynomial mating.

If $\phi_R:\D\rightarrow\mathcal{B}(R)$ is a Riemann uniformization, then $\phi_R^{-1}\circ R\circ\phi_R:\D\to\D$ is a degree $k$ Blaschke product $B$. As $\mathcal{J}(R)$ is locally connected, $\phi_R$ extends continuously to $\bS^1$ to yield a semi-conjugacy between $B$ and $R$. Moreover, $B$ has a (super-)attracting or parabolic fixed point in $\overline{\D}$, the Julia set of $B$ is $\bS^1$, and $B\vert_{\bS^1}$ is an expansive covering of degree $k$. Thus, there exists a homeomorphism $\eta:\bS^1\to\bS^1$ that conjugates $B$ to $A_{\Gamma_0}$.

We first define the topological mating of the above pair of dynamical systems.
We consider the disjoint union $\mathcal{K}(R)\sqcup K(\Gamma)$ and the map 
\begin{center}
	$R\sqcup A_{\Gamma}: \mathcal{K}(R)\sqcup \mathcal{D}_\Gamma\to \mathcal{K}(R)\sqcup K(\Gamma),$\\
	$R\sqcup A_{\Gamma}\vert_{\mathcal{K}(R)}=R,\quad  R\sqcup A_\Gamma\vert_{\mathcal{D}_\Gamma}=\widehat{A}_\Gamma.$
\end{center}
Let $\sim$ be the equivalence relation on $\mathcal{K}(R)\sqcup K(\Gamma)$ generated by 
\begin{equation}
\phi_{R}(z)\sim \phi_{\Gamma}(\eta(\overline{z})),\ \textrm{for all}\ z\in\mathbb{S}^1.
\label{conf_mat_equiv_rel}
\end{equation} 
It is easy to check that $\sim$ is $R\sqcup A_\Gamma$-invariant, and hence it descends to a continuous map $R\mate A_\Gamma$ to the quotient $\mathcal{K}(R)\mate K(\Gamma):=\left(\mathcal{K}(R)\sqcup K(\Gamma)\right)/\sim$ (see \cite[\S 4.1]{petersen-mayer-mating} for details of this construction in the polynomial mating context). We call the map $R\mate A_\Gamma$ the \emph{topological mating} of the Bers rational map $R$ and the mateable map $A_\Gamma$.
Moreover, if $\mathcal{K}(R)\mate K(\Gamma)$ is homeomorphic to a $2$-sphere, we say that the topological mating is \emph{Moore-unobstructed}. (We refer the reader to \cite[Theorem~2.12]{petersen-mayer-mating} for the statement of Moore's theorem, which provides a general sufficient condition for the quotient of $\mathbb{S}^2$ under an equivalence relation to be a topological $2$-sphere.) We say that $R$ and $A_\Gamma$ are \emph{conformally mateable} if their topological mating is Moore-unobstructed, and if the topological $2$-sphere $\mathcal{K}(R)\mate K(\Gamma)$ admits a complex structure that turns the topological mating $R\mate A_\Gamma$ into a holomorphic map.

The following equivalent definition of conformal mating of $R$ and $A_\Gamma$ is often more useful in practice (cf. \cite[\S 4.7]{petersen-mayer-mating}).

\begin{defn}\label{conf_mating_def}
	A continuous map $F\colon \mathrm{Dom}(F)\subsetneq\widehat{\C}\to\widehat{\C}$
	is called a \emph{conformal mating} of $A_{\Gamma}$ and $R$ if $F$ is complex-analytic in the interior of $\mathrm{Dom}(F)$, and there exist continuous maps 
	$$
	\mathfrak{X}_R:\mathcal{K}(R)\to\widehat{\C}\ \textrm{and}\ \mathfrak{X}_\Gamma: K(\Gamma)\to\widehat{\C}
	$$
	conformal on $\Int{\mathcal{K}(R)}, \Int{K(\Gamma)}$ (respectively), satisfying
	\begin{enumerate}
		\item\label{topo_cond} $\mathfrak{X}_R\left(\mathcal{K}(R)\right)\cup \mathfrak{X}_\Gamma\left(K(\Gamma)\right) = \widehat{\C}$,
		
		\item\label{dom_cond} $\mathrm{Dom}(F)\supset \mathfrak{X}_R(\mathcal{K}(R))\cup\mathfrak{X}_\Gamma(\mathcal{D}_\Gamma)$,
		
		\item $\mathfrak{X}_R\circ R(w) = F\circ \mathfrak{X}_R(w),\quad \mathrm{for}\ w\in\mathcal{K}(R)$,
		
		\item $\mathfrak{X}_\Gamma\circ \widehat{A}_{\Gamma}(w) = F\circ \mathfrak{X}_\Gamma(w),\quad \mathrm{for}\ w\in
		\mathcal{D}_\Gamma$, and

		\item\label{identifications} $\mathfrak{X}_R(z)=\mathfrak{X}_\Gamma(w)$ if and only if $z\sim w$ where $\sim$ is the equivalence relation on $\mathcal{K}(R)\sqcup K(\Gamma)$ defined in \eqref{conf_mat_equiv_rel}.
	\end{enumerate}
\end{defn}

\subsection{David homeomorphism and conformal mating}\label{sec-mateable}

\begin{defn}\label{def-david}
	An orientation-preserving homeomorphism $H: U\to V$ between domains in the Riemann sphere $\widehat{\C}$ is called a \textit{David homeomorphism} if it lies in the Sobolev class $W^{1,1}_{\textrm{loc}}(U)$ and there exist constants $C,\alpha,\varepsilon_0>0$ with
	\begin{align}\label{david_cond}
		\sigma(\{z\in U: |\mu_H(z)|\geq 1-\varepsilon\}) \leq Ce^{-\alpha/\varepsilon}, \quad \varepsilon\leq \varepsilon_0.
	\end{align}
\end{defn}

Here $\sigma$ is the spherical measure, and
$\mu_H= \frac{\partial H/ \partial\overline{z}}{\partial H/\partial z}$
is the Beltrami coefficient of $H$ (see \cite[Chapter 20]{AIM09} for more background on David homeomorphisms).  

The following result guarantees the existence of continuous extensions of circle homeomorphisms as David homeomorphisms of $\D$. 

\begin{prop}\label{extension_david_normal}
Let $A:\mathbb{S}^1\to\mathbb{S}^1$ be a \pwfm map of degree $d$. Assume further that none of the periodic break-points of $A$ is asymmetrically hyperbolic. Then, there exists a homeomorphism $H:\bS^1\to\bS^1$ that conjugates $z^d$ to $A$, and admits a continuous extension to $\D$ as a David homeomorphism.
\end{prop}
\begin{proof}
The existence of the topological conjugacy $H$ between $z^d$ and $A$ follows from the fact that $A$ is an expansive circle covering map of degree $d$ (recall that a \pwfm map is an expansive circle covering; see  Definition~\ref{def-pwfm}).

We now proceed to establish the David extension statement. Denote the break-points of (the piecewise M{\"o}bius definition of) $A$ by $x_1, \cdots, x_k$ (ordered counterclockwise), and by $C_{j}$ (for $j\in \{1,\cdots, k\}$) the round circle containing the hyperbolic geodesic connecting the endpoints of $I_j=[x_j, x_{j+1}]$. Further, denote the complementary component of $C_j$ containing $I_j$ by $U_j$. Since $A$ is piecewise M{\"o}bius (defined by members of $\textrm{Aut}(\disk)$), the Markov partition defined by $I_1,\cdots, I_k$ satisfies the following `complex Markov' property: $A(U_j)\supset U_{j'},\ \textrm{whenever}\ A(I_j)\supset I_{j'}.$ Furthermore, by Lemma~\ref{no_mixed}, no periodic break-point $x_j$ is of mixed type. The assumption that no periodic $x_j$ is asymmetrically hyperbolic implies that every periodic $x_j$ is symmetrically hyperbolic or symmetrically parabolic. By \cite[Theorem~4.12]{LMMN}, the map $H$ can be continuously extended to a David homeomorphism of $\disk$. 
\end{proof}

We now employ the machinery of David homeomorphisms to prove our first conformal combination theorem for mateable maps associated to Fuchsian groups and the special class of Bers rational maps given by complex polynomials in principal hyperbolic components.
Recall that for a complex polynomial $P$, the \emph{filled Julia set} $\mathcal{K}(P)$ is the set of all points whose forward orbits (under $P$) stay bounded.

\begin{prop}\label{conformal_mating_general_prop}
	Let $A:\mathbb{S}^1\to\mathbb{S}^1$ be a mateable map of degree $d$ associated to a Fuchsian group, and $P\in\mathcal{H}_d$ where $\mathcal{H}_d$ is the hyperbolic component of degree $d$ polynomials containing the map $z^d$.
	Then, the maps $\widehat{A}:\mathcal{D}\to\overline{\disk}$ and $P:\mathcal{K}(P)\to\mathcal{K}(P)$ are conformally mateable. 
\end{prop}
\begin{proof}
As $P\in\mathcal{H}_d$, the filled Julia set $\mathcal{K}(P)$ is a closed Jordan disk. Hence, there exists a Blaschke product $B$ of degree $d$ with an attracting fixed point in $\overline{\D}^c$, and a conformal isomorphism $\kappa$ from $\mathcal{K}(P)$ onto $\D^c$ that conjugates $P\vert_{\mathcal{K}(P)}$ to $B\vert_{\D^c}$.

By definition, a mateable map is a \pwfm map without asymmetrically hyperbolic periodic break-points. According to Proposition~\ref{extension_david_normal}, there exists a homeomorphism $\eta:\bS^1\to\bS^1$ that conjugates $z^d$ to $A$, and admits a continuous extension to $\D$ as a David homeomorphism.
	
Furthermore, by \cite[Lemma~3.9]{LMMN}, there exists a quasi-symmetric homeomorphism $h:\mathbb{S}^1\to\mathbb{S}^1$ that conjugates $B$ to $z^d$. We extend $h$ to a quasiconformal homeomorphism of $\widehat{\C}$, and denote the extension also  by $h$. By \cite[Proposition~2.5]{LMMN} (part ii), the map $\eta\circ h$ is a David homeomorphism of $\D$. Clearly, $\eta\circ h$ conjugates $B\vert_{\mathbb{S}^1}$ to $A$.
	
	Consider the topological dynamical system
	
	\begin{equation*}
		G(w):=\left\{\begin{array}{ll}
			B & \mbox{on}\ \disk^c, \\
			h^{-1}\left(\eta^{-1}\left(\widehat{A}\left(\eta(h(w))\right)\right)\right) & \mbox{on}\ \disk\setminus h^{-1}\left(\eta^{-1}\left(R\right)\right).
		\end{array}\right.
	\end{equation*}
	(By equivariance of $\eta\circ h$, the two definitions agree on $\mathbb S^1$.) 
	
	We define a Beltrami coefficient $\mu$ in the sphere as follows. In $\disk^c$ we let $\mu$ be the standard complex structure. In $\disk$ we let $\mu$ be the pullback of the standard complex structure under the map $\eta\circ h$. Since $\eta\circ h$ is a David homeomorphism of $\disk$, it follows that $\mu$ is a David coefficient on $\widehat{\C}$. 
	
	By the David Integrability Theorem \cite{David88} \cite[Theorem~20.6.2, p.~578]{AIM09}, there exists a David homeomorphism $\pmb{\Psi}$ of $\widehat{\C}$ with $\mu_{\pmb{\Psi}}=\mu$. Consider the map 
	$$
	F=   \pmb{\Psi} \circ G\circ \pmb{\Psi}^{-1}: \widehat{\C}\setminus \pmb{\Psi}(h^{-1}\left(\eta^{-1}\left(R\right)\right))\to\widehat{\C}.
	$$
We set $\mathbf{D}^\textrm{in}:=\pmb{\Psi}(\disk)$, $\mathbf{D}^\textrm{out}:=\pmb{\Psi}(\bbar{\disk}^c)$, and claim that $F$ is analytic on the interior of its domain of definition $\mathrm{Dom}(F)= \widehat{\C}\setminus \pmb{\Psi}(h^{-1}\left(\eta^{-1}\left(R\right)\right))$, and that $F$ is conformally conjugate to $P$ on $\mathbf{D}^\textrm{out}$ and to $\widehat{A}$ on $\mathbf{D}^\textrm{in}$.
	To see this, note first that
	by \cite[Theorem~2.2]{LMMN}, the map $\phi^\textrm{in}:=\pmb{\Psi}\circ h^{-1}\circ \eta^{-1}$ is conformal on $\disk$. That $\phi^\textrm{out}:=\pmb{\Psi}\circ\kappa$ is conformal on $\mathcal{K}(P)$ follows from the definition of $\mu$. This proves the claims regarding the conformal conjugacies. 
	It follows that $F$ is conformal in $\Int{(\mathrm{Dom}(F))}\setminus \pmb{\Psi}(\mathbb S^1)$. Since $\mathbb S^1$ is removable for $W^{1,1}$ functions, we conclude from \cite[Theorem~2.7]{LMMN} that $\pmb{\Psi}(\mathbb S^1)$ is locally conformally removable. This implies that $F$ is conformal on the interior of $\mathrm{Dom}(F)$.
\end{proof}

\section{Bowen-Series maps for Bers slice groups}\label{bs_sec}
Examples of \pwfm maps of the circle that are orbit equivalent to finitely generated Fuchsian groups are given by \emph{Bowen-Series maps}, which first  appeared in the work of Bowen and Series \cite{bowen,bowen-series}.

A finitely generated Fuchsian group $\Gamma$ (of the first kind) has a fundamental domain $R\left(\subset\D\right)$ that is a (possibly ideal) hyperbolic polygon. Denote the edges of $R$ by $\{s_i\}_{i=1}^n$ (labeled in counterclockwise order around the circle). Each edge $s_i$ of $R$ is identified with another edge $s_j$ by a corresponding element $h(s_i)\in\Gamma$. The set $\{h(s_i)\}_{i=1}^n$ forms a generating set for $\Gamma$.

Let $C(s_i)$ be the Euclidean circular arc in $\D$ containing $s_i$  and meeting  $\bS^1$ orthogonally (see \cite[Figure~2]{bowen-series} or \cite[Figure~1]{sullivan_survey}). Further, let $\mathcal{N}$ be the net in $\D$ consisting of all images of edges of $R$ under elements of $\Gamma$. The fundamental domain $R$ is said to satisfy the \emph{even corners} property if $C(s_i)$ lies completely in $\mathcal{N}$, for $i\in\{1,\cdots, n\}$.

\begin{defn}[Bowen-Series map]\label{b_s_map_def}
	Suppose that a fundamental domain $R$ of $\Gamma$ satisfies the even corners property. Label (following \cite{bowen-series}) the endpoints of $C(s_i)$ on $\bS^1$, $P_i, Q_{i+1}$ (with $Q_{n+1}=Q_1$) with $P_i$ occurring before $Q_{i+1}$ in the counterclockwise order. These points occur along the circle in the order $P_1, Q_1, P_2, Q_2$, $\cdots$, $P_n, Q_n$.
The \emph{Bowen-Series map} $A_{\Gamma, \textrm{BS}}:\bS^1\to\bS^1$ of $\Gamma$ (associated with the fundamental domain $R$) is defined piecewise as $A_{\Gamma, \textrm{BS}}\equiv h(s_i)$, on the sub-arc $[P_i, P_{i+1})$ of $\bS^1$ (traversed in the counterclockwise order).
\end{defn}

\begin{prop}\cite[Lemma~2.4]{bowen-series}\label{orbit_equiv_prop}
	The map $A_{\Gamma, \textrm{BS}}$ is orbit equivalent to $\Gamma$, except (possibly) at finitely many pairs of points modulo the action of $\Gamma$.
\end{prop}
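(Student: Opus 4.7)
My plan is to prove the two inclusions constituting orbit equivalence separately. The inclusion $\mathrm{GO}_{A_{\Gamma,\textrm{BS}}}(x) \subseteq \Gamma\cdot x$ holds for \emph{every} $x \in \bS^1$ and is immediate: each piece of $A_{\Gamma,\textrm{BS}}$ is the restriction of some side-pairing transformation $h(s_i)\in\Gamma$, so every forward iterate of $x$ lies in $\Gamma\cdot x$, and grand orbits are built from forward iterates together with their preimages, all of which lie in $\Gamma\cdot x$.

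The reverse inclusion $\Gamma\cdot x \subseteq \mathrm{GO}_{A_{\Gamma,\textrm{BS}}}(x)$ is the substantive direction, and I would establish it for all $x$ outside a finite collection of exceptional $\Gamma$-orbits. Since the $h(s_i)$ generate $\Gamma$ and grand-orbit membership is transitive, it suffices to show $h(s_i)(x) \in \mathrm{GO}_{A_{\Gamma,\textrm{BS}}}(x)$ for each $i$. The approach is to code the forward orbit of $x$ geometrically via the tile decomposition induced by the net $\mathcal{N}$: for generic $x$, the hyperbolic geodesic $\gamma_{0,x}$ from $0 \in \D$ to $x \in \bS^1$ crosses a sequence of sides of the tessellation, passing through tiles $R = R_0, R_1, R_2, \ldots$. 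The even-corners property ensures that the piece boundaries of $A_{\Gamma,\textrm{BS}}$ align precisely with the ideal endpoints of the sides of $R$, so that the arc $[P_i, P_{i+1})$ on which $A_{\Gamma,\textrm{BS}} = h(s_i)$ is exactly the set of $x$ whose geodesic $\gamma_{0,x}$ first exits $R$ across $s_i$. Iterating, $A_{\Gamma,\textrm{BS}}^{\circ k}(x)$ is the image of $x$ under the group element pulling $R_k$ back to $R$. For a given generator $h(s_i)$, the geodesics $\gamma_{0,x}$ and $\gamma_{0,h(s_i)(x)}$ differ by the isometry $h(s_i)$, so their tile itineraries share a common tail after a finite prefix; this translates through the coding to an identity $A_{\Gamma,\textrm{BS}}^{\circ m}(x) = A_{\Gamma,\textrm{BS}}^{\circ \ell}(h(s_i)(x))$ for suitable $m, \ell \geq 0$, placing $h(s_i)(x)$ in the grand orbit of $x$.

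The exceptional set comprises the $\Gamma$-orbits of the finite collection of vertices $\{P_i, Q_i\}_{i=1}^n$: for such $x$, the geodesic $\gamma_{0,x}$ terminates at (or runs along) a vertex of $\mathcal{N}$, making the tile itinerary and the corresponding dynamical coding ambiguous, in accordance with the discontinuity of $A_{\Gamma,\textrm{BS}}$ at its breakpoints. Since there are only $2n$ such vertices, they produce only finitely many $\Gamma$-orbits, matching the statement's allowance for "finitely many pairs of points modulo $\Gamma$". The main technical obstacle is establishing the coding dictionary rigorously: one must verify that the arcs $[P_i, P_{i+1})$ correspond precisely to the directions from $0$ whose geodesics first cross $s_i$, and that the common-tail matching of itineraries survives translation back to grand-orbit identities. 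This is the step where the even-corners property is indispensable; without $C(s_i)\subset\mathcal{N}$ for all $i$, the piece boundaries of $A_{\Gamma,\textrm{BS}}$ would in general not align with edges of the tessellation, and the proposed coding would break down.
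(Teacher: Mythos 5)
There is a genuine gap, and it sits exactly where you flagged it: the coding dictionary. Note first that the paper does not prove this proposition at all — it is quoted from Bowen--Series, where the statement is proved for an arbitrary Fuchsian group admitting an even-cornered fundamental domain, including cocompact groups whose fundamental polygon has vertices in the interior of $\D$ (the genus-two example of Figure 1). In that generality your central claim — that $[P_i,P_{i+1})$ is precisely the set of $x$ whose geodesic $\gamma_{0,x}$ first exits $R$ across $s_i$ — is false. The endpoints $P_i,Q_{i+1}$ are the endpoints at infinity of the \emph{full circle} $C(s_i)$ extending the side, not the endpoints of the shadow of the side $s_i$ as seen from the basepoint; when the vertices of $R$ are interior points these two arcs genuinely differ (the shadow of $C(s_i)$ is $[P_i,Q_{i+1}]$, which strictly contains $[P_i,P_{i+1})$, and the shadow of the compact side $s_i$ is yet another arc). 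The even-corners property only guarantees $C(s_i)\subset\mathcal{N}$; it does not align the break-points of $A_{\Gamma,\mathrm{BS}}$ with first-exit shadows. Reconciling the Bowen--Series itinerary with geodesic cutting sequences is a known and delicate matter (it is the subject of later work of Series on geodesic coding), so it cannot be absorbed as a routine verification. A second, smaller inaccuracy: $\gamma_{0,x}$ and $\gamma_{0,h(s_i)x}$ do not differ by the isometry $h(s_i)$ (that isometry moves the basepoint to $h(s_i)\cdot 0$); what is true is that $h(s_i)\gamma_{0,x}$ and $\gamma_{0,h(s_i)x}$ are forward-asymptotic, so their cutting sequences agree eventually — but exploiting this again requires the itinerary/cutting-sequence dictionary you have not established. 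Your identification of the exceptional set with finitely many $\Gamma$-orbits of pairs coming from the vertices is in the right spirit, but it inherits the same dependence on the dictionary.

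Your argument does go through essentially as written in the special case where $R$ is an \emph{ideal} polygon, because there $Q_{i+1}=P_{i+1}$ and $[P_i,P_{i+1})$ really is the shadow of $s_i$; but in that case a much shorter route is available and is the one this paper actually uses for its groups $G_d$ (Proposition 3.7(2)): the containment $\mathrm{GO}_A(x)\subseteq\Gamma\cdot x$ is immediate as you say, and for $y=g_i(x)$ one argues directly that either $y\notin I_{-i}$, forcing $x\in I_i$ and $A(x)=y$, or $y\in I_{-i}$ and then $A(y)=g_i^{-1}(y)=x$ — no geodesic coding, no exceptional orbits. For the general even-cornered case of the cited Bowen--Series lemma, the original proof is a combinatorial shift-tail argument on the $f$-expansions themselves (showing the expansions of $x$ and $gx$ eventually coincide, with finitely many bad orbits coming from the endpoints of the $C(s_i)$), rather than a passage through geodesic itineraries. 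To repair your write-up you would either need to prove the itinerary/cutting-sequence correspondence honestly (nontrivial), or replace the geometric coding by the combinatorial expansion argument.
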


For ease of notation, we will drop the subscript `$\textrm{BS}$' from $A_{\Gamma, \textrm{BS}}$ and denote it simply by $A_{\Gamma}$ throughout this section. Since Bowen-Series maps are the only \pwfm maps considered in this section, this will not lead to any confusion.

It is not hard to see that for Fuchsian groups uniformizing positive genus surfaces (possibly with punctures), the corresponding Bowen-Series maps are discontinuous (cf. \cite[\S 3.1]{sullivan_survey}).
Thus, to get continuous Bowen-Series maps, we need to restrict our attention to punctured sphere groups (possibly with orbifold points). In fact, it turns out that Bowen-Series maps of Fuchsian groups uniformizing spheres with finitely many punctures and, possibly, one/two order two orbifold points are coverings of $\mathbb{S}^1$ with degree at least two.

\subsection{Bowen-Series maps for Fuchsian punctured sphere groups}\label{b_s_punc_sphere_subsec}

In this subsection, we will study some general properties of Bowen-Series maps of punctured sphere Fuchsian groups. We will first introduce a Fuchsian group $G_d$ uniformizing a $(d+1)$-times punctured sphere equipped with a symmetric fundamental domain, for which the associated Bowen-Series map is a $C^1$ covering map of the circle.

Fix $d\geq 2$. For $j\in\{1,\cdots, d\}$, let $C_j$ be the hyperbolic geodesic of $\disk$ connecting $p_j:=e^{\pi i (j-1)/d}$ and $p_{j+1}:=e^{\pi i j/d}$, and $C_{-j}$ be the image of $C_j$ under reflection in the real axis. We further denote the complex conjugate of $p_j$ by $p_{-j}$, $j\in\{2,\cdots, d\}$. The M{\"o}bius automorphism $g_j$ of $\disk$ is defined as reflection in $C_j$ followed by complex conjugation. Evidently, $g_j$ sends $C_j$ onto $C_{-j}$  (cf.\ Figure~\ref{fund_dom_punctured_sphere_fig}). Note also that for $j\in\{1,\cdots,d-1\}$, the map $g_{j+1} g_j^{-1}$ is the composition of reflections in the circular arcs $C_{j+1}$ and $C_{j}$ that touch at $p_{j+1}$. It is now routine to check that $g_{j+1} g_j^{-1}$ is parabolic with its unique fixed point at $p_{j+1}$. Similarly, the maps $g_1, g_d$ are also parabolic with their unique fixed points at $p_1, p_{d+1}$, respectively.
Let
$$
G_d:=\langle g_1,\cdots, g_d\rangle.
$$
It follows that $G_d$ is a Fuchsian group with fundamental domain $R$ having $C_1,\cdots, C_d,$ $C_{-d}, \cdots, C_{-1}$ as its edges. Moreover, $\disk/G_d$ is a $(d+1)$-times punctured sphere.

\begin{figure}[h!]
\begin{tikzpicture}
\node[anchor=south west,inner sep=0] at (1,5) {\includegraphics[width=0.6\linewidth]{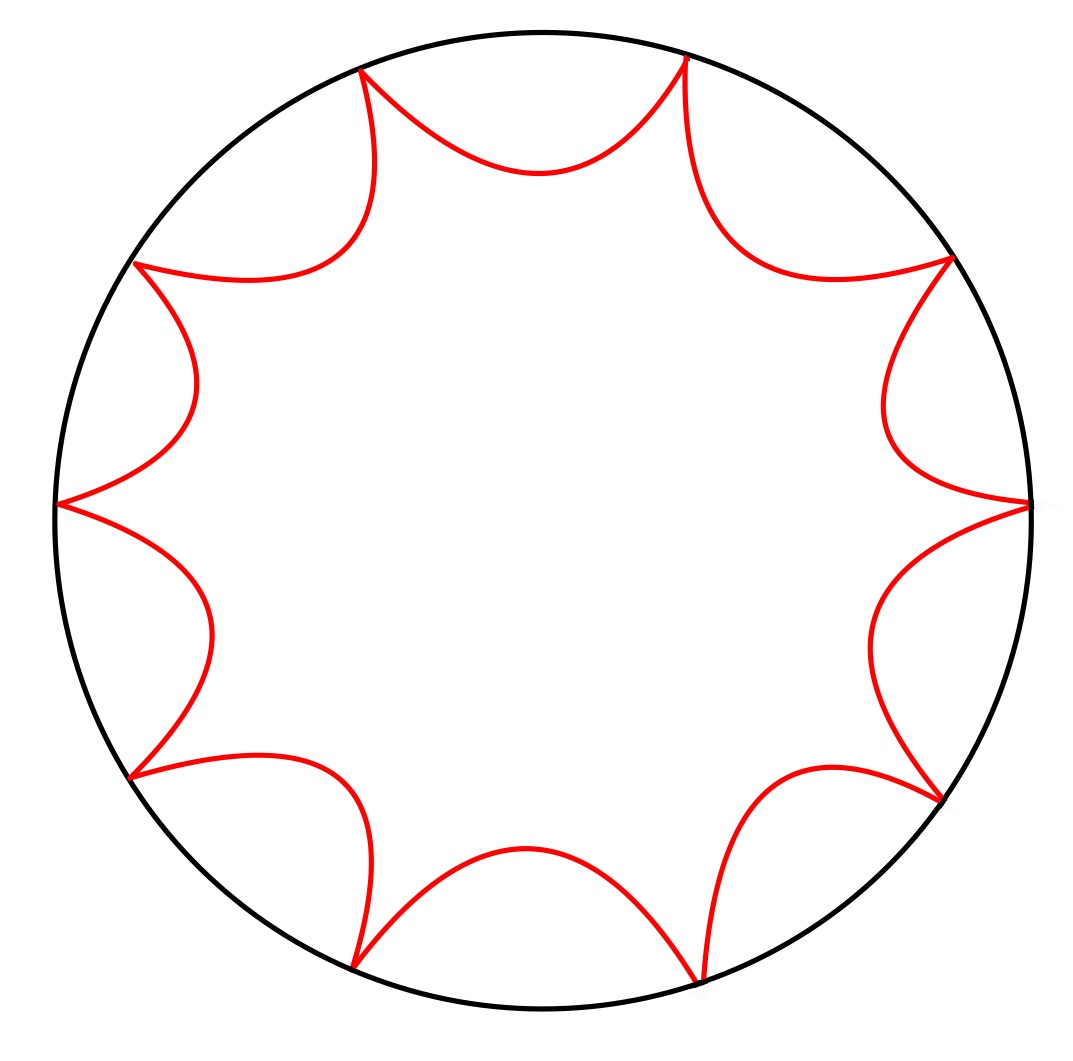}};
\node[anchor=south west,inner sep=0] at (6,1) {\includegraphics[width=0.5\linewidth]{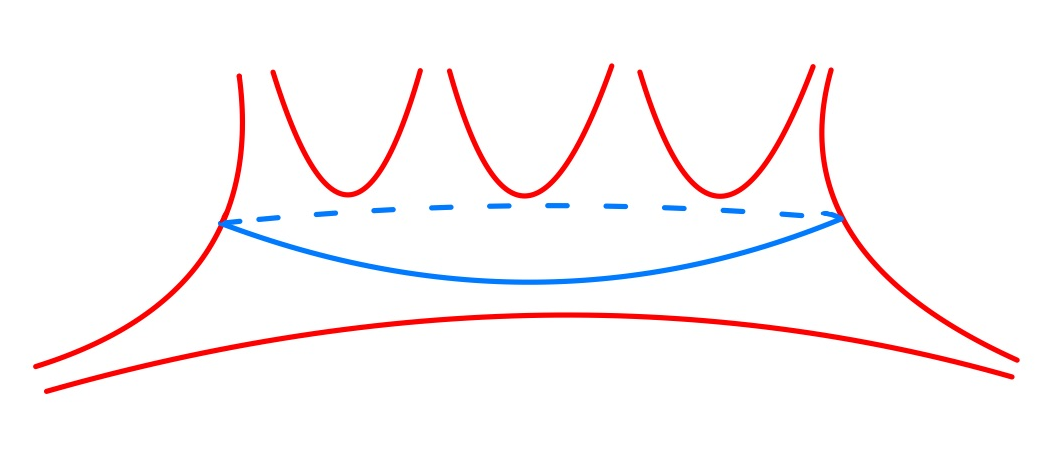}};
\node at (4.8,9) {\begin{Huge}$R$\end{Huge}};
\node at (6.9,9.4) {$C_1$};
\node at (6.7,8.1) {$C_{-1}$};
\node at (6.2,10.4) {$C_2$};
\node at (6,7.1) {$C_{-2}$};
\node at (4.8,11) {$C_3$};
\node at (4.8,6.7) {$C_{-3}$};
\node at (3.48,10.36) {$C_4$};
\node at (3.5,7.28) {$C_{-4}$};
\node at (2.75,9.54) {$C_5$};
\node at (2.96,8.1) {$C_{-5}$};
\node at (8.6,9.8) {$g_1$};
\node at (7.2,11.8) {$g_2$};
\node at (4.8,12.6) {$g_3$};
\node at (2.5,11.8) {$g_4$};
\node at (1.2,9.8) {$g_5$};
\node at (1.1,7.7) {$g_{5}^{-1}$};
\node at (2.6,5.7) {$g_{4}^{-1}$};
\node at (4.8,5) {$g_{3}^{-1}$};
\node at (7.4,5.9) {$g_{2}^{-1}$};
\node at (8.6,7.6) {$g_{1}^{-1}$};
\node at (8.7,8.9) {$p_1$};
\node at (1,8.9) {$p_6$};
\node at (8.08,10.78) {$p_2$};
\node at (8.08,6.7) {$p_{-2}$};
\node at (6,12.4) {$p_3$};
\node at (6.1,5.22) {$p_{-3}$};
\node at (3.44,12.36) {$p_4$};
\node at (3.5,5.32) {$p_{-4}$};
\node at (1.66,10.8) {$p_5$};
\node at (1.66,6.8) {$p_{-5}$};
\node at (6.48,3.42) {\begin{small}$\left[p_5\right]=\left[p_{-5}\right]$\end{small}};
\node at (8.54,3.64) {\begin{tiny}$\left[p_4\right]=\left[p_{-4}\right]$\end{tiny}};
\node at (10,3.7) {\begin{tiny}$\left[p_3\right]=\left[p_{-3}\right]$\end{tiny}};
\node at (12,3.42) {\begin{small}$\left[p_2\right]=\left[p_{-2}\right]$\end{small}};
\node at (5.7,1.4) {\begin{small}$\left[p_6\right]$\end{small}};
\node at (12.64,1.4) {\begin{small}$\left[p_1\right]$\end{small}};
\end{tikzpicture}
\caption{A fundamental domain of $G_5$}
\label{fund_dom_punctured_sphere_fig}
\end{figure}
In Figure \ref{fund_dom_punctured_sphere_fig}, the fundamental domain $R$ uniformizes a six times punctured sphere. All ten vertices of $R$ are on $\mathbb{S}^1$, and they cut the circle into ten arcs. The corresponding Bowen-Series map acts on these arcs by the generators $g_j^{\pm 1}$ displayed next to them.
For $j\in\{1,\cdots, d\}$, we denote the arc of $\mathbb{S}^1$ connecting $p_j$ to $p_{j+1}$ by $I_j$, and the image of $I_j$ under reflection in the real axis by $I_{-j}$. Note that $A_{G_d}$ acts on $I_{\pm j}$ by $g_j^{\pm 1}$.

\begin{prop}\label{b_s_poly_conjugate_prop_1}
1) For $d\geq 2$, the Bowen-Series map $A_{G_d}:\bS^1\to\bS^1$ of $G_d$ (equipped with the fundamental domain $R$) is a $C^1$ \pwfm map of degree $2d-1$.
	
2) $A_{G_d}$ is a mateable map whose pieces generate the Fuchsian group $G_d$; in particular, $A_{G_d}$ is orbit equivalent to $G_d$.
\end{prop}
\begin{proof}
	1) As $A_{G_d}\equiv g_j^{\pm 1}$ on $I_{\pm j}$, it maps $I_{\pm j}$ onto $\mathbb{S}^1\setminus \Int{I_{\mp j}}$ as an orientation-preserving homeomorphism, fixes $p_1, p_{d+1}$, and maps $p_{\pm i}$ to $p_{\mp i}$ (for $i\in\{2,\cdots, d\}$). It easily follows from these properties that $A_{G_d}$ is a degree $2d-1$ covering of $\mathbb{S}^1$. It is also straightforward to check that at the endpoints of any $I_{\pm j}$, the left and right derivatives of $A_{G_d}$ agree. Hence, $A_{G_d}$ is $C^1$.
	
	Expansiveness of $A_{G_d}$ is a consequence of the facts that it is a $C^1$ covering map, it has only finitely many fixed points, and $\vert A'_{G_d}\vert\geq 1$ on $\mathbb{S}^1$ with equality only at the parabolic fixed points (compare \cite[Lemma~3.7]{LMMN}). 
	
	Clearly, $A_{G_d}$ is piecewise Fuchsian. To see that $A_{G_d}$ is \pwfm, observe that the arcs $I_1, I_{-1}, \cdots, I_d, I_{-d}$ form a Markov partition for $A_{G_d}$ with transition matrix
	$$
	\begin{bmatrix}
		1&0&1&1&\cdots&1&1\\
		0&1&1&1&\cdots&1&1\\
		\hdotsfor{7}\\
		1&1&1&1&\cdots&1&0\\
		1&1&1&1&\cdots&0&1\\
	\end{bmatrix}.
	$$
	Thus, $A_{G_d}$ is a \pwfm map.
	
	2) As $A_{G_d}$ is $C^1$, it has no asymmetrically hyperbolic periodic break-point. Since the pieces of $A_{G_d}$ generate the group $G_d$, we only need to check that $A_{G_d}$ is orbit equivalent to $G_d$.

	To this end, let us pick $x,y\in\mathbb{S}^1$ in the same grand orbit of $A_{G_d}$. Since $A_{G_d}$ acts by the generators $g_i^{\pm 1}$ ($i\in\{1,\cdots, d\}$) of the group $G_d$, it directly follows that there exists an element of $G_d$ that takes $x$ to $y$; i.e., $x$ and $y$ lie in the same $G_d$-orbit.
	
	Conversely, let $x,y\in\mathbb{S}^1$ lie in the same $G_d$-orbit; i.e., there exists $g\in G_d$ with $g(x)=y$. It suffices to prove grand orbit equivalence of $x$ and $y$ (under $A_{G_d}$) in the case where $g=g_i$. Therefore, we assume that $g_i(x)=y$. Note that $g_i$ carries $I_i$ to $\mathbb{S}^1\setminus \Int{I_{-i}}$. Thus, if $y\notin I_{-i}$, then $x$ must lie in $I_i$, and we have that $A_{G_d}(x)=g_i(x)=y$. On the other hand, if $y\in I_{-i}$, then we have that $A_{G_d}(y)=g_i^{-1}(y)=x$. In either case, $x$ and $y$ lie in the same grand orbit of $A_{G_d}$.
\end{proof}

\begin{rmk}
It is worth emphasizing that if the group $G_d$ is equipped with a different fundamental domain or a different pattern of side-pairings, then the resulting Bowen-Series map may be discontinuous. For instance, the four punctured sphere admits
an ideal hexagon as a fundamental domain with sides $C_1, \cdots C_6$ (arranged cyclically) and side-pairing
 transformations $g_1, g_2, g_3$ such that $g_i$ pairs $C_{2i-1}, C_{2i}$, for $i=1,2,3$. It is easy to that the associated Bowen-Series map is discontinuous.
\end{rmk}

\subsection{Bowen-Series maps for Fuchsian punctured sphere groups with torsion points}\label{b_s_punc_sphere_orbifold_subsec}

We now consider Bowen-Series maps for Fuchsian groups uniformizing punctured spheres with one/two orbifold points of order two. As in Subsection~\ref{b_s_punc_sphere_subsec}, we will first study the Bowen-Series maps associated with two specific Fuchsian groups $G_{d,1}, G_{d,2}$ (uniformizing spheres with $d$ punctures and one/two order two orbifold points respectively) equipped with special fundamental domains.

\subsubsection{The case of two orbifold points}

Fix $d\geq 2$. For $j\in\{1,\cdots, d\}$, let $C_j$ be the hyperbolic geodesic of $\disk$ connecting $p_j:=e^{\pi i (j-1)/d}$ and $p_{j+1}:=e^{\pi i j/d}$, and $C_{-j}$ be the image of $C_j$ under reflection in the real axis. Let us denote the common perpendicular bisector of $C_1$ and $C_{-d}$ by $\ell$. Consider the order two M{\"o}bius automorphism $g_1$ (respectively, $g_{-d}$) of $\disk$ defined as reflection in $C_1$ (respectively, $C_{-d}$) followed by reflection in $\ell$. Then, $g_1, g_{-d}$ preserve $C_1, C_{-d}$, and have elliptic fixed points on $C_1, C_{-d}$ (respectively). Next, consider the M{\"o}bius automorphisms $g_j$ ($j\in\{2, \cdots, d\}$) of $\disk$ defined as reflection in $C_j$ followed by reflection in $\ell$. By construction, $g_j$ carries $C_j$ onto $C_{1-j}$. It is also easy to see that each $g_{j+1} g_j^{-1}$ is a parabolic map with the unique fixed point at $p_{j+1}$ (where $j\in\{1,\cdots,d-1\}$), and $g_d g_{-d}$ is a parabolic map with  unique fixed point at $p_{d+1}$.   We define 
$$
G_{d,2}:=\langle g_1, g_2\cdots, g_d, g_{-d}\rangle.
$$
Then, $G_{d,2}$ is a Fuchsian group with a fundamental domain $R$ having $C_1,\cdots, C_d,$ $C_{-d}, \cdots, C_{-1}$ as its edges (see Figure~\ref{orbifold_punc_sphere_bs_fig} (left)). Moreover, $\disk/G_{d,2}$ is a sphere with $d$ punctures and two order two orbifold points.

Arguments similar to the ones used in the proof of Proposition~\ref{b_s_poly_conjugate_prop_1} yield the following result (we omit the proof).

\begin{prop}\label{b_s_poly_conjugate_prop_2}
1) For $d\geq 2$, the Bowen-Series map $A_{G_{d,2}}:\bS^1\to\bS^1$ of $G_{d,2}$ (equipped with the fundamental domain $R$) is a $C^1$ \pwfm map of degree $2d-1$.
	
2) $A_{G_{d,2}}$ is a mateable map whose pieces generate the Fuchsian group $G_{d,2}$; in particular, $A_{G_{d,2}}$ is orbit equivalent to $G_{d,2}$.
\end{prop}

\begin{figure}[h!]
	\begin{tikzpicture}
		\node[anchor=south west,inner sep=0] at (1,0) {\includegraphics[width=0.96\linewidth]{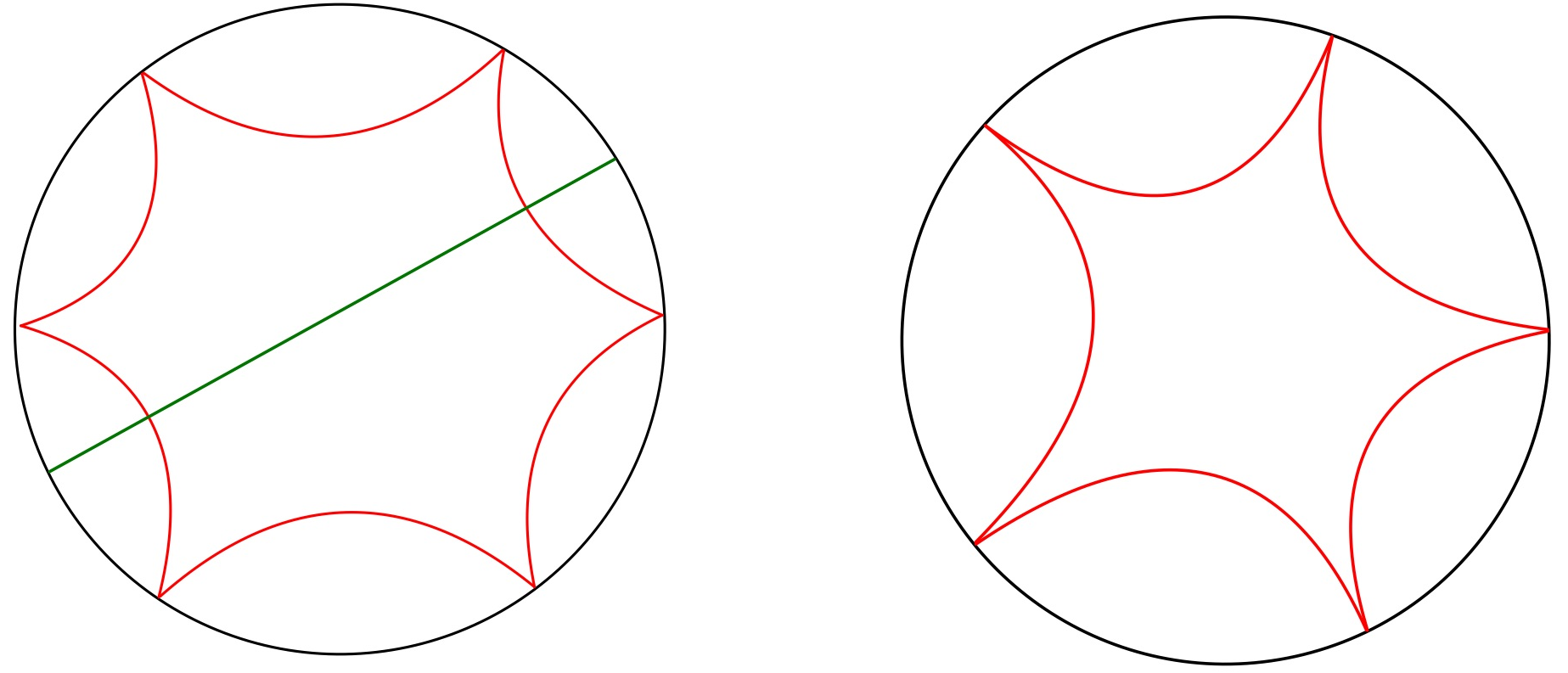}};
		\node at (3.5,3.5) {\begin{large}$R$\end{large}};
		\node at (3.5,2.5) {$\ell$};
		\node at (5.16,3.06) {$C_1$};
		\node at (4.8,2) {$C_{-1}$};
		\node at (3.6,4.48) {$C_2$};
		\node at (3.66,1) {$C_{-2}$};
		\node at (2.4,3.2) {$C_3$};
		\node at (2.7,1.6) {$C_{-3}$};
		\node at (6.3,3.8) {$g_1$};
		\node at (6.4,1.64) {$g_{2}^{-1}$};
		\node at (3.5,5.4) {$g_2$};
		\node at (3.5,-0.15) {$g_{3}^{-1}$};
		\node at (1,3.7) {$g_3$};
		\node at (1,1.32) {$g_{-3}$};
		\node at (6.45,2.75) {$p_1$};
		\node at (5.08,5.08) {$p_{2}$};
		\node at (1.96,4.9) {$p_{3}$};
		\node at (0.75,2.7) {$p_{4}$};
		\node at (5.3,0.32) {$p_{-2}$};
		\node at (2.1,0.3) {$p_{-3}$};
		
		\node at (10.5,2.5) {\begin{large}$R$\end{large}};
		\node at (11.9,3.36) {$C_1$};
		\node at (12.06,1.75) {$C_{-1}$};
		\node at (10.25,4.12) {$C_2$};
		\node at (10.1,1.25) {$C_{-2}$};
		\node at (9,2.66) {$C_3$};
		\node at (12.8,4.2) {$g_1$};
		\node at (13.1,1.4) {$g_{1}^{-1}$};
		\node at (9.9,5.25) {$g_2$};
		\node at (10,-0.2) {$g_2^{-1}$};
		\node at (7.7,2.75) {$g_3$};
		\node at (13.4,2.66) {$p_1$};
		\node at (11.4,5.16) {$p_{2}$};
		\node at (8.45,4.45) {$p_{3}$};
		\node at (12.3,0.32) {$p_{-2}$};
		\node at (8.25,0.84) {$p_{-3}$};
	\end{tikzpicture}
	\caption{The preferred fundamental domains $R$ of $G_{3,2}$ (on the left) and $G_{3,1}$ (on the right) are shown. The group $G_{3,2}$ (respectively, $G_{3,1}$) uniformizes a thrice punctured sphere with two (respectively, one) orbifold point(s) of order two. The actions of the corresponding Bowen-Series map on $\bS^1$ are also displayed.}
	\label{orbifold_punc_sphere_bs_fig}
\end{figure}

\subsubsection{The case of one orbifold point}

Fix $d\geq 2$. For $j\in\{1,\cdots, d\}$, let $C_j$ be the hyperbolic geodesic of $\disk$ connecting $p_j:=e^{2\pi i (j-1)/(2d-1)}$ and $p_{j+1}:=e^{2\pi i j/(2d-1)}$, and $C_{-j}$ be the image of $C_j$ under reflection in the real axis. Note that $C_d$ is the same as $C_{-d}$. Consider the M{\"o}bius automorphism $g_j$ of $\disk$ defined as reflection in $C_j$ followed by complex conjugation, for $j\in\{1,\cdots,d\}$. Then, $g_d$ has order two, preserves $C_d$, and has an elliptic fixed point on $C_d$. On the other hand, $g_j$ carries $C_j$ onto $C_{-j}$, for $j\in\{1,\cdots,d-1\}$. It is readily checked that $g_1^2$ is a parabolic transformation with  unique fixed point at $p_1$, while each $g_{j+1}g_j^{-1}$ is parabolic with  unique fixed point at $p_{j+1}$, for $j\in\{1,\cdots,d-1\}$.
We define 
$$
G_{d,1}:=\langle g_1, \cdots, g_d\rangle.
$$
Then, $G_{d,1}$ is a Fuchsian group with a fundamental domain $R$ having $C_1,\cdots, C_d,$ $C_{-(d-1)}, \cdots, C_{-1}$ as its edges (see Figure~\ref{orbifold_punc_sphere_bs_fig} (right)). Moreover, $\disk/G_{d,1}$ is a sphere with $d$ punctures and one order two orbifold point. Again, as in  Proposition~\ref{b_s_poly_conjugate_prop_1} we obtain the following result (and omit the proof).

\begin{prop}\label{b_s_poly_conjugate_prop_3}
1) For $d\geq 2$, the Bowen-Series map $A_{G_{d,1}}:\bS^1\to\bS^1$ of $G_{d,1}$ (equipped with the fundamental domain $R$) is a $C^1$ \pwfm map of degree $2d-2$.
	
2) $A_{G_{d,1}}$ is a mateable map whose pieces generate the Fuchsian group $G_{d,1}$; in particular, $A_{G_{d,1}}$ is orbit equivalent to $G_{d,1}$.
\end{prop}

\subsection{Mating Bowen-Series maps of Bers slice groups with polynomials}\label{sec-puncturedspherebuildingblock}

We now describe the simplest instance of our mating construction. For $d\geq 2$, consider the Fuchsian group $\Gamma_0\in\{G_d, G_{d,1}, G_{d,2}\}$ and the Bowen-Series map $A_{\Gamma_0}:\bS^1\rightarrow\bS^1$, which is a mateable map associated to $\Gamma_0$. For $\Gamma\in\mathcal{B}(\Gamma_0)$, we call the mateable map $A_\Gamma$ associated to $\Gamma$ and compatible with $A_{\Gamma_0}$ the \emph{Bowen-Series map} of $\Gamma$ (see Subsection~\ref{mateable_gen_subsec}).
Set $k=2d-1\ \textrm{if}\ \Gamma_0\in\{G_d, G_{d,2}\},\ \textrm{and}\ k=2d-2\ \textrm{if}\ \Gamma_0=G_{d,1}.$
Recall that $\mathcal{H}_k$ stands for the principal hyperbolic component in the space of degree $k$ polynomials.

\begin{theorem}\label{moduli_interior_mating_thm}
	Let $\Gamma\in\mathcal{B}(\Gamma_0)$, and $P\in\mathcal{H}_k$. Then, the maps $\widehat{A}_{\Gamma}:\mathcal{D}_{A_\Gamma}\to K(\Gamma)$ and $P:\mathcal{K}(P)\to\mathcal{K}(P)$ are conformally mateable.
\end{theorem}
\begin{proof}
Note that $\Gamma_0$ is conjugate to $\Gamma$ via a quasiconformal homeomorphism, and this quasiconformal map conjugates $\widehat{A}_{\Gamma_0}$ to $\widehat{A}_{\Gamma}$. Thus by a standard quasiconformal deformation argument, it suffices to show that the map $\widehat{A}_{\Gamma_0}:\mathcal{D}_{A_{\Gamma_0}}\to K(\Gamma_0)=\overline{\disk}$ is mateable with $P$. Since $A_{\Gamma_0}:\bS^1\to\bS^1$ is a mateable map of degree $k$, the existence of the desired conformal mating follows from Proposition~\ref{conformal_mating_general_prop}.
\end{proof}

\section{Higher Bowen-Series maps for Bers slice groups}\label{sec-fold}
The aim of this section is to describe a new class of mateable maps orbit equivalent to groups in the Bers slice of a Fuchsian punctured sphere group, 
	and conformally mateable with polynomials. 
The motivation here is different from the symbolic coding of  geodesics \cite{series-acta} as is our approach.

\subsection{Higher Bowen-Series maps for Fuchsian punctured sphere groups}\label{hbs_fuch_subsec}
Recall from Definition~\ref{def-canonicalextension-domain} that the fundamental domain of a \pwfm map $A$ is denoted by $R$. The set $\mathcal{D}=\overline{\D}\setminus R$ is the canonical domain of definition of $\widehat{A}$ in $\overline{\D}$.

Recall (Definition \ref{def-canonicalextension-domain}) that a side of $R$ is not regarded as a diagonal.

\begin{defn}\label{def-nofold}
	A \pwfm map $A: \bS^1 \to \bS^1$ is said to have a \emph{diagonal fold} if there
	exist consecutive edges $\alpha_1, \alpha_2$ of $\partial R$
	and a diagonal $\delta$ of $R$ such that $\hA(\alpha_i) = \delta$ for $i=1,2$. Note that if $a_1, a_2$ (resp. $ a_2, a_3$) are the endpoints of $\alpha_1$ (resp. $\alpha_2$) and $p, q$ are the endpoints of $\delta$, then
	$A(a_1)=p=A(a_3)$ and $A(a_2)=q$ by continuity of $A$ on $\bS^1$.
	
	A \pwfm map $A: \bS^1 \to \bS^1$ is said to be a \emph{\hdm} if
	\begin{enumerate}
		\item there exists an (open) ideal polygon $D \subset R$ such that all the edges 
		$\delta_1, \cdots, \delta_l$ of $D$ are  (necessarily non-intersecting) diagonals of $R$. We assume further that $\delta_1, \cdots, \delta_l$ are  cyclically ordered along $\partial D$. We shall call $D$ the \emph{inner domain} of $A$.
		\item If $p$ is an ideal vertex of $D$, then $A(p)=p$.
		\item For every edge $\alpha$ of $R$, $\hA(\alpha)$ is one of the diagonals
		$\delta_1, \cdots, \delta_l$.
		\item $A$ has no diagonal folds. 
	\end{enumerate}
\end{defn}

Cyclically ordering the edges $\alpha_1, \cdots, \alpha_k$ of $R$, it follows
from Definition \ref{def-nofold}, that under a \hdm $A$, consecutive edges
$\alpha_i, \alpha_{i+1}$ of $R$ go to consecutive edges of $D$. Note however that a counterclockwise cyclic ordering of edges of $R$ may be taken to a 
clockwise cyclic ordering of edges of $D$ under $A$.
In any case we have a continuous map $\hA: \partial R \to \partial D$.
Adjoining the ideal endpoints of $R$ and $D$, $\hA$ has a well-defined
degree $d$. 
Further, each edge of $D$ has exactly $|d|$ pre-images under $\hA$ since there are no folds. Also, since each $\delta_i$ is a diagonal of $R$,
$|d| >1$. We call $|d|$ the \emph{polygonal degree} of $A$. (Since $|d|>1$ we call $A$ a \hdm.)

 We refer the reader to Figure~\ref{cfmgen} below. Fix a (closed) fundamental domain of $\Gamma_0=G_{k-1}$ (see Subsection~\ref{b_s_punc_sphere_subsec} for the definition of $G_{k-1}$), given by an ideal $(2k-2)$-gon $W$ (the figure illustrates the $k=4$ case). For definiteness, let us assume that the ideal vertices of $W$ are the $(2k-2)$-th roots of unity. We fix the 
 following notation.
\begin{enumerate}
	\item The vertices of $W$ on the bottom semi-circle are numbered $1=1_-$, $2_-$ $\cdots$, $k_-=k$ in counterclockwise order.
	\item  The vertices of $W$ on the top semi-circle are numbered $1, 2, \cdots, k$ in clockwise order.
	\item Between vertices $i, i+1$ (and including $i, i+1$) on the top semi-circle, there are 
	$2k-2$ vertices given by the vertices of $g_i.W$ (noting that $g_i.W \cap W$ equals the bi-infinite geodesic $\overline{i (i+1)}$). We label the 
	$2k-4$ vertices strictly between $i, i+1$ as $\{i,2\}, \{i,3\}, \cdots,
	\{i,2k-3\}$ in clockwise order. 
\end{enumerate}

The generators of $\Gamma_0$ are given by $g_1, \cdots, g_{k-1} $, where $g_i$ takes the edge $\overline{i_- (i+1)_-}$ to the  bi-infinite geodesic $\overline{i (i+1)}$.

Define  $R$ as
$$
R=\Int{\left(W \cup \bigcup_{i=1, \cdots, k-1} g_i.W\right)}, 
$$ 
so that $\overline{i (i+1)}$
are diagonals of $R$.

\begin{figure}[ht!]
\begin{tikzpicture}
\node[anchor=south west,inner sep=0] at (0,0) {\includegraphics[width=0.5\linewidth]{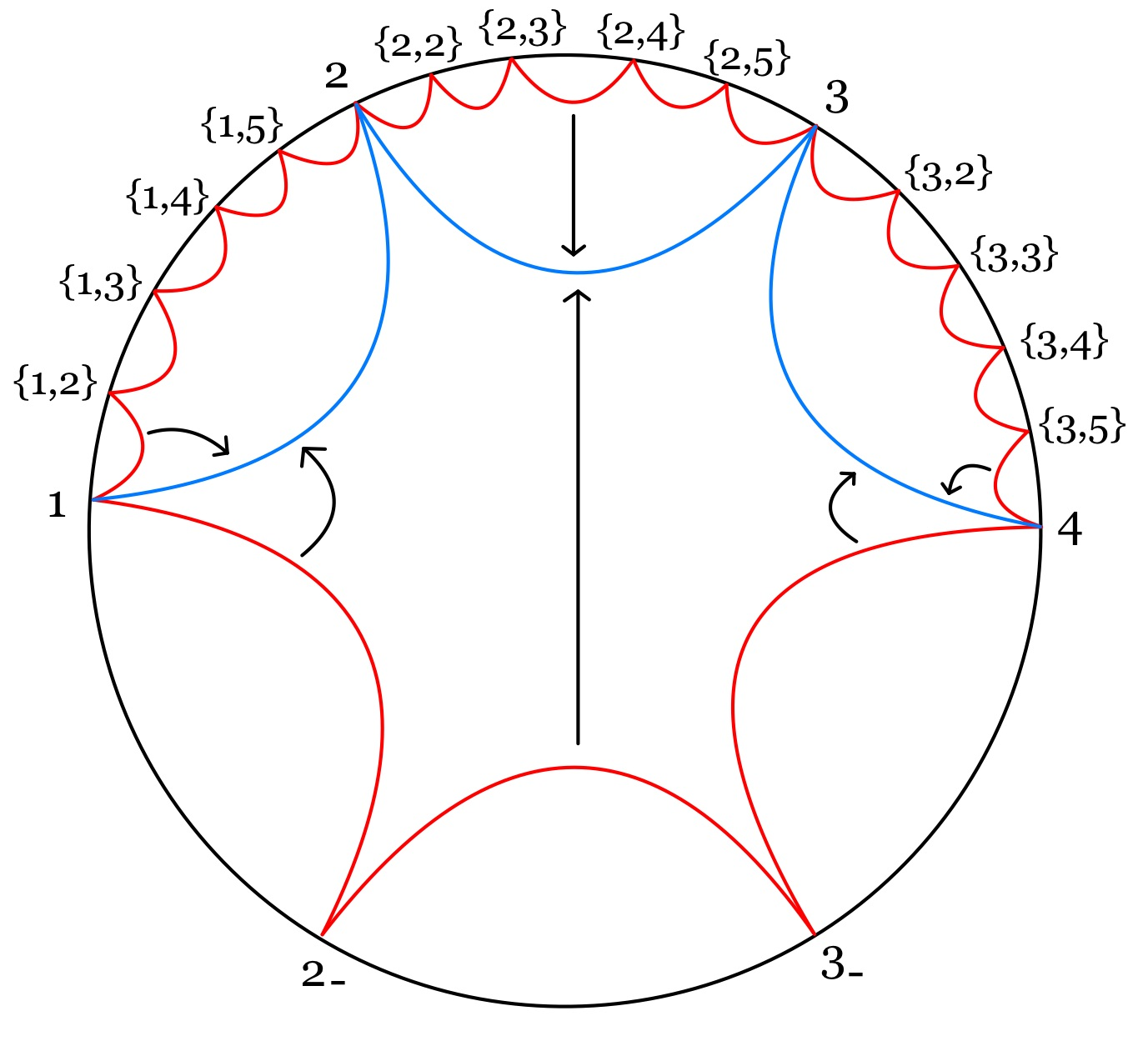}};
\node[anchor=south west,inner sep=0] at (7,0) {\includegraphics[width=0.44\linewidth]{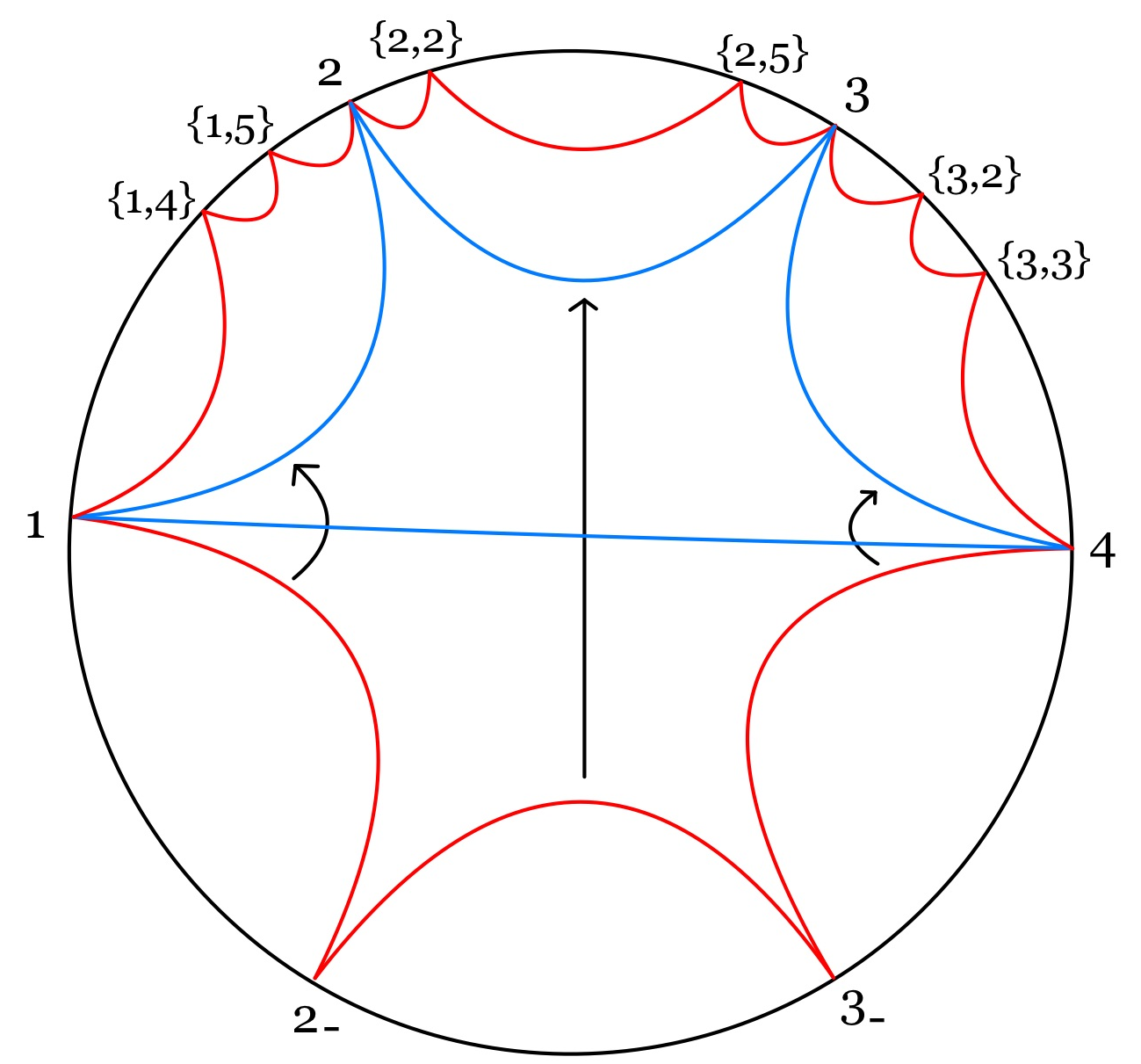}};

\node at (4.4,2.9) {\begin{small}$g_3$\end{small}};
\node at (3.5,3) {\begin{small}$g_2$\end{small}};
\node at (2.1,2.9) {\begin{small}$g_1$\end{small}};
\node at (1.25,3.6) {\begin{small}$g_1^{-1}$\end{small}};
\node at (3.5,4.8) {\begin{small}$g_2^{-1}$\end{small}};
\node at (5.12,3.38) {\begin{small}$g_3^{-1}$\end{small}};
\node at (10.94,2.75) {\begin{small}$g_3$\end{small}};
\node at (10.1,3) {\begin{small}$g_2$\end{small}};
\node at (8.84,2.78) {\begin{small}$g_1$\end{small}};
\end{tikzpicture}
\caption{Fundamental domains for $A_{\Gamma_0, \mathrm{aux}}$ and $A_{\Gamma_0, \mathrm{hBS}}$: $4$ punctures}
\label{cfmgen}
\end{figure}

We shall introduce below an auxiliary map $A_{\Gamma_0, \mathrm{aux}}$ in terms of its pieces. The main purpose of $A_{\Gamma_0, \mathrm{aux}}$ is to lead up to the
notion of a \emph{higher Bowen-Series map} in Definition \ref{higher_bs_def} below.
However, $A_{\Gamma_0, \mathrm{aux}}$ is more symmetrically defined and makes the
book-keeping easier.
The notation $\arc{ij}$ will denote an arc of $\bS^1$ with its endpoints at the break-points $i, j$ such that there are no other break-points of $A_{\Gamma_0, \mathrm{aux}}$ in the arc.

\noindent	$\bullet$ On the arc $\arc{\ i_- (i+1)_-\ }$, define $A_{\Gamma_0, \mathrm{aux}}$ to be $g_i$ for $i=1, \cdots, k-1$. Then $A_{\Gamma_0, \mathrm{aux}}(\arc{\ i_- (i+1)_-\ })$ equals the complement of (the interior of) the arc $\arc{\ i (i+1)\ }$ in $\bS^1$.
	
\noindent	$\bullet$ For every $i=1, \cdots, k-1$, and on each of the $k-1$ short arcs $\arc{\ \{i,j\} \{i,j+1\}\ }$ for $i \leq j \leq i+k-2$ in between $i, i+1$, define $A_{\Gamma_0, \mathrm{aux}}$ to be $g_i^{-1}$. Then $A_{\Gamma_0, \mathrm{aux}}(\cup_{j=i}^{i+k-2}\arc{\ \{i,j\} \{i,j+1\} })$ equals the upper semi-semicircle between $1$ and $k$. (Here, for notational convenience, we identify $\{i,1\}$ with $i$ and $\{i, i+2k-2\}$ with $i+1$.)
	Also, for $i \leq j \leq i+k-2$, $A_{\Gamma_0, \mathrm{aux}}$ maps the clockwise arc from $\{i,j\}$ to $\{i,j+1\}$ onto the clockwise arc from $j$ to $j+1$. We refer to the clockwise arcs from $\{i,j\}$ to $\{i,j+1\}$ (for $i \leq j \leq i+k-2$) as \emph{short folding arcs under $A_{\Gamma_0, \mathrm{aux}}$}.
	
\noindent	$\bullet$ For $i\in\{2,\cdots, k-1\}$ and $1\leq j\leq i-1$, let $j=i-s$, so that
	$1\leq s \leq i-1$. We  define $A_{\Gamma_0, \mathrm{aux}}$ to be $g_s \circ g_i^{-1}$ on $\arc{\ \{i,j\} \{i,j+1\}\ }$. Thus, for $j\leq i-1$, $A(\arc{\{i,j\} \{i,j+1\}})$ equals the counterclockwise (long) arc from $s$ to $s+1$.

\noindent	$\bullet$ For $i\in\{1,\cdots, k-2\}$ and $i+k-1 \leq j\leq 2k-3$, let $j=i+k-1+t$, so that
	$0\leq t \leq k-2-i$.
	We  define $A_{\Gamma_0, \mathrm{aux}}$ to be $g_{k-1-t} \circ g_i^{-1}$ on $\arc{\ \{i,j\} \{i,j+1\}\ }$.
	Thus, for $i+k-1 \leq j\leq 2k-3$, $A(\arc{ \{i,j\} \{i,j+1\}} )$ equals the counterclockwise (long) arc from $k-1-t$ to $k-t$. 
	
	We refer to the clockwise arcs from $\{i,j\}$ to $\{i,j+1\}$ (for $j\leq i-1$ or $ 
	i+k-1 \leq j$) as \emph{long folding arcs under $A$}. 
	
\noindent	$\bullet$ Note that $A_{\Gamma_0, \mathrm{aux}}(i)=i$ for all $i=1,\cdots, k$.

 Define $A_{\Gamma_0, \mathrm{hBS}}$ to be the minimal \pwfm\ map equaling $A_{\Gamma_0, \mathrm{aux}}$ on $\bS^1$, i.e.,  $A_{\Gamma_0, \mathrm{hBS}}$ is obtained from $A_{\Gamma_0, \mathrm{aux}}$ by removing superfluous break-points. Denote the canonical extension of $A_{\Gamma_0, \mathrm{hBS}}$ by $\widehat{A}_{\Gamma_0, \mathrm{hBS}}$, its canonical domain of definition in $\overline{\D}$ by $\mathcal{D}_{\Gamma_0, \mathrm{hBS}}$, and the fundamental domain of $\widehat{A}_{\Gamma_0, \mathrm{hBS}}$ by $R_{\Gamma_0, \mathrm{hBS}}$. Further, let $D$ be the open ideal polygon bounded by the bi-infinite geodesics $\overline{12}, \overline{23},\cdots, \overline{(k-1) k}, \overline{k 1}$. Evidently, all the edges of $D$ are (non-intersecting) diagonals of $R_{\Gamma_0, \mathrm{hBS}}$, each ideal vertex of $D$ is fixed by $A_{\Gamma_0, \mathrm{hBS}}$, each edge of $R_{\Gamma_0, \mathrm{hBS}}$ is mapped by $\widehat{A}_{\Gamma_0, \mathrm{hBS}}$ to an edge of $D$, and $\widehat{A}_{\Gamma_0, \mathrm{hBS}}$ has no diagonal folds. Therefore, $\widehat{A}_{\Gamma_0, \mathrm{hBS}}$ is a \hdm having $D$ as its inner domain. 

\begin{defn}\label{higher_bs_def}
	We call the \pwfm map $A_{\Gamma_0, \mathrm{hBS}}$ the \emph{higher Bowen-Series map} of $\Gamma_0$ (associated with the fundamental domain $W$). 
\end{defn}

The next result is about the relationship between Bowen-Series maps and higher Bowen-Series maps. In fact, the following proposition is a restatement of the above construction and gives a  more direct description of the higher Bowen-Series map of $\Gamma$ in terms of the Bowen-Series maps of $\Gamma_0$ associated with various overlapping fundamental domains. This point of view will be useful in Proposition~\ref{higher_bs_char_prop}, where we will prove a combinatorial characterization of higher Bowen-Series maps. 

\begin{prop}\label{hbs_alternative_prop}
Let $W$ be a (closed) fundamental domain for a Fuchsian group $\Gamma_0$ (uniformizing a $k$-times punctured sphere) given by an ideal $(2k-2)$-gon. We label the ideal vertices of $W$ as $1=1_-, 2_-,\cdots, (k-1)_-, k_-=k, k-1, \cdots, 2$ in counterclockwise order, and assume that the side pairing transformations of $W$ (generating $\Gamma_0$) are given by $g_1, \cdots, g_{k-1} $, where $g_i$ takes the edge $\overline{i_- (i+1)_-}$ to the  edge  $\overline{i (i+1)}$.
	
	Further, let $D$ be the interior of the ideal polygon bounded by the bi-infinite geodesics $\overline{12}$, $\overline{23}$, $\cdots$, $\overline{(k-1) k}$, $\overline{k 1}$, and $P$ the interior of the ideal polygon bounded by the bi-infinite geodesics $\overline{1_- 2_-}$, $\overline{2_- 3_-}$, $\cdots$, $\overline{(k-1)_- k_-}$, $\overline{k_- 1_-}$. Then the following hold.\\
1) $W=\overline{D}\cup\overline{P}$, and for each $j\in\{1,\cdots, k-1\}$, $\overline{D}\cup \overline{g_j(P)}$ is a (closed) fundamental domain for $\Gamma_0$.\\
2) On the clockwise arc from $j$ to $j+1$, the higher Bowen-Series map $A_{\Gamma_0, \mathrm{hBS}}$ equals the Bowen-Series map of $\Gamma_0$ associated with the (closed) fundamental domain $\overline{D}\cup \overline{g_j(P)}$ ($j\in\{1,\cdots, k-1\}$), and on the counterclockwise arc from $1$ to $k$, $A_{\Gamma_0, \mathrm{hBS}}$ equals the Bowen-Series map of $\Gamma_0$ associated with the fundamental domain $W=\overline{D}\cup \overline{P}$.
		
	 Conversely, a map $A:\bS^1\to\bS^1$ defined as in part 2 of the proposition is a higher Bowen-Series map.
\end{prop}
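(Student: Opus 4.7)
The plan is to handle the three properties in order, culminating in the uniqueness claim.

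For (1), I first note that $D$ and $P$ share precisely the diagonal $\overline{k, 1} = \overline{k_-, 1_-}$ of $W$ (using $1 = 1_-$ and $k = k_-$), so $W = \overline{D} \cup \overline{P}$. For $F_j := \overline{D} \cup \overline{g_j(P)}$, observe that $g_j$ identifies the edge $\overline{j_-, (j+1)_-}$ of $P$ with $\overline{j, j+1}$ of $D$, so $g_j(P)$ abuts $D$ along $\overline{j, j+1}$ and the union is an ideal $(2k-2)$-gon. The cut-and-paste bijection $F_j \to W$ equal to the identity on $\overline{D}$ and to $g_j^{-1}$ on $\overline{g_j(P)}$ realizes $F_j$ as a $\Gamma$-rearrangement of $W$; hence $F_j$ is itself a fundamental domain of $\Gamma$.

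For (2), I enumerate the boundary edges of $F_j$ subtending the clockwise arc from $j$ to $j+1$: these are exactly the $k-1$ edges of $g_j(P)$ other than the now-interior edge $\overline{j, j+1}$, namely $g_j(\overline{i_-, (i+1)_-})$ for $i \in \{1, \ldots, k-1\} \setminus \{j\}$ together with $g_j(\overline{k, 1})$ (inherited from the former diagonal of $W$). Working out the side-pairings inherited from $W$: for $i \neq j$, $g_j(\overline{i_-, (i+1)_-})$ is paired with $\overline{i, i+1}$ via $g_i \circ g_j^{-1}$; whereas $g_j(\overline{k, 1})$ is paired with $\overline{k, 1}$ via $g_j$. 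Thus, by Definition~\ref{b_s_map_def}, the Bowen-Series map of $F_j$ acts by $g_i \circ g_j^{-1}$ on the $(k-2)$ sub-arcs subtended by the edges $g_j(\overline{i_-, (i+1)_-})$, and by $g_j^{-1}$ on the sub-arc subtended by $g_j(\overline{k, 1})$. Comparing with Subsection~\ref{cfm_general_subsec}, the clockwise arc from $j$ to $j+1$ is divided by $A_{\Gamma, \mathrm{cfm}}$ into $k-1$ short folding arcs in the centre (all carrying $g_j^{-1}$) flanked by $k-2$ long folding arcs carrying the pairwise distinct actions $g_s \circ g_j^{-1}$ for $s \neq j$. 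Passing to the minimal piecewise Fuchsian representative $A_{\Gamma, \mathrm{hBS}}$ collapses the short-folding block into one central piece and leaves the long-folding pieces intact, yielding $1 + (k-2) = k-1$ pieces matching the Bowen-Series map of $F_j$ both in action and in cyclic order; the central merged piece corresponds to the former-diagonal edge $g_j(\overline{k, 1})$, which occupies the central position of $\partial g_j(P)$ between the outer edges.

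For (3), on each arc $\arc{i_-, (i+1)_-}$ of the bottom semicircle the construction assigns the action $g_i$ to $A_{\Gamma, \mathrm{cfm}}$, which is exactly the action of the Bowen-Series map of $W$ (since $g_i$ pairs $\overline{i_-, (i+1)_-}$ with $\overline{i, i+1}$); as $g_1, \ldots, g_{k-1}$ are pairwise distinct, no merging occurs and $A_{\Gamma, \mathrm{hBS}}$ agrees. For uniqueness, the clockwise arcs from $j$ to $j+1$ (for $j = 1, \ldots, k-1$) together with the anti-clockwise arc from $1$ to $k$ cover $\bS^1$, so specifying the map on each such sub-arc determines it globally. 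The main obstacle lies in part (2): carefully tracking the cyclic order of the boundary edges of $g_j(P)$ and confirming that the former-diagonal edge $g_j(\overline{k, 1})$ occupies the central position, so that the Bowen-Series action $g_j^{-1}$ there matches the merged short-folding block of $A_{\Gamma, \mathrm{hBS}}$.
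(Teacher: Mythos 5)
Your proposal is correct. Note that the paper states this proposition without any written proof, treating it as a direct verification from the construction of $A_{\Gamma,\mathrm{cfm}}$ and $A_{\Gamma,\mathrm{hBS}}$ in Section \ref{sec-genfold}; your argument supplies exactly that verification along the natural lines: cut-and-paste to see that $\overline{D}\cup\overline{g_j(P)}$ is a fundamental domain, an edge-by-edge match between the side pairings of $\overline{D}\cup\overline{g_j(P)}$ and the pieces of the (minimalized) folding map on the clockwise arc from $j$ to $j+1$, the bottom-arc comparison with the Bowen-Series map of $W$, and the covering argument for uniqueness. Two small points you gloss over but which are immediate: one should check that $g_s\circ g_j^{-1}$ and $g_j^{-1}$ are genuine side pairings of $\overline{D}\cup\overline{g_j(P)}$ (i.e.\ the translate of the domain meets it exactly along the partner edge), which follows since $g_s(\overline{P})$, resp.\ $\overline{P}$, shares the edge $\overline{s,s+1}$, resp.\ $\overline{k,1}$, with $\overline{D}$; and the merged short-folding block is literally ``central'' only for intermediate $j$ (for $j=1$ or $j=k-1$ all $k-2$ long folding arcs lie on one side), though this does not affect the edge-by-edge correspondence you use.
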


	The preceding description of $A_{\Gamma_0, \mathrm{hBS}}$ shows that it is made up of Bowen-Series maps corresponding to various (overlapping) fundamental domains of $\Gamma_0$. This justifies the terminology `higher Bowen-Series maps'.

\begin{prop}\label{prop-cfdhd-orbeq-gen}
Let $A_{\Gamma_0, \mathrm{hBS}}$ be as above. Then $A_{\Gamma_0, \mathrm{hBS}}$ is a $C^1$ mateable map whose pieces generate the Fuchsian group $\Gamma_0$; in particular, $A_{\Gamma_0, \mathrm{hBS}}$ is orbit equivalent to $\Gamma_0$.
\end{prop}

\begin{proof} 
Clearly, $A_{\Gamma, \mathrm{hBS}}: \bS^1 \to \bS^1$ is piecewise Fuchsian. Since each ideal vertex of the inner domain $D$ is a parabolic fixed point of $A_{\Gamma, \mathrm{hBS}}$, one readily checks that $A_{\Gamma, \mathrm{hBS}}$ is $C^1$. That the pieces of $A_{\Gamma_0, \mathrm{hBS}}$ form a Markov partition for the map follows from the construction of $A_{\Gamma_0,\mathrm{hBS}}$. Expansiveness of $A_{\Gamma_0,\mathrm{hBS}}$ is a consequence of the facts that it is a $C^1$ covering map, it has only finitely many fixed points, and $\vert A'_{\Gamma_0,\mathrm{hBS}}\vert\geq 1$ on $\mathbb{S}^1$ with equality only at the parabolic fixed points (compare \cite[Lemma~3.7]{LMMN}). This shows that $A_{\Gamma_0,\mathrm{hBS}}$ is a \pwfm map without asymmetrically hyperbolic periodic break-points.

To show that $A_{\Gamma, \mathrm{hBS}}$ is a mateable map, it remains to argue that it is orbit equivalent to $\Gamma_0$. It is enough to  show that $x, y$ lie in the same grand orbit under $A_{\Gamma_0, \mathrm{hBS}}$ in the special  case that
	$y = g_i(x)$ for some $i=1, \cdots, k-1$.  \\
\noindent $ \bullet$
	 If $y$ lies in the complement of the clockwise arc from $i$ to
		$i+1$, then $x$ lies in the counterclockwise arc from $i_-$ to
		$(i+1)_-$ and $y=A_{\Gamma_0, \mathrm{hBS}}(x)$.\\
\noindent $ \bullet$ If $y$ lies in any of the short folding arcs; i.e., in
		the clockwise arc from $\{i,i\}$ to
		$\{i,i+k\}$, then we rewrite $y=g_i(x)$ as $g_i^{-1}(y)=x$. Since the piece of $A_{\Gamma_0, \mathrm{hBS}}$ on such an arc is $g_i^{-1}$, we now have that $A_{\Gamma_0, \mathrm{hBS}}(y)=x$.

	It remains to deal with the case where $y$ lies in one of the long folding arcs $\arc{\ \{i,j\} \{i,j+1\}\ }$ for $1\leq j\leq i-1$ (where $i>1$) or $i+k-1 \leq j\leq 2k-3$ (where $i<2k-1$).
	We first consider $1\leq j\leq i-1$, and let $j=i-s$  as in the definition of $A_{\Gamma_0, \mathrm{hBS}}$. Then $x \in \arc{\ s_- (s+1)_-\ }$, the counterclockwise arc from $s_-$ to $ (s+1)_-$. Further, on $\arc{\ s_- (s+1)_-\ }$, $A_{\Gamma_0, \mathrm{hBS}}=g_s$, and on
	$\arc{\ \{i,j\} \{i,j+1\}\ }$, $A_{\Gamma_0, \mathrm{hBS}}=g_s \circ g_i^{-1}$. Thus, $$A_{\Gamma_0, \mathrm{hBS}}(y) = 
	g_s \circ g_i^{-1} \circ g_i (x) = g_s(x) = A_{\Gamma_0, \mathrm{hBS}}(x), $$ and hence $x, y$
	lie in the same grand orbit.
	
	Finally, let $i+k-1 \leq j\leq 2k-3$, and let $j=k-1+t$. Then $x \in \arc{\ (k-t-1)_- (k-t)_-\ }$, the counterclockwise arc from $(k-t-1)_-$ to $(k-t)_-$.
	On $\arc{\ (k-t-1)_- (k-t)_-\ }$, $A_{\Gamma_0, \mathrm{hBS}}=g_{k-t-1}$. Also, on 
	$\arc{\ \{i,j\} \{i,j+1\}\ }$, $A_{\Gamma_0, \mathrm{hBS}}=g_{k-t-1} \circ g_i^{-1}$. Hence 
	$$A_{\Gamma_0, \mathrm{hBS}}(y) = 
	g_{k-t-1} \circ g_i^{-1} \circ g_i (x) = g_{k-t-1}(x) = A_{\Gamma_0, \mathrm{hBS}}(x), $$ and  $x, y$
	lie in the same grand orbit.
\end{proof}

\noindent {\bf Degree of $A_{\Gamma, \mathrm{hBS}}$:}\\ We first observe that the polygonal degree of $A_{\Gamma_0, \mathrm{hBS}}$ is $k-1$. This follows from the fact that the number of components of $A_{\Gamma_0, \mathrm{hBS}}^{-1}(\overline{1k})$, the pre-image of the diameter of $W$, equals $k-1$. 

We now compute the degree of $A_{\Gamma_0, \mathrm{aux}}: \bS^1 \to \bS^1$; this is equal to the degree of $A_{\Gamma_0,\mathrm{hBS}}$ on $\bS^1$. There are $(k-1)(k-2)$ long folding arcs in the upper semi-circle (see Figure \ref{cfmgen} or
Proposition \ref{prop-cfdhd-orbeq-gen}). Each, after being mapped forward by $A_{\Gamma_0, \mathrm{aux}}$, covers the lower semi-circle once. Each of the short
folding arcs in the upper semi-circle cover arcs only in the upper semi-circle. Finally, each of the $k-1$ arcs $\arc{\ i_- (i+1)_-\ }$ in the
lower semi-circle, after being mapped forward by $A_{\Gamma_0, \mathrm{aux}}$, covers the lower semi-circle once. Hence, the degree of $A_{\Gamma_0, \mathrm{aux}}$ is given by
\begin{equation}\label{eq-degcfm}
	deg (A_{\Gamma_0, \mathrm{aux}}) = (k-1)(k-2) + 0 + k-1 = (k-1)^2 = (\chi-1)^2,
\end{equation}
where $\chi = 2-k$ is the Euler characteristic of $S_{0,k}$.\\

\subsection{Mating higher Bowen-Series maps of Bers slice groups with polynomials}\label{hbs_mate_subsec}
Consider the Fuchsian group $\Gamma_0$ and the higher Bowen-Series map $A_{\Gamma_0,\mathrm{hBS}}:\bS^1\rightarrow\bS^1$ introduced in Subsection~\ref{hbs_fuch_subsec}. Note that $A_{\Gamma_0,\mathrm{hBS}}$ is a mateable map associated to $\Gamma_0$. 
For $\Gamma\in\mathcal{B}(\Gamma_0)$, we call the mateable map $A_\Gamma$ associated to $\Gamma$ and compatible with $A_{\Gamma_0,\mathrm{hBS}}$ the \emph{higher Bowen-Series map} of $\Gamma$ (see Subsection~\ref{mateable_gen_subsec}).
Recall that $\mathcal{H}_d$ stands for the principal hyperbolic component in the space of degree $d$ polynomials.

\begin{theorem}\label{thm-cfdmateable}
	Let $\Gamma\in\mathcal{B}(\Gamma_0)$, and $P\in\mathcal{H}_{(k-1)^2}$. Then, $\widehat{A}_{\Gamma, \mathrm{hBS}}:\mathcal{D}_{A_\Gamma, \mathrm{hBS}}\to K(\Gamma)$ and $P:\mathcal{K}(P)\to\mathcal{K}(P)$ are conformally mateable. 
\end{theorem}

\begin{proof}
This can be seen by applying the proof of Theorem~\ref{moduli_interior_mating_thm} mutatis mutandis to the present setting.
\end{proof}

\noindent {\bf A non-example:}
The following description of the higher Bowen-Series map $A_{\Gamma_0, \mathrm{hBS}}$ on $\bS^1$ is straightforward to check from its construction:

\begin{equation*}
A_{\Gamma_0, \mathrm{hBS}}\ = \ \left\{\begin{array}{ll}
                    A_{\Gamma_0, \mathrm{BS}}, & \mbox{on}\  \displaystyle \left(\bigcup_{i=1}^{k-1} \arc{\ i_-(i+1)_-\ }\right) \cup \left(\bigcup_{i=1}^{k-1}\bigcup_{j=i}^{i+k-2} \arc{\ \{i,j\}\{i,j+1\}\ } \right), \\
                    A_{\Gamma_0, \mathrm{BS}}^{\circ 2}, & \mbox{otherwise},
                                          \end{array}\right. 
\end{equation*}
where $A_{\Gamma_0, \mathrm{BS}}$ denotes the Bowen-Series map of $\Gamma_0$ associated with the fundamental domain $W$.

In fact, agreement of $A_{\Gamma_0, \mathrm{hBS}}$ and $A_{\Gamma_0, \mathrm{BS}}$ on the arcs $\arc{\ \{i,j\}\{i,j+1\}\ }$ ($i\in\{1,\cdots, k-1\}, j\in\{i,\cdots, i+k-2\}$) played an important role in the proof of orbit equivalence of $\Gamma_0$ and $A_{\Gamma_0, \mathrm{hBS}}$ (see Proposition~\ref{prop-cfdhd-orbeq-gen}). In this subsection, we shall show that if one replaces $A_{\Gamma_0, \mathrm{BS}}$ by $A_{\Gamma_0, \mathrm{BS}}^{\circ 2}$ on these arcs as well, the resulting minimal \pwfm map 
\begin{equation*}
B := \ \left\{\begin{array}{ll}
                    A_{\Gamma_0, \mathrm{BS}} & \mbox{on}\  \bS^1\cap\{z: \mathrm{Im}(z)\leq 0\}, \\
                    A_{\Gamma_0, \mathrm{BS}}^{\circ 2} & \mbox{on}\  \bS^1\cap\{z: \mathrm{Im}(z)\geq 0\},
                                          \end{array}\right. 
\end{equation*}
is \emph{not} orbit equivalent to $\Gamma_0$. Although the definition of the map $B$ may seem unmotivated at this point, it will naturally crop up in the proof of Theorem~\ref{thm-char-hdm-eoe}, where lack of orbit equivalence between $\Gamma_0$ and $B$ will be essentially used.

\begin{prop}\label{not_oe_prop}
The map $B:\bS^1\to\bS^1$ is not orbit equivalent to $\Gamma_0$.
\end{prop}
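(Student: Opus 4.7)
The plan is to exhibit a point $y$ that lies in the $\Gamma_0$-orbit but not in the $B$-grand orbit of some $x\in\bS^1$. Since $B$ acts pointwise as either $A_{\Gamma_0,\mathrm{BS}}$ or $A_{\Gamma_0,\mathrm{BS}}^{\circ 2}$, every forward $B$-iterate of a point is also a forward $A_{\Gamma_0,\mathrm{BS}}$-iterate of that point, so the containment
\begin{equation*}
\mathrm{GO}_B(z)\ \subseteq\ \mathrm{GO}_{A_{\Gamma_0,\mathrm{BS}}}(z)\ =\ \Gamma_0\cdot z
\end{equation*}
is automatic (the last equality by Proposition~\ref{b_s_poly_conjugate_prop_1}, part 2). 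The task is therefore to produce an $x$ for which this containment is strict.

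First I would construct a suitable basepoint. By Proposition~\ref{b_s_poly_conjugate_prop_1}, on each upper arc $I_j$ ($j\in\{1,\ldots,k-1\}$) the map $A_{\Gamma_0,\mathrm{BS}}$ acts as $g_j$ and sends $I_j$ onto $\bS^1\setminus\Int{I_{-j}}$; in particular, it covers every other upper arc $I_{j'}$ as a sub-interval. Iterating this surjectivity along an arbitrary sequence of positive indices produces points realizing any prescribed upper-arc itinerary, so the set
\begin{equation*}
T\ =\ \{z\in\bS^1 : A_{\Gamma_0,\mathrm{BS}}^{\circ n}(z)\in\bS^1\cap\{w:\mathrm{Im}(w)>0\}\text{ for all } n\geq 0\}
\end{equation*}
is a Cantor-like (hence uncountable) subset of the upper semi-circle. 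Because pre-periodic points of $A_{\Gamma_0,\mathrm{BS}}$ form a countable set, I can pick $x\in T$ that is not pre-periodic under $A_{\Gamma_0,\mathrm{BS}}$; equivalently, $A_{\Gamma_0,\mathrm{BS}}^{\circ a}(x)\neq A_{\Gamma_0,\mathrm{BS}}^{\circ b}(x)$ whenever $a\neq b$.

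Now set $y:=A_{\Gamma_0,\mathrm{BS}}(x)$. Then $y\in\Gamma_0\cdot x$ since every piece of $A_{\Gamma_0,\mathrm{BS}}$ lies in $\Gamma_0$, and by the choice of $x$ both $y$ and its entire forward $A_{\Gamma_0,\mathrm{BS}}$-orbit sit on the upper semi-circle. Since $B$ acts as $A_{\Gamma_0,\mathrm{BS}}^{\circ 2}$ on the upper semi-circle, an induction gives
\begin{equation*}
B^{\circ n}(x)\ =\ A_{\Gamma_0,\mathrm{BS}}^{\circ 2n}(x),\qquad B^{\circ n}(y)\ =\ A_{\Gamma_0,\mathrm{BS}}^{\circ(2n+1)}(x),\qquad n\geq 0.
\end{equation*}
If $y$ belonged to $\mathrm{GO}_B(x)$, there would exist $m,n\geq 0$ with $B^{\circ m}(x)=B^{\circ n}(y)$, forcing $A_{\Gamma_0,\mathrm{BS}}^{\circ 2m}(x)=A_{\Gamma_0,\mathrm{BS}}^{\circ(2n+1)}(x)$; but $2m\neq 2n+1$, contradicting non-pre-periodicity of $x$. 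Hence $y\in(\Gamma_0\cdot x)\setminus\mathrm{GO}_B(x)$, so $B$ is not orbit equivalent to $\Gamma_0$. The only step requiring a non-trivial argument is the construction of $T$ together with the existence of a non-pre-periodic point in it; once this is in place, the parity obstruction that concludes the proof is a two-line calculation.
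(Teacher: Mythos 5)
Your proof is correct, but it takes a genuinely different route from the paper. The paper's argument is a short, explicit fixed-point computation: assuming orbit equivalence (and using Proposition~\ref{b_s_poly_conjugate_prop_1}), it locates a fixed point $p$ of $g_2^{-1}\circ g_1^{-1}$ (hence of $B$) in the arc $\arc{\ \{1,2\}\{1,3\}\ }$, observes that $q=A_{\Gamma_0,\mathrm{BS}}(p)=g_1^{-1}(p)$ is again a fixed point of $B$, and notes that two distinct $B$-fixed points can never share a $B$-grand orbit although $p$ and $q$ share a $\Gamma_0$-orbit. You instead produce a \emph{wandering} witness: a non-pre-periodic point $x$ whose full forward $A_{\Gamma_0,\mathrm{BS}}$-orbit stays in the open upper semi-circle, so that $B$ acts there as $A_{\Gamma_0,\mathrm{BS}}^{\circ 2}$ and a parity count separates $x$ from $y=A_{\Gamma_0,\mathrm{BS}}(x)$ in the $B$-grand orbit relation. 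The ingredients you need are all available and correct: each upper arc maps under $A_{\Gamma_0,\mathrm{BS}}$ onto the complement of the interior of its mirror lower arc, hence over the whole upper semi-circle, so the closed-arc itinerary set surjects onto the full shift and is uncountable; pre-periodic points are countable (no iterate relation $A^{\circ a}=A^{\circ b}$ can hold on an arc, by expansivity); and a non-pre-periodic point with such an itinerary automatically avoids the two real-axis vertices, which is in effect how your strict-inequality set $T$ gets populated -- it would be slightly cleaner to build the itinerary set with closed upper arcs and then discard the countably many pre-periodic points, rather than asserting the open-condition set is Cantor-like outright. The trade-off: the paper's proof is two lines of algebra with concretely named group elements and needs no symbolic dynamics, while yours is more robust -- it shows the failure of orbit equivalence at a ``generic'' point rather than at special periodic ones, and the same parity obstruction would apply to any map obtained by replacing $A_{\Gamma_0,\mathrm{BS}}$ by $A_{\Gamma_0,\mathrm{BS}}^{\circ 2}$ on a forward-invariant-enough region, independent of the particular break-point combinatorics exploited in the paper.
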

\begin{proof}
By way of contradiction, let us assume that $B$ and $\Gamma_0$ are orbit equivalent; i.e., $\Gamma_0.x=\mathrm{GO}_B(x)$, for all $x\in\bS^1$. Since $\Gamma_0$ and $A_{\Gamma_0, \mathrm{BS}}$ are orbit equivalent by Proposition~\ref{b_s_poly_conjugate_prop_1}, it follows that $\mathrm{GO}_{A_{\Gamma_0,\mathrm{BS}}}(x)=\mathrm{GO}_B(x)$, for all $x\in\bS^1$.

The definition of $B$ implies that it acts as $g_2^{-1}\circ g_1^{-1}$ on $\arc{\ \{1,2\}\{1,3\}\ }$. Moreover, since $g_2^{-1}\circ g_1^{-1}$ maps the arc $\arc{\ \{1,2\}\{1,3\}\ }$ to the complement of the interior of the arc $\arc{\ 2_- 3_-\ }$, it follows that $g_2^{-1}\circ g_1^{-1}$, and hence $B$, has a fixed point $p$ in $\arc{\ \{1,2\}\{1,3\}\ }$ (compare Figure~\ref{cfmgen}). Also note that $q:=A_{\Gamma_0, \mathrm{BS}}(p)=g_1^{-1}(p)\in\arc{\ 2 3\ }$ belongs to the $A_{\Gamma_0, \mathrm{BS}}$-grand orbit of $p$. On the other hand, $q$ is also a fixed point of $B$: 
$$
B(q)= A_{\Gamma_0, \mathrm{BS}}^{\circ 2}(A_{\Gamma_0, \mathrm{BS}}(p))=A_{\Gamma_0, \mathrm{BS}}(A_{\Gamma_0, \mathrm{BS}}^{\circ 2}(p))=A_{\Gamma_0, \mathrm{BS}}(B(p))=A_{\Gamma_0, \mathrm{BS}}(p)=q.
$$ 
Thus, $p$ and $q$ cannot lie in the same grand orbit under $B$. Hence, $\mathrm{GO}_{A_{\Gamma_0,\mathrm{BS}}}(p)\neq\mathrm{GO}_B(p)$, a contradiction.
\end{proof}

\section{Combinatorial characterization of Bowen-Series maps and higher Bowen-Series maps}\label{sec-combchar}

\subsection{Combinatorial characterization of Bowen-Series maps}\label{bs_char_sec}
The canonical extension of a Bowen-Series map of a Fuchsian group uniformizing a sphere with finitely many punctures and zero/one/two order two orbifold points
(as discussed in Sections \ref{b_s_punc_sphere_subsec} and \ref{b_s_punc_sphere_orbifold_subsec}) restricts to a self-homeomorphism of the boundary of its domain of definition. We will now see that this property characterizes Bowen-Series maps (of Fuchsian groups uniformizing spheres with finitely many punctures and zero/one/two order two orbifold points) among all \pwfm maps.

Let us quickly recall some notation. Following Definition~\ref{pwm_def}, we denote the pieces of a piecewise M{\"o}bius map $A:\bS^1\to\bS^1$ by $g_1,\cdots, g_k\in\textrm{Aut}({\disk})$ such that $A\vert_{I_j}=g_j$, where $\{I_1,\cdots, I_k\}$ is a partition of $\bS^1$ by closed arcs (i.e., $\mathbb{S}^1=\bigcup_{j=1}^k I_j$, and $\Int{I_m}\cap\Int{I_n}=\emptyset$ for $m\neq n$). We set $\Gamma_A=\langle g_1,\cdots, g_k\rangle$. The canonical domain of definition of $\widehat{A}$ in $\overline{\D}$ is denoted by $\mathcal{D}$. Finally, we set $R=\disk\setminus\mathcal{D}$ to be the fundamental domain of  $\widehat{A}$, and denote the set of all ideal vertices of $R$ (in $\mathbb{S}^1$) by $S$.

\begin{prop}\label{bs_char_1}
	Let $A:\mathbb{S}^1\to\mathbb{S}^1$ be a \pwfm map. Then the following are equivalent.
	\begin{enumerate}\upshape
		\item The canonical extension $\widehat{A}$ maps the boundary of its (canonical) domain of definition onto itself; i.e., $\widehat{A}(\partial R)=\partial R$. 
		\item $\disk/\Gamma_A$ is a sphere with finitely many punctures (possibly with one/two order two orbifold points), and $A$ is a Bowen-Series map of $\Gamma_A$. In particular, $A$ is orbit equivalent to $\Gamma_A$.
	\end{enumerate}
\end{prop}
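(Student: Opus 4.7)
\emph{Sketch of approach.} The direction $(2) \Rightarrow (1)$ is essentially a direct unpacking of Definition~\ref{b_s_map_def}: each piece $g_j = h(s_j)$ of the Bowen-Series map is a side-pairing transformation of the fundamental polygon $R$, carrying the extended edge $C(s_j) = \gamma_j$ to the paired edge. Hence $\widehat{A}(\partial R) \subseteq \partial R$; since every edge of $R$ appears as some $C(s_j)$ and the pairing (including self-pairings coming from order-two elliptic side-pairings) is surjective on the edge set, equality holds.

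For $(1) \Rightarrow (2)$, the plan is to recover the entire punctured-sphere Fuchsian structure from the single geometric hypothesis. First, since each $g_j \in \Gamma_A$ is a M\"obius transformation and carries the geodesic $\gamma_j$ onto an element of $\partial R$, there is a map $\sigma: \{1,\ldots,k\}\to\{1,\ldots,k\}$ with $g_j(\gamma_j) = \gamma_{\sigma(j)}$. The Markov property combined with $\deg A \geq 2$ and the orientation-preserving action of $g_j$ on $\bS^1$ forces $g_j(I_j) = \bS^1 \setminus \Int{I_{\sigma(j)}}$, which determines the entire transition matrix. A tessellation comparison---the tile of $\langle g_j\rangle \cdot R$ sharing the edge $\gamma_{\sigma(j)}$ with $R$ is simultaneously $g_j \cdot R$ and $g_{\sigma(j)}^{-1} \cdot R$---then yields $g_{\sigma(j)} = g_j^{-1}$, so that $\sigma$ is an involution whose fixed points correspond precisely to order-two elliptic side-pairings.

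The crucial combinatorial input is the continuity of $A$ at each break-point $x_{j+1}$: the relation $g_j(x_{j+1}) = g_{j+1}(x_{j+1})$ of Remark~\ref{rmk-ai} says that this common value is simultaneously a vertex of $\gamma_{\sigma(j)}$ and of $\gamma_{\sigma(j+1)}$, so those two edges are adjacent in $R$. Thus the involution $\sigma$ sends each pair of cyclically adjacent edges to a pair of cyclically adjacent edges---exactly the combinatorial pattern whose failure produces the discontinuities of Bowen-Series maps on positive-genus surfaces, as discussed around Figure~\ref{closed_bs_fig}. Unwinding this adjacency-preserving involution makes the vertex cycles of $R$ explicitly computable, and an Euler-characteristic count then shows that $\D/\Gamma_A$ is topologically a sphere with one puncture per ideal vertex cycle, together with an order-two orbifold point for each fixed edge of $\sigma$; the adjacency constraint on $\sigma$ permits at most two such fixed edges.

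Finally, Poincar\'e's polygon theorem applies to $(R, \{g_j\})$: the angle condition at ideal vertex cycles is automatic (parabolic stabilizers), and at fixed edges it follows from the $\pi$-rotation of the order-two elliptic. This identifies $R$ as a genuine fundamental polygon for $\langle g_1,\ldots,g_k\rangle = \Gamma_A$ with the claimed quotient type, and then direct comparison with Definition~\ref{b_s_map_def} shows that $A$ is the Bowen-Series map of $\Gamma_A$ associated with $R$; the orbit-equivalence clause is then Proposition~\ref{orbit_equiv_prop}. The principal obstacle is the penultimate step: translating the purely local continuity-plus-involution data into the global genus-zero (and bounded-orbifold) conclusion, since this is where the failure mode illustrated in Figure~\ref{closed_bs_fig} must be ruled out from purely combinatorial hypotheses.
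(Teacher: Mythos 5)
Your overall route coincides with the paper's: read off an edge permutation $\sigma$ from $\hA(\partial R)=\partial R$, use continuity at the break-points to see that $\sigma$ is induced by a symmetry of the cyclically ordered vertex set, identify self-paired edges with order-two elliptic pieces, and finish with the Poincar\'e polygon theorem and a comparison with Definition~\ref{b_s_map_def} (orbit equivalence then being Proposition~\ref{orbit_equiv_prop}). The genuine gap is your step obtaining $g_{\sigma(j)}=g_j^{-1}$ by a ``tessellation comparison''. At that point of the argument $R$ is only the fundamental domain of the \emph{map} $A$ (Definition~\ref{def-canonicalextension-domain}); nothing yet guarantees that distinct $\Gamma_A$-translates of $R$ have disjoint interiors, nor that there is a well-defined single ``tile adjacent to $R$ across $\gamma_{\sigma(j)}$'' --- for a general \pwfm map the translates of $R$ do overlap (this already happens for higher Bowen--Series maps), and the disjointness you invoke is exactly what the Poincar\'e step, which you postpone to the very end, is supposed to produce; as ordered, the argument is circular. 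What your combinatorics actually yields is that $h:=g_{\sigma(j)}\circ g_j$ fixes both ideal endpoints of $\gamma_j$, so $h$ is either the identity or a hyperbolic element with axis $\gamma_j$; ruling out the hyperbolic alternative is the real content of this step, and the tessellation argument does not supply it. (A smaller instance of the same ordering problem sits in your step 2: excluding a piece with $g_j(I_j)=I_{\sigma(j)}$ uses the fact that the induced vertex map is a rotation or reflection of the $k$-gon, with the rotation case killed by $\deg A\geq 2$; that dichotomy rests on the adjacency observation you only make in step 4.)

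The paper closes this gap by reordering. Once the reflection structure is in hand, the pieces attached to ``half'' of the edges, $g_1,\dots,g_r$, already pair \emph{all} sides of $R$ (the self-paired edges being carried by order-two elliptics, exactly as you argue), so Poincar\'e's theorem is applied to $\Gamma'=\langle g_1,\dots,g_r\rangle$ alone: $R$ is a fundamental polygon for $\Gamma'$, the quotient is a punctured sphere with at most two order-two orbifold points, and in particular every ideal vertex of $R$ is a parabolic fixed point. Only then are the remaining pieces treated: $g_{\sigma(j)}\circ g_j$ is an element of the discrete group $\Gamma_A$ fixing two distinct parabolic fixed points, hence is the identity, which gives $g_{\sigma(j)}=g_j^{-1}$ and $\Gamma_A=\Gamma'$, and $A$ is then the Bowen--Series map for $R$ by inspection. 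If you rearrange your sketch in this order it goes through. Note also that the ``principal obstacle'' you single out (the global genus-zero conclusion) is in fact the easy part --- it is immediate from the mirror-type side-pairing pattern, i.e., from your Euler-characteristic count --- whereas the parabolicity of the ideal vertex cycles, which you dismiss parenthetically as automatic in the Poincar\'e step, is precisely the nontrivial output on which the final identity argument depends.
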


Before proceeding with the proof of Proposition \ref{bs_char_1}, we provide a concrete example.

\begin{figure}[ht!]
	\begin{tikzpicture}
		\node[anchor=south west,inner sep=0] at (0,0) {\includegraphics[width=0.492\linewidth]{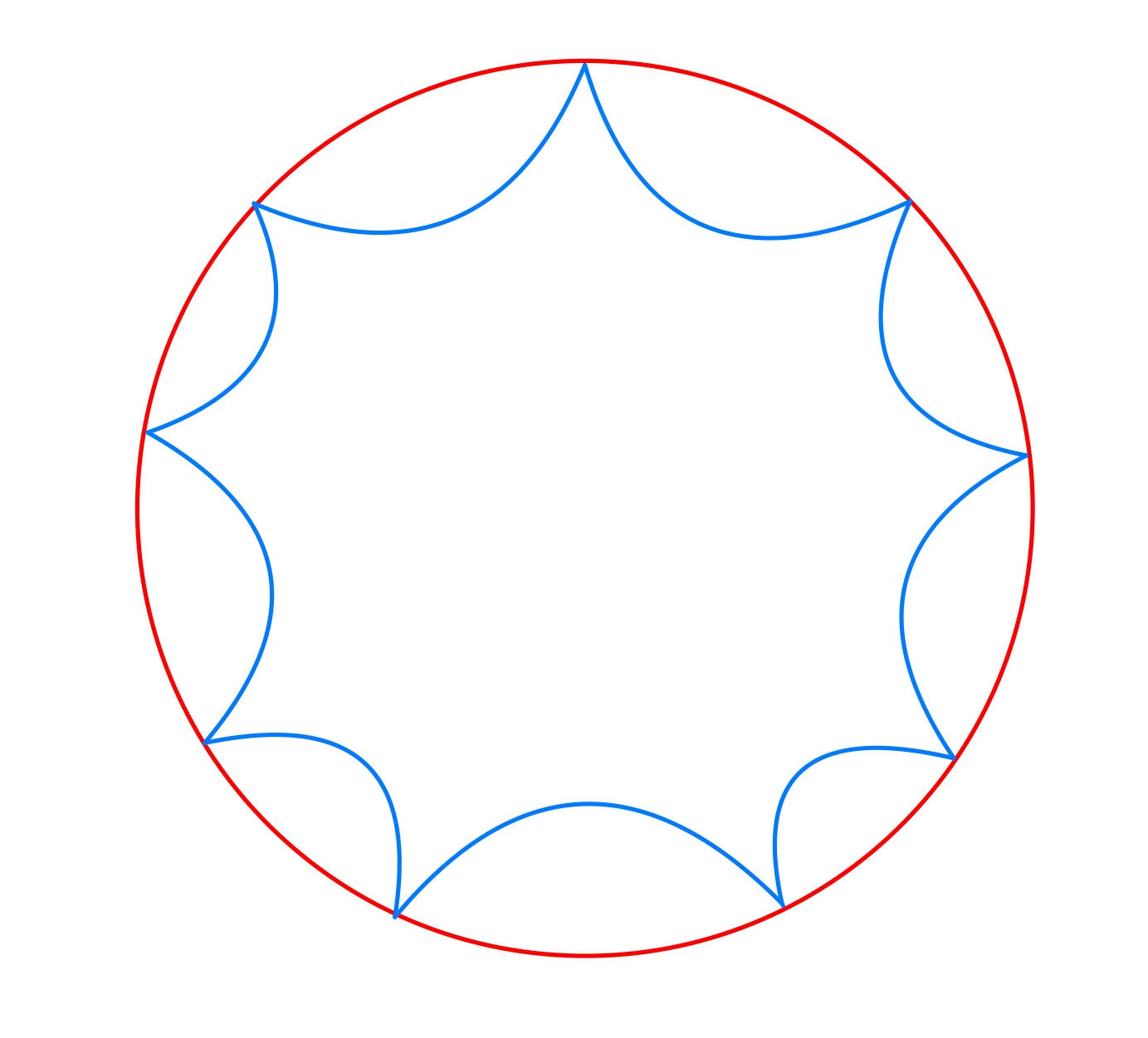}};
		\node[anchor=south west,inner sep=0] at (6.6,0) {\includegraphics[width=0.48\linewidth]{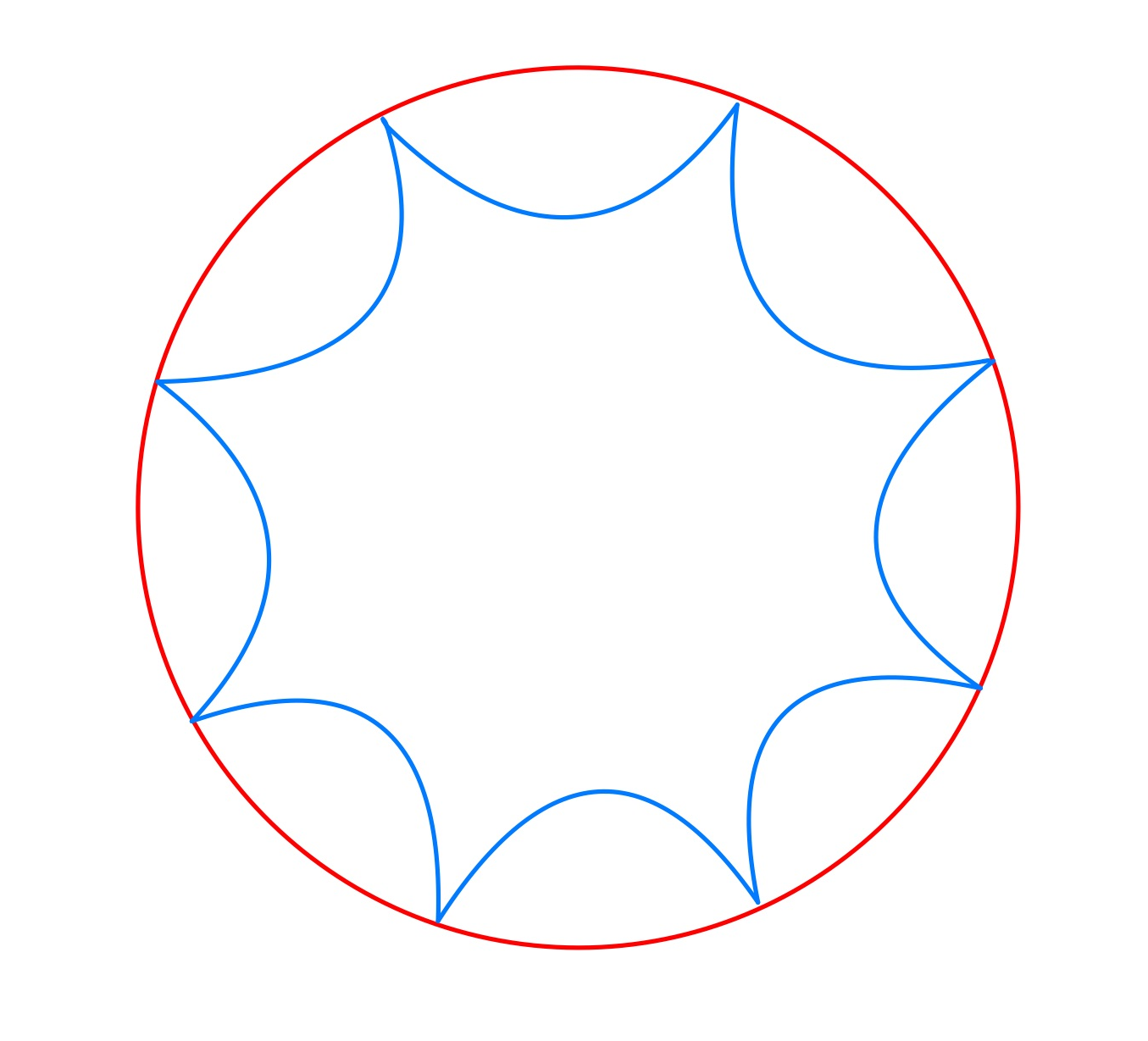}};
		\node at (3,2.8) {\begin{huge}$R$\end{huge}};
		\node at (9.6,2.8) {\begin{huge}$R$\end{huge}};
		\node at (3.2,5.5) {$x_1$}; 
		\node at (1.88,5.22) {$g_1$}; 
		\node at (1.12,4.7) {$x_2$}; 
		\node at (0.7,4) {$g_2$}; 
		\node at (0.4,3.32) {$x_3$}; 
		\node at (0.54,2.45) {$g_3$}; 
		\node at (0.8,1.54) {$x_4$}; 
		\node at (1.28,0.92) {$g_4$}; 
		\node at (2,0.4) {$x_5$}; 
		\node at (3.2,0.24) {$g_5$}; 
		\node at (4.5,0.4) {$x_6$}; 
		\node at (4.96,0.84) {$g_6$}; 
		\node at (5.46,1.4) {$x_7$}; 
		\node at (5.8,2.2) {$g_7$}; 
		\node at (6,3.1) {$x_8$}; 
		\node at (5.7,3.84) {$g_8$}; 
		\node at (5.25,4.6) {$x_9$}; 
		\node at (4.45,5.25) {$g_9$}; 
		
		\node at (10.6,5.22) {$x_1$};
		\node at (9.7,5.4) {$g_1$};
		\node at (8.44,5.18) {$x_2$};
		\node at (7.6,4.5) {$g_2$};
		\node at (7.1,3.5) {$x_3$};
		\node at (7,2.5) {$g_3$};
		\node at (7.4,1.5) {$x_4$};
		\node at (7.9,0.9) {$g_4$};
		\node at (8.8,0.4) {$x_5$};
		\node at (9.8,0.28) {$g_5$};
		\node at (10.8,0.48) {$x_6$};
		\node at (11.56,1.1) {$g_6$};
		\node at (12.1,1.8) {$x_7$};
		\node at (12.32,2.56) {$g_7$};
		\node at (12.16,3.6) {$x_8$};
		\node at (11.5,4.66) {$g_8$};
	\end{tikzpicture}
	\caption{Illustrating  the proof of Proposition~\ref{bs_char_1}.} 
	\label{odd_even_case_fig}
\end{figure}

In the left figure in Figure \ref{odd_even_case_fig}, $\partial R$ has $9$ sides; i.e., $k=9$ and $r=5$. In the right figure in Figure \ref{odd_even_case_fig}, $\partial R$ has $8$ sides; i.e., $k=8$ and $r=4$.

\begin{proof}[Proof of Proposition \ref{bs_char_1}:] 1) $\implies$ 2). We refer the reader to Figure \ref{odd_even_case_fig}.
	The condition $\widehat{A}(\partial R)=\partial R$ implies that $\widehat{A}\vert_{\partial R}$ is a self-homeomorphism preserving the set $S$.
	(Recall that $S$ is the set of all ideal vertices of $R$ in $\mathbb{S}^1$.) In particular, $\widehat{A}\vert_{S}$ is conjugate to the action of a symmetry $\phi$ of a regular $k$-gon restricted to its vertices.
	
	First note that if $\phi$ is a rotational symmetry of a regular $k$-gon, then there exists some $l\in\{1,\cdots, k\}$ such that $A(x_j)=x_{j+l-1}$, for each $j\in\{1,\cdots, k\}$. But this will force $A$ to be a homeomorphism of $\mathbb{S}^1$ contradicting the assumption that $A\vert_{\mathbb{S}^1}$ has degree at least two. Therefore, $\phi$ must be a reflection symmetry of a regular $k$-gon.
	
	We now need to consider two cases.
	\smallskip

	\noindent\textbf{Case 1: $k=2r-1$, for some integer $r\geq 2$.} For a regular polygon with odd number of sides, the axis of the reflection symmetry $\phi$ connects the midpoint of one side to the opposite vertex. Thus, possibly after renumbering the ideal vertices $x_1,\cdots, x_k$ of $R$, we can assume that the ``axis of symmetry'' of $\widehat{A}\vert_{\partial R}$ connects $x_1$ to the geodesic of $\disk$ connecting $x_r$ to $x_{r+1}$. Hence, $A(x_1)=x_1$, and $\{x_{1+i}, x_{1-i}\}$ is a $2$-cycle of $A$, for $i\in\{1,\cdots, r-1\}$.
	
	In particular, $A$ maps the geodesic connecting $x_{r}$ to $x_{r+1}$ onto itself, and switches it endpoints. Thus $g_{r}$ must have a fixed point on this geodesic. It follows that $g_{r}^{\circ 2}$ has three fixed points, and hence must equal the identity map. In other words, $g_{r}$ has an elliptic fixed point of order two on the side of $\partial R$ connecting $x_{r}$ to $x_{r+1}$.
	
	Let $\Gamma':=\langle g_{1},\cdots, g_{r}\rangle\leq\Gamma_A$. Note that $R$ is (the interior of) an ideal polygon in $\disk$, and the M{\"o}bius transformations $g_{1},\cdots, g_{r}$ pair the sides of $\partial R$. By the Poincar{\'e} polygon theorem, $R$ is (the interior of) a fundamental polygon for $\Gamma'$. Furthermore, the side pairing patterns show that $\Sigma:=\disk/\Gamma'$ is a sphere with $r$ punctures and an orbifold point of order two. The ideal vertices of $R$ correspond to the punctures of $\Sigma$, so they are parabolic fixed points of elements of $\Gamma'$.
	
	For $i\in\{0,\cdots, r-2\}$, the element $g_{k-i}\circ g_{1+i}\in\Gamma_A$ fixes the parabolic fixed points $x_{1+i}$ and $x_{2+i}$. Hence, these  maps must equal the identity map; i.e., $g_{k-i}=g_{1+i}^{-1}$, for $i\in\{0,\cdots, r-2\}$. Since $g_1,\cdots, g_k$ generate the group $\Gamma_A$, we now conclude that $\Gamma_A=\Gamma'$, and $A$ is the Bowen-Series map of $\Gamma_A$ associated to a fundamental polygon having $R$ as its interior.
	\bigskip

	\noindent\textbf{Case 2: $k=2r$, for some $r\geq 2$.} For a regular polygon with even number of sides, the axis of the reflection symmetry $\phi$ either connects the midpoints of opposite sides or connects opposite vertices. Thus, possibly after renumbering the ideal vertices $x_1,\cdots, x_k$ of $R$, we can assume that the ``axis of symmetry'' of $\widehat{A}\vert_{\partial R}$ either connects $x_1$ to $x_{r+1}$, or connects the geodesic of $\disk$ having endpoints at $x_1, x_2$ to the geodesic having endpoints at $x_{r+1}, x_{r+2}$. 
	
	In the former case, we have that $A(x_1)=x_1$, $A(x_{r+1})=x_{r+1}$, and $\{x_{1+i}, x_{1-i}\}$ is a $2$-cycle of $A$, for $i\in\{1,\cdots, r-1\}$. Again, setting $\Gamma':=\langle g_{1},\cdots, g_{r}\rangle\leq\Gamma_A$, we see in light of the Poincar{\'e} polygon theorem that $R$ is (the interior of) a fundamental polygon for $\Gamma'$ with side pairing transformations $g_1,\cdots, g_r$, and $\Sigma:=\disk/\Gamma'$ is a sphere with $r+1$ punctures. As the ideal vertices of $R$ correspond to the punctures of $\Sigma$, they are parabolic fixed points of elements of $\Gamma'$. For $i\in\{0,\cdots, r-1\}$, the element $g_{k-i}\circ g_{1+i}\in\Gamma_A$ fixes the parabolic fixed points $x_{1+i}$ and $x_{2+i}$ of elements of $\Gamma_A$. By discreteness of $\Gamma_A$, these maps must equal the identity map; i.e., $g_{k-i}=g_{1+i}^{-1}$, for $i\in\{0,\cdots, r-1\}$. Therefore, $\Gamma_A=\Gamma'$, and $A$ is the Bowen-Series map of $\Gamma_A$ associated to a fundamental polygon having $R$ as its interior.
	
	In the latter case, $\{x_{2+i}, x_{1-i}\}$ is a $2$-cycle of $A$ for $i\in\{0,\cdots, r-1\}$. Combining the arguments used above, one easily sees that in this case, $\Gamma_A=\langle g_{1},\cdots, g_{r+1}\rangle$, and $\Sigma:=\disk/\Gamma_A$ is a sphere with $r$ punctures and two orbifold points of order two. Moreover, $A$ is the Bowen-Series map of $\Gamma_A$ associated to a fundamental polygon having $R$ as its interior.
	
	2) $\implies$ 1). This directly follows from the definition of Bowen-Series maps.
\end{proof}

\subsection{Combinatorial characterization of higher Bowen-Series maps}\label{higher_bs_char_sec}

In this subsection, we will employ the notation introduced in Definition~\ref{def-nofold} of a \hdm.

In Section~\ref{sec-fold}, we defined and studied the structure of a specific \hdm, which we called a higher Bowen-Series map (see Definition~\ref{higher_bs_def}). The canonical extension of a higher Bowen-Series map has the additional property of being injective on each connected component of $\partial R\setminus \partial D$ (here we consider the boundary in the closed disk $\overline{\D}$). As we shall see next, this property characterizes higher Bowen-Series maps among all higher degree maps without folding. Recall that for a piecewise Fuchsian map $A$, the group $\Gamma_A$ is the Fuchsian group generated by the pieces of $A$.

\begin{prop}\label{higher_bs_char_prop}
	Let $A:\bS^1\to\bS^1$ be a \hdm. Then, the following are equivalent.
	\begin{enumerate}\upshape
		\item The canonical extension $\widehat{A}$ is injective on each connected component of $\partial R\setminus \partial D$ (boundary taken in $\overline{\D}$). 
		
		\item $\Gamma_A$ is a punctured sphere Fuchsian group, and $A$ is a higher Bowen-Series map of $\Gamma_A$. In particular, $A$ is orbit equivalent to $\Gamma_A$.
	\end{enumerate}
\end{prop}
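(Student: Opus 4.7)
The implication (2) $\implies$ (1) follows quickly by combining Proposition~\ref{hbs_alternative_prop} with Proposition~\ref{bs_char_1}. On each component $C_l$ of $\partial R\setminus\partial D$, the higher Bowen-Series map $A$ agrees (as a circle map) with the Bowen-Series map of $\Gamma_A$ associated with a specific $(2k-2)$-gon fundamental domain $\overline{D}\cup\overline{g_l(P)}$, and by Proposition~\ref{bs_char_1} the canonical extension of that Bowen-Series map carries the boundary of its fundamental polygon to itself as a homeomorphism; restricting this homeomorphism to $C_l$ yields the required injectivity.

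For (1) $\implies$ (2), let $k$ be the number of edges of $D$, let $p_1,\dots,p_k$ denote its ideal vertices in cyclic order on $\bS^1$, and let $C_l$ be the component of $\partial R\setminus\partial D$ joining the consecutive vertices $p_l,p_{l+1}$; by the definition of a \hdm\ each $p_l$ is a fixed point of $A$. Write $m_l$ for the number of edges of $R$ contained in $C_l$. The plan is first to show $m_l=k-1$ for every $l$ and then to reconstruct a $(2k-2)$-gon fundamental domain of $\Gamma_A$ together with its side-pairings from the pieces of $A$. Injectivity of $\widehat A|_{C_l}$, together with the no-fold condition and continuity of $A$, forces the $m_l$ pieces on $C_l$ to map the corresponding edges of $R$ bijectively onto $m_l$ distinct consecutive edges of $\partial D$; since each piece is an orientation-preserving M\"obius transformation fixing the common endpoints $p_l,p_{l+1}$, this chain of image edges runs from $p_l$ to $p_{l+1}$ inside $\partial D$ and hence has length either $1$ or $k-1$. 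The value $m_l=1$ is however impossible: it would make $p_l$ and $p_{l+1}$ adjacent vertices of $R$, contradicting that the edge of $D$ joining them is a diagonal of $R$ in the sense of Definition~\ref{def-canonicalextension-domain} (i.e., connects \emph{non}-adjacent vertices of $R$). Therefore $m_l=k-1$ for every $l$, and $R$ decomposes as $\overline D\cup P_1\cup\cdots\cup P_k$, where each $P_l$ is an ideal $k$-gon attached to $\overline D$ along the edge $[p_l,p_{l+1}]$.

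Next, fix any index $l_0$ and set $W:=\overline D\cup\overline{P_{l_0}}$, an ideal $(2k-2)$-gon. The $k-1$ pieces of $A$ on $C_{l_0}$ give elements $h_1,\dots,h_{k-1}\in\Gamma_A$, with $h_j$ carrying the $j$-th edge of $C_{l_0}$ bijectively onto the $j$-th edge of $\partial D\setminus[p_{l_0},p_{l_0+1}]$; these are the candidate side-pairings of $W$. Since every $p_j$ is fixed by $A$, the corresponding cycle transformations at the ideal vertices of $W$ fix $p_j$; combining this with Lemma~\ref{no_mixed} (ruling out mixed-type periodic break-points) and the expansiveness inherent to any \pwfm\ map (forcing both one-sided multipliers at $p_j$ to be $\ge 1$), one shows these cycle transformations must be parabolic. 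Poincar\'e's polygon theorem then presents $W$ as a fundamental domain of the Fuchsian group $\langle h_1,\dots,h_{k-1}\rangle$ uniformizing a sphere with $k$ punctures. A direct verification from the piecewise definition of $A$ shows $P_l=h_l\cdot P_{l_0}$ for $l\ne l_0$, so that $\Gamma_A=\langle h_1,\dots,h_{k-1}\rangle$, and Proposition~\ref{hbs_alternative_prop} then identifies $A$ with the higher Bowen-Series map of $\Gamma_A$ associated with $W$.

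The main obstacle is the vertex-cycle step: one must exclude hyperbolic cycle transformations at the cusp-candidate vertices $p_j$ (which would preclude a punctured-sphere quotient) and simultaneously rule out elliptic cycles at the other ideal vertices of $W$ (which would introduce unwanted orbifold points). Here the interplay between expansiveness, Lemma~\ref{no_mixed}, the fixity of every $p_j$ under $A$, and the rigid chain structure forced by injectivity on each $C_l$ is essential; the argument must track the identifications globally around $\partial W$ and verify consistency, and this global book-keeping, together with checking that each $P_l$ is the $h_l$-translate of $P_{l_0}$, constitutes the technical heart of the proof.
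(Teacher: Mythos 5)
Your overall architecture matches the paper's: $(2)\Rightarrow(1)$ via Proposition~\ref{hbs_alternative_prop} and Proposition~\ref{bs_char_1}, and for $(1)\Rightarrow(2)$ the count that every component of $\partial R\setminus\partial D$ carries exactly $k-1$ edges (your exclusion of the length-one case via the fact that the edge of $D$ joining $p_l,p_{l+1}$ is a \emph{diagonal} of $R$ is a clean variant of the paper's orientation-reversal/degree argument, although the clause about each piece ``fixing the common endpoints $p_l,p_{l+1}$'' is a slip --- only $A(p_l)=p_l$, $A(p_{l+1})=p_{l+1}$ is true or needed), then the polygon $W=\overline D\cup\overline{P_{l_0}}$, Poincar\'e, and a final appeal to Proposition~\ref{hbs_alternative_prop}. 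However, the two steps you defer are exactly where the content lies, and as proposed they do not go through. Your mechanism for parabolicity of the vertex cycles is insufficient: Lemma~\ref{no_mixed} plus expansiveness only show that at each $p_j$ the two one-sided multipliers are either both equal to $1$ or both greater than $1$; they do not exclude the case where both pieces adjacent to $p_j$ are hyperbolic with repelling fixed point at $p_j$. (Also, elliptic cycle transformations at \emph{ideal} vertices cannot occur at all, since elliptics have no fixed point on $\bS^1$, so that half of your ``main obstacle'' is vacuous.) In the paper, parabolicity is not an input: it is read off after the side-pairing pattern of $W_1$ identifies $\D/\Gamma_1$ as a $k$-punctured sphere with the vertices of $D$ as cusps, and discreteness of $\Gamma_A$ then gives that the full stabilizer of each $p_j$ is an infinite cyclic parabolic subgroup.

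The more serious gap is the assertion that ``a direct verification from the piecewise definition of $A$ shows $P_l=h_l\cdot P_{l_0}$''. There is nothing to verify against: $A$ is an abstract \hdm satisfying (1), and a priori its pieces on the sectors $C_l$, $l\neq l_0$, could be arbitrary elements of $\Gamma_A$ carrying edges of $R$ onto edges of $D$; nothing you have written forces them to be the compositions occurring in the higher Bowen-Series recipe. This is precisely what the paper's Claim 1, Claim 2 and the ensuing computation accomplish: one first shows $\Gamma_A$ is generated by the primitive parabolics stabilizing the vertices of $D$; then, since each sector group $\Gamma_j$ is a lattice and hence of finite index in $\Gamma_A$, every piece adjacent to a vertex of $D$ must equal such a primitive parabolic or its inverse (not a proper power), and the mapping properties of $\widehat A$ (which edge goes to which edge, and on which side of it) fix the signs, yielding inductively $g_{k+i}=g_i\circ g_{k-1}^{-1}$ and hence $P_2=g_{k-1}(P_1)$, and so on around $D$. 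Without this argument (or a substitute), your proof does not rule out a \hdm satisfying (1) whose pieces on the remaining sectors differ from the higher Bowen-Series construction, so the identification of $A$ with $A_{\Gamma_A,\mathrm{hBS}}$ --- the conclusion of the proposition --- is not established. You correctly flag this as the technical heart, but flagging it is not proving it.
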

\begin{proof}
	1) $\implies$ 2). We first note that thanks to Proposition~\ref{prop-cfdhd-orbeq-gen}, orbit equivalence of $A$ and $\Gamma_A$ will follow once we prove that $A$ is a higher Bowen-Series map.
	
	We label the edges $\alpha_1, \cdots, \alpha_l$ of $R$ and the edges $\delta_1, \cdots, \delta_k$ of $D$ (in counterclockwise order) such that $\delta_1$ and $\alpha_1$ have a common endpoint, and denote the vertices of $D$ by $p_1,\cdots, p_k$ (in counterclockwise order) where $\alpha_1$ and $\delta_1$ meet at $p_1$ (see Figure~\ref{hdm_fig}). We further assume that $\widehat{A}\vert_{\alpha_r}\equiv g_r$, for $r\in\{1,\cdots, l\}$. By definition, under a \hdm $A$, consecutive edges $\alpha_r, \alpha_{r+1}$ of $R$ go to consecutive edges of $D$. Thus, we have a continuous map $\hA: \partial R \to \partial D$.
	
	Note that both the endpoints of $\delta_1$ are fixed by $A$. The assumption that $\widehat{A}$ is injective on each connected component of $\partial R\setminus \partial D$ now implies that $\hA: \partial R \to \partial D$ is orientation-reversing, and the connected component of $\partial R\setminus \partial D$ containing $\alpha_1$ consists of exactly $k-1$ edges. (If the map were orientation-preserving, the degree of $A$ would be one, violating the assumption that $A$ has degree greater than one.) Moreover, $\widehat{A}$ maps $\alpha_1,\cdots, \alpha_{k-1}$ bijectively onto the edges $\delta_k,\cdots, \delta_2$ of $D$. Hence, each of the M{\"o}bius maps $g_1,\cdots, g_{k-1}$ carry the edges $\alpha_1,\cdots, \alpha_{k-1}$ of $R$ to the edges $\delta_k,\cdots, \delta_2$ of $D$. Thus, the M{\"o}bius maps $g_{1}^{\pm 1},\cdots, g_{k-1}^{\pm}$ pair the sides of the (closed) ideal polygon $W_1$ in $\disk$ bounded by $\alpha_1,\cdots, \alpha_{k-1}$, $\delta_2,\cdots, \delta_k$. Setting
	$\Gamma_1:= \langle g_1,\cdots, g_{k-1}\rangle,$ one sees that $W_1$ is a (closed) fundamental polygon for $\Gamma_1$. Furthermore, the side pairing patterns show that $\disk/\Gamma_1$ is a sphere with $k$ punctures where the vertices of $D$ bijectively correspond to the punctures of $\disk/\Gamma_1$. It also follows that on the counterclockwise arc from $p_1$ to $p_2$, the map $A$ agrees with the Bowen-Series map of $\Gamma_1$ associated with the fundamental polygon $W_1$.
	
	For $j\in\{1,\cdots, k\}$, we set
	$\Gamma_j:= \langle g_{1+(j-1)(k-1)},\cdots, g_{j(k-1)}\rangle.$ Applying the arguments of the previous paragraph repeatedly, one sees that the connected component of $\partial R\setminus \partial D$ containing $\alpha_{1+(j-1)(k-1)}$ consists of exactly $k-1$ edges, namely $\alpha_{1+(j-1)(k-1)},\cdots, \alpha_{j(k-1)}$. We denote by $W_j$ the closed ideal polygon bounded by the edges $\alpha_{1+(j-1)(k-1)},\cdots, \alpha_{j(k-1)}$ of $R$ and all the edges of $D$ except $\delta_j$. Then, $g_{1+(j-1)(k-1)},\cdots, g_{j(k-1)}$ pair the sides of $W_j$, and hence $W_j$ is a (closed) fundamental polygon for $\Gamma_j$. Furthermore, the side pairing patterns show that $\disk/\Gamma_j$ is a sphere with $k$ punctures where the vertices of $D$ bijectively correspond to the punctures of $\disk/\Gamma_j$. It also follows that on the counterclockwise arc from $p_j$ to $p_{j+1}$, the map $A$ agrees with the Bowen-Series map of $\Gamma_j$ associated with the fundamental polygon $W_j$.
	
	\begin{figure}[htb!]
	\begin{tikzpicture}
	\node[anchor=south west,inner sep=0] at (0,0) {\includegraphics[width=0.8\linewidth]{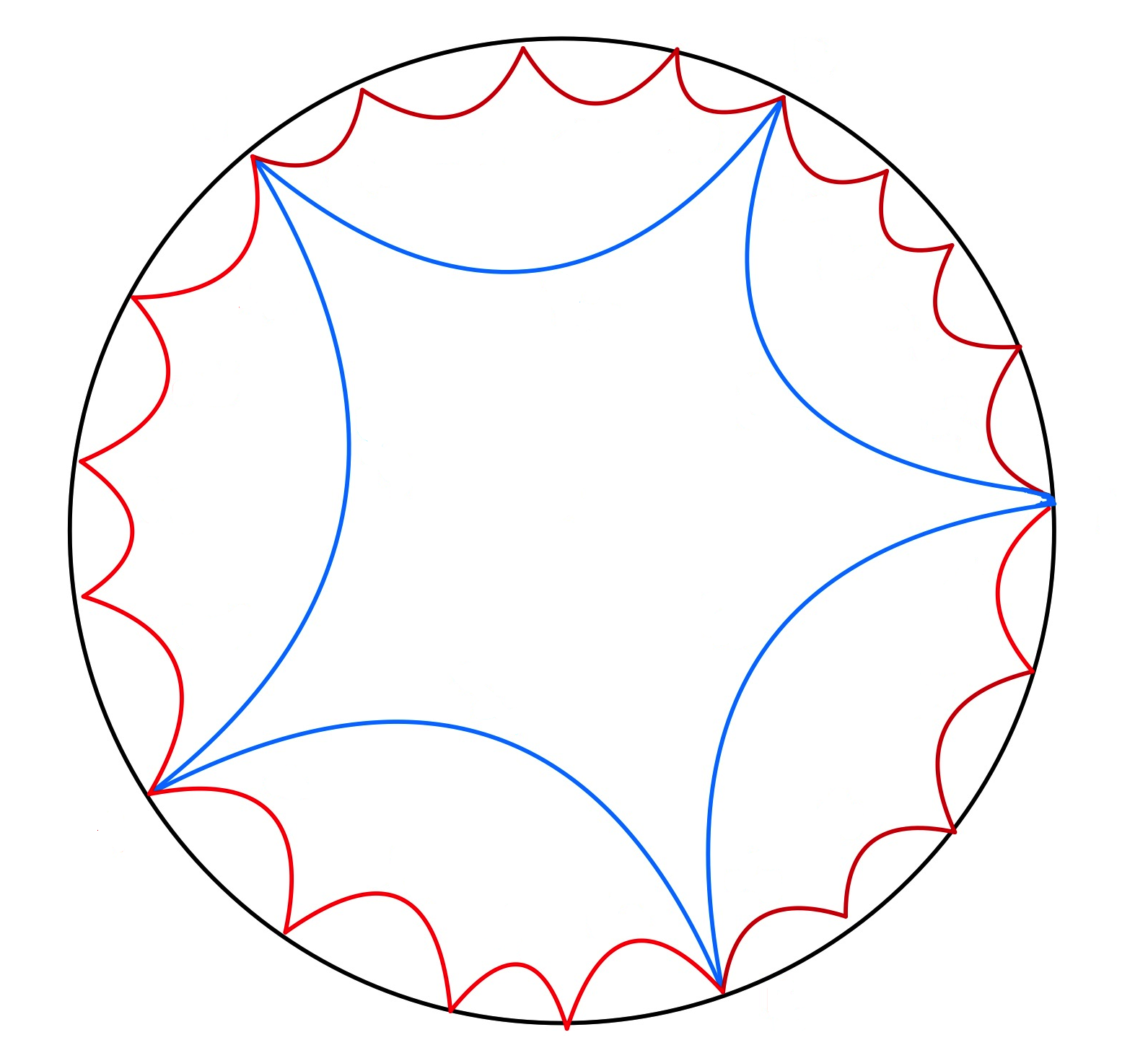}};
	\node at (5,5) {\begin{Huge}$D$\end{Huge}};
	\node at (4.2,3.2) {\begin{Large}$\delta_1$\end{Large}};
	 \node at (4.2,2.2) {\begin{large}$P_1$\end{large}};
	\node at (6.3,3.8) {\begin{Large}$\delta_2$\end{Large}};
         \node at (7,3.2) {\begin{large}$P_2$\end{large}};
        \node at (6.6,6) {\begin{Large}$\delta_3$\end{Large}};
          \node at (7.2,6.2) {\begin{large}$P_3$\end{large}};
         \node at (5,6.6) {\begin{Large}$\delta_4$\end{Large}};
          \node at (4.8,7.28) {\begin{large}$P_4$\end{large}};
        \node at (3.36,5.4) {\begin{Large}$\delta_5$\end{Large}};
        \node at (2.7,5) {\begin{large}$P_5$\end{large}};
        \node at (2.5,2.3) {$\alpha_1$};
        \node at (3.5,1.6) {$\alpha_2$};
        \node at (4.5,1.1) {$\alpha_3$};
        \node at (5.5,1.2) {$\alpha_4$};
        \node at (6.7,1.45) {$\alpha_5$};
        \node at (7.5,2.2) {$\alpha_6$};
        \node at (8.1,2.9) {$\alpha_7$};
        \node at (8.54,4.1) {$\alpha_8$};
        \node at (8.44,5.44) {$\alpha_9$};
        \node at (7.94,6.4) {$\alpha_{10}$};
        \node at (7.6,7) {$\alpha_{11}$};
        \node at (7,7.6) {$\alpha_{12}$};
        \node at (6.3,8.16) {$\alpha_{13}$};
        \node at (5,8.2) {$\alpha_{14}$};
        \node at (4,8) {$\alpha_{15}$};  
        \node at (3,7.7) {$\alpha_{16}$};
        \node at (2.3,6.7) {$\alpha_{17}$};
        \node at (1.8,5.5) {$\alpha_{18}$};
        \node at (1.6,4.36) {$\alpha_{19}$};
        \node at (2,3.36) {$\alpha_{20}$};
       
         	\end{tikzpicture}
		\caption{A \hdm arising from a five times punctured sphere group.}
		\label{hdm_fig}
	\end{figure}
	
	Further, the preceding discussion shows that $l=k(k-1)$. We set 
	$$
	\Gamma\equiv \Gamma_A:=\langle g_1,\cdots, g_{k(k-1)} \rangle.
	$$
	Note that the vertices of $D$ are fixed points of parabolic elements of $\Gamma$, and hence the elements of $\Gamma$ that fix a vertex $p_j$ ($j\in\{1,\cdots, k\}$) of $D$ form an infinite cyclic subgroup of $\Gamma$. Let $h_j$ be a generator of the stabilizer subgroup of $p_j$. 
	\bigskip
	
	\noindent\textbf{Claim 1: $\Gamma=\langle h_1,\cdots, h_k \rangle$.}
	\begin{proof}[Proof of Claim]
		Let us fix $j\in\{1,\cdots, k\}$. As $g_{1+(j-1)(k-1)}$ fixes the vertex $p_j$ of $D$, it is a power of $h_j$. Now observe that both $g_{1+(j-1)(k-1)}$ and $g_{2+(j-1)(k-1)}$ carry a vertex of $R$ to the vertex $p_{j-1}$ of $D$, so $g_{2+(j-1)(k-1)}\circ g_{1+(j-1)(k-1)}^{-1}$ fixes $p_{j-1}$. Therefore, $g_{2+(j-1)(k-1)}\circ g_{1+(j-1)(k-1)}^{-1}$ is a power of $h_{j-1}$. It follows that $g_{2+(j-1)(k-1)}$ lies in $\langle h_{j-1}, h_j \rangle$. Inductively, one sees that $g_r$  lies in $\langle h_1,\cdots, h_k\rangle$, for $r\in\{1,\cdots, k(k-1)\}$. The claim now follows.
	\end{proof}
	
	As $\Gamma$ is generated by $k$ parabolic transformations $h_1,\cdots, h_k$ (whose powers generate a $k$-times punctured sphere group $\Gamma_1\leq\Gamma$), it follows that $\D/\Gamma$ is a $k$-times punctured sphere.
	\bigskip
	
	\noindent\textbf{Claim 2: $\Gamma_j=\Gamma,\ j\in\{1,\cdots, k\}$.} 
	\begin{proof}[Proof of Claim]
		To this end, note that $\Gamma_j$ is generated by the parabolic elements 
		$$
		g_{1+(j-1)(k-1)}, g_{2+(j-1)(k-1)}\circ g_{1+(j-1)(k-1)}^{-1},\cdots, g_{j(k-1)}\circ g_{k-2+(j-1)(k-1)}^{-1}, g_{j(k-1)}^{-1}
		$$ 
		that fix the vertices of $D$. Hence, each such element is a power of some $h_i$. Since $\Gamma_j$ is a lattice, it must be a finite index subgroup of $\Gamma$. Therefore, each of the above parabolic elements must be equal to some $h_i$ or $h_i^{-1}$. This shows that $\Gamma_j=\Gamma$.
	\end{proof}
	
	For $j\in\{1,\cdots, k\}$, let us denote the interior of the ideal polygon bounded by $\delta_j, \alpha_{1+(j-1)(k-1)},\cdots, \alpha_{j(k-1)}$ by $P_j$. Then, $W_j=\overline{D}\cup \overline{P_j}$, $j\in\{1,\cdots, k\}$. To show that $A$ is a higher Bowen-Series map, it now suffices to prove (in the light of Proposition~\ref{hbs_alternative_prop}) that 
	$$
	P_j=g_{k+1-j}(P_1),\ \textrm{for}\ j\in\{2,\cdots, k\}.
	$$ 
	We demonstrate this for $j=2$, the proofs for other values of $j$ are similar.
	
	First observe that applying the arguments of the proof of Claim 2 on $j=1$, we can choose the generators of the stabilizer subgroups of $p_1,\cdots, p_k$ as follows:
	\begin{equation}
		h_1= g_1,\ h_2= g_{k-1}^{-1},\ h_3=g_{k-1}\circ g_{k-2}^{-1},\ \cdots,\ h_{k-1}=g_{3}\circ g_{2}^{-1},\ h_k=g_2\circ g_1^{-1}.
		\label{hdm_explicit_formula_1}
	\end{equation}
	Again, applying the arguments of the proof of Claim 2 on $j=2$, we first see that $g_k$ is equal to $h_2$ or $h_2^{-1}$. But we already know that $g_{k-1}=h_2^{-1}$. The mapping properties of $\widehat{A}$ (more precisely, the fact that $g_{k-1}$ carries $\alpha_{k-1}$ onto $\delta_2$ while $g_k$ carries $\alpha_k$ onto $\delta_1$, see Figure~\ref{hdm_fig}) now imply that $g_k=h_2=g_{k-1}^{-1}$. Next, the element $g_{k+1}\circ g_k^{-1}$ is equal to $h_1$ or $h_1^{-1}$; i.e., $g_{k+1}$ is equal to $g_1\circ g_k$ or $g_1^{-1}\circ g_k$. Observe that $g_{k+1}(\alpha_{k+1})=\delta_k$, while $g_k$ sends $\alpha_{k+1}$ into the region bounded by $\delta_1$ and the counterclockwise arc from $p_1$ to $p_2$. This forces $g_{k+1}$ to be equal to $g_1\circ g_k=g_1\circ g_{k-1}^{-1}$. Continuing this way, one concludes that $g_{k+i}= g_i\circ g_{k-1}^{-1}$, for $i\in\{1,\cdots, k-2\}$. This shows that $g_{k-1}^{-1}(\delta_2)=\alpha_{k-1}$, $g_{k-1}^{-1}(\alpha_k)=\delta_1$, and $g_{k-1}^{-1}(\alpha_{k+i})=\alpha_i$, $i\in\{1,\cdots, k-2\}$. Hence, $P_2=g_{k-1}(P_1)$.
	
	1) $\implies$ 2). This is clear from the definition of higher Bowen-Series maps.
\end{proof}

Propositions \ref{bs_char_1} and \ref{higher_bs_char_prop} give a combinatorial characterization   of Bowen-Series and higher
Bowen-Series maps among piecewise Fuchsian maps of the circle. While
these are not adequate to rule out the existence of other  mateable maps
of the circle, 
 they do suggest a positive answer to 
the following question, a version of which was formulated and explicitly posed to us by the referee (see also Question \ref{qn-moduli} below):

\begin{qn}
Are the (continuous) Bowen-Series and higher
Bowen-Series maps the only mateable maps in the sense of Definition \ref{def-mateable}? 
\end{qn}

\subsection{Higher Bowen-Series as a first return map}\label{hbs_first_return_subsec} In this subsection, we shall show that the higher Bowen-Series map arises naturally out of a piecewise M\"obius map defined on a \emph{disjoint} union of two circles in the complex plane. The corresponding Kleinian surface groups, as we shall show in Section \ref{mating_boundary_groups_subsec}, arise naturally from pinching a special collection of simple closed curves  (see Lemma \ref{inv_lami_pwfm} and Theorem~\ref{first_return_conf_model_thm}) on a punctured sphere, giving rise to a Kleinian surface group with accidental parabolics. For the time being, consider the following simple extension of a Bowen-Series map. We refer the reader to Figure \ref{fund_dom_punctured_sphere_fig}. If we pinch the diameter in the standard representation 
of a Bowen-Series map $A_{G_d}$ associated with the Fuchsian group $G_d$ uniformizing a sphere with $d+1$ punctures (in Figure \ref{fund_dom_punctured_sphere_fig}, the diameter is $\bbar{p_1,p_6}$), we obtain  two circles $ \bS^1_+, \bS^1_-$ attached at a point.
It will be more convenient to regard $ \bS^1_+, \bS^1_-$ as disjoint circles equipped with 
an auxiliary quotient map that identifies the south pole of one to the north pole of the other. The auxiliary quotient map will not play any role in the discussion in this paragraph.
The map $A_{G_d}$
induces a new map $\widetilde{A}_{G_d}$ on $\bS^1_+ \sqcup \bS^1_-$. The fundamental domain $R$ of $A_{G_d}$ gets pinched to $R_+ \sqcup R_-$, where each $R_\pm$ is an ideal polygon with $d$ sides . Further, if $\tilde{g}_1, \cdots, \tilde{g}_{d}$ are the pieces of $\widetilde{A}_{G_d}$ restricted to
$\bS^1_+$, then $(\tilde{g}_1)^{-1}, \cdots, (\tilde{g}_{d})^{-1}$ are the pieces of $\widetilde{A}_{G_d}$ restricted to
$\bS^1_-$. Further, $\widetilde{A}_{G_d}$ maps $\partial R_+$ to $\partial R_-$ and vice versa. As we shall 
see below (Proposition \ref{hbs_return_map_prop} and Corollary \ref{hbs_first_return_cor}), the first return map of $\widetilde{A}_{G_d}$ to $\bS^1_+$ (or $\bS^1_-$)  in this setup gives a higher Bowen-Series map.

\begin{rmk}\label{rmk-nopinchornfld}
	The above discussion also shows that a  higher Bowen-Series map for the orbifold groups in Section \ref{b_s_punc_sphere_orbifold_subsec} involves pinching a geodesic passing through the orbifold point, and hence the elliptic elements degenerate to parabolics and no torsion survives in the pinched group. Thus, pinching the examples in Section \ref{b_s_punc_sphere_orbifold_subsec} will not furnish new examples.
\end{rmk}

We turn now to a more general setup.
Let $A_+, A_-:\bS^1\to\bS^1$ be \pwfm maps. We denote the fundamental domains of $A_+, A_-$ by $R_+, R_-$, and the Fuchsian groups generated by the pieces of $A_+, A_-$ by $\Gamma_+, \Gamma_-$ (respectively). One can naturally associate a fiberwise dynamical system to the above pair of maps; namely
\begin{equation}
\mathbf{A}:\left(\overline{\D}\setminus R_+\right)\times\{+\}\bigsqcup\left(\overline{\D}\setminus R_-\right)\times\{-\}\to\overline{\D}\times\{+,-\},\ (z,\pm)\mapsto (\widehat{A}_\pm(z), \mp).
\end{equation}

We will now show that under mild conditions on the maps $A_+, A_-$, higher Bowen-Series maps emerge as first return maps of $\mathbf{A}$ to each sheet.

\begin{prop}\label{hbs_return_map_prop}
	Suppose that the \pwfm maps $A_+, A_-$ satisfy the following conditions.
	\begin{enumerate}
		\item $\widehat{A}_+$ maps $\partial R_+$ homeomorphically onto $\partial R_-$.
		
		\item $\widehat{A}_-\circ \widehat{A}_+$ preserves each edge of $\partial R_+$.
		
		\item The group generated by $\Gamma_+$ and $\Gamma_-$ is discrete (i.e., Fuchsian).
	\end{enumerate}  
	
	Then, the first return map of $\mathbf{A}$ on $\overline{\D}\times\{+\}$ is a higher Bowen-Series of a punctured sphere Fuchsian group.
\end{prop}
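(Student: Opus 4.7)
The plan is to compute the first return map and then apply the combinatorial characterization from Proposition~\ref{higher_bs_char_prop}. Since $\mathbf{A}$ swaps the two sheets at each step, iterating twice from $(z,+)$ lands back on the $+$ sheet at $(\widehat{A}_-(\widehat{A}_+(z)),+)$, so the first return map to $\overline{\D}\times\{+\}$ is
$$
A:=\widehat{A}_-\circ \widehat{A}_+.
$$
Restricted to $\bS^1$, the composition $A_-\circ A_+$ is piecewise M\"obius, with each piece of the form $g_{-,i}\circ g_{+,j}\in \langle\Gamma_+,\Gamma_-\rangle$; by hypothesis~(3) the latter group is Fuchsian, so $A$ is piecewise Fuchsian. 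Expansiveness and the Markov property descend from $A_\pm$ upon passing to a common refinement of their partitions, making $A$ a \pwfm map.

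Let $e_1,\ldots,e_k$ denote the edges of $\partial R_+$, let $R_{+,j}$ be the piece of $\widehat{A}_+$ adjacent to $e_j$, and let $g_{+,j}$ be the corresponding M\"obius map. By hypothesis~(1), $g_{+,j}(e_j)$ is an edge of $\partial R_-$; granting the geometric claim that the half-disk $g_{+,j}(R_{+,j})$ contains $R_-$ (our main obstacle, discussed below), the set $g_{+,j}^{-1}(R_-)$ is a M\"obius copy of $R_-$ glued to $R_+$ across $e_j$. The fundamental domain of $A$ is then
$$
R=R_+\cup \bigcup_{j=1}^k g_{+,j}^{-1}(R_-),
$$
an ideal polygon in which each edge of $\partial R_+$ has become an interior geodesic, i.e., a diagonal of $R$. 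Setting $D:=R_+$, the edges of $\partial D$ form a system of pairwise non-intersecting diagonals of $R$.

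It then remains to verify the hypotheses of Definition~\ref{def-nofold} and the injectivity condition of Proposition~\ref{higher_bs_char_prop}. First, each ideal vertex of $D$ is fixed by $A$: by hypothesis~(2), $A$ setwise preserves every edge of $\partial D$, and the only orientation-preserving M\"obius transformation that setwise preserves a complete geodesic while swapping its ideal endpoints is an elliptic half-turn through an interior point, which is ruled out at ideal points by discreteness. Second, for every edge $\alpha$ of $\partial R$ one has $\widehat{A}_+(\alpha)\subset \partial R_-$ because $\alpha\subset g_{+,j}^{-1}(\partial R_-)$ for some $j$; combining hypotheses~(1) and~(2) gives $\widehat{A}_-(\partial R_-)=\partial R_+=\partial D$, whence $\widehat{A}(\alpha)\subset\partial D$. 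Finally, absence of diagonal folds and injectivity of $\widehat{A}$ on each connected component of $\partial R\setminus \partial D$ both rest on the same observation: on a single attached copy $g_{+,j}^{-1}(R_-)$ the map $\widehat{A}_+$ is a single M\"obius transformation---hence injective---carrying the outer boundary of the copy homeomorphically onto $\partial R_-\setminus g_{+,j}(e_j)$, after which $\widehat{A}_-$ restricted to $\partial R_-$ is itself a homeomorphism onto $\partial R_+$ by the argument just given. Proposition~\ref{higher_bs_char_prop} then identifies $A$ as a higher Bowen-Series map of a punctured sphere Fuchsian group $\Gamma_A\subset \langle\Gamma_+,\Gamma_-\rangle$.

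The main obstacle, in my view, is the geometric containment $R_-\subset g_{+,j}(R_{+,j})$ for every $j$, which is what makes the attached copies actually sit inside $R_{+,j}$ and assemble into the polygon $R$ above. Without it, the picture of the fundamental domain as $R_+$ surrounded by a flower of copies of $R_-$ collapses, and the structural assertions in the preceding paragraph fail. I expect this containment to follow from a sides-and-orientation argument combining hypothesis~(1)---which forces $g_{+,j}$ to pair $e_j$ with an edge of $R_-$---with the compatibility of the canonical domains $\Omega_\pm$ of $\widehat{A}_\pm$ as canonical extensions of \pwfm maps whose pieces generate lattices; but verifying it is the one genuinely non-formal step of the proof.
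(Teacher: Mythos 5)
Your route is structurally the same as the paper's: compute the first return map as $\widehat{A}_-\circ\widehat{A}_+$, exhibit the fundamental domain $\mathbf{R}=\Int\bigl(\overline{R_+}\cup\bigcup_{j}\overline{h_j^{-1}(R_-)}\bigr)$ with inner domain $R_+$, check the conditions of Definition~\ref{def-nofold} together with injectivity on the components of $\partial\mathbf{R}\setminus\partial R_+$, and invoke Proposition~\ref{higher_bs_char_prop}. But the step you explicitly ``grant'' --- that $R_-\subset h_j(R_{+,j})$, i.e.\ that each copy $h_j^{-1}(R_-)$ attaches to $R_+$ across $\delta_j$ \emph{inside} the half-plane on which $\widehat{A}_+$ equals $h_j$ --- is a genuine gap, and it is exactly where the paper does its real work. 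The paper's argument is an orientation count: since $\widehat{A}_+\vert_{\partial R_+}$ is a homeomorphism onto $\partial R_-$, the images $\delta_{-j}=h_j(\delta_j)$ enumerate the edges of $\partial R_-$ in some cyclic order; if that order were counter-clockwise (orientation preserved), then $A_+$ would be a homeomorphism of $\bS^1$, contradicting that a \pwfm map is a covering of degree at least two; hence the cyclic order of the $\delta_{-j}$ is reversed. This orientation reversal is what pins down on which side of $\delta_{-j}$ the arc $h_j(I_j)$ lies (it is the arc complementary to the domain arc of the piece $h_{-j}$, not that arc itself), and that is precisely the containment $R_-\subset h_j(R_{+,j})$ you need for the ``flower'' picture of $\mathbf{R}$. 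Note that this cannot be deduced formally from hypotheses (1)--(3) alone: one must use that $A_\pm$ have degree $\geq 2$, since an orientation-preserving edge pairing would put the attached copies on the wrong side and the whole decomposition collapses, as you anticipate.

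A secondary flaw: your argument that the ideal vertices of $D=R_+$ are fixed by $A_-\circ A_+$ is not correct as stated. An orientation-preserving M\"obius map preserving an edge and swapping its ideal endpoints is an elliptic involution, and discreteness does \emph{not} rule such elements out --- the paper's own orbifold examples $G_{d,1}, G_{d,2}$ in Section~\ref{b_s_punc_sphere_orbifold_subsec} contain exactly such order-two elliptics preserving an edge of the fundamental domain. The correct reason is adjacency and continuity: each vertex $v$ of $\partial R_+$ is the unique common endpoint of two adjacent edges, and by hypothesis~(2) the endpoint pair of each of these edges is mapped to itself by $A_-\circ A_+$; since $A_-\circ A_+(v)$ is a single point lying in both pairs, it must equal $v$. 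This is the justification behind the paper's assertion that $A_-\circ A_+$ fixes all ideal vertices of $R_+$.
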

\begin{proof}
	Let $\delta_1,\cdots, \delta_k$ be the sides of $\partial R_+$ (ordered counterclockwise) such that $\widehat{A}_+\vert_{\delta_j}\equiv h_j$, where $h_1,\cdots, h_k$ are the pieces of $A_+$. As $\widehat{A}_+:\partial R_+\to\partial R_-$ is a homeomorphism, $\partial R_-$ also has $k$ edges, and hence $A_-$ has $k$ pieces. 
	
	We denote the edges of $\partial R_-$ by $\delta_{-1},\cdots, \delta_{-k}$ such that $\delta_{-j}=\widehat{A}_+(\delta_j)=h_j(\delta_j)$. Since $\widehat{A}_+\vert_{\partial R_+}$ is a homeomorphism, the edges $\delta_{-1},\cdots, \delta_{-k}$ must be cyclically ordered. Observe that if these edges were ordered counterclockwise, then $A_+$ would be a homeomorphism of the circle (this follows from the fact that M{\"o}bius maps preserve orientation). But this contradicts our hypothesis that $A_+$ is a \pwfm map (recall that a \pwfm map is required to be a circle covering of degree at least two). Therefore, the edges $\delta_{-1},\cdots, \delta_{-k}$ are ordered clockwise.
	
	Let us denote the pieces of $A_-$ by $h_{-1},\cdots, h_{-k}$, where $\widehat{A}_-\vert_{\delta_{-j}}\equiv h_{-j}$. As  each edge of $\partial R_+$ is preserved under $\widehat{A}_-\circ \widehat{A}_+$ (by the second condition), we have that $\widehat{A}_-(\delta_{-j})=h_{-j}(\delta_{-j})=\delta_j$. Moreover, $A_-\circ A_+$ fixes all the ideal vertices of $R_+$.

\begin{figure}[ht!]
\begin{tikzpicture}
\node[anchor=south west,inner sep=0] at (0,0) {\includegraphics[width=0.8\linewidth]{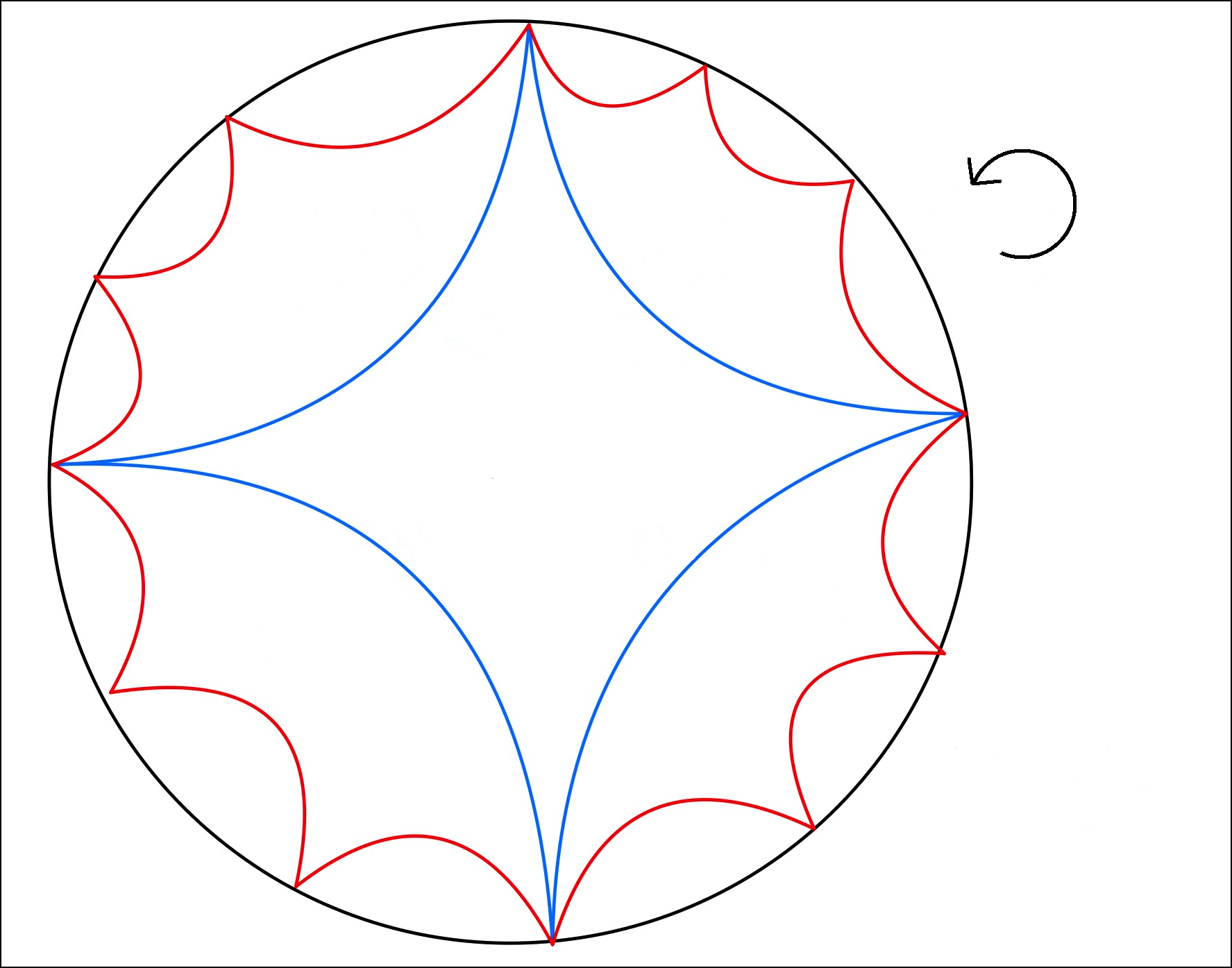}};
\node at (4.4,4.2) {\begin{Huge}$R_+$\end{Huge}};
\node at (2.8,2.8) {$h_1^{-1}(R_-)$};
\node at (3.8,3.2) {$\delta_1$};
\node at (6.2,2.8) {$h_2^{-1}(R_-)$};
\node at (5.25,3.2) {$\delta_2$};
\node at (6,6) {$h_3^{-1}(R_-)$};
\node at (5.25,5.25) {$\delta_3$};
\node at (2.8,6) {$h_4^{-1}(R_-)$};
\node at (3.8,5.25) {$\delta_4$};
\node at (8.6,1) {\begin{huge}$\overline{\D}\times\{+\}$\end{huge}};
\node at (9.4,6.4) {\begin{huge}$\mathbf{A}^{\circ 2}$\end{huge}};

\end{tikzpicture}
\caption{The first copy of $\overline{\D}$. The first return map $\mathbf{A}^{\circ 2}=\widehat{A}_-\circ \widehat{A}_+$ to this fiber is a \pwfm map. }
\label{first_return_fig}
\end{figure}
	
	Set 
	$$
	\mathbf{R}:= \Int{\left( \overline{R_+}\cup\bigcup_{j=1}^k \overline{h_j^{-1}(R_-)}\right)}.
	$$
	Clearly, the first return map of $\mathbf{A}$ on $\overline{\D}\times\{+\}$ is a \pwfm map having $\mathbf{R}\times\{+\}$ as its fundamental domain (here we use the third condition to conclude that the group generated by the pieces of $\mathbf{A}^{\circ 2}:\left(\overline{\D}\setminus\mathbf{R}\right)\times\{+\}\to\overline{\D}\times\{+\}$ is Fuchsian). The following properties now follow from the mapping properties of $h_{\pm j}$ described above (compare Figure~\ref{first_return_fig}).
	
	\begin{enumerate}\upshape
		\item Each edge of $\partial\mathbf{R}\times\{+\}$ is mapped under $\mathbf{A}^{\circ 2}$ to some $\delta_j\times\{+\}$. 
		
		\item $\mathbf{A}^{\circ 2}$ fixes each ideal vertex of $R_+\times\{+\}$. 
		
		\item $\mathbf{A}^{\circ 2}:\left(\overline{\D}\setminus\mathbf{R}\right)\times\{+\}\to\overline{\D}\times\{+\}$ has no diagonal fold.
	\end{enumerate}
(The fundamental domain of the first return map $\mathbf{A}^{\circ 2}$ in Figure \ref{first_return_fig} is the (open) ideal polygon $\mathbf{R}$ bounded by the red edges. $\mathbf{A}^{\circ 2}$ maps $\partial\mathbf{R}$ to the boundary of the (open) polygon $R_+\left(\subsetneq \mathbf{R}\right)$ bounded by the blue edges, which is the fundamental domain of the \pwfm map $A_+$.)

Thus, $\mathbf{A}^{\circ 2}:\left(\overline{\D}\setminus\mathbf{R}\right)\times\{+\}\to\overline{\D}\times\{+\}$ is a \hdm with inner domain $R_+\times\{+\}$. Finally, the mapping properties of $h_{\pm j}$ also show that this \hdm is injective on each component of $\left(\partial\mathbf{R}\setminus\partial R_+\right)\times\{+\}$. Hence, we can apply Proposition~\ref{higher_bs_char_prop} to conclude that the first return map of $\mathbf{A}$ on $\overline{\D}\times\{+\}$ is a higher Bowen-Series map of a punctured sphere Fuchsian group. 
\end{proof}

The following special situation, which we will encounter in Subsection~\ref{boundary_group_b_s_subsec}, is of particular interest.

\begin{cor}\label{hbs_first_return_cor}
	Suppose that the pieces of the \pwfm maps $A_+, A_-$ are given by $\{h_1,\cdots, h_k\}$ and $\{h_1^{-1}, \cdots, h_k^{-1}\}$ (respectively). Assume further that the edge sets of $\partial R_+, \partial R_-$ are $\{\delta_1,\cdots, \delta_k\}$ and $\{\delta_{-1},\cdots, \delta_{-k}\}$ (respectively) such that the following hold for all $j\in\{1,\cdots, k\}$.
	
	\begin{enumerate}
		\item $\widehat{A}_+\vert_{\delta_j}\equiv h_j$, $\widehat{A}_-\vert_{\delta_{-j}}\equiv h_j^{-1}$.
		
		\item $h_j(\delta_j)= \delta_{-j}$.
	\end{enumerate}  
	
	Then, the first return map of $\mathbf{A}$ on $\overline{\D}\times\{+\}$ is a higher Bowen-Series of a punctured sphere Fuchsian group.
\end{cor}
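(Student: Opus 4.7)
The plan is to verify that the hypotheses of Corollary~\ref{hbs_first_return_cor} directly imply the three conditions of Proposition~\ref{hbs_return_map_prop}, from which the conclusion will follow immediately.

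First I would check condition (1) of Proposition~\ref{hbs_return_map_prop}, namely that $\widehat{A}_+$ maps $\partial R_+$ homeomorphically onto $\partial R_-$. Since $\widehat{A}_+\vert_{\delta_j} \equiv h_j$ and $h_j(\delta_j) = \delta_{-j}$, the piecewise M\"obius map $\widehat{A}_+$ sends each edge $\delta_j$ bijectively onto the edge $\delta_{-j}$. As $\{\delta_j\}_{j=1}^k$ and $\{\delta_{-j}\}_{j=1}^k$ exhaust the edges of $\partial R_+$ and $\partial R_-$ respectively, continuity of $A_+$ (being a \pwfm map) combined with the edge-wise bijection yields the required homeomorphism.

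Next I would verify condition (2): $\widehat{A}_- \circ \widehat{A}_+$ preserves each edge of $\partial R_+$. For any $j \in \{1,\dots,k\}$, the hypothesis gives $\widehat{A}_+(\delta_j) = h_j(\delta_j) = \delta_{-j}$, and then $\widehat{A}_-(\delta_{-j}) = h_j^{-1}(\delta_{-j}) = h_j^{-1}(h_j(\delta_j)) = \delta_j$. Hence $\widehat{A}_- \circ \widehat{A}_+$ maps $\delta_j$ onto itself, as required.

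Condition (3) is essentially free: since the pieces of $A_-$ are precisely the inverses of the pieces of $A_+$, we have $\Gamma_- = \langle h_1^{-1},\dots,h_k^{-1}\rangle = \langle h_1,\dots,h_k\rangle = \Gamma_+$. Thus the group generated by $\Gamma_+$ and $\Gamma_-$ is just $\Gamma_+$ itself, which is Fuchsian (in particular discrete) by hypothesis.

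With all three conditions of Proposition~\ref{hbs_return_map_prop} verified, the proposition immediately gives that the first return map of $\mathbf{A}$ to $\overline{\D} \times \{+\}$ is a higher Bowen-Series map of a punctured sphere Fuchsian group, completing the proof. There is no substantial obstacle here; the corollary is essentially a bookkeeping specialization in which the symmetric structure ($h_j$ paired with $h_j^{-1}$, $\delta_j$ paired with $\delta_{-j}$) ensures each of the abstract hypotheses of the proposition holds by inspection.
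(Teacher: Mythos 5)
Your proposal is correct and follows the same route as the paper: verify the three hypotheses of Proposition~\ref{hbs_return_map_prop} (homeomorphism $\widehat{A}_+:\partial R_+\to\partial R_-$, edge-preservation by $\widehat{A}_-\circ\widehat{A}_+$, and $\Gamma_+=\Gamma_-$ so discreteness is automatic) and then invoke that proposition. The paper's proof is just a terser version of the same bookkeeping.
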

\begin{proof}
	Evidently, the canonical extension $\widehat{A}_+$ carries $\partial R_+$ homeomorphically onto $\partial R_-$, and $\widehat{A}_-\circ \widehat{A}_+$ brings each edge of $\partial R_+$ back to itself. Moreover, the Fuchsian groups generated by the pieces of $A_+$ and $A_-$ are equal. The result now follows from Proposition~\ref{hbs_return_map_prop}.
\end{proof}

Another application of Proposition~\ref{hbs_return_map_prop} is that the second iterate of the Bowen-Series map of a Fuchsian group uniformizing a sphere with $k$ punctures and zero/one/ two orbifold points of order two is the higher Bowen-Series map of a Fuchsian group uniformizing a sphere with approximately $2k$ punctures.

\begin{cor}\label{second_iterate_hbs_cor} Let $d\geq 2$.
\noindent\begin{enumerate}

\item If $\Gamma_0\in\{G_d, G_{d,2}\}$, then $A_{\Gamma_0,\mathrm{BS}}^{\circ 2}=A_{\Gamma_1,\mathrm{hBS}}$, where $\Gamma_1$ is an index two subgroup of $\Gamma_0$ with $\D/\Gamma_1\cong S_{0,2d}$.

\item  If $\Gamma_0= G_{d,1}$, then $A_{\Gamma_0,\mathrm{BS}}^{\circ 2}=A_{\Gamma_1,\mathrm{hBS}}$, where $\Gamma_1$ is an index two subgroup of $\Gamma_0$ with $\D/\Gamma_1\cong S_{0,2d-1}$.
\end{enumerate}

In all cases, the second iterate of the Bowen-Series map of $\Gamma_0$ is orbit equivalent to an index two subgroup of $\Gamma_0$.
\end{cor}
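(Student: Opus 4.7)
The plan is to apply Proposition~\ref{hbs_return_map_prop} with the diagonal choice $A_+ = A_- := A_{\Gamma_0, \mathrm{BS}}$, where $\Gamma_0 \in \{G_d, G_{d,1}, G_{d,2}\}$. Because every $\Gamma \in \mathrm{Teich}(\Gamma_0)$ is quasiconformally conjugate to $\Gamma_0$, and this conjugacy intertwines Bowen-Series maps, their iterates, and passage to finite-index subgroups, it suffices to prove the corollary for the base groups. With this choice $R_+ = R_- = R_{\Gamma_0}$, and the first return map of the fibered system $\mathbf{A}$ to either sheet of $\overline{\D} \times \{+,-\}$ equals $\widehat{A}_- \circ \widehat{A}_+ = A_{\Gamma_0, \mathrm{BS}}^{\circ 2}$, which is exactly the object we wish to analyze.

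Next, I verify the three hypotheses of Proposition~\ref{hbs_return_map_prop}. Hypothesis~(1), that $\widehat{A}_{\Gamma_0, \mathrm{BS}}$ maps $\partial R_{\Gamma_0}$ homeomorphically onto itself, is precisely the combinatorial characterization of Bowen-Series maps in Proposition~\ref{bs_char_1}. For hypothesis~(2), the proof of Proposition~\ref{bs_char_1} shows that in each base case the induced self-homeomorphism of $\partial R_{\Gamma_0}$ is conjugate to a reflection symmetry of the ideal polygon $R_{\Gamma_0}$; the square of any such reflection fixes every vertex pointwise and therefore preserves every edge as a set, so hypothesis~(2) holds. Hypothesis~(3) is immediate since $\Gamma_{A_+} = \Gamma_{A_-} = \Gamma_0$ is discrete. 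Proposition~\ref{hbs_return_map_prop} then identifies $A_{\Gamma_0, \mathrm{BS}}^{\circ 2}$ with the higher Bowen-Series map $A_{\Gamma', \mathrm{hBS}}$ of some punctured sphere Fuchsian group $\Gamma'$.

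It remains to identify $\Gamma'$ as an index-two subgroup with the claimed quotient topology. Inspecting the proof of Proposition~\ref{hbs_return_map_prop} shows that the number of punctures of $\D/\Gamma'$ equals the number of ideal vertices of the inner domain of the resulting higher Bowen-Series map, which in our setting is $R_+ = R_{\Gamma_0}$; this gives $2d$ when $\Gamma_0 \in \{G_d, G_{d,2}\}$ and $2d - 1$ when $\Gamma_0 = G_{d,1}$, in accordance with $\chi(\Gamma') = 2\chi_{\mathrm{orb}}(\Gamma_0)$. On the other hand, every piece of $A_{\Gamma_0, \mathrm{BS}}^{\circ 2}$ is a length-two word in the canonical generators of $\Gamma_0$, so $\Gamma' \subseteq \Gamma_0^{\mathrm{ev}}$, the index-two subgroup of even-length words. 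In the orbifold cases the elliptic generators of $\Gamma_0$ have odd length and hence lie outside $\Gamma_0^{\mathrm{ev}}$, which is consistent with the torsion-freeness of $\Gamma'$ guaranteed by Proposition~\ref{hbs_return_map_prop}. Coincidence of covolumes (equivalently, of Euler characteristics) forces the finite-index inclusion $\Gamma' \subseteq \Gamma_0^{\mathrm{ev}}$ to be an equality, establishing both the index and the topology claims. Finally, the orbit equivalence of $A_{\Gamma_0, \mathrm{BS}}^{\circ 2} = A_{\Gamma', \mathrm{hBS}}$ with $\Gamma'$ is immediate from Proposition~\ref{prop-cfdhd-orbeq-gen}.

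The main technical point to get right is the uniform verification of hypothesis~(2) across the three base cases, which rests on the explicit reflection-symmetry structure of $\widehat{A}_{\Gamma_0, \mathrm{BS}}|_{\partial R_{\Gamma_0}}$ established in the proof of Proposition~\ref{bs_char_1}. A subsidiary delicacy is the identification $\Gamma' = \Gamma_0^{\mathrm{ev}}$, which combines the word-length inclusion with an orbifold Euler characteristic computation, and which in the orbifold cases also implicitly uses that the odd-length elliptic generators do not lift to $\Gamma'$.
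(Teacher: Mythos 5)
Your proposal is correct and follows essentially the same route as the paper: apply Proposition~\ref{hbs_return_map_prop} with $A_+=A_-=A_{\Gamma,\mathrm{BS}}$, using that the canonical extension of the Bowen-Series map restricts to an involution on $\partial R$, count the punctures of $\D/\Gamma'$ via the ideal vertices of the inner domain (which is the fundamental domain of $A_{\Gamma,\mathrm{BS}}$), deduce $[\Gamma:\Gamma']=2$ by an Euler characteristic computation, and obtain orbit equivalence from Proposition~\ref{prop-cfdhd-orbeq-gen}. Your additional identification of $\Gamma'$ with the even-word-length subgroup $\Gamma_0^{\mathrm{ev}}$ is a harmless refinement of the paper's Euler characteristic argument rather than a different approach.
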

\begin{proof}
Recall that the canonical extension of the Bowen-Series map of $\Gamma_0=G_d$ restricts as an involution on the boundary of its fundamental domain. Thus, we can apply Proposition~\ref{hbs_return_map_prop} on $A_+=A_-=A_{\Gamma_0,\mathrm{BS}}$ to conclude that $A_{\Gamma_0,\mathrm{BS}}^{\circ 2}$ is the higher Bowen-Series map of a subgroup $\Gamma_1\leq\Gamma_0$ that uniformizes $S_{0,2d}$ (note that $\D/\Gamma_1$ has $2d$ punctures since the inner domain of $A_{\Gamma_1,\mathrm{hBS}}$, which is equal to the fundamental domain of $A_{\Gamma_0,\mathrm{BS}}$, has $2d$ ideal vertices). That $\left[\Gamma\colon\Gamma'\right]=2$ follows from a straightforward Euler characteristic computation.

Exactly the same proof applies to the cases with torsion elements. Finally, the statement about orbit equivalence is a consequence of Proposition~\ref{prop-cfdhd-orbeq-gen}.
\end{proof}

As the degree of the second iterate of a map is the square of the degree of the map, Corollary~\ref{second_iterate_hbs_cor} explains the appearance of the square in Equation \eqref{eq-degcfm} in Section \ref{sec-fold}.

\section{2-point characterizations and moduli}\label{sec-pattern}
Proposition \ref{bs_char_1} gives a combinatorial characterization of Bowen-Series maps. The aim of this Section is to obtain a more dynamical
characterization (Theorem \ref{thm-aisbs}) in terms of orbit equivalences of pairs of points. This characterization is necessary to determine the moduli space of matings 
in Section \ref{sec-moduli}.
Some amount of technology needs to be developed to obtain this characterization, and the right setup appears 
to be that of discrete laminations and patterns (see Definitions \ref{def-Gpattern} and \ref{def-Apattern}).
We quickly recall the notion of geodesic laminations \cite{thurstonnotes}

\begin{defn}\label{def-lami}
	
	A \emph{geodesic lamination} $\mathcal{L}^*$ on a hyperbolic surface $\Sigma=\D/\Gamma$ is a closed set of mutually disjoint simple geodesics on the surface. Lifting $\mathcal{L}^*$ to the universal cover $\D$ yields a $\Gamma$-invariant closed set of mutually disjoint geodesics that are allowed to have common endpoints on $\mathbb{S}^1$. We call this a \emph{geodesic lamination} on $\D$, and denote it by $\mathcal{L}$. Each component of a lamination is called a \emph{leaf}.  
	
	A \emph{transverse measure} $\mu$ for $\mathcal{L}^*$ is an assignment of a positive, finite measure to each transversal $\tau$ to $\mathcal{L}^*$, supported on $\tau\cap\mathcal{L}^*$ and invariant under homotopy. A \emph{measured lamination} is a geodesic lamination equipped with a transverse measure of full support. 
\end{defn}

\begin{rmk}\label{support_rem}
	We remark that not every geodesic lamination admits a transverse measure of full support, see \cite[\S 3.9]{marden-book} for details.
\end{rmk}

Let $A=A_\Gamma$ be a piecewise Fuchsian Markov map topologically conjugate
 to the polynomial map $p: z \mapsto z^d$ on $\bS^1$.
It will be important to relate $A$-invariant laminations to
$\Gamma$-invariant laminations. We now proceed to show that both $A$- and $\Gamma$-invariance imposes
considerable restrictions on laminations.
Note however that such laminations are entirely on the `group side'
of the picture, inasmuch as we are concerned with combinatorial restrictions
on laminations invariant under both $\Gamma$ and a \pwfm $\, A=A_\Gamma$ orbit equivalent to $\Gamma$.

In the context of hyperbolic geometry, the following notion goes back to Schwartz
\cite{schwarz-inv,schwartz-pihes} (see also \cite{msw2,bimj,mahan-pattern,bis-gafa} for the connection to rigidity questions). Let $\dbS$ denote $\big((\bS^1 \times \bS^1 \setminus \Delta)/\sim\big)$, where $\Delta$ denotes the diagonal and $\sim$ denotes the flip equivalence relation.

\begin{defn}\label{def-Gpattern}
	Let $\Gamma$ be a finite co-volume  Fuchsian group. A \emph{$\Gamma$-pattern} is a closed, discrete $\Gamma$-invariant subset of
	$\dbS$.
\end{defn}
Let $\gamma$ be  a closed geodesic in $\Hyp^2/\Gamma$, possibly with self-intersections. Let $\til{\gamma}$ be the pre-image of $\gamma$ in the universal cover 
	$\Hyp^2$. Then the collection 
\begin{center}
	$\{(p,q): \, p, q$  
	 are ideal endpoints  of 
		a  bi-infinite   geodesic in  $\til{\gamma} \}$
\end{center}
	 provides an example of a $\Gamma$-pattern. Thus, a $\Gamma$-pattern need not correspond to a lamination.

\subsection{$A$-patterns}\label{sec-apattern} In this subsection, we shall proceed to construct a \pwfm 
version of a $\Gamma$-pattern.
For $\Gamma$ as above, let $A$ be a degree $d$ \pwfm map on $\bS^1$, where the pieces of $A$ are from $\Gamma$. Let $\{I_j\}$ be the (finitely many cyclically ordered) intervals of definition of the pieces
$g_j$ of $A$.

We need to pay special attention to  the endpoints of the intervals
$I_j$.
Let $R$ denote the fundamental domain (see Definition \ref{def-canonicalextension-domain}) of $A$ in $\D$ with cyclically ordered ideal points
$x_1, \cdots, x_k$. Thus, the endpoints of $I_j$ are $x_j$ and $x_{j+1}$ (mod $k$). 
Let $S=\{x_1,\cdots, x_k\}$.
The Markov property
of the map $A$ ensures that $A(S) \subset S$. Thus, we have the following:

\begin{rmk}\label{rmk-pwfm-Spoints}
	Each $x_i \in S$ is pre-periodic under $A$. Thus,  grand orbits of $x_i \in S$ are in one-to-one correspondence with periodic orbits in $S$ under 
	(forward iteration by) $A$.
\end{rmk}




As before, $\big((\bS^1 \times \bS^1\setminus \Delta)/\sim\big)$ is denoted by $\dbS$.
The $A$-action on $\bS^1$ induces an action $\da: \dbS \to (\bS^1 \times \bS^1)/\sim $.

\begin{defn}\label{def_invariance_pwfm}
	Let $A$ denote a \pwfm map with pieces in a finite co-volume Fuchsian 
	group $\Gamma$. A subset $\LL$ of
	$\dbS$ is\\
\noindent (1) \emph{$A$-forward invariant} if $\da (\LL) = \LL$.\\
\noindent (2) \emph{$A$-backward invariant under branches} or simply {$A$-backward invariant} if the following holds:\\
if $\{p,q\} \in \LL$,  and $g^{-1}\in\Gamma$ is a branch of $A^{-1}$ defined on an arc $\sigma$ containing $p,q$ such that $g^{-1}(p) = p_1$ and $ g^{-1}(q) = q_1$, then $\{p_1,q_1\} \in \LL$. For 	$\{p,q\} \in \dbS$, as above, $\{p_1,q_1\}$ satisfying 	such a condition $ g^{-1}(p) = p_1$ and $ g^{-1}(q) = q_1$, where $g^{-1}\in\Gamma$ is a branch of $A^{-1}$ is called an $A$-pre-image of $\{p,q\}$.\\	
	For any $\{p,q\} \in \dbS$, the  backward orbit of 
	$\{p,q\}$ under $A$ consists of all iterated $A$-pre-images of $\{p,q\}$ (under branches of $A^{-n}$, $n \in \natls$) along with the element $\{p,q\}$. It will be denoted as $\boa (\{p,q\})$.
	The grand orbit of 
	$\{p,q\}$ under $A$ is defined to be the union of the forward orbit of
	$\{p,q\}$ and the backward orbits of all elements in the forward orbit of
	$\{p,q\}$ (with the convention that if $\da^i(\{p,q\})\in\Delta$ for some $i$, then the backward orbit of $\da^i(\{p,q\})$ is empty). The grand orbit of 
	$\{p,q\}$ under $A$  will be denoted as $\goa (\{p,q\})$.  For a bi-infinite geodesic $\gamma\subset\D$ having its endpoints at $p,q$, we will use the notation $\boa(\{p,q\})$ and $\boa(\gamma)$ (respectively, $\goa(\{p,q\})$ and $\goa(\gamma)$) interchangeably.
\end{defn}

Along the lines of Definition
\ref{def-Gpattern}, we define:

\begin{defn}\label{def-Apattern} Let $A$ denote a \pwfm map with pieces in a finite co-volume Fuchsian 
	group $\Gamma$.
	An \emph{$A$-pattern} is a closed discrete  subset $\LL$ of
	$\dbS$ that is forward and backward invariant under $\da$.
\end{defn}

\begin{lemma}\label{lem-apattern}
	Let $R$ denote the fundamental domain of a \pwfm \
	 map $A$ and $\alpha$ denote an edge of $R$. 
	Let $p,q$ denote the endpoints of $\alpha$. 
	Then $\goa(\{p,q\})\cap\Delta$ contains at most one point, and $\goa (\{p,q\})\cap\dbS$ is an $A$-pattern.
\end{lemma}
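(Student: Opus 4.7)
The plan is to prove the two assertions in sequence: first that $\goa(\{p,q\})\cap\Delta$ contains at most one point, and then that $\goa(\{p,q\})\cap\dbS$ is closed, discrete, and $\da$-invariant in both directions.

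For the first claim, the key is to exploit the Markov property. Since $p,q$ are vertices of $R$, they lie in $S$, and by Remark \ref{rmk-pwfm-Spoints} we have $A(S)\subseteq S$. Hence the forward orbit $\{\da^n(\{p,q\}):n\geq 0\}$ is contained in the finite set $(S\times S)/\sim$, and is therefore finite and eventually periodic. If some forward iterate lies in $\Delta$, let $n_0\geq 1$ be minimal with $A^{n_0}(p)=A^{n_0}(q)=r$. Since the pieces of $A$ are injective M\"obius maps, $A^{n_0-1}(p)$ and $A^{n_0-1}(q)$ must be distinct vertices of $R$; both are sent to $r$, and the piecewise-M\"obius Markov structure, combined with the fact that the starting chord $\alpha$ is an edge (not a diagonal) of $R$, will force $r$ to be a fixed vertex of $A$. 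Consequently all subsequent iterates remain at $(r,r)$, so $\goa(\{p,q\})\cap\Delta=\{(r,r)\}$ is a single point.

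For the second claim, the forward orbit contributes only finitely many elements to $\goa\cap\dbS$, so the real issue is the backward orbit of each $\{p',q'\}$ in the forward orbit. This is a countable tree obtained by applying composed inverse branches $g_{j_n}^{-1}\circ\cdots\circ g_{j_1}^{-1}$ along allowed sequences of Markov transitions; each such composition sends $\{p',q'\}$ into a nested sequence of intervals of the Markov partition. By the expansivity of $A$ (Definition \ref{expansive_def}), the diameters of these intervals tend to zero, so deep pre-image pairs have coordinates converging to one another. Thus any accumulation point of the backward orbit lies on $\Delta$, which is disjoint from $\dbS$. This gives discreteness of $\goa\cap\dbS$, and closedness follows from the same observation that accumulation points sit in $\Delta$. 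Forward and backward invariance under $\da$ are built into the definition of the grand orbit together with the convention that backward orbits from $\Delta$ points are empty: every element of $\goa\cap\dbS$ has $\da$-image in $\goa$, and every $A$-pre-image under a branch $g^{-1}\in\Gamma_A$, being an injective M\"obius image, automatically lies in $\dbS$.

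The main technical obstacle is the fixed-point assertion in the first part: one must show that, starting from an edge-endpoint pair, the first landing in $\Delta$ (if it occurs) is at a vertex fixed by $A$. The argument will rely on carefully tracking how $\widehat A$ acts on the chord $\widehat A^k(\alpha)$ as $k$ grows, using that $\alpha$ is an edge so the first application is governed by the single piece $g_i$, and that the Markov condition restricts the possible image chords inside $R$. The discreteness argument near parabolic fixed points of $\Gamma_A$ is also subtle, since inverse branches there contract only sub-exponentially; this is handled by appealing to shrinking of nested Markov intervals rather than to a uniform contraction rate.
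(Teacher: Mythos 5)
Your second half (closedness and discreteness of $\goa(\{p,q\})\cap\dbS$) is essentially the paper's argument: the forward orbit is finite because vertices of $R$ are pre-periodic, and expansivity forces the diameters of iterated pre-image pairs to shrink to zero, so all accumulation points lie on the diagonal and hence outside $\dbS$. That part is fine, as is your remark that forward/backward invariance is built into the definition of the grand orbit.

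The first claim, however, contains a genuine gap, and it stems from a misreading of the definitions. In Definition \ref{def_invariance_pwfm} the map $\da$ is only defined on $\dbS$, and the grand orbit is defined with the explicit convention that if $\da^{i}(\{p,q\})\in\Delta$ then the backward orbit of that element is empty; in particular, forward iteration simply \emph{stops} the moment the orbit hits $\Delta$, and $A$-pre-images of any element of $\dbS$ are by definition again in $\dbS$ (a branch $g^{-1}\in\Gamma_A$ is injective). Hence the only way a grand-orbit element lies on $\Delta$ is as the terminal point of the forward orbit, which immediately gives ``at most one point'' with no further work — this is exactly what the paper does. Your proposal instead assumes the iteration continues past the collision and tries to rescue the count by asserting that the first landing point $r$ must be a vertex \emph{fixed} by $A$, so that all later iterates stay at $(r,r)$. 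You give no proof of this fixed-point assertion (you yourself flag it as ``the main technical obstacle''), and nothing in the \pwfm hypotheses forces it: the Markov condition only gives $A(S)\subset S$ and pre-periodicity of vertices, so two distinct vertices can in principle collide onto a vertex that is merely periodic or pre-periodic rather than fixed. So as written the first part of your argument is both unproven and unnecessary; replacing it by the definitional observation above closes the gap.
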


\begin{proof}
        The definition of an $A$-pre-image of an element $\{r,s\}\in\dbS$ guarantees that the $A$-pre-images of $\{r,s\}$ lie in $\dbS$. On the other hand, if the forward orbit of $\{r,s\}\in\dbS$ under $\da$ hits the diagonal $\Delta$ in some finite time $i$, then there are no forward images of $\da^i(\{r,s\})$ under $\da$ (i.e., forward iteration of $\da$ on $\{r,s\}$ stops at time $i$). It follows that $\goa(\{r,s\})$ intersects $\Delta$ in at most one point.
        
	It now suffices to show that $\goa (\{p,q\})\cap\dbS$ is closed and discrete in $\dbS$. Note that $\goa (\{p,q\})\cap\dbS$ equals the union $\cup_i\boa (\{A^{\circ i}(p),A^{\circ i}(q)\})$ in $\dbS$.
	Since the forward orbit $\{A^{\circ i}(p),A^{\circ i}(q)\}$ of the endpoints
	of any edge of $R$ is necessarily finite (since all vertices of $R$ are pre-periodic under $A$), it suffices to show that the  backward orbit of each edge or diagonal is closed and discrete. This follows
	from the hypothesis that $A$ is a \pwfm map as follows.

	Since $A$ is expansive, it follows that the diameter of each element of $A^{-j}(\{p,q\})$, considered
	as a $2$-point subset of $\bS^1$ tends to zero as $j\to+\infty$.
	Thus, any accumulation point of $\goa (\{p,q\})\cap\dbS$ in $(\bS^1 \times \bS^1)/\sim\, \,$ lies on the diagonal.
	Hence  $\goa (\{p,q\})\cap\dbS$ is a closed and discrete subset of $\dbS$ as required.
\end{proof}

The proof of Lemma \ref{lem-apattern} actually furnishes  more:

\begin{cor}\label{cor-apattern}
	Let $R$ denote the fundamental domain of a \pwfm map $A$ and $\alpha_i$, $i=1, \cdots, k$ denote the boundary edges of $R$. 
	Let $p_i,q_i$ denote the endpoints of $\alpha_i$, and $\P := \{\{p_i,q_i\}: i=1, \cdots, k\}$.
	Then $\goa (\P)\cap\Delta$ is a (possibly empty) finite set, and $\goa (\P)\cap\dbS$ is an $A$-pattern.
\end{cor}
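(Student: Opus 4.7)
The plan is to reduce the corollary directly to Lemma~\ref{lem-apattern} applied edge-by-edge, and then observe that both defining properties of an $A$-pattern are preserved under finite unions. The first step is to note that the grand orbit construction of Definition~\ref{def_invariance_pwfm} distributes over finite unions, so that
\[
\goa(\P) \;=\; \bigcup_{i=1}^{k} \goa(\{p_i,q_i\}).
\]
For each $i$, Lemma~\ref{lem-apattern} guarantees that $\goa(\{p_i,q_i\})\cap\Delta$ contains at most one point; taking the union over the $k$ edges, the intersection $\goa(\P)\cap\Delta$ is a finite set of cardinality at most $k$ (and possibly empty, if no forward orbit of an edge endpoint ever collapses to the diagonal).

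Next, Lemma~\ref{lem-apattern} also gives that each $\goa(\{p_i,q_i\})\cap\dbS$ is an $A$-pattern. Forward invariance under $\da$ and backward invariance under branches both pass through finite unions trivially: if each summand is invariant, so is the union. For the topological condition, a finite union of closed, discrete subsets of $\dbS$ is again closed and discrete. Hence
\[
\goa(\P)\cap\dbS \;=\; \bigcup_{i=1}^{k}\bigl(\goa(\{p_i,q_i\})\cap\dbS\bigr)
\]
is an $A$-pattern, completing the argument.

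The only step with any content is the closed-and-discrete claim, but that was already handled in the proof of Lemma~\ref{lem-apattern} via expansivity of $A$ (diameters of iterated preimages shrink to zero, so accumulation can happen only along $\Delta$), and passing to a union over finitely many edges introduces no new accumulation phenomena. Thus there is no substantial new difficulty here beyond assembling the conclusions of Lemma~\ref{lem-apattern} across the $k$ boundary edges of $R$.
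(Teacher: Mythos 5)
Your argument is correct and is exactly what the paper intends: Corollary~\ref{cor-apattern} is stated as an immediate consequence of (the proof of) Lemma~\ref{lem-apattern}, obtained by decomposing $\goa(\P)$ as the finite union $\bigcup_{i=1}^{k}\goa(\{p_i,q_i\})$ and noting that closedness, discreteness, the invariance properties, and the at-most-one-diagonal-point bound all pass through finite unions. No further comment is needed.
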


\noindent {\bf Eliminating special diagonals:}\\
For any diagonal or edge $\gamma=\{x,y\}$ of $R$, $A(\gamma)$ is defined
as $\da(\{x,y\})=\{A(x),A(y)\}$, or equivalently as the bi-infinite geodesic joining 
$A(x),A(y)$. However, the $A$-pre-image of a diagonal is not automatically defined. This is because a branch of $A^{-1}$ can be defined (as an element of $\Gamma$) on a diagonal 
$\{x,y\}$ if and only 
if there is a branch of $A^{-1}$ defined on one of the arcs of $\bS^1$
cut off by $x, y$.
Suppose $A$ has degree $d$.
Since $A$ is $\pwfm$, $d^k$ branches of $A^{-k}$ are always defined.
The following useful observation connects pieces and branches.

\begin{rmk}\label{rmk-piecebranch}
	$h$ is a branch of $A^{-1}$ if and only if $h^{-1}$ is a piece of $A$.
	More generally, $h(\in \Gamma)$ is a branch of $A^{-k}$ if and only if $h^{-1}$ is a piece of $A^{\circ k}$.
\end{rmk}

\begin{defn}\label{def-diagonal}
	Let $p,q$ denote the endpoints of a diagonal $\delta$ of $R$. If there exists a branch $h$ of $A^{-1}$ and an edge $\alpha$ 
	of $R$ with endpoints $p_1, q_1$ such that $h(p_1)=p$, $h(q_1)=q$, then
	$\delta$ (or equivalently the pair $\{p,q\}$) will be called a 
	\emph{special diagonal} of $R$. 
\end{defn}

We summarize the above discussion in the following (rather useful) statement:
\begin{prop}[No special diagonals]\label{prop-nodiagonals}
	Let $A$ be a minimal \pwfm map. Let $R$ be a fundamental domain of $A$.
	Then special diagonals in $R$  do not exist for $A$.
\end{prop}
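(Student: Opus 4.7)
The plan is to argue by contradiction. Suppose $\delta$ is a special diagonal of $R$ with endpoints $p,q \in S$ (with $p,q$ non-adjacent in the cyclic ordering on $S$). By Definition \ref{def-diagonal}, there exist an edge $\alpha$ of $R$ with endpoints $p_1, q_1$ and a branch $h = g^{-1} \in \Gamma$ of $A^{-1}$ defined on an arc $\sigma \subset \bS^1$ containing $p_1, q_1$, such that $h(p_1) = p$ and $h(q_1) = q$. The goal is to show that minimality forces $\{p,q\}\subseteq\{x_m,x_{m+1}\}$ for some piece $I_m = [x_m,x_{m+1}]$, contradicting non-adjacency.

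The first key step will be to show that $h(\sigma)$ is contained in the closure of the set $\bigcup_{m : g_m = g} I_m$. Indeed, if $y \in \sigma$ and $h(y)$ lies in the interior of some piece $I_m$, then for $y'$ in a small neighborhood of $y$ in $\sigma$ we have both $g_m(h(y')) = A(h(y')) = y'$ (because $A = g_m$ on a neighborhood of $h(y)$) and $g(h(y')) = y'$ (because $h \equiv g^{-1}$). Hence the two M\"obius transformations $g_m$ and $g$ agree on the open arc $h(\sigma)\cap\Int{I_m}$, which by the identity principle forces $g_m = g$ as elements of $\Gamma$.

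Next, I invoke minimality. By Definition \ref{def-minimal}, the pieces $g_{i-1}$ and $g_i$ are distinct elements of $\Gamma_A$ for every $i$; equivalently, no two cyclically adjacent pieces of $A$ share the same M\"obius map. Consequently $\bigcup_{m : g_m = g} I_m$ is a finite union of pairwise non-adjacent closed arcs of $\bS^1$. Since $\sigma$ is connected and $h$ is continuous, $h(\sigma)$ is a connected sub-arc of $\bS^1$, and the preceding paragraph shows that $h(\sigma)$ is contained in the closure of this disconnected union. Connectedness therefore forces $h(\sigma) \subseteq \overline{I_m}$ for some single piece $I_m$ with $g_m = g$.

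To conclude, observe that $p = h(p_1)$ and $q = h(q_1)$ lie in $\overline{I_m}$ and are vertices of $R$ (since they are ideal endpoints of the diagonal $\delta$). The only vertices of $R$ contained in $\overline{I_m}$ are the two endpoints $x_m, x_{m+1}$ of $I_m$, so $\{p,q\} \subseteq \{x_m, x_{m+1}\}$; hence $p$ and $q$ are adjacent vertices of $R$ (or coincide), contradicting the hypothesis that $\delta$ is a diagonal. The main (and essentially only) subtlety will be the identification in the first step: one must recognize that the a priori abstract branch $h = g^{-1} \in \Gamma$ is actually forced to coincide pointwise with the inverse of the piece of $A$ at $h(y)$, which is precisely where minimality enters the argument.
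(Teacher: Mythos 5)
Your proof is correct and takes essentially the same route as the paper's: a special diagonal would force the inverse branch's image, an arc containing the non-adjacent vertices $p,q$, to lie where $A$ agrees with the single M\"obius map $g$, which minimality (no two cyclically adjacent pieces share the same M\"obius map) rules out. The paper compresses this into the observation that $A|_\sigma = g$ on an arc between $p$ and $q$ contradicts minimality, while your write-up simply makes the identity-principle and connectedness bookkeeping explicit.
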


\begin{proof}
	Suppose $\delta$ is a special diagonal of $R$ with endpoints $p,q$. Then, by Remark \ref{rmk-piecebranch}, there exists an arc $\sigma$ of $\bS^1$
	between $p, q$ and $g\in \Gamma$ such that $A|_\sigma=g$. As $\delta$ is a diagonal of $R$, this contradicts minimality of $A$.
\end{proof}

Recall that $\mathcal{D} = \overline{\D} \setminus R$. Let  $\hA$ be the canonical extension (Definition \ref{def-canonicalextension}) of $A$ to $\mathcal{D}$. Proposition \ref{prop-nodiagonals} can be restated as saying that if $\alpha$ is an edge of $R$, then every component of $\widehat{A}^{-1}(\alpha)$ is either an edge of $R$ or lies outside the closure $\overline{R}$.

We set up some notation as follows. Each edge $\alpha$ of $R$ bounds a unique (closed) half-plane 
$\mathcal{D}_\alpha \subset \mathcal{D}$, whose boundary contains an arc $I_\alpha \subset \bS^1$ such that $I_\alpha$ is the domain of a piece of $A$.
Thus, $A|_{I_\alpha}=g_\alpha$ for some $g_\alpha\in \Gamma$, and
$\hA|_{\mathcal{D}_\alpha}=g_\alpha$. 
We have
the following  stronger repelling condition on $\hA$. 

\begin{cor}\label{cor-strongrepulse}
Let $A$ be a minimal \pwfm map. Then,
	$\hA^{-1}(\Int{\mathcal{D}}) \subset \Int{\mathcal{D}}$, and the set of break-points of $\hA^{-1}(\Int{\mathcal{D}})$ on $\bS^1$ contain those of $\Int{\mathcal{D}}$.
	Hence, 
	$$
	\cdots\hA^{-n}(\Int{\mathcal{D}}) \subset \hA^{-(n-1)}(\Int{\mathcal{D}})\subset \hA^{-(n-2)}(\Int{\mathcal{D}})\subset \cdots\hA^{-1}(\Int{\mathcal{D}}) \subset \Int{\mathcal{D}},
	$$
	and $\cap_i \hA^{-i}(\Int{\mathcal{D}}) = \emptyset$. 
	Also, every boundary edge of $\hA^{-i}(\Int{\mathcal{D}})$ maps to an edge of $R$ under $\hA^{\circ i}$.
\end{cor}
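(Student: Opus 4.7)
The plan is to derive the corollary from Proposition~\ref{prop-nodiagonals} (no special diagonals for a minimal \pwfm), the Markov condition $A(S)\subset S$, and the lattice property of the group $\Gamma_A$ generated by the pieces (Lemma~\ref{lem-gammalattice}).

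For the inclusion $\hA^{-1}(\Int{\Omega})\subset\Int{\Omega}$, viewing $\Int{\Omega}=\D\setminus\overline{R}$, I would show that $\hA$ carries $\partial R\cap\D$ into $\overline{R}$ and $\bS^1\cap\Omega$ into $\bS^1$, both of which are disjoint from $\Int{\Omega}$. For any edge $\alpha$ of $R$, the Markov property forces the piece $g_\alpha$ to send the endpoints of $\alpha$ (which lie in $S$) back into $S$, so $g_\alpha(\alpha)$ is a geodesic joining two vertices of $R$ and hence either an edge or a diagonal of $R$; both lie in $\overline{R}$. The desired inclusion then follows by contrapositive, and iteration via monotonicity of preimages yields the nested chain $\hA^{-(n+1)}(\Int{\Omega})\subset\hA^{-n}(\Int{\Omega})$.

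The break-point containment on $\bS^1$ is immediate from the fact that the boundary geodesics of $\hA^{-1}(\Int{\Omega})$ inside $\D$ are components of $\hA^{-1}(\partial R)$, whose endpoints on $\bS^1$ therefore lie in $A^{-1}(S)$; since $A(S)\subset S$, one has $S\subset A^{-1}(S)$, placing $S$ in the break-point set of $\hA^{-1}(\Int{\Omega})$. For the emptiness of $\bigcap_i\hA^{-i}(\Int{\Omega})$, I would use the tessellation of $\D$ by $\Gamma_A$-translates of $\overline{R}$ (guaranteed by the lattice property, even when $R$ is only a finite union of $\Gamma_A$-fundamental domains, as happens for higher Bowen-Series maps). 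Any $z\in\D$ lies in a tile $g.\overline{R}$ of finite word-length $\ell(z)$ in the pieces, and the Markov structure ensures that each application of $\hA=g_\beta$ on $\Omega_\beta$ cancels the leading letter $g_\beta^{-1}$ of a suitable reduced representative, strictly decreasing this word-length. Some iterate $\hA^{\circ n}(z)$ thus lies in $\overline{R}$, so $z\notin\hA^{-n}(\Int{\Omega})$ for $n$ sufficiently large.

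Finally, for the boundary-edge claim I would proceed by induction on $i$. The case $i=0$ is trivial since the boundary of $\Int{\Omega}$ consists of edges of $R$. For the inductive step, the boundary geodesics of $\hA^{-i}(\Int{\Omega})$ inside $\D$ are components of the $\hA$-preimage of boundary geodesics of $\hA^{-(i-1)}(\Int{\Omega})$; by Proposition~\ref{prop-nodiagonals} applied piecewise, each such component is either an edge of $R$ or lies entirely outside $\overline{R}$, hence sits as a legitimate boundary geodesic of the new set. Under $\hA$ it maps to a boundary geodesic of $\hA^{-(i-1)}(\Int{\Omega})$, and the inductive hypothesis then sends it further to an edge of $R$ under $\hA^{\circ(i-1)}$. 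The principal obstacle is precisely the need to exclude the scenario in which a preimage geodesic lands as a diagonal of $R$ and thereby slips into the interior of $R$, breaking the iteration; this is exactly what the no-special-diagonal conclusion of Proposition~\ref{prop-nodiagonals} forbids in the minimal setting.
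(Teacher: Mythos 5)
Most of your proposal runs parallel to the paper's argument and is fine: the contrapositive for $\hA^{-1}(\Int{\Omega})\subset\Int{\Omega}$ (using $A(S)\subset S$ and convexity of $\overline{R}$), the break-point containment via $S\subset A^{-1}(S)$, the nested chain by iteration, and the induction for the boundary-edge assertion (where you correctly identify Proposition~\ref{prop-nodiagonals} as the tool that prevents preimage geodesics from landing as diagonals inside $\overline{R}$) all match the paper's proof in substance, with your first step being if anything slightly more elementary.

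The genuine gap is in your treatment of $\bigcap_i \hA^{-i}(\Int{\Omega})=\emptyset$. First, the corollary assumes only that $A$ is a minimal \pwfm map, not that it is mateable, so Lemma~\ref{lem-gammalattice} is not available and $\Gamma_A$ need not be a lattice; even when it is, the claim that $\overline{R}$ (or a finite union of its $\Gamma_A$-translates) tessellates $\D$ is precisely the kind of structural conclusion that the later characterization results (Proposition~\ref{bs_char_1}, Theorem~\ref{thm-aisbs}, Theorem~\ref{thm-char-hdm-eoe}) extract under extra hypotheses, and cannot be invoked here. Second, even granting a tiling, the assertion that each application of $\hA$ ``cancels the leading letter'' and strictly decreases the word length of the tile containing a point is an unproven Bowen--Series-type property; nothing in the definition of a \pwfm map guarantees it, and making it precise would require exactly the combinatorial control you are trying to avoid. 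The step the paper actually uses, and which you never invoke, is expansivity (built into Definition~\ref{def-pwfm}): backward branches of an expansive Markov covering contract, so the components of $A^{-n}(I_\alpha)$ have diameters tending to $0$ uniformly; hence the half-planes making up $\hA^{-n}(\Int{\Omega})$ subtend arcs of shrinking diameter and are eventually contained in arbitrarily small Euclidean neighborhoods of $\bS^1$, so no point of $\D$ can lie in every $\hA^{-n}(\Int{\Omega})$. Replacing your word-length argument by this expansivity argument closes the gap; the rest of your write-up can stand as is.
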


\begin{proof}
	Since $A: \bS^1 \to \bS^1$ is a degree $d$ map, $A^{-1}(I_\alpha)$
	is a disjoint union of $d$ arcs. By Proposition \ref{prop-nodiagonals},  each of these arcs is the boundary at infinity
	of an open half-plane contained in $\Int{\mathcal{D}}$. Also, since $S$ is preserved under $A$,
	$S \subset A^{-1}(S)$. Hence, $\hA^{-1}(\Int{\mathcal{D}}) \subset \Int{\mathcal{D}}$, and the set of break-points at infinity of $\hA^{-1}(\Int{\mathcal{D}})$ contain those of $\Int{\mathcal{D}}$.
	
	Iterating $\hA^{-1}$, we get the second assertion. Since $A$ is expansive, 
	$\cap_i \hA^{-i}(\Int{\mathcal{D}}) = \emptyset$. The last assertion is clear.
\end{proof}

\subsection{Characterizing Bowen-Series maps through patterns}\label{sec-agpattern}

\begin{defn}\label{def-Abicondn}
	Let $A$ be a \pwfm map with fundamental domain $R$. 
	
	\begin{enumerate}
		\item We say that $A$ is \emph{backward edge-orbit equivalent} to $\Gamma$ if for every edge $\alpha$ of $R$, we have $\boa(\alpha)=\Gamma.\alpha$.
		\item $A$ is said to be \emph{simplicial} if for every edge $\alpha$ of $R$,
		$\hA(\alpha)$ is also an edge of $R$.
	\end{enumerate}
\end{defn}

\begin{lemma}\label{lem-ApatisalmostGpat-simplicial}
	Suppose that $A$ is simplicial. Then for each edge $\alpha$
	of $R$, we have $\goa(\alpha)\subset \Gamma.\alpha$.
\end{lemma}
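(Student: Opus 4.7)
The strategy is to control the forward orbit and the backward orbit of $\alpha$ separately, using the fact that both the pieces of $A$ and the branches of $A^{-1}$ are elements of $\Gamma$, and then glue them together via the definition of $\goa$.

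First I would handle the forward orbit. Let $\alpha$ be an edge of $R$ with endpoints $p,q$, and let $\Omega_\alpha \subset \Omega$ be the closed half-plane bounded by $\alpha$. By the piecewise M\"obius setup, $\hA|_{\Omega_\alpha}=g_\alpha$ for some $g_\alpha\in\Gamma$, so in particular $\hA(\alpha)=g_\alpha\cdot\alpha\in\Gamma\cdot\alpha$. The simplicial hypothesis guarantees that $\hA(\alpha)$ is again an edge of $R$, so we may iterate: induction yields that $\hA^{\circ n}(\alpha)$ is an edge of $R$ lying in $\Gamma\cdot\alpha$ for every $n\geq 0$. In particular, no forward image of $\{p,q\}$ ever hits the diagonal $\Delta$.

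Next I would handle backward orbits. Recall that an $A$-pre-image of an element $\{r,s\}\in\dbS$ is obtained by applying a branch of $A^{-1}$, i.e., the inverse of some piece $g$ of $A$; by Remark~\ref{rmk-piecebranch} and the hypothesis that $A$ is piecewise Fuchsian with pieces in $\Gamma$, every such branch lies in $\Gamma$. Consequently, if $\{r,s\}\in\Gamma\cdot\alpha$ (viewed as a subset of $\dbS$) and $\{r_1,s_1\}$ is any $A$-pre-image of $\{r,s\}$, then $\{r_1,s_1\}=g^{-1}\cdot\{r,s\}\in\Gamma\cdot\alpha$. Iterating, the entire backward orbit $\boa(\{r,s\})$ is contained in $\Gamma\cdot\alpha$.

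Finally I would assemble the two steps. By definition, $\goa(\alpha)=\bigcup_{i\geq 0}\boa(\hA^{\circ i}(\alpha))$. The first step shows that each $\hA^{\circ i}(\alpha)$ is an edge of $R$ lying in $\Gamma\cdot\alpha$; the second step shows that the backward orbit of any element of $\Gamma\cdot\alpha$ is contained in $\Gamma\cdot\alpha$. Hence $\goa(\alpha)\subset\Gamma\cdot\alpha$, as claimed. There is no real obstacle here; the only point that requires a little care is confirming that every $A$-pre-image that appears in the definition of $\boa$ is given by a group element of $\Gamma$, which is immediate from the piecewise Fuchsian hypothesis combined with Remark~\ref{rmk-piecebranch}.
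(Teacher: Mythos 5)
Your proposal is correct and follows essentially the same route as the paper: forward images of $\alpha$ stay edges of $R$ lying in $\Gamma.\alpha$ by the simplicial hypothesis, while each component of $\hA^{-i}$ of such an edge is $g.\alpha$ for a branch $g\in\Gamma$, so the grand orbit lies in $\Gamma.\alpha$. Your write-up merely spells out the induction and the assembly of forward and backward orbits in more detail than the paper's brief argument.
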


\begin{proof} 
	For every edge $\alpha$, $\hA(\alpha)$ is an edge of $R$ belonging to
	$\Gamma.\alpha$. Also, each connected component of $\hA^{-i}  (\alpha)$
	equals $g.\alpha$ for some branch $g$ of  $\hA^{-i}$. Hence the grand orbit
	$\goa (\alpha)$ is contained in $\Gamma.\alpha$.
\end{proof}

We  observe the following restriction on $\Gamma$-patterns (see, for instance, the proof of \cite[Proposition 2.3]{mahan-relrig}):

\begin{lemma}\label{lem-alpha-gpattern} Let $\Gamma$ be a Fuchsian lattice, $A$ 
	a \pwfm map with pieces lying in $\Gamma$, and $R$ a fundamental domain of $A$.
	Let $p,q$ be the
	endpoints  of an edge $\alpha$ of $R$. Then $\Gamma. \{p,q\}$ is a $\Gamma$-pattern if and only if
	\begin{enumerate}
		\item either $p, q$ are parabolic break-points,
		\item or $p, q$ are hyperbolic break-points corresponding to attracting 
		and repelling fixed points of a hyperbolic element.
	\end{enumerate}
	
\end{lemma}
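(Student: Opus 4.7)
The plan is to reformulate the condition geometrically. Identify $\dbS$ with the space $\mathcal{G}(\D)$ of unoriented bi-infinite hyperbolic geodesics in $\D$ via $\{p,q\} \leftrightarrow \gamma_{pq}$; this identification is $\Gamma$-equivariant, so $\Gamma.\{p,q\}$ is closed and discrete in $\dbS$ if and only if the projection $\bar\gamma_{pq}$ of $\gamma_{pq}$ to $\Sigma = \D/\Gamma$ is a closed (equivalently, properly immersed) subset of $\Sigma$. Since $\Gamma$ is a finite co-volume Fuchsian group, the complete bi-infinite geodesics in $\Sigma$ with closed image are exactly \emph{closed geodesics} (projections of axes of hyperbolic elements of $\Gamma$) and \emph{cusp-to-cusp geodesics} (whose lifts join pairs of parabolic fixed points of $\Gamma$); any other complete geodesic accumulates on itself and has non-closed image. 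This matches the dichotomy in the statement.

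For the \textbf{if} direction, if $p, q$ are both parabolic, choose embedded horocyclic cusp neighborhoods $U_{[p]}, U_{[q]}$ of $\Sigma$ small enough that every geodesic of $\Sigma$ meets each in at most one connected component. Then $\bar\gamma_{pq}$ meets the thick part $\Sigma \setminus (U_{[p]} \cup U_{[q]})$ in a compact arc and exits into the two cusps; so $\bar\gamma_{pq}$ is closed in $\Sigma$, and $\Gamma.\gamma_{pq}$ is closed and discrete in $\mathcal{G}(\D)$. If instead $p, q$ are attracting/repelling fixed points of a common hyperbolic $h \in \Gamma$, then $\gamma_{pq}$ is the axis of $h$, and $\Gamma.\gamma_{pq}$ is the set of axes of $\Gamma$-conjugates of $h$, projecting to a single closed geodesic and hence discrete and closed in $\mathcal{G}(\D)$.

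For the \textbf{only if} direction, Remark \ref{rmk-pwfm-Spoints} together with the piecewise M\"obius structure of $A$ shows that at the periodic break-point $p$, a suitable iterate $A^{\circ n}$ coincides on a one-sided neighborhood with an element of $\Gamma$ fixing $p$; hence $p$ is a boundary fixed point of a non-trivial element of $\Gamma$, which is parabolic or hyperbolic (since Fuchsian elliptics fix no points on $\bS^1$). The same holds for $q$. It remains to rule out (a) mixed type and (b) both hyperbolic without a common axis. In case (a), with $p$ parabolic and $q$ fixed by a hyperbolic $k \in \Gamma$ with other fixed point $q^*$, the sequence $k^{\circ n}.\{p,q\} = \{k^{\circ n} p, q\}$ lies in $\Gamma.\{p,q\}$ and, by North-South dynamics, $k^{\circ n} p \to q^*$; the accumulation point $\{q^*, q\} \in \dbS$ is distinct from $\{p,q\}$ (as $p \neq q^*$), violating closedness. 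In case (b), with $h_1 \in \Gamma$ hyperbolic, $h_1(p) = p$, other fixed point $p^* \neq q$, the sequence $\{p, h_1^{\circ n} q\}$ accumulates at $\{p, p^*\} \in \dbS$, distinct from $\{p,q\}$, violating discreteness.

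The main obstacle will be the rigorous execution of the accumulation arguments in the only-if direction: one must verify that the sequences $k^{\circ n} p$ and $h_1^{\circ n} q$ yield genuinely infinitely many distinct elements of $\Gamma.\{p,q\}$ and that their limits lie in $\dbS$ rather than on the removed diagonal. Discreteness of $\Gamma$ together with the North-South dynamics of hyperbolic elements on $\bS^1$ handles this, using $q \neq q^*$ in case (a) (immediate, as $p$ is parabolic and $q^*$ hyperbolic) and $q \neq p^*$ in case (b) (the defining hypothesis). The if-direction's horocyclic argument is standard once one invokes the thick-thin decomposition of the finite-volume surface $\Sigma$.
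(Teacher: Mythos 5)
Your proposal is correct and follows essentially the same route as the paper: the "if" direction projects the edge to the quotient surface, where cusp-to-cusp geodesics and closed geodesics (axes) give locally finite, hence closed and discrete, $\Gamma$-orbits, and the "only if" direction uses north--south dynamics of a hyperbolic element fixing one endpoint to make translates of $\alpha$ accumulate onto its axis, a point of $\dbS$. Your split into cases (a) and (b) is just an unpacking of the paper's single converse case ("one endpoint fixed by a hyperbolic whose other fixed point is not the second endpoint"), and your preliminary observation that each break-point is fixed by a nontrivial element of $\Gamma$ matches the paper's assertion (cf.\ Remark \ref{rmk-ai}), so the two arguments coincide in substance.
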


\begin{proof} The points $p, q \in \bS^1$ are break-points of $A$ and hence fixed points of some elements of $\Gamma$.
	Suppose first that  the break-points of $A$ are stabilized by parabolics in $\Gamma$.
	Hence, under the covering projection $\Pi: \D \to \D/\Gamma$, $\Pi(\alpha)$ is a bi-infinite geodesic in $\Sigma=  \D/\Gamma$, whose ends go down cusps of $\Sigma$. Therefore $\Gamma.\alpha$
	is a `discrete lamination'; i.e., a  closed subset of $\D$ consisting of a countable collection of bi-infinite geodesics, none of which is accumulated on by others. Hence $\Gamma. \{p,q\}$ is closed and discrete.
	
	Next, if $p, q$ are hyperbolic break-points corresponding to attracting 
	and repelling fixed points of a hyperbolic element $g$, then $\alpha$ is stabilized by $g$ and
	$\Pi(\alpha)$ is a closed geodesic in $\Sigma$. Hence $\Gamma. \{p,q\}$ is a $\Gamma$-pattern.
	
	Conversely, suppose one of the endpoints $p$ of $\alpha$ is fixed by a hyperbolic element $g$ and the other endpoint is a fixed point of a non-trivial element $h \in \Gamma$ with $h \neq g$. Let $\lambda$ denote the
	bi-infinite geodesic in $\D$ stabilized by $g$.
	Then $\{g^n(\alpha)\}_{n\in\Z}$ accumulates on $\lambda$, and $\Gamma. \{p,q\}$ is not closed and discrete; i.e., it is not a $\Gamma$-pattern.
\end{proof}

We now show that backward edge-orbit equivalence characterizes Bowen-Series maps:

\begin{prop}\label{prop-nocollapsetoedge} Let $A$ be a minimal \pwfm map that is backward edge-orbit equivalent to $\Gamma$. Then 
 all break-points of $A$ are parabolic,
	  $A$ is simplicial,
	and   $\hA(\partial R)=\partial R$.
\end{prop}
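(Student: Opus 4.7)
The three assertions will be established in sequence, with (1) supplying the parabolic hypothesis used for (2) and (3). The main combinatorial engine is Corollary~\ref{cor-apattern}: letting $\mathcal{P}$ denote the set of edge-endpoint pairs of $R$, the set $\goa(\mathcal{P})\cap\dbS$ is a closed, discrete $A$-pattern, so every backward orbit $\boa(\alpha)$ of an edge $\alpha$ of $R$ is automatically closed and discrete in $\dbS$.

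For (1), I will argue by contradiction. Suppose some $x\in S$ is a hyperbolic fixed point of $\Gamma$. For each of the two edges $\alpha',\alpha''$ of $R$ incident to $x$, the hypothesis $\boa(\alpha)=\Gamma\cdot\alpha$ combined with discreteness of $\boa(\alpha)$ forces $\Gamma\cdot\alpha$ to be a $\Gamma$-pattern. Lemma~\ref{lem-alpha-gpattern} then leaves only two possibilities for the endpoints of $\alpha$: both parabolic, or both fixed by a common hyperbolic with $\alpha$ its axis. Since $x$ is hyperbolic, only the axis case is available for both $\alpha'$ and $\alpha''$, so each is the axis of a hyperbolic element $h',h''\in\Gamma$ fixing $x$. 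In a Fuchsian group, two hyperbolic elements sharing a single fixed point on $\bS^1$ must share the other as well (else they generate a non-discrete subgroup), hence have the same axis. This forces $\alpha'=\alpha''$, contradicting the distinctness of the two edges of $R$ at $x$. So every break-point of $A$ is parabolic.

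For (2), suppose for contradiction that $\hA(\alpha_j)=\delta$ is a diagonal of $R$. Then the endpoints $A(x_j),A(x_{j+1})$ of $\delta$ are non-adjacent in $S$, so by the Markov property $A(I_j)=g_j(I_j)$ is a union $I_l\cup\cdots\cup I_{l+m-1}$ of at least two consecutive intervals; each interior break-point $x_{l'}$ in this union has a $g_j^{-1}$-pullback $z_{l'}\in\Int I_j$ which is not a break-point of $A$. I plan to extract the contradiction by invoking Proposition~\ref{prop-nodiagonals} (no special diagonals). Pulling the half-planes $R_{l'}$ back via the branch $g_j^{-1}$ produces a nest of disjoint open sub-half-planes inside $R_j$, whose bounding geodesics $g_j^{-1}(\alpha_{l'})$ are not edges of $R$. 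Using backward edge-orbit equivalence to locate these pullbacks inside the $\Gamma$-orbits $\Gamma\cdot\alpha_{l'}$, and then composing with further branches to transport $\delta$ back through this nest onto an edge of $R$, one aims to exhibit a diagonal of $R$ as the image of an edge under a branch of some iterate $A^{-n}$; passing to the first such $n$ and invoking minimality yields a special diagonal for $A$ itself, in the spirit of the proof of Proposition~\ref{prop-nodiagonals}.

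Granted (2), part (3) is straightforward. The map $\hA\vert_{\partial R}\colon\partial R\to\partial R$ is a continuous self-map of the topological circle $\partial R$, since continuity at each vertex $x_{j+1}$ follows from $g_j(x_{j+1})=g_{j+1}(x_{j+1})=A(x_{j+1})$. Define $f\colon\{1,\ldots,k\}\to\{1,\ldots,k\}$ by $\hA(\alpha_j)=\alpha_{f(j)}$. Since each $g_j$ is an orientation-preserving M\"obius map, traversing $\alpha_j$ from $x_j$ to $x_{j+1}$ maps onto traversing $\alpha_{f(j)}$ from $x_{f(j)}$ to $x_{f(j)+1}$, whence $A(x_j)=x_{f(j)}$ and $A(x_{j+1})=x_{f(j)+1}$. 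Consistency at the vertex $x_{j+1}$ (shared by $\alpha_j$ and $\alpha_{j+1}$) then forces $f(j+1)=f(j)+1\pmod k$, so $f$ is a cyclic shift and therefore a bijection. Consequently $\hA(\partial R)=\bigcup_j\alpha_{f(j)}=\partial R$. The technically delicate step is (2): the conceptual obstacle is translating the geometric statement ``an edge maps to a diagonal'' into the combinatorial defect ``a diagonal of $R$ is the image of an edge under a branch of an iterate of $A^{-1}$'', so as to invoke (an iterated version of) Proposition~\ref{prop-nodiagonals}.
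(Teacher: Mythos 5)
Your part (1) is correct and follows the paper's route: backward edge-orbit equivalence plus Lemma \ref{lem-apattern} makes $\Gamma\cdot\alpha$ a $\Gamma$-pattern, Lemma \ref{lem-alpha-gpattern} gives the parabolic/hyperbolic-axis dichotomy for each edge, and discreteness (two hyperbolics sharing a fixed point share their axis) kills the hyperbolic case; the paper phrases this as ``all edges would coincide'' rather than localizing at one vertex, but the ingredients are identical. Part (2), however, is only an announced plan (``I plan to\dots'', ``one aims to\dots''), and the scaffolding with pulled-back half-planes is an unnecessary detour: the hypothesis gives the target statement in one line. If $\hA(\alpha_j)=\delta$ is a diagonal, then $\delta=g_j\cdot\alpha_j\in\Gamma\cdot\alpha_j=\boa(\alpha_j)$, so $\delta$ is a component of $\hA^{-n}(\alpha_j)$ for some $n\geq 1$; this is impossible because no iterated branch preimage of an edge is a diagonal of $R$ --- that is exactly Corollary \ref{cor-strongrepulse} (resting on minimality via Proposition \ref{prop-nodiagonals}), or, even more simply, because every branch preimage has both endpoints in a single interval $I_l$, while the endpoints of a diagonal are non-adjacent vertices. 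Your ``first such $n$'' step is essentially re-deriving this corollary and, as written, does not justify why the level-$(n-1)$ object must be an edge rather than a geodesic in $\Int{\Omega}$.

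The genuine gap is in part (3), which you call straightforward but which in the paper requires the hypothesis a second time. Your orientation claim --- that $A(x_j)=x_{f(j)}$ and $A(x_{j+1})=x_{f(j)+1}$, whence $f(j+1)=f(j)+1$ and $f$ is a cyclic rotation --- is false: for the Bowen-Series map of $G_d$ the labels are reversed (e.g.\ $g_1$ fixes $x_1=p_1$ and sends $x_2=p_2$ to $p_{-2}$), so the induced map on $\partial R$ is reflection-like; indeed, if $f$ were a rotation, each $g_j$ would map $I_j$ onto the single interval $I_{f(j)}$ and $A$ would be a degree-one homeomorphism of $\bS^1$, contradicting the degree $\geq 2$ requirement for a \pwfm map (this is precisely the point made in the proof of Proposition \ref{bs_char_1}). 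More fundamentally, no purely local orientation/consistency argument can prove (3): orientation-preservation of the individual pieces is perfectly compatible with a fold, i.e.\ with two adjacent edges $\alpha_j,\alpha_{j+1}$ having the same image edge (one image arc on each side of it), in which case the edge map need not be injective and $\hA(\partial R)$ could a priori be a proper subarc. The paper excludes this using backward edge-orbit equivalence: if $J=\hA(\partial R)\subsetneq\partial R$, then $\hA(J)\subset J$; pick an edge $\alpha\not\subset J$ with $\hA(\alpha)=\beta=g\cdot\alpha\subset J$; since $\beta\in\Gamma\cdot\alpha=\boa(\alpha)$, $\beta$ is a component of $\hA^{-n}(\alpha)$, and simpliciality gives $\hA^{\circ n}(\beta)=\alpha$, forcing $\alpha\subset J$, a contradiction. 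Your argument never invokes the hypothesis at all, so part (3) as written is not a proof and proves, along the way, a statement ($f$ a rotation) that is inconsistent with the standing assumptions.
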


\begin{proof} \noindent {\bf All break-points of $A$ are parabolic:} Let $\alpha$ be an edge of $R$ with endpoints $p, q$.
	By Lemma \ref{lem-apattern}, $\boa (\{p,q\})$ is an $A$-pattern. In particular, $\boa (\{p,q\})$ is a closed, discrete
	subset of $\dbS$. Since $A$ is backward edge-orbit equivalent to $\Gamma$,
	$\Gamma.\{p,q\}$ is a closed, discrete
	subset of $\dbS$. By Lemma \ref{lem-alpha-gpattern}, 
	
	\begin{enumerate}
		\item either $p, q$ are parabolic break-points,
		\item or $p, q$ are hyperbolic break-points corresponding to attracting 
		and repelling fixed points of a hyperbolic element.
	\end{enumerate}
	
	Since $\alpha$ was arbitrary, it follows that either all break-points are hyperbolic
	or all break-points are parabolic. 
	Suppose all break-points are hyperbolic. Then, by Lemma \ref{lem-alpha-gpattern} again, each edge $\alpha$ has endpoints $p, q$
	the attracting and repelling fixed points  of a hyperbolic $g \in \Gamma$.
	This forces all the edges to coincide; i.e., $R$ is empty, and $A$ has exactly two break-points given by the attracting and repelling fixed points
	of a single  hyperbolic $g \in \Gamma$ (by discreteness of $\Gamma$). But this is impossible for a \pwfm map. This establishes the first conclusion
	of the Lemma.
	
	\noindent {\bf $A$ is simplicial:}	Let  $\alpha$ be an  edge of $R$. Then, since $S$ is pre-periodic under $A$, it follows that $\hA(\alpha)$ is either an edge or a diagonal of $R$. We now observe that 
	$\hA(\alpha)=\beta$ cannot be a diagonal of $R$. To do so, assume that $\beta$ is a diagonal. Let $g \in \Gamma$ be the piece of $A$ restricted to an arc of $\bS^1$ subtended by $\alpha$. Then $\beta=g.\alpha$.
	By the definition of backward edge-orbit equivalence, and $\boa(\alpha)$, there exists $n \in \natls$ such that $\beta$ is a connected component of $\hA^{-n}(\alpha)$.
	But this contradicts Corollary \ref{cor-strongrepulse}. Hence $\hA(\alpha)$ is  an edge  of $R$. 
	
	\noindent {\bf $\mathbf{\hA(\partial R)=\partial R}$:}
	We argue the last conclusion by contradiction. The second conclusion ensures that $\hA(\partial R) \subset
	\partial R$. Suppose that $\hA(\partial R) \subsetneq
	\partial R$.
	By continuity of $\hA$, there exists a contiguous family of intervals $I_1, 
	\cdots, I_r \subset \partial R$ such that  $\hA(\partial R)=I_1 \cup
	\cdots \cup I_r =J \subsetneq \partial R$. Hence $\hA(J) \subset J$.

	Let $\alpha $ be an edge of $R$ not contained in $J$. Then $\hA(\alpha)=\beta$ is an edge contained in $J$.
	Let $\beta = g.\alpha$, where $g \in \Gamma$.
	Since $\boa (\alpha) = \Gamma.\alpha$, it follows that there exists $n \in \natls$ such that $\beta$ is a connected component of
	$\hA^{-n}(\alpha)$. But then $\hA^{\circ n}(\beta)$ must equal $\alpha$, since $A$, and hence $A^{\circ n}$ is simplicial.
	It follows that $\alpha$ is contained in $J$, a contradiction. 
\end{proof}

\begin{rmk}\label{rmk-stab} Proposition \ref{prop-nocollapsetoedge} shows that each edge $\alpha$ of $R$
	is, in fact, periodic under forward iteration by $\hA$.
	
	Since the endpoints of any $I_s$ are
	parabolic break-points of $A$, the stabilizer in $\Gamma$ of the edge $\alpha_s$
	of $R$ joining the endpoints of $I_s$ is either trivial (when $\Gamma$ is torsion-free) or $\Z/2\Z$ (in case $\Gamma$ has torsion). Let $w$   be a  fixed point of $\widehat{A}$ on $\partial R$.
	\begin{enumerate}
		\item Then either $w$ is a break-point of $A$, and $\hA$   exchanges the two edges $\alpha, \beta$ of $R$ incident on $w$, or
		\item there exists an edge $I_l$ of $R$ such that $w \in I_l$,
		$\hA(I_l)=I_l$ and $\hA$ acts
		as an order 2 isometry on $I_l$ fixing $w$.
	\end{enumerate}
\end{rmk}

\begin{theorem}\label{thm-aisbs} Let $A$ be a minimal \pwfm map backward edge-orbit equivalent to $\Gamma$.
	Then
	$A$ is the Bowen-Series map for $\Gamma$ corresponding to the fundamental domain $R$. In particular,
	$R$ is a fundamental domain of $\Gamma$ and $\D/\Gamma$ is a sphere with punctures with possibly one or two orbifold points of order 2.
\end{theorem}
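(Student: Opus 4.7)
The plan is to chain together two results already established in the paper: Proposition~\ref{prop-nocollapsetoedge}, which converts the backward edge-orbit equivalence hypothesis into the geometric statement $\widehat{A}(\partial R) = \partial R$, and Proposition~\ref{bs_char_1}, which combinatorially characterizes Bowen-Series maps in exactly this condition. Between them, essentially all of the substantive work is already done.

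First I would apply Proposition~\ref{prop-nocollapsetoedge} to $A$, whose hypotheses are precisely the minimality and backward edge-orbit equivalence assumed in the theorem. This yields three outputs: every break-point of $A$ is a parabolic fixed point of an element of $\Gamma$; $A$ is simplicial, so $\widehat{A}$ permutes the edges of $R$; and, most importantly, $\widehat{A}$ restricts to a self-homeomorphism of $\partial R$.

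Next I would feed this into Proposition~\ref{bs_char_1}, whose combinatorial hypothesis $\widehat{A}(\partial R) = \partial R$ is exactly what the previous step produced. That proposition concludes that $\disk/\Gamma_A$ is a sphere with finitely many punctures (possibly with one or two order-two orbifold points), and that $A$ is a Bowen-Series map of $\Gamma_A$. Inspection of the proof of Proposition~\ref{bs_char_1}, in which the Poincar\'e polygon theorem is applied to the M\"obius pieces of $A$, further shows that the associated fundamental polygon is precisely $R$.

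The only remaining point is the identification $\Gamma_A = \Gamma$, where $\Gamma_A$ denotes the group generated by the pieces of $A$. The inclusion $\Gamma_A \subseteq \Gamma$ is automatic from the setup. For the reverse, I would observe that $\boa(\alpha) \subseteq \Gamma_A \cdot \alpha$ for every edge $\alpha$, since every branch of $A^{-k}$ is a composition of pieces of $A$ and so lies in $\Gamma_A$; combined with the hypothesis $\boa(\alpha) = \Gamma \cdot \alpha$, this gives $\Gamma \cdot \alpha = \Gamma_A \cdot \alpha$ for each edge $\alpha$ of $R$. The parabolicity of break-points (from the first step) constrains the setwise $\Gamma$-stabilizer of any edge to be either trivial or a single order-two involution whose fixed point lies on that edge; since such an involution in $\mathrm{PSL}_2(\reals)$ has a unique fixed point in $\disk$ while distinct edges of $R$ are disjoint, a short coset-counting argument forces $[\Gamma : \Gamma_A] = 1$. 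This last bookkeeping is the only step beyond a direct citation, and hence the main (mild) potential obstacle; the conceptual core is already packaged in Propositions~\ref{prop-nocollapsetoedge} and~\ref{bs_char_1}.
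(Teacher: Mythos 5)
Your proposal matches the paper's proof, which is exactly the two-step chain you describe: Proposition \ref{prop-nocollapsetoedge} yields $\widehat{A}(\partial R)=\partial R$, and Proposition \ref{bs_char_1} then identifies $A$ as a Bowen-Series map with fundamental polygon $R$. Your closing paragraph on $\Gamma_A=\Gamma$ goes beyond the paper, which tacitly identifies the two groups (and note that your stabilizer observation by itself only bounds $[\Gamma:\Gamma_A]$ by two, so the ``coset-counting'' step would need a little more care), but this does not affect the correctness of the core argument.
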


\begin{proof}
	Proposition \ref{prop-nocollapsetoedge} ensures $\hA(\partial R)=\partial R$. The Theorem now follows from Proposition \ref{bs_char_1}.
\end{proof}

\begin{rmk}\label{rmk-semigroup}
	Theorem \ref{thm-aisbs} and
	the discussion preceding Remark \ref{rmk-pwfm-Spoints} now shows that $\boa(\alpha)$ 
	equals a $\Gamma$ \emph{semigroup} orbit for the semigroup generated by branches of $\hA^{-1}$ that
	are defined on $\alpha$.  We emphasize that these branches generate $\Gamma$ as a semigroup
	and that successive iterates under branches of $\hA^{-1}$ on $\alpha$ will give us a semigroup orbit
	rather than a group orbit. 
\end{rmk}

\subsection{Edge-orbit equivalence of $\hA$}\label{sec-eoe}

All Fuchsian groups considered in this subsection will be assumed to be torsion-free.

For a minimal \pwfm map $A$, Corollary \ref{cor-strongrepulse} allows us to define $\hA$-grand orbits
of bi-infinite geodesics in $\D$.
Let $\hA:\mathcal{D} \to \overline{\D}$ be the canonical extension of $A$.
Assume that $A$ has degree $d$.
Then by Corollary \ref{cor-strongrepulse},  
$$
\cdots\hA^{-n}(\Int{\mathcal{D}}) \subset \hA^{-(n-1)}(\Int{\mathcal{D}})\subset \hA^{-(n-2)}(\Int{\mathcal{D}})\subset \cdots\subset\hA^{-1}(\Int{\mathcal{D}}) \subset \Int{\mathcal{D}}.
$$
Then for any bi-infinite geodesic $\alpha \subset \partial\mathcal{D}$, there exists a unique (closed) half-plane $\mathcal{D}_\alpha \subset \mathcal{D}$ bounded by the edge $\alpha$ of $R$ and an interval $I_\alpha \subset \bS^1$ 
(see Definition \ref{def-canonicalextension}) such that $\hA^{-1}(\alpha)$
contains at least $d$ bi-infinite geodesics, one corresponding to each branch of $\hA^{-1}$. Further, the fact that $A$ preserves the set of ideal vertices of $R$ implies that exactly one of the two following possibilities occur:
\begin{enumerate}
	\item There exists $k \in \natls$ such that $\hA^{\circ k} (\alpha)$ is contained in $\hA(\mathcal{D}) \setminus \mathcal{D}$ and hence
	$\hA^{\circ (k-1)} (\alpha)$ is contained in $\mathcal{D}$.
	
	\item $\alpha$ is pre-periodic under iteration by $\hA$. Hence,
	there exists 
	$k \in \natls$ such that $\hA^{\circ k} (\alpha)$ is a periodic edge of
	$R$, and  all forward iterates of $\alpha$ under $\hA$ are
	edges of $R$.
\end{enumerate}
In the first case, forward iteration stops when $\hA^{\circ k} (\alpha)$ exits
$\mathcal{D}$; in the second case, $\hA^{\circ k} (\alpha)$ is periodic under $\hA$.
In the first case, the \emph{grand orbit of $\alpha$ under $\hA$} is defined as the union of 
\begin{enumerate}
	\item all forward  iterates $\hA^{\circ i}(\alpha),\ 0 \leq i \leq k$, where $k$ is the smallest positive integer 
	for which $\hA^{\circ k} (\alpha)$ exits
	$\mathcal{D}$, and
	\item all iterated pre-images of $\hA^{\circ i}(\alpha)$ under $\hA,\ 0\leq i\leq k$.
\end{enumerate}
In the second case, the \emph{grand orbit of $\alpha$ under $\hA$} is defined as the union of all
backward iterates of all forward iterates of $\alpha$.
In either case, we obtain  a collection of bi-infinite geodesics in $\D$.
We denote the grand orbit of $\alpha$ under $\hA$ by $\goah(\alpha)$.

Replacing $\alpha$ by its ideal endpoints, $\{p,q\}$, the set of ideal endpoints of bi-infinite geodesics in $\goah(\alpha)$ is a subset of $\dbS$ and we denote it as  $\goah(\{p,q\})$.

\begin{defn}\label{def-eoe}
	$\hA$ is said to be \emph{edge-orbit equivalent } to $\Gamma$ if 
	\begin{enumerate}
	\item for every edge $\alpha$ of $R$, we have $\goah (\alpha)= \goa(\alpha)=\Gamma.\alpha$, and
	
	\item if a diagonal $\{r, s\}$ of $R$ is periodic under $\da$, then its endpoints must be fixed under $A$. 
	\end{enumerate}
	\end{defn}

\begin{rmk}
In Definition~\ref{def_invariance_pwfm}, we defined the grand orbit $\goa(\{p,q\})$ of $\{p,q\}\in\dbS$ using the map $\da$ which only records the action of $\hA$ at infinity. On the other hand, the grand orbit $\goah (\{p,q\})$ is defined in terms of the action of $\widehat{A}$ on bi-infinite geodesics in the interior of the disk. As an example of the difference between these two grand orbits, we note that if an edge $\{p, q\}$ of $R$ maps to a diagonal $\{r,s\}$ of $R$ under $\hA$, then $\{A(r), A(s)\}$ always lies in $\goa(\{p,q\})$, but not necessarily in $\goah(\{p,q\})$. 

The condition $\goah (\{p,q\})=\goa (\{p,q\})$ and the second condition of Definition~\ref{def-eoe} ensure compatibility between the boundary action and the interior action of $\hA$.
\end{rmk}

\begin{lemma}\label{lem-diagonalAfixed} 
Let $A$ be a minimal \pwfm map such that $\hA$ is edge-orbit equivalent  to $\Gamma$. Let $\delta$ be a diagonal  of $R$ such that $\delta = \hA (\alpha)$ for some edge $\alpha$ of $R$. Let $r, s$ be the ideal endpoints of $\delta$.
	Then $A(r)=r, A(s)=s$.
\end{lemma}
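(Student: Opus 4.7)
The plan is to show that $\{r,s\}$ is periodic under $\da$; condition~(2) of edge-orbit equivalence (Definition~\ref{def-eoe}) then yields $A(r)=r$ and $A(s)=s$ immediately. Write $p, q$ for the endpoints of $\alpha$, so that $\{r,s\}=\da(\{p,q\})$. By condition~(1) of edge-orbit equivalence applied to $\alpha$, $\goah(\alpha)=\goa(\alpha)=\Gamma\cdot\alpha$, and in particular $\{r,s\}\in\Gamma\cdot\{p,q\}$, with every forward $\da$-iterate of $\{r,s\}$ also lying in $\Gamma\cdot\{p,q\}$. Since $A(S)\subset S$ by the Markov property and $S$ is finite, this forward orbit is finite and hence eventually periodic, say
\[
\{r,s\}=X_0\to X_1\to\cdots\to X_{k_0}\to\cdots\to X_{k_0+\ell-1}\to X_{k_0},
\]
with pre-period $k_0\ge 0$ and cycle length $\ell\ge 1$ (Lemma~\ref{lem-apattern} severely restricts collapses into the diagonal $\Delta$, and the case analysis below will rule out any remaining possibility). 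The goal is to show that $k_0=0$.

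The key observation, and the main obstacle, is that $\delta$ is the \emph{only} diagonal of $R$ in $\Gamma\cdot\alpha$. By the definition of $\goah$ given before Definition~\ref{def-eoe}, every element of $\goah(\alpha)$ is either $\alpha$, $\delta$, or an iterated $\hA^{-n}$-pre-image of $\alpha$ or $\delta$ (some $n\ge 1$). Every such pre-image is a bi-infinite geodesic contained in $\Omega$, whereas diagonals of $R$ lie inside $\Int R \subset \overline{\D}\setminus\Omega$. Since by hypothesis $\goah(\alpha)=\Gamma\cdot\alpha$, the claim follows.

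Now analyze the cycle. Each cycle element $X_i$ (for $k_0\le i<k_0+\ell$) corresponds to a bi-infinite geodesic joining two distinct points of $S$ and is therefore either an edge or a diagonal of $R$. By condition~(2) of edge-orbit equivalence, any diagonal in the cycle is $\da$-fixed, and by the key observation such a diagonal must equal $\delta$; in that case $\{r,s\}$ (the endpoints of $\delta$) already lies in the cycle, forcing $k_0=0$. Otherwise the cycle consists entirely of edges $\beta_0,\ldots,\beta_{\ell-1}$ of $R$ cyclically permuted by $\hA$. If $\alpha\ne\beta_i$ for every $i$, then each $\beta_i\in\Gamma\cdot\alpha\setminus\{\alpha,\delta\}$ must be of the form $\hA^{-n}(\alpha)$ or $\hA^{-n}(\delta)$ for some $n\ge 1$; but then $\hA^{n+1}(\beta_i)=\delta$ (in the first case) or $\hA^{n}(\beta_i)=\delta$ (in the second), each contradicting the fact that the forward $\hA$-orbit of $\beta_i$ stays among the edges $\beta_0,\ldots,\beta_{\ell-1}$. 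Hence $\alpha=\beta_j$ for some $j$, so the $\da$-orbit of $\{r,s\}$ passes through $\{p,q\}$, and $\da(\{p,q\})=\{r,s\}$ closes the cycle. In either sub-case $\{r,s\}$ is periodic under $\da$.

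Finally, condition~(2) of Definition~\ref{def-eoe} applied to the periodic diagonal $\{r,s\}$ of $R$ gives $A(r)=r$ and $A(s)=s$, as required.
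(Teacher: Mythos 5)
Your proof is correct and follows essentially the same route as the paper's: both reduce the statement to showing that $\{r,s\}$ is periodic under $\da$, using the fact that $\goah(\alpha)=\Gamma\cdot\alpha$ contains no diagonal of $R$ other than $\delta$ (every other element being $\alpha$ itself or an iterated $\hA$-preimage contained in $\Omega$, hence forced back onto $\delta$ under forward iteration), and then invoke condition (2) of Definition~\ref{def-eoe}. The paper organizes this as a one-step case analysis on $\da(\{r,s\})$ (first ruling out collapse onto the diagonal, then treating the cases ``$\delta$ itself'', ``another diagonal'', ``an edge''), whereas you run the same facts through an eventual-periodicity and cycle analysis; the underlying content is identical.
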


\begin{proof}
Let us denote the endpoints of $\alpha$ by $p,q$.
We first argue that $A(r)\neq A(s)$. In fact, if $A$ maps $r, s$ to the same point, then $\{A(r), A(r)\}\in\goa(\{r, s\})=\goa(\{p,q\})=\Gamma.\{p,q\}$ (by edge-orbit equivalence of $\hA$ and $\Gamma$). But a group element cannot map the distinct points $p, q$ to the same point. Thus, $A(r)\neq A(s)$; i.e., $A(r), A(s)$ must be the ideal endpoints of either an edge or a diagonal of $R$.

If $A(r), A(s)$ are also the ideal endpoints of $\delta$, then $\{r, s\}$ is fixed under $\da$, and hence the second defining property of edge-orbit equivalence implies that $r$ and $s$ are fixed by $A$. We claim that $A(r), A(s)$ cannot be the ideal endpoints of a diagonal other than $\delta$. Indeed, if $A(r), A(s)$ are the ideal endpoints of a diagonal $\delta'\neq\delta$, then $\delta'\in\goa(\alpha)=\goah(\alpha)=\goah(\delta)$ (here we use the first defining property of edge-orbit equivalence). But this is impossible as $\hA$ is not defined on the diagonals of $R$.

Now suppose that $A(r), A(s)$ are the ideal endpoints of an edge $\alpha'$ of $R$. Then, $\alpha'\in\goa(\alpha)=\goah(\alpha)=\goah(\delta)$. This implies that some iterate of $\hA$ carries $\alpha'$ onto $\delta$. As $\da$ sends the endpoints of $\delta$ to those of $\alpha'$, we conclude that $\{r, s\}$ is periodic under $\da$. Once again, by the second defining property of edge-orbit equivalence, we have that $A(r)=r$, $ A(s)=s$.
\end{proof}

\begin{lemma}\label{lem-goah-allparab} Suppose $A$ is a minimal \pwfm map such that $\hA$ is edge-orbit equivalent to $\Gamma$. Then
	\begin{enumerate}
		\item 	 For every edge $\alpha$ of $R$,
		$\goah (\alpha) $ is a $\Gamma$-pattern.
		\item All break-points of $A$ are parabolic.
	\end{enumerate}
	
\end{lemma}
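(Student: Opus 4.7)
For part~(1), the plan is to combine Lemma~\ref{lem-apattern} with the hypothesis of edge-orbit equivalence. Given an edge $\alpha$ of $R$ with ideal endpoints $p, q$, Lemma~\ref{lem-apattern} already furnishes that $\goa(\{p,q\})\cap\dbS$ is closed and discrete in $\dbS$. By the defining condition of edge-orbit equivalence, $\goah(\alpha) = \goa(\alpha) = \Gamma.\alpha$. Since $p\neq q$ and elements of $\Gamma$ act injectively on $\bS^1$, no element of $\Gamma.\{p,q\}$ can lie on the diagonal. Hence $\Gamma.\alpha$ coincides with $\goa(\alpha)\cap\dbS$, making it a closed, discrete, $\Gamma$-invariant subset of $\dbS$; i.e., a $\Gamma$-pattern.

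For part~(2), the plan is to apply Lemma~\ref{lem-alpha-gpattern} to each edge $\alpha$ of $R$ using part~(1). That lemma presents a dichotomy: either both endpoints of $\alpha$ are parabolic break-points, or both are the attracting and repelling fixed points of a single hyperbolic element of $\Gamma$ stabilizing $\alpha$. The task is to exclude the hyperbolic alternative for every edge, mirroring the analogous step in the proof of Proposition~\ref{prop-nocollapsetoedge}. Label the break-points $x_1,\ldots,x_k$ cyclically, so that the edge $\alpha_j$ joins $x_j$ to $x_{j+1}$ (indices mod $k$). Whether a break-point is parabolic or a hyperbolic axis-endpoint is determined by its $\Gamma$-stabilizer; so if $\alpha_j$ is of hyperbolic type, then the neighbouring edge $\alpha_{j-1}$, which shares the break-point $x_j$, must also be of hyperbolic type.

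The heart of the argument (and the anticipated main obstacle) is the rigidity step that follows. Let $g_j, g_{j-1}\in\Gamma$ be the hyperbolic elements stabilizing $\alpha_j, \alpha_{j-1}$; both fix the point $x_j$. Since two hyperbolic elements of a discrete Fuchsian group sharing a fixed point are powers of a common primitive hyperbolic, they must share their entire axis. This forces $\alpha_{j-1}=\alpha_j$ and hence $x_{j-1}=x_{j+1}$, contradicting the distinctness of the break-points (a \pwfm map of degree at least two has at least three break-points, because two distinct edges of $R$ cannot be geodesics of $\D$ sharing both ideal endpoints). Therefore no edge is of hyperbolic type, and every break-point of $A$ is parabolic, completing part~(2).
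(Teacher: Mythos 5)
Your proof is correct and follows essentially the same route as the paper: part (1) is the same combination of Lemma~\ref{lem-apattern} with the identity $\goah(\alpha)=\goa(\alpha)=\Gamma.\alpha$, and part (2) reproduces, inline, the argument from the first part of Proposition~\ref{prop-nocollapsetoedge} (the dichotomy of Lemma~\ref{lem-alpha-gpattern} plus the discreteness rigidity that hyperbolic elements sharing a fixed point share their axis), which is exactly what the paper cites at this point.
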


\begin{proof}
Edge-orbit equivalence of $\hA$ and $\Gamma$ tells us that $\goah(\alpha)=\goa(\alpha)=\Gamma.\alpha\subset\dbS$. By Lemma~\ref{lem-apattern}, $\goa (\alpha)$ is a closed discrete subset of $\dbS$. Moreover, $\Gamma.\alpha$ is $\Gamma$-invariant by definition. This completes the proof of the fact that $\goah(\alpha)$ is a $\Gamma$-pattern.
	
	Next, Lemma \ref{lem-alpha-gpattern} and the first part of Proposition \ref{prop-nocollapsetoedge} show that all break-points of $A$ (i.e., the vertices of $R$) are parabolic.
\end{proof}

\begin{defn}\label{def-discretelamn}
	A $\Gamma$-pattern $\LL$ is called a \emph{discrete $\Gamma$-lamination} if no pair $\{p_1, q_1\},
	\{p_2, q_2\} \in \LL$ is linked, or equivalently if $\alpha_i$ is the bi-infinite geodesic in $\D$
	joining $\{p_i, q_i\}$ for $i=1,2$, then $\alpha_1, \alpha_2$ do not intersect. 
\end{defn}

\begin{lemma}\label{lem-goah-discretelamn}
Suppose $A$ is a minimal \pwfm map such that $\hA$ is edge-orbit equivalent to $\Gamma$. Let $\mathcal{L}':=\{\hA (\alpha):\ \alpha\ \textrm{is\ an\ edge\ of}\ R\}$. Then the following hold.
\begin{itemize}
\item No two distinct bi-infinite geodesics in $\LL'$ intersect in $\D$.

\item $\Gamma.\LL'=\bigcup_{\alpha \subset \partial R} \goah (\alpha)$, where the union is taken over all boundary edges $\alpha$ of $R$.

\item $\bigcup_{\alpha \subset \partial R} \goah (\alpha)$ is a discrete $\Gamma$-lamination.

\end{itemize}
\end{lemma}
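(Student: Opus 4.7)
The plan is to establish the items in the order (2), (3), (1). Item (2) will follow formally from edge-orbit equivalence, and (1) will drop out of (3) via the containment $\LL'\subset\Gamma.\LL'$, so the real technical work lies in proving the non-crossing statement in (3). The main obstacle is the subcase in (3) where the candidate crossing geodesic is a deep pre-image of an edge under some branch of $\hA^{-l}$; locating such a pre-image relative to a given edge of $R$ requires combining the nestedness $\hA^{-l}(\Omega)\subset\Omega$ of Corollary~\ref{cor-strongrepulse} with the decomposition of $\Omega$ into half-planes.

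For (2), fix an edge $\alpha$ of $R$. Edge-orbit equivalence gives $\hA(\alpha)\in\goah(\alpha)=\Gamma.\alpha$, so $\hA(\alpha)=g_\alpha.\alpha$ for some $g_\alpha\in\Gamma$; consequently $\Gamma.\hA(\alpha)=\Gamma.\alpha=\goah(\alpha)$, and taking the union over the (finitely many) edges of $R$ yields $\Gamma.\LL'=\bigcup_\alpha\goah(\alpha)$.

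For (3), Lemma~\ref{lem-goah-allparab} ensures each $\goah(\alpha)=\Gamma.\alpha$ is a $\Gamma$-pattern, so the finite union remains a $\Gamma$-pattern; only the non-crossing property requires work. Using $\Gamma$-invariance, I reduce to verifying that an edge $\alpha_i$ of $R$ does not cross a translate $g_0.\alpha_j$ in $\Int\D$ for any pair with $(g_0,\alpha_j)\neq(e,\alpha_i)$. The case $g_0=e$, $i\neq j$ is immediate since $\alpha_i,\alpha_j$ are distinct boundary edges of the ideal polygon $R$. For $g_0\neq e$, edge-orbit equivalence places $g_0.\alpha_j$ in $\goah(\alpha_j)$, and I split into the forward-iterate and backward-iterate subcases. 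The possibility $g_0.\alpha_j=\hA^{\circ 0}(\alpha_j)=\alpha_j$ with $g_0\neq e$ is excluded because the $\Gamma$-stabilizer of $\alpha_j$ is trivial: the two endpoints of $\alpha_j$ are parabolic (Lemma~\ref{lem-goah-allparab}), hence fixed by distinct maximal parabolic subgroups of the torsion-free lattice $\Gamma$, whose intersection is trivial. For $k\geq 1$, $\hA^{\circ k}(\alpha_j)$ is an edge or diagonal of $R$, and hence cannot cross the edge $\alpha_i$ inside the ideal convex polygon $R$.

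The backward-iterate subcase is the main obstacle. Here $g_0.\alpha_j$ is a component of some $\hA^{-l}$-pre-image of a forward iterate of $\alpha_j$, and Corollary~\ref{cor-strongrepulse} gives $g_0.\alpha_j\subset\overline{\hA^{-l}(\Omega)}\subset\overline{\Omega}$. Since $\Omega=\bigsqcup_k\Omega_{\alpha_k}$ decomposes into half-planes bounded by the edges of $R$, the geodesic $g_0.\alpha_j$ lies in some closed half-plane $\overline{\Omega_{\alpha_k}}$, so its ideal endpoints lie in $\overline{I_{\alpha_k}}$. The arcs $\{I_{\alpha_k}\}$ have pairwise disjoint interiors, so $\overline{I_{\alpha_k}}$ is contained in one of the two closed arcs of $\bS^1$ cut out by the endpoints of $\alpha_i$. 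Hence the endpoints of $g_0.\alpha_j$ do not separate those of $\alpha_i$ on $\bS^1$, ruling out a crossing in $\Int\D$. This proves (3), and (1) follows at once from $\LL'\subset\Gamma.\LL'=\bigcup_\alpha\goah(\alpha)$.
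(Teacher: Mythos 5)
Your proof is correct, but it takes a genuinely different route from the paper's at the key non-crossing step. The paper proves item (1) first, by contradiction: if two first images $\delta_1=\hA(\alpha_1)$, $\delta_2=\hA(\alpha_2)$ crossed, then $g_2^{-1}(\delta_1)$ would cross the edge $\alpha_2$, hence have endpoints in two different pieces, so $\hA$ is undefined on it, contradicting the fact that (by edge-orbit equivalence) it lies in $\goah(\alpha_1)$ and would have to be reached by forward iteration. It then proves (3) using (1) via a dynamical push-forward argument: two crossing grand-orbit geodesics must both be iterated pre-images, must lie in the interior of a common half-plane $\Omega_\alpha$, and since $\hA$ acts by a single M\"obius map there, their images keep crossing and keep lying in common half-planes for all forward times, contradicting the finiteness of forward orbits of grand-orbit elements. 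You instead prove (3) directly: edge-orbit equivalence identifies $\bigcup_\alpha\goah(\alpha)$ with $\bigcup_\alpha\Gamma.\alpha$, so $\Gamma$-equivariance of crossing reduces everything to ``an edge $\alpha_i$ of $R$ never crosses an element of $\goah(\alpha_j)$''; forward iterates are edges or diagonals of $R$ and hence unlinked with the endpoints of any edge, while genuine iterated pre-images lie in $\Omega$, hence in a single half-plane $\Omega_{\alpha_k}$, hence have both endpoints in the single closed arc $I_k$ and again cannot link with $\{x_i,x_{i+1}\}$; item (1) then falls out of (3) via $\LL'\subset\Gamma.\LL'$. This is a clean, non-iterative argument, and the order (2)--(3)--(1) creates no circularity since your proof of (3) nowhere uses (1). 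Two minor remarks: the stabilizer digression is superfluous (if $g_0.\alpha_j=\alpha_j$ it is an edge of $R$ and trivially does not cross $\alpha_i$), and in the backward case the only thing you actually use from Corollary~\ref{cor-strongrepulse} is that iterated pre-images are geodesics contained in $\Omega$ -- though citing it is appropriate, since that corollary (which is where minimality enters) is what guarantees this. Note also that the paper's standalone argument for (1) is the one reused later (e.g.\ in Theorem~\ref{thm-char-hdm-eoe}), whereas your (1) is only available after (3); this affects nothing about correctness here.
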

\begin{proof}
1) Clearly, neither two distinct edges of $R$, nor an edge and a diagonal of $R$ intersect in $\D$. Thus, it suffices to show that if $\alpha_1, \alpha_2$ are edges of $R$ such that $\delta_i=\hA(\alpha_i)$ ($i=1,2$) are distinct diagonals (of $R$), then $\delta_1$ and $\delta_2$ do not intersect in $\D$. By way of contradiction, assume that $\delta_1$ and $\delta_2$ intersect. Further suppose that $\hA\vert_{\alpha_i}=g_i,\ i\in\{1,2\}$. Then, the bi-infinite geodesic $\delta'':= g_2^{-1}\circ g_1(\alpha_1)=g_2^{-1}(\delta_1)$ intersects $\alpha_2$, and hence its two ideal endpoints lie in two different pieces of $A$. It follows that $\hA$ is not defined on $\delta''$. Since $\delta''\in\Gamma.\alpha_1=\goah(\alpha_1)$ (by edge-orbit equivalence), some iterate of $\hA$ must carry $\alpha_1$ onto $\delta''$. But this is impossible as $\delta''$ is neither an edge nor a diagonal of $R$.

2) This directly follows from the definitions of $\LL'$ and edge-orbit equivalence.

3) That $\bigcup_{\alpha \subset \partial R} \goah (\alpha)$ is a  $\Gamma$-pattern follows from the first part of Lemma \ref{lem-goah-allparab}, since the union is finite.
	
By way of contradiction, suppose that the bi-infinite geodesics $\gamma_1,\gamma_2\in\bigcup_{\alpha \subset \partial R} \goah (\alpha)$ intersect in $\D$. As no two distinct geodesics in $\LL'$ intersect, both $\gamma_1,\gamma_2$ must be iterated $\hA$-pre-images of geodesics in $\LL'$. In order that they intersect, they must lie in the interior of a common $\mathcal{D}_{\alpha}$, where $\alpha$ is an edge of $R$ (see the discussion before Corollary~\ref{cor-strongrepulse}). Moreover, each geodesic in the $\hA$-forward orbit of $\gamma_i$ ($i\in\{1,2\}$) either lies in $\mathcal{D}$ or equals a diagonal of $R$ in $\LL'$. Since $\hA$ acts by a single group element on each $\mathcal{D}_{\alpha}$, the geodesics $\hA(\gamma_1),\hA(\gamma_2)$ also intersect in $\D$. So in light of the first part of this proposition, they must lie in the interior of a common $\mathcal{D}_{\alpha}$ as well. Iterating this argument, one sees that $\hA^{\circ j}(\alpha_1), \hA^{\circ j}(\alpha_2)$ lie in the interior of a common $\mathcal{D}_{\alpha}$, for all $j\geq 0$. But this contradicts the fact that $\gamma_1,\gamma_2\in\bigcup_{\alpha \subset \partial R} \goah (\alpha)$.
\end{proof}

\begin{defn}\label{def-innerdomain}
	Let $\{w_1, \cdots, w_k\} \subset S$ be the set of ideal vertices of $R$ that
	are fixed under $A$. Then the interior of the convex hull of $w_1, \cdots, w_k$
	will be called the \emph{inner domain} of $A$, and will be denoted as $D$.
\end{defn}

Note that if $\hA$ is edge-orbit equivalent to $\Gamma$, then by Definition \ref{def-eoe} and Lemma~\ref{lem-diagonalAfixed}, any diagonal $\delta$ of $R$ of the form $\delta = \hA(\alpha)$
for an edge $\alpha$ of $R$ lies in $\overline{D}$.

\begin{defn}\label{def-fold} 
	$\hA$ is said to have a folding if there exist adjacent edges $\alpha, \beta$ of the fundamental domain $R$ such that the bi-infinite
	geodesics $\hA(\alpha)$ and $\hA(\beta)$ are the same.
\end{defn}

\begin{prop}\label{prop-goah-dichotomy}
	Suppose $A$ is a (minimal) mateable map without folding such that $\hA$ is edge-orbit equivalent to the Fuchsian group $\Gamma$ generated by its pieces. Then either $\hA(\partial R)=\partial R$, or, 
	for every edge $\alpha$ of $R$, there exists $n \in \natls$ such that $\hA^{\circ n}(\alpha)$ is a diagonal of $R$ contained in 
	$\overline{D}$. Further, 
	\begin{enumerate}
		\item The closure $\overline{D} $ of $D$ in $\D$ is contained in $R$.
		\item For every edge $\alpha$ of $R$, either $\hA(\alpha)$ is an edge 
		of $R$ or a diagonal of $R$ contained in 
		$\overline{D}$. 
	\end{enumerate}

	In either case, $\Sigma=\D/\Gamma$ is homeomorphic to a sphere with punctures.
\end{prop}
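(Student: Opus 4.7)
The plan proceeds in four stages. First, for any edge $\alpha=\overline{pq}$ of $R$, the image $\hA(\alpha)=\overline{A(p)A(q)}$ has endpoints in the $A$-invariant set $S$, so it is either an edge or a diagonal of $R$; if it is a diagonal $\delta$, then Lemma~\ref{lem-diagonalAfixed} forces both endpoints of $\delta$ to be $A$-fixed, hence vertices of $D$, giving $\delta\subset\overline{D}$. This establishes the second bulleted conclusion.

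Next, for $\overline{D}\subset R$, I shall argue that no two $A$-fixed vertices of $R$ can be adjacent. If $v,v'$ shared an edge $\alpha$ of $R$, then $\hA(\alpha)=\alpha$, so the piece $g_\alpha\in\Gamma$ of $A$ on $I_\alpha$ would fix both of the parabolic cusps $v,v'$ (Lemma~\ref{lem-goah-allparab}). In the torsion-free Fuchsian lattice $\Gamma$, each cusp stabilizer is a maximal parabolic cyclic subgroup, so no nontrivial element can fix two distinct cusps; thus $g_\alpha=1$ and $A\equiv\mathrm{id}$ on $I_\alpha$, contradicting the fact that $A$ is topologically conjugate to $z^d$ (hence has only finitely many fixed points on $\bS^1$). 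Consequently the edges of $D$ are diagonals of $R$ and $\overline{D}$ lies in the open ideal polygon $R$.

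For the dichotomy, suppose first that no edge maps to a diagonal, so $\hA(\partial R)\subset\partial R$. Each forward $\hA$-orbit of edges is eventually periodic by finiteness; moreover, edge-orbit equivalence forces every element of $\Gamma\cdot\gamma\cap\partial R$ to lie in the forward $\hA$-orbit of $\gamma$ (since backward iterates are contained in $\Int(\Omega)$ by Corollary~\ref{cor-strongrepulse}), and a symmetric argument shows $\gamma$ is itself in the periodic part of its orbit. Any edge $\gamma\in\partial R\setminus\hA(\partial R)$ would then both be periodic and lack an edge preimage, a contradiction. Hence $\hA(\partial R)=\partial R$.

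Conversely, assume some edge $\alpha_0$ satisfies $\hA(\alpha_0)=\delta\subset\overline{D}$. I must show no periodic cycle $\beta_1\to\cdots\to\beta_n\to\beta_1$ of edges exists under $\hA$. Writing $\beta_{i+1}=g_i(\beta_i)$ with $g_i\in\Gamma$ a piece of $A$, the product $h=g_n\cdots g_1$ preserves $\beta_1$, and the cusp argument above forces $h=1$ in $\Gamma$. The contradiction is then extracted by combining the no-folding hypothesis with the observation (Lemma~\ref{lem-diagonalAfixed}) that transitions from edges to diagonals in $\overline{D}$ can occur only at vertices mapping into $D$, together with edge-orbit equivalence (which forces $\Gamma\cdot\beta_1\cap\partial R=\{\beta_1,\ldots,\beta_n\}$ and $\Gamma\cdot\alpha_0\cap\partial R=\{\alpha_0\}$ to be disjoint): tracking how consecutive $\beta_i,\beta_{i+1}$ must lie on opposite sides of their shared image-vertices (relative to $R$ and to $\overline{D}$) produces the required contradiction. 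This last step is the main technical obstacle. Once it is resolved, the topological conclusion follows: in Case 1, Proposition~\ref{bs_char_1} identifies $A$ as a Bowen-Series map, so $\D/\Gamma$ is a punctured sphere; in Case 2, verifying that $\hA$ is a \hdm with inner domain $D$ (injective on components of $\partial R\setminus\partial D$) lets one invoke Proposition~\ref{higher_bs_char_prop} to realize $A$ as a higher Bowen-Series map of a punctured sphere Fuchsian group $\Gamma$.
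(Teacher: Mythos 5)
Your stages 1 and 2 are fine: deducing conclusion (2) directly from Lemma \ref{lem-diagonalAfixed}, and conclusion (1) from the fact that a piece fixing two parabolic break-points must be trivial (contradicting expansivity), is essentially the paper's argument. The genuine gap is in the dichotomy itself, which is the heart of the proposition. In your Case A, the claim that every edge is periodic rests on the assertion that $\Gamma\cdot\gamma\cap\partial R$ lies in the forward $\hA$-orbit of $\gamma$ ``since backward iterates are contained in $\Int{\Omega}$ by Corollary \ref{cor-strongrepulse}''; but Proposition \ref{prop-nodiagonals} only says that a component of $\hA^{-1}(\alpha)$ is either an edge of $R$ or lies outside $\overline{R}$, so backward iterates may very well be edges of $R$ (e.g.\ another edge mapping onto $\gamma$), and the claim does not follow; the ``symmetric argument'' is not supplied. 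In your Case B the contradiction is explicitly left open (``the main technical obstacle''), and the auxiliary claims there ($\Gamma\cdot\beta_1\cap\partial R=\{\beta_1,\dots,\beta_n\}$ and $\Gamma\cdot\alpha_0\cap\partial R=\{\alpha_0\}$) are unjustified for the same reason: $\goah$-grand orbits contain backward iterates, which may be edges outside the cycle. The paper's proof does not go through periodic edge-cycles at all; it is anchored at a fixed vertex $w$ of $D$. One shows the adjacent vertex $u_1$ cannot be fixed; if $u_1$ is periodic, then Lemma \ref{lem-diagonalAfixed} forces $A(u_1)=v_1$, $A(v_1)=u_1$, and the no-folding hypothesis propagates $A(u_k)=v_k$ step by step around the whole polygon, yielding $\hA(\partial R)=\partial R$; if no vertex adjacent to a fixed vertex is periodic, every edge incident to a fixed vertex lands on a diagonal in $\overline{D}$ within two iterates, and this propagates edge by edge to all of $\partial R$. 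That local propagation argument is the missing idea, and without it neither of your cases closes.

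There is a second problem with your conclusion for Case 2. Verifying that $\hA$ is a \hdm with inner domain $D$ which is injective on the components of $\partial R\setminus\partial D$, so as to invoke Proposition \ref{higher_bs_char_prop}, is far more than what is available at this stage: you only know that each edge eventually lands on \emph{some} diagonal contained in $\overline{D}$ (possibly a diagonal of $D$ rather than an edge of $\partial D$), and nothing about injectivity or about every edge of $\partial D$ being hit. Establishing that structure is precisely the content of Theorem \ref{thm-char-hdm-eoe}, which the paper proves \emph{after} and \emph{using} the present proposition, together with the checkerboard tiling, Lemma \ref{barrier_edge_lem} and the non-example of Proposition \ref{not_oe_prop}; so your route is at best a long unproved detour and at worst circular. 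The paper instead concludes the punctured-sphere statement softly: $\bigcup_{\alpha\subset\partial R}\goah(\alpha)=\Gamma.\LL_D$ is a discrete $\Gamma$-lamination with $\LL_D\subset\overline{D}$ (Lemma \ref{lem-goah-discretelamn}); since the ideal vertices of $D$ are fixed by $A$, orbit equivalence places them in distinct parabolic $\Gamma$-orbits, so $\gamma.\overline{D}\cap\overline{D}=\emptyset$ for $\gamma\neq 1$ and $\overline{D}$ embeds in $\Sigma$; and a handle or extra puncture would force an infinite-sided complementary region of this lamination, contradicting the fact that all complementary regions are finite-sided polygons.
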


\begin{proof}	
	Let $w\in S$ be an ideal vertex of $D$. Then $A(w)=w$. Let $\overline{w u_1}$
	be an edge of $R$. First, $u_1$ cannot be fixed under $A$, since 
	this will force $\hA(\overline{w u_1}) = \overline{w u_1}$, and since all break-points
	of $A$ are parabolic by Lemma \ref{lem-goah-allparab}, the stabilizer in $\Gamma$ of the pair of points $\{w,u_1\}$ is trivial.
	Hence $\hA$ must be the identity on the interval of $\bS^1$ that is
	bounded by $\{w,u_1\}$ and
	supports a piece of $A$.  This violates expansivity of $A$.
	
	We observe next that if $u_1$ is periodic, then under $A$, it must map to the unique vertex $v_1\neq u_1$ that is adjacent to $w$ on $\partial R$.
	Else $\overline{w A(u_1)}$ must be a diagonal of $R$, forcing $A(u_1)$
	to be fixed under $A$, contradicting the assumption that $u_1$ is periodic.
	Since $u_1$ cannot be fixed by the previous paragraph, it follows
	that $A(u_1)=v_1$. Similarly, $A(v_1) = u_1$.
	
	Let $u_2$ be  the unique vertex of $R$ other than 
	$w$ that is adjacent to $u_1$. Similarly, let $v_2$ be  the unique vertex of $R$ other than 
	$w$ that is adjacent to $v_1$. Then 
	\begin{enumerate}
		\item either $A(u_2) = v_2$,
		\item or $A(u_2) = w$.
	\end{enumerate}
	Else $\hA(\overline{u_1 u_2})$ is a diagonal, forcing $v_1$ to be a fixed point under $A$, a contradiction. If $A(u_2) = w$, then the two adjacent edges
	$\hA(\overline{u_1 u_2})$ and $\hA(\overline{w u_1})$ are folded over the edge $\overline{w v_1}$, contradicting the hypothesis.
	Hence,  $A(u_2) = v_2$.
	
	Proceeding inductively, we observe that either there is a folding, or
	$A(u_k) = v_k$ for all $k$, where $u_k, v_k$ are defined as above.
	The process terminates when 
	\begin{enumerate}
		\item Either $u_k=v_k$, in which case $A(u_k)=u_k$,
		\item Or, $u_k, v_k$ are adjacent vertices and $A$ exchanges them.
	\end{enumerate}
	In either case, we obtain $\hA(\partial R) = \partial R$.
	Summarizing the above argument, we have that if $u_1$ is periodic under $A$, then $\hA(\partial R)= \partial R$.
	
	Hence, if $\hA(\partial R)\neq \partial R$, then for every ideal vertex $w$
	of $D$ and ideal vertex
	$u_1$ of $R$ adjacent to $w$, either
	$\hA(\overline{w u_1})$ or $\hA^{\circ 2}(\overline{w u_1})$ is a diagonal of $R$ contained in $\overline{D}$. Let $k=1$ or $2$ be such that $\hA^{\circ k}(\overline{w u_1})$ is a diagonal of $R$ contained in $\overline{D}$. If $\hA^{\circ k}(\overline{u_1 u_2})$ is well-defined and is an edge of $R$, then one of the endpoints of $\hA^{\circ k}(\overline{u_1 u_2})$ is the ideal vertex $A^{\circ k}(u_1)$ of $D$. But then, $\hA^{\circ k}(\overline{u_1 u_2})$ must be mapped to a diagonal of $R$ contained in $\overline{D}$ by $\hA$ or $\hA^{\circ 2}$. Thus, $\hA^{\circ l}(\overline{u_1 u_2})$ is a diagonal of $R$ contained in $\overline{D}$ for some $l\geq 1$. Iterating the argument, we conclude that 
	for every edge $\alpha$ of $R$, there exists $n \in \natls$ such that $\hA^{\circ n}(\alpha)$ is a diagonal of $R$ contained in 
	$\overline{D}$. We note also that this argument further shows that
	for every edge $\alpha$ of $R$, either $\hA(\alpha)$ is an edge 
	of $R$ or a diagonal of $R$ contained in 
	$\overline{D}$. 
	
	Note further that the closure $\overline{D} $ of $D$ in $\D$ is contained in $R$. Else, there is an edge $\alpha$ of $D$ that is also an edge of $R$, forcing $\hA$ to fix $\alpha$ pointwise and violating expansivity of $A$. (This is similar to the argument in the first paragraph of the proof of the present proposition and we omit details.)
	
	It remains to establish the topology of $\Sigma$. When 
	$\hA(\partial R)= \partial R$, this was shown in Proposition \ref{bs_char_1}. Else, for every edge $\alpha$, $\hA^{\circ n}(\alpha) \subset \overline{D}$ for some $n \in \natls$. Thus, there exists a diagonal of $R$; namely,
	$\delta \subset \overline{D}$, such that $\boa(\delta) = \goah (\alpha)$.
	Hence, by Lemma~\ref{lem-goah-discretelamn}, there exists a lamination
	$\LL_D \subset \overline{D}$ consisting of finitely many leaves such that
	$\Gamma.\LL_D = \bigcup_{\alpha \subset \partial R} \goah (\alpha)$,
	where the union is taken over all edges of $R$. In fact, such a lamination $\LL_D$ is given by $\LL'\cap\bbar{D}$, where $\LL'$ is as in  Lemma~\ref{lem-goah-discretelamn}.
	
	All ideal vertices of $D$ are fixed points of $A$. Hence, by orbit equivalence of $A$ and $\Gamma$, they lie in distinct $\Gamma$-orbits of parabolic fixed
	points in $\bS^1$. Since $\Gamma.\LL_D = \bigcup_{\alpha \subset \partial R} \goah (\alpha)$ is a lamination,
	$\gamma.\bbar{D} \cap \bbar{D} = \emptyset$ (here $\overline{D}$ is the closure of $D$ in $\D$) for all non-trivial $\gamma \in \Gamma$ (else, some non-trivial element of $\Gamma$ will carry an ideal vertex of $D$ to another ideal vertex of $D$). Hence $\bbar{D}$ embeds in $\Sigma$
	under the covering map $\Pi:\D \to \Sigma$; i.e.,
	$\Pi(\bbar{D})\subset \Sigma$ is a closed, embedded ideal polygon. We conclude by showing that
	$\Sigma$ cannot have a handle or a new puncture. Else there exists a subsurface 
	$\Sigma_0$ of infinite fundamental group and with geodesic boundary such that $\Pi(\bbar{D}) \cap \Sigma_0=\emptyset$ (simply by choosing $\Sigma_0=\Sigma\setminus\Pi(\overline{D})$). Then a lift $\widetilde{\Sigma}_0$
	to $\D$ is an infinite sided polygon not intersecting $\Gamma.\LL_D$. But each complementary region of $\D \setminus \bigcup_{\alpha \subset \partial R} \goah (\alpha)$ is a finite-sided polygon, a contradiction.
\end{proof}

As an immediate consequence of the above proof, we have:

\begin{cor}\label{cor-numberofpunct}
	Suppose $A$ is a (minimal) mateable map without folding such that $\hA$ is edge-orbit equivalent to the Fuchsian group $\Gamma$ generated by its pieces, and $\hA(\partial R)\neq \partial R$. Then $\Sigma = \D/\Gamma$ is homeomorphic to $S_{0,k}$, where $k$ is the number of ideal vertices of $D$ (or equivalently, the number of edges of $D$).
\end{cor}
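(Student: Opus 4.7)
The plan is to read off the number of cusps of $\Sigma$ from the polygonal decomposition already constructed inside the proof of Proposition~\ref{prop-goah-dichotomy}. That proof provides three facts I would use as inputs: (i)~$\Sigma$ is a sphere with punctures; (ii)~the lamination $\bigcup_{\alpha\subset\partial R}\goah(\alpha)=\Gamma\cdot\LL_D$ consists of $\Gamma$-translates of finitely many diagonals inside $\overline D$, and its complementary regions in $\D$ are finite-sided ideal polygons whose ideal vertices all lie in $\Gamma\cdot\{\text{ideal vertices of }D\}$; and (iii)~$\overline D$ embeds in $\Sigma$ under $\Pi$ as a closed ideal $k$-gon whose $k$ ideal vertices lie in $k$ distinct $\Gamma$-orbits of parabolic fixed points, hence project to $k$ distinct cusps of $\Sigma$. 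In particular, $\Sigma$ has at least $k$ cusps, and the reverse inequality is what remains.

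For the upper bound, the first substep promotes (ii) to an assertion on $\Sigma$: since $\Gamma$ is torsion-free (the standing assumption of Section~\ref{sec-eoe}) and any complementary region has only finitely many ideal vertices lying in distinct $\Gamma$-orbits, its stabilizer in $\Gamma$ is trivial. Hence $\Pi$ is injective on each complementary region, and $\Sigma$ decomposes as a finite union of embedded closed ideal polygons (one of which is $\Pi(\overline D)$) whose collective set of ideal vertices is exactly the image of $\Gamma\cdot\{\text{ideal vertices of }D\}$, a set of cardinality $k$. The second substep is the observation that any cusp of $\Sigma$ is forced to be one of these ideal vertices: a horocyclic punctured neighborhood of a cusp of $\Sigma$ is contained in the closure of some polygon in the decomposition and limits onto an ideal vertex of that polygon (the polygon boundaries are geodesic arcs going between ideal vertices, and hence do not themselves accumulate at any additional cusp). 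Combined with (iii), this gives exactly $k$ cusps, so $\Sigma\cong S_{0,k}$.

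I do not anticipate a real obstacle: the heavy lifting was already performed inside Proposition~\ref{prop-goah-dichotomy}. The only points that need a line of care are the injectivity of $\Pi$ on each complementary region (which uses torsion-freeness together with the finite-sidedness of the regions) and the observation that the projected polygonal tiling of $\Sigma$ accounts for every cusp -- both of which are immediate once one unpacks the ingredients already supplied by the proposition.
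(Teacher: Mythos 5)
Your proposal is correct and takes essentially the same approach as the paper: Corollary~\ref{cor-numberofpunct} is recorded there as an immediate consequence of the proof of Proposition~\ref{prop-goah-dichotomy}, whose final paragraph supplies exactly your inputs (the $k$ ideal vertices of $D$ are $A$-fixed, hence lie in $k$ distinct $\Gamma$-orbits of parabolic fixed points, and the complementary regions of $\Gamma.\LL_D$ are finite-sided ideal polygons), and it excludes extra cusps by the same mechanism, phrased there as the impossibility of an infinite-sided complementary polygon arising from a lift of $\Sigma\setminus\Pi(\bbar{D})$. Two cosmetic remarks that do not affect your count: at this stage $\LL_D$ may still contain diagonals of $D$ (this is only ruled out later, in Theorem~\ref{thm-char-hdm-eoe}), so $\Pi(\bbar{D})$ need not itself be a single tile of your decomposition; and triviality of the stabilizer of a complementary region is best justified not via ``vertices in distinct $\Gamma$-orbits'' (which need not hold) but simply because a stabilizing element permutes the finitely many ideal vertices, so a power of it fixes at least three points of $\bS^1$ and is the identity, whence torsion-freeness finishes the job.
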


The rest of this subsection is devoted to proving that a \pwfm map $A$ satisfying the hypothesis of Corollary \ref{cor-numberofpunct}
is, in fact, a \emph{higher Bowen-Series map}, thus characterizing such mateable maps
(see Theorem \ref{thm-char-hdm-eoe} below).

\noindent {\bf Checkerboard Tiling:}
We set up some notation first.
Let $\mathbf{U} = \Sigma \setminus \Pi(\bbar{D})$. Since $\Sigma$ is homeomorphic to $S_{0,k}$ (where $k$ is the number of ideal vertices of $D$) and $\Pi$ gives an embedding of 
$\bbar{D}$ in $\Sigma$, it follows that $\mathbf{U}$ is also an ideal hyperbolic polygon with $k$ ideal vertices. Let $\TT$ denote the tiling of $\D$ induced by the lifts of $\Pi(\bbar{D})$ and $\mathbf{U}$ to
the universal cover $\D$. Choose a lift of $\mathbf{U}$ adjacent
to $D$ (i.e., sharing a boundary edge with $D$) and call it $U$. Then the translates $\Gamma.D$ and  $\Gamma.U$ give a `checkerboard
tiling' structure to $\TT$; i.e., any pair of tiles $T_1, T_2$ in $\TT$ that share a boundary edge are of the form $T_1 \in \Gamma.D$
and $T_2 \in \Gamma.U$. We shall call $\TT$ the \emph{checkerboard
	tiling} of $\D$ induced by $D, U$. Note that the discrete lamination $\bigcup_{\alpha \subset \partial R}\goah (\alpha)$
equals $\Gamma.\LL_D$, where $\LL_D$ is the finite lamination contained in $\overline{D}$ as in the proof of Proposition \ref{prop-goah-dichotomy} above. For convenience of notation, let
$\LL= \Gamma.\LL_D$.
Then, any leaf
of $\LL$ is contained in $\gamma.\overline{D}$ for some $\gamma \in \Gamma$.
We shall refer to translates of $D$ (resp. $U$) by $\Gamma$ as
\emph{$D$-tiles (resp. $U$-tiles)}

The checkerboard tiling $\TT$ shows that there are $k$ $U$-tiles 
$U_1, \cdots, U_k$ adjacent
to the inner domain $D$, where $w_i, w_{i+1}$ are ideal vertices of $U_i$ ($i+1$ mod $k$). 
Further, for each such $U_i$, there exist $k-1$ adjacent $D$-tiles $D_{ij}$,
$j= 1, \cdots, k-1$, apart from the inner domain $D$. 
Since each edge of $R$ eventually falls on some leaf of $\LL_D$ under forward iteration of $\hA$, we have $\partial R \subset \LL$.
Let $\LL_{ij} = \LL \cap \bbar{D_{ij}}$,
and let $\partial R_{ij} = \LL_{ij} \cap \partial R$ denote the
collection of edges of $R$ in $\bbar{D_{ij}}$. Further, let $\gamma_{ij} \in \Gamma$ be such that $\gamma_{ij}.D_{ij} =D$, so $\gamma_{ij}. \LL_{ij}=\LL_D$. With this notation, we have:

\begin{lemma}\label{barrier_edge_lem}
\noindent \begin{enumerate}
\item Every ideal vertex of $D$ belongs to the closure $\overline{\LL_D}$ of $\LL_D$ in $\C$. Moreover, $\overline{\LL_D}$ is connected.

\item $\partial R \subset \bigcup_{\substack{i=1, \cdots, k\\ j= 1, \cdots, k-1}} \bbar{D_{ij}}$; i.e., $\partial R=\bigcup_{\substack{i=1, \cdots, k\\ j= 1, \cdots, k-1}}\partial R_{ij}$.
\end{enumerate}
\end{lemma}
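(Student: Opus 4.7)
The plan is to exploit the checkerboard structure of $\TT$ throughout. The key starting observation is that $D$ is itself a tile of $\TT$, so no leaf of $\LL$ can pass through its interior; hence $\LL\cap\overline{D}=\partial D$, and consequently $\LL_D\subseteq\partial D$.

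For (1), given an ideal vertex $w$ of $D$, I will pick an edge $\alpha$ of $R$ incident to $w$ and apply Proposition~\ref{prop-goah-dichotomy} to obtain the smallest $n\ge 1$ with $\widehat{A}^{\circ n}(\alpha)\subset\overline{D}$. Since $\widehat{A}^{\circ(n-1)}(\alpha)$ is still an edge of $R$, the leaf $\widehat{A}^{\circ n}(\alpha)$ lies in $\LL'\cap\overline{D}=\LL_D$; as $A^{\circ n}(w)=w$, it has $w$ as an ideal endpoint, so $w\in\overline{\LL_D}$. To obtain connectedness I will strengthen this to $\LL_D=\partial D$. At each vertex $w_i$ of $D$, the two consecutive edges of $R$ at $w_i$, say $\overline{w_i,u_1}$ and $\overline{w_i,v_1}$, produce via the above iteration two leaves of $\LL_D$ each incident to $w_i$. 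A careful analysis tracking the forward $A$-orbits of $u_1$ and $v_1$ under the pieces of $\widehat{A}$ (each such piece being an element of $\Gamma$ fixing $w_i$), combined with the no-folding hypothesis applied to the consecutive pair $\overline{w_i,u_1},\overline{w_i,v_1}$ and to its subsequent consecutive images, forces these two leaves to be the two distinct edges $\overline{w_i,w_{i\pm 1}}$ of $D$ at $w_i$. Varying $i$ yields $\LL_D=\partial D$, whose closure in $\C$ is a topological circle and therefore connected.

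For (2), every edge $\alpha\in\partial R$ belongs to $\LL=\Gamma\cdot\partial D$ and hence is an edge of a unique $D$-tile $T_\alpha$ of $\TT$. To identify $T_\alpha$ with some $D_{ij}$, I will iterate the pieces of $\widehat{A}$ along the $\widehat{A}$-orbit of $\alpha$ until it reaches $\partial D$, assembling an element $g\in\Gamma$ with $g(\alpha)\in\partial D$. Then $T_\alpha=g^{-1}(D)$ is the $D$-tile bordering $\alpha$ on the $H_\alpha$ side, and the $U$-tile on the opposite (inside-$R$) side of $\alpha$ is $g^{-1}(U_m)$, where $U_m$ is the unique $U$-tile adjacent to $D$ across the edge $g(\alpha)$. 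The conclusion reduces to showing $R=\Int(\overline{D}\cup\overline{U_1}\cup\cdots\cup\overline{U_k})$: once this is established, the $U$-tile on the inside-$R$ side of $\alpha$ must be some $U_l$, whence $T_\alpha=g^{-1}(D)$ is a $D$-tile adjacent to $U_l$ but distinct from $D$, i.e., a $D_{lj}$.

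Verifying this characterization of $R$ is the main obstacle. The inclusion $D\cup U_1\cup\cdots\cup U_k\subseteq R$ is immediate from $\overline{D}\subset R$ together with the fact that every edge $\overline{w_i,w_{i+1}}$ of $D$ lies in $\Int R$, so the unique tile of $\TT$ across that edge (namely $U_i$) must likewise lie in $R$. To rule out any further tile in $R$, I will argue by contradiction: if some $D_{ij}\subset R$, then the edge $D_{ij}$ shares with $U_i$ is interior to $R$, while for any edge of $D_{ij}$ that does lie in $\partial R$ the inverse branch of $\widehat{A}$ constructed above sends $D$ into the tile $T_\alpha\subset R$, contradicting the fact that every branch of $\widehat{A}^{-1}$ maps into $\Omega=\overline{\D}\setminus R$. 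Once $R$ is identified, the equality $\partial R=\bigcup_{i,j}\partial R_{ij}$ follows immediately.
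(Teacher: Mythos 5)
Your argument rests on the claim, made at the outset and again at the end of your treatment of (1), that $\LL_D\subseteq\partial D$ (indeed $\LL_D=\partial D$), i.e.\ that no leaf of $\LL$ can be a diagonal of a $D$-tile. This is not available at this point and your sketch does not establish it. Proposition~\ref{prop-goah-dichotomy} only guarantees that an edge of $R$ maps under iteration to a \emph{diagonal of $R$ contained in $\bbar{D}$}, which may perfectly well be a diagonal of $D$: Figure~\ref{barrier_fig} exhibits exactly such an a priori possible $\LL_D$ (containing $\overline{w_1,w_3}$ and omitting the edges $\overline{w_1,w_2}$, $\overline{w_3,w_4}$). A local ``no folding'' analysis at a vertex cannot exclude this; in the paper the exclusion of such configurations is the content of Theorem~\ref{thm-char-hdm-eoe}, whose proof uses the present lemma together with a \emph{global} orbit-equivalence input (Proposition~\ref{not_oe_prop}). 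So deriving connectedness of $\overline{\LL_D}$ from ``$\LL_D=\partial D$ is a circle'' is circular. The paper instead proves connectedness directly: if $\alpha_1,\alpha_2$ are adjacent edges of $R$ landing on leaves after $n_1,n_2$ iterates, continuity of $\hA$ and the fact that the vertices of $D$ are $A$-fixed force $n_1\in\{n_2,n_2+1,n_2+2\}$ and the two leaves to share an endpoint.

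The same problem sinks your part (2). The identity $R=\Int\bigl(\overline{D}\cup\overline{U_1}\cup\cdots\cup\overline{U_k}\bigr)$ is strictly stronger than the lemma: Remark~\ref{rmk-rij} records precisely that, at this stage, $\partial R_{ij}$ may consist of a chain of ``far edges'' rather than the single near edge, and ruling that out again requires Proposition~\ref{not_oe_prop} later, \emph{after} Lemma~\ref{barrier_edge_lem} has been proved. Your contradiction step is also not a valid deduction: Corollary~\ref{cor-strongrepulse} controls preimages of $\Int{\Omega}$, not images of $D$ under the group element assembled from pieces, and if $D_{ij}\subset R$ its edges need not lie on $\partial R$ at all, so there is no edge to which your ``inverse branch'' argument applies. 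What is missing from your proposal is the actual mechanism of the paper's proof: each $\LL_{ij}=\gamma_{ij}^{-1}.\LL_D$ is disjoint from $R$, because by edge-orbit equivalence (Definition~\ref{def-eoe}) a leaf $\ell_{ij}=\gamma_{ij}^{-1}.\ell$ lies in the same $\hA$-grand orbit as $\ell\subset\overline{D}\subset R$, on which $\hA$ is undefined, forcing $\ell_{ij}$ into the domain of $\hA$; these leaves then act as barriers, and together with the connectedness statement of part (1) and the fact that $\partial R$ is a polygon they yield $\partial R=\bigcup_{i,j}\partial R_{ij}$. (Even your ``immediate'' inclusion $U_i\subset R$ needs the short argument that no element of $\Gamma$ carries a point of $U_i$ into $\overline{D}$, since a priori an edge of $R$ could cross $U_i$.)
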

\begin{proof}
1) It follows from the proof of Proposition~\ref{prop-goah-dichotomy} that  for every ideal vertex $w$ of $D$ and ideal vertex $u$ of $R$ adjacent to $w$, either $\hA(\overline{w u})$ or $\hA^{\circ 2}(\overline{w u})$ is a leaf of $\LL_D$. Since $A(w)=w$, this leaf of $\LL_D$ has $w$ as one of its endpoints. This shows that every ideal vertex of $D$ belongs to the closure $\overline{\LL_D}$ of $\LL_D$ in $\C$. 

Recall that each leaf of $\LL_D$ is an iterated forward image of some edge of $R$ under $\hA$. Hence, to prove connectedness of $\overline{\LL_D}$, it suffices to argue that if $\alpha_1$ and $\alpha_2$ are two distinct adjacent edges of $R$, then the leaves of $\LL_D$ on which they eventually land have at least one common endpoint. To this end, assume that $\hA^{\circ n_i}(\alpha_i)=\delta_i\in\LL_D$, for some positive integers $n_1, n_2$. If $n_1=n_2$, then the desired conclusion follows from continuity of $\hA$. Else, we may assume without loss of generality that $n_1>n_2$. Then by continuity of $\hA$, $\hA^{\circ n_2}(\alpha_1)$ is an edge of $R$ having one endpoint in common with $\delta_2=\hA^{\circ n_2}(\alpha_2)$. Finally, since the endpoints of $\delta_2$ are ideal vertices of $D$, the arguments used in the previous paragraph show that $n_1\in\{n_2+1, n_2+2\}$ and $\delta_1=\hA^{\circ n_1}(\alpha_1)$ has (at least) one endpoint in common with $\delta_2$. In Figure \ref{barrier_fig},  an a priori possible lamination $\LL_D=\{\overline{w_2 w_3}, \overline{w_4 w_1}, \overline{w_1 w_3}\}$ is given along with its translates $\LL_{41}, \LL_{42}, \LL_{43}$. In this case, $\partial R_{4j}$ ($j=1,3$) consists of an edge and a diagonal of $\overline{D_{4j}}$; while $\partial R_{42}$ has a unique edge, namely $\partial U_4\cap \partial D_{42}$.

\begin{figure}[ht!]
	\begin{tikzpicture}
		\node[anchor=south west,inner sep=0] at (0,0) {\includegraphics[width=0.8\linewidth]{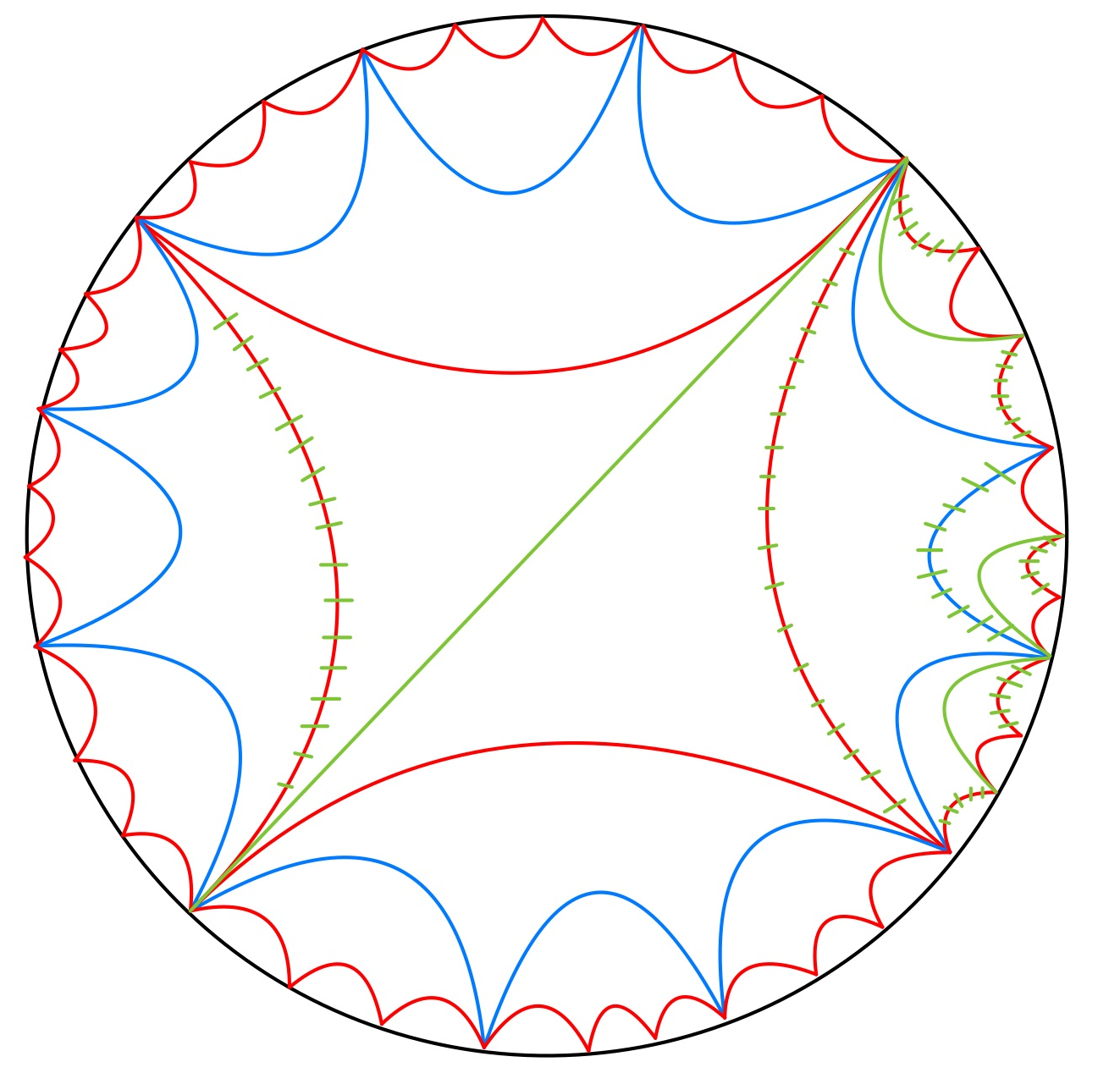}};
		\node at (4.2,5.4) {\begin{huge}$D$\end{huge}};
		\node at (5,7.2) {$U_1$};
		\node at (2.2,5) {$U_2$};
		\node at (5,2.2) {$U_3$};
		\node at (7.7,4.8) {$U_4$};
		\node at (8.56,8.56) {$w_1$};
		\node at (0.96,8.12) {$w_2$};
		\node at (1.32,1.32) {$w_3$};
		\node at (8.88,1.75) {$w_4$};
		\node at (10,2) {$D_{41}$};
		\draw[->,line width=0.8pt] (9.66,2.1)->(8.45,2.84);
		\node at (10.25,5.4) {$D_{42}$};
		\draw[->,line width=0.8pt] (9.84,5.4)->(8.75,4.95);
		\node at (10.05,6.6) {$D_{43}$};
	        \draw[->,line width=0.8pt] (9.66,6.6)->(8.4,6.4);
\end{tikzpicture}
\caption{Tiles $D, U_i$, and $D_{ij}$} 
\label{barrier_fig}
\end{figure}

2) First note that by Proposition~\ref{prop-goah-dichotomy}, the closure $\overline{D}$ of $D$ in $\D$ is contained in $R$. We claim that $\cup_{i=1}^k U_i$ is also contained in $R$. If this were not true, then some edge $\alpha$ of $R$ would intersect some $U_i$. Let $\delta$ be the leaf of $\LL_D$ on which $\alpha$ eventually falls under $\hA$. But then, $\delta\in\goah(\alpha)=\Gamma.\alpha$, which is impossible since there is no element of $\Gamma$ carrying a point of $U_i$ to a point of $\overline{D}$ (recall that $\Pi(\overline{D})$ and $\mathbf{U}=\Pi(U_i)$ are disjoint on the surface $\Sigma$). Hence, $\overline{D}\cup \cup_{i=1}^k U_i\subset R$. 

Now pick a leaf $\ell$ of $\LL_D$, and set $\ell_{ij}=\gamma_{ij}^{-1}.\ell\in\LL_{ij}$. Then, both $\ell$ and $\ell_{ij}$ lie in the $\Gamma$-orbit of some edge of $R$. By edge-orbit equivalence, $\ell$ and $\ell_{ij}$ lie in the same grand orbit under $\hA$. But since $\hA$ is not defined on $\ell$ (as $\ell\subset\overline{D}\subset  R$), $\ell_{ij}$ must be contained in the domain of definition of $\hA$; i.e., $\ell_{ij}\cap R=\emptyset$. Thus, $\LL_{ij}\cap R=\emptyset$. Combining this with the conclusion of the previous paragraph, we see that for $i\in\{1, \cdots, k\}, j\in\{1, \cdots, k-1\}$, there must be edges of $\partial R_{ij}$ (i.e., edges of $R$ contained in $\overline{D_{ij}}$) that act as barriers for $R$ to grow beyond $\LL_{ij}$. Note that since $\partial R_{ij} = \LL_{ij} \cap \partial R$, these edges must be some leaves of $\LL_{ij}$ themselves (see Figure~\ref{barrier_fig}). We call such edges of $\partial R_{ij}$ \emph{barrier} edges.

The properties of $\LL_D$ established in the first part of this proposition imply that every ideal vertex of $D_{ij}$ belongs to the closure $\overline{\LL_{ij}}$ of $\LL_{ij}$ (in $\C$), and $\overline{\LL_{ij}}$ is connected. Hence, for each $i, j$ as above, the closure (in $\C$) of the union of the barrier edges of $\partial R_{ij}$ must be homeomorphic to the closed interval $\left[0,1\right]$ with endpoints at the ideal vertices of $\partial U_i\cap \partial D_{ij}$ (see Figure~\ref{barrier_fig}). Since $\partial R$ is a polygon, it follows that every edge of $\partial R_{ij}$ is a barrier edge, and $R$ has no edge apart from these. In other words, $\partial R=\bigcup_{\substack{i=1, \cdots, k\\ j= 1, \cdots, k-1}}\partial R_{ij}$.
\end{proof}

\begin{rmk}\label{unique_element_rmk}
Since $\Gamma$ is torsion-free by our assumption and all break-points of $A$
are parabolics (and hence stabilizers in $\Gamma$ of all edges and diagonals of $R$ are trivial), if $\gamma. \LL_{ij} \cap \LL_D \neq \emptyset$,
then $\gamma=\gamma_{ij}$. Note that without the torsion-free assumption, the stabilizer of an edge/diagonal of $R$ may be an order two subgroup of $\Gamma$. This would add a technical expository complication that we want to avoid.
\end{rmk}

Note that there exists a unique edge of
$D_{ij}$ separating the interior of $D_{ij}$ from $D$. These edges are obtained as  the images of the edges of $\partial D$
under $\gamma_{ij}^{-1}$. 
Call these edges the \emph{near edges} of
$D_{ij}$. The vertices of a near edge of $D_{ij}$ will be called 
the \emph{near vertices} of $D_{ij}$.

\begin{rmk}\label{rmk-rij}
	A priori, we do not know from Proposition \ref{prop-goah-dichotomy} that near edges belong to $\LL$ (this is equivalent to saying that $\partial R_{ij}$ has a unique edge, and it is the near edge of $D_{ij}$). However, in light of the proof of Proposition \ref{prop-goah-dichotomy} and Lemma~\ref{barrier_edge_lem}, we do have the following dichotomy for any $\partial R_{ij}$:\\
\noindent $\bullet$ Either $\partial R_{ij}$ consists of a single edge, in which case the edge is a near edge.\\
\noindent $\bullet$ Else, $\partial R_{ij}$ consists of a chain of edges 
		$\beta_1, \cdots, \beta_r$, $2\leq r \leq k-1$ such that the initial vertex of $\beta_1$ and the final vertex of $\beta_r$
		coincide with the near vertices of $D_{ij}$. Further, the final vertex
		of $\beta_i$ equals the initial vertex of $\beta_{i+1}$ and
		the union of the $\beta_i$'s along with their ideal endpoints
		gives an embedding of the closed interval $[0,1]$ into
		$\overline{\D}$. In this case, we refer to the edges in $\partial R_{ij}$
		as the \emph{far edges} of $R$ in
		$\overline{D_{ij}}$. 
\end{rmk}

We shall refer to
diagonals of $R$ not contained in $\bbar D$ as \emph{bad diagonals}.
Proposition \ref{prop-goah-dichotomy} then shows that $\goah (\alpha)$ does not contain bad diagonals if $\alpha$ is an edge of $R$. Also, the union of the (closed) tiles $\bigcup_{j=1, \cdots, k-1} \overline{D_{ij}}$ will be called the $i$-th sector of $A$ and will be denoted
as $S_i$.

\begin{theorem}\label{thm-char-hdm-eoe}
	Suppose $A$ is a minimal
	mateable map without folding such that $\hA$ is edge-orbit equivalent to the Fuchsian group $\Gamma$ generated by its pieces, and $\hA(\partial R)\neq \partial R$. Then 
	$A$ is a higher Bowen-Series map.
\end{theorem}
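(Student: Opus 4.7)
The plan is to verify the combinatorial characterization in Proposition~\ref{higher_bs_char_prop}: it suffices to show that $A$ is a \hdm in the sense of Definition~\ref{def-nofold} whose canonical extension $\hA$ is injective on each connected component of $\partial R\setminus\partial D$.

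From Proposition~\ref{prop-goah-dichotomy} I already have, without further work: $\overline{D}\subset R$; for every edge $\alpha$ of $R$, either $\hA(\alpha)$ is an edge of $R$ or a diagonal of $R$ contained in $\overline{D}$ (i.e., an edge of $D$); every ideal vertex of $D$ is fixed by $A$; and, via Corollary~\ref{cor-numberofpunct}, $\D/\Gamma\cong S_{0,k}$ with $k$ equal to the number of ideal vertices of $D$. The ``no folding'' hypothesis supplies clause~(4) of Definition~\ref{def-nofold} directly. So the task reduces to two items: (i) upgrade the dichotomy to show that $\hA(\alpha)$ is always an edge of $D$; and then (ii) verify injectivity of $\hA$ on each connected component of $\partial R\setminus\partial D$.

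For (i), I will work with the checkerboard tiling and the dichotomy from Remark~\ref{rmk-rij}: in each $D$-tile $D_{ij}$ adjacent to a $U_i$, the set $\partial R_{ij}$ is either the single near edge of $D_{ij}$ or a chain of $r\geq 2$ far edges. I claim the far-edge alternative never occurs, so that $R = \overline{D}\cup\bigcup_{i=1}^k U_i$ and every edge of $R$ is a near edge, whose $\hA$-image is automatically an edge of $D$. This is the main obstacle. Suppose for contradiction that some $\partial R_{ij}$ consists of a chain of far edges; then the near edge of $D_{ij}$ is a diagonal of $R$, and $R$ extends into the interior of $D_{ij}$. I plan to derive a contradiction in the spirit of Proposition~\ref{not_oe_prop}: tracking the $\hA$-dynamics on the far edges (each of which is a $\gamma_{ij}^{-1}$-translate of a leaf of $\LL_D$, by Lemma~\ref{barrier_edge_lem}) shows that the first-return dynamics under $\hA$ implements a composition of two $\Gamma$-pieces on the portion of $\bS^1$ subtended by the far-edge chain while implementing a single piece on the portion subtended by $\partial U_i$. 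This mismatch produces a fixed point $p$ of $\hA$ on a far-edge arc together with a distinct fixed point $q=g(p)\in\Gamma\cdot p$ of $\hA$ lying on an arc subtended by $D$, yet with $p$ and $q$ in different $\hA$-grand orbits—contradicting the orbit equivalence built into mateability (Definition~\ref{def-mateable}(1)), just as in the non-example of Section~\ref{ehbs_subsec}.

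For (ii), once (i) is in hand, consecutive edges of $R$ in any connected component of $\partial R\setminus\partial D$ map under $\hA$ to consecutive edges of $D$ by continuity of $\hA$ on $\partial R$; the ``no folding'' hypothesis prevents two adjacent edges from mapping to the same edge of $D$; and the combinatorial count $\#\partial R = k(k-1)$ together with the cyclic ordering forces each component of $\partial R\setminus\partial D$ to contain exactly $k-1$ edges bijectively covering the $k-1$ edges of $D$ not incident to the vertex of $D$ separating that component from its neighbor. Injectivity of $\hA$ on each component is then immediate, and Proposition~\ref{higher_bs_char_prop} concludes that $A$ is a higher Bowen-Series map of $\Gamma$.
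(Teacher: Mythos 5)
Your overall skeleton is the same as the paper's: reduce to the combinatorial characterization of Proposition~\ref{higher_bs_char_prop} by showing that every edge of $R$ maps under $\hA$ to an edge of $D$ (so that $R=\Int{(\overline{D}\cup\bigcup_i U_i)}$ and $A$ is a \hdm), and then get injectivity on each component of $\partial R\setminus\partial D$ from continuity plus the no-folding hypothesis; your step (ii) is fine once (i) is in hand. The gap is in step (i), which is where essentially all the work of the theorem lives. You assert that the presence of a far-edge chain in some tile $D_{ij}$ produces, locally, a piece (or first-return composition) with a fixed point on a far-edge arc whose image under one piece is a second fixed point, violating orbit equivalence ``in the spirit of Proposition~\ref{not_oe_prop}.'' Nothing in your sketch justifies this. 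First, the two sub-cases behave differently: if \emph{all} far edges of $D_{ij}$ map into $\overline{D}$, then by Remark~\ref{unique_element_rmk} every piece on those arcs is the single element $\gamma_{ij}$, and the contradiction is with \emph{minimality} of $A$, not with orbit equivalence; your fixed-point mechanism does not apply there. Second, if some far edge maps to an edge of $R$, that image edge can lie in another sector, and a priori its forward orbit may pass through several edges of $R$ before reaching $\LL_D$; which group elements occur as pieces on the far-edge arcs, and whether any such piece (or composition) maps its arc over itself so as to have a fixed point there, is exactly what is \emph{not} known at this stage. In Proposition~\ref{not_oe_prop} the fixed point $p$ and its partner $q$ are produced from the explicit global description of the map $B$ (Bowen-Series on one arc, its square on the rest), and the proof that $p,q$ lie in the same $\Gamma$-orbit but different grand orbits leans on the already-established orbit equivalence of the genuine Bowen-Series map. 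You have no such identification available locally.

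The paper's route shows why a global argument is needed: Step~1 is a counting argument on the sector with the fewest edges of $R$ (using the absence of bad diagonals from Proposition~\ref{prop-goah-dichotomy}, the fixedness of the vertices of $D$, no folding, and minimality) to produce one sector consisting of single near edges all mapping to $\overline{D}$; Step~2 then splits on whether $\overline{w_1,w_2}\in\LL_D$, and in the bad case reconstructs the \emph{entire} map via the checkerboard tiling, concluding that $A$ must agree with $A_{\Gamma,\mathrm{BS}}$ on one arc and with $A_{\Gamma,\mathrm{BS}}^{\circ 2}$ elsewhere, i.e.\ is the map $B$; only then is Proposition~\ref{not_oe_prop} invoked to contradict mateability. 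The combinatorial constraints alone do not rule out far edges --- the map $B$ satisfies them --- so the contradiction genuinely requires identifying the global structure before the orbit-equivalence violation can be exhibited. As written, your step (i) assumes the conclusion of this analysis rather than proving it; to repair the proposal you would need to carry out (or replace) the minimal-sector argument and the $\overline{w_1,w_2}$ dichotomy, at which point you are reproducing the paper's proof.
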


\begin{proof}
	The proof is in two steps. 
	
	\noindent {\bf Step 1: Sector with least number of edges of $R$ maps to $\overline{D}$:} Without loss of generality (by renumbering the vertices if necessary), we can assume that $S_1$ is the sector of 
	$A$ with the smallest number of edges of $R$. Let $\alpha_1, \cdots, \alpha_s$ denote the edges of $R$ in $S_1$ ordered in a counterclockwise sense.
	We claim that 
	\begin{enumerate}
		\item for every edge $\alpha$ of $R$ in $S_1$, $\hA(\alpha)$ is an edge of $D$;

		\item further, each edge of $R$ in $S_1$ is a near edge.
	\end{enumerate}
	Suppose that the first claim is false. We continue with the notation introduced in Proposition \ref{prop-goah-dichotomy}. Assume that the ideal vertices $w_1, \cdots, w_k$ of $D$ are ordered in a counterclockwise sense, and $u_1,\cdots, u_{s-1}$ are the ideal vertices of $R$ (ordered in a counterclockwise sense) between $w_1$ and $w_2$ such that $\alpha_1=\overline{w_1 u_1}$ and $\alpha_s=\overline{u_{s-1} w_2}$. The proof of Proposition \ref{prop-goah-dichotomy} shows that $A(u_1)$ (respectively, $A(u_{s-1})$) is either an ideal vertex of $D$ or an ideal vertex of $R$ adjacent to $w_1$ (respectively, $w_2$) such that $\overline{w_1 A(u_1)}$ lies in $S_k$ (respectively, $\overline{w_2 A(u_{s-1})}$ lies in $S_2$).
	
\noindent \textbf{Case 1: At least one of $A(u_1)$ and $A(u_{s-1})$ is an ideal vertex of $D$.} By symmetry of the situation, we can assume without loss of generality that $A(u_1)$ is an ideal vertex of $D$; i.e., $\alpha_1$ maps to $\overline{D}$ under $\hA$. If some edge $\alpha$ of $R$ in $S_1$ does not map into $\bbar D$,
	then by Proposition \ref{prop-goah-dichotomy}, it must map to
	an edge of $R$. Let $S_i$ denote the sector of $R$ containing
	$\hA(\alpha)$. Since $A(w_2)=w_2$ and there are no bad diagonals in $\hA$-grand orbits of edges of $R$ by Proposition \ref{prop-goah-dichotomy} (as observed just before Theorem \ref{thm-char-hdm-eoe}), $S_i$ can contain at most $s-1$ edges of $R$, contradicting the assumption that $S_1$ is the sector of 
	$A$ with the smallest number of edges of $R$. Hence, every edge  of $R$
	in $S_1$ maps to $\bbar D$.

\noindent \textbf{Case 2: $A(u_1), A(u_{s-1})$ are ideal vertices of $R$ adjacent to $w_1, w_2$.} In this case, $\overline{w_1 A(u_1)}$ lies in $S_k$, and $\overline{w_2 A(u_{s-1})}$ lies in $S_2$. Once again, absence of bad diagonals in $\hA$-grand orbits of edges of $R$ and the fact that $A(w_m)=w_m$ ($m=1,2$) imply that each of the sectors $S_2,S_k$ can contain at most $s-2$ edges of $R$, contradicting the assumption that $S_1$ is the sector of 
	$A$ with the smallest number of edges of $R$. Hence, this case cannot arise. The proof of the first claim is now complete.
	
	It remains to prove that each $\overline{D_{1j}}$ contains exactly one edge of $R$. If this were not true, since all edges in $\partial R_{1j}$ map to $\bbar D$ under $\hA$ (and hence under some element of $\Gamma$), it would follow from Remark~\ref{unique_element_rmk} that $\hA$ must act as $\gamma_{1j}$ on all edges of $\partial R_{1j}$. But this would contradict minimality of $A$. This forces $\partial R_{1j}$ to consist of a single edge; namely, the near edge of $D_{1j}$. Further, $\hA$ acts as $\gamma_{1j}$ on the connected component of $\D\setminus R$ containing $D_{1j}$, and it maps the unique edge of $\partial R_{1j}$ (i.e., the near edge of $D_{1j}$) to a boundary edge of $D$. In particular, on $\partial R\cap S_1$, $\hA$ equals a higher degree map without folding.
	\smallskip
	
	\noindent {\bf Step 2: Treating the possible exceptional edge of $D$:} 
	By the proof of Step 1, the only edge of $D$ that might not belong to $\LL_D$ is $\overline{w_1 w_2}$. 
\smallskip

\noindent \textbf{Case 1: $\overline{w_1 w_2}\in \LL_D$.} Then, for each $i\in\{1, \cdots, k\}, j\in\{1, \cdots, k-1\}$, $\partial R_{ij}$ consists of a single edge; namely, the near edge of $D_{ij}$. Consequently, $R=D\cup\cup_{i=1}^k U_i$. Therefore, each sector of $A$ contains exactly $k-1$ edges of $R$. It now follows from the conclusion of Step 1 that $\hA$ maps every edge of $R$ to a boundary edge of $D$. In particular, $\LL_D$ consists precisely of the boundary edges of $D$. Thus, $A$ is a higher degree map without folding with fundamental domain $R=D\cup\cup_{i=1}^k U_i$ and inner domain $D$. Moreover, $A$ is injective on the union of the $k-1$ near edges of $R$ in each sector. Proposition \ref{higher_bs_char_prop} now shows that $A$ is a higher Bowen-Series map.   
\smallskip

\noindent \textbf{Case 2: $\overline{w_1 w_2}\notin \LL_D$.} We shall show that this case cannot arise. 

 In this case, $\overline{w_1 w_2}$ does not lie in the image of $\partial R$ under$\hA$ or its iterates. Then $R$ has far edges in $\overline{D_{ij}}$ if and only if the near edge of $D_{ij}$ is a $\Gamma$-translate of $\overline{w_1 w_2}$ (see Remark \ref{rmk-rij}). For the rest of the proof, we shall refer to such tiles $D_{ij}$ as \emph{far tiles}. For any such tile $D_{ij}$, $\partial R_{ij}$ can have at most $k-1$ edges. Since $\TT$ is a checkerboard tiling, $D_{21}\subset S_2$ is a far tile sharing the ideal vertex $w_2$ with the tile $D$. Let $\beta\subset\overline{D_{21}}$ be the edge of $R$ having an endpoint at $w_2$.
	
	By Proposition \ref{prop-goah-dichotomy}, $\hA(\beta)$ is either a diagonal of $R$ contained in $\bbar{D}$ or an edge of $R$ having an endpoint at $w_1$. We need to analyze these two cases separately.

\noindent \textbf{Subcase i: $\hA(\beta)\subset\overline{D}$.}
	 Since there are no bad diagonals in $\hA$-grand orbits of edges of $R$,
	this forces all the far edges of $R$ in $\overline{D_{21}}$ to map into $\bbar{D}$ under $\hA$. But then all the pieces of $A$ corresponding to the
	far edges of $R$ in $\overline{D_{21}}$ must coincide with the group element $\gamma_{21}$
	(see the discussion in the paragraph on `Checkerboard Tiling' and Remark~\ref{unique_element_rmk}). But this contradicts minimality of $A$.

\noindent \textbf{Subcase ii: $\hA(\beta)$ is an edge of $R$ having an endpoint at $w_2$.} Since the edges of $R$ in $S_1$ are precisely the near edges of $D_{1j}$ ($j\in\{1,\cdots,k-1\}$), absence of bad diagonals in $\hA$-grand orbits of edges of $R$ implies that $\partial R_{21}$ must consist of all $k-1$ edges of $D_{21}$ except the near edge. It now follows that $\LL_D$ does not contain any diagonal of $D$; otherwise, $\LL_{21}$ would contain a diagonal of $D_{21}$, forcing $\partial R_{21}$ to contain a diagonal of $D_{21}$ (see the proof of part (2) of Lemma~\ref{barrier_edge_lem}). Hence, $\LL_D$ consists of  all the edges of $D$ except $\overline{w_1 w_2}$. This yields the following description of $\partial R$:
\begin{itemize}
\item \textbf{type 1:} if the near edge of $D_{ij}$ is not a $\Gamma$-translate of $\overline{w_1 w_2}$, then this edge alone comprises $\partial R_{ij}$;

\item \textbf{type 2:} otherwise, $\partial R_{ij}$ consists of all $k-1$ edges of $D_{ij}$ except the near edge.
\end{itemize} 
Using a counting argument similar to the one used in Step 1 combined with the structure of the checkerboard tiling $\mathcal{T}$ and the fact that $\hA$ has no folding, one now sees that each type 1 edge of $R$ maps to a leaf of $\LL_D$ under $\hA$, while each type 2 edge maps to an edge of $R$ in $S_1$. 

Finally, in light of Remark~\ref{unique_element_rmk}, the checkerboard tiling $\mathcal{T}$ allows one to express all the group elements $\gamma_{ij}$ in terms of the elements $\gamma_{1j}$ ($i\in\{1,\cdots,k\}$, $j\in\{1,\cdots,k-1\}$). This, in turn, shows that $A$ agrees with the Bowen-Series map $A_{\Gamma, \mathrm{BS}}$ associated with the fundamental domain $\bbar{D}\cup U_1$ on the counterclockwise arc of $\bS^1$ from $w_1$ to $w_2$, and with  $A_{\Gamma, \mathrm{BS}}^{\circ 2}$ on the rest of $\bS^1$. According to Proposition~\ref{not_oe_prop}, the map $A$ is not orbit equivalent to $\Gamma$; i.e., $A$ is not a mateable map, contradicting our hypothesis. 
	We conclude that Case 2 is impossible, so $\overline{w_1 w_2}$ must be a leaf of $\LL_D$. The proof is now complete by Case 1.
\end{proof}

\begin{qn}\label{q-minimal} Does the 
	hypothesis that $A$ has  `no folding' in Proposition \ref{prop-goah-dichotomy}, Corollary \ref{cor-numberofpunct} and
	Theorem \ref{thm-char-hdm-eoe} follow from minimality of $A$?
	The only examples that we know of mateable maps with folding are non-minimal \cite[\S 4.1]{sullivan_survey}. Note however that for a general \pwfm map (not necessarily mateable), minimality does not preclude the existence of diagonal folds; see \cite[\S 4.4.2]{sullivan_survey}.
\end{qn}

\subsection{Moduli of matings}\label{sec-moduli} We shall now assemble the results of the previous sections to determine the moduli space of matings.

\noindent {\bf Normalization of gluing map:} Recall from Proposition~\ref{conformal_mating_general_prop} that mateable maps orbit equivalent to Fuchsian groups can be conformally mated with polynomials lying in principal hyperbolic components of appropriate degree. In fact, the proof of the proposition reveals that there are finitely many choices to glue the dynamics of a mateable map $A$ with that of a polynomial $P$ in a principal hyperbolic component. Since such a gluing map conjugates $A\vert_{\bS^1}$ to $P\vert_{\mathcal{J}(P)}$, it can be uniquely described by marking a fixed point of $A$ and $P$ each; more precisely, by stipulating that a marked fixed point of $A$ on $\bS^1$ is mapped under the gluing map to a marked fixed point of $P$ on $\mathcal{J}(P)$. We will now introduce a canonical marking of such fixed points. 

Let $\Gamma_0$ be a Fuchsian group, and $A_{\Gamma_0}$ be a mateable map orbit equivalent to $\Gamma_0$. Possibly after conjugating $\Gamma_0$ and $A_{\Gamma_0}$ by an element of $\mathrm{Aut}(\D)$, we can assume that $1$ is a fixed point of the mateable map $A_{\Gamma_0}$ (orbit equivalent to $\Gamma_0$ on $\bS^1$). For each $\Gamma\in\mathcal{B}(\Gamma_0)$, the associated mateable map $A_\Gamma$ compatible with $A_{\Gamma_0}$ is given by $\phi_\Gamma\circ A_{\Gamma_0}\circ\phi_\Gamma^{-1}$, where $\phi_\Gamma$ is a quasiconformal homeomorphism inducing the representation $\Gamma_0\to\Gamma$. This determines a marked fixed point $\phi_\Gamma(1)$ of $A_\Gamma$. 
 
On the other hand, the polynomial $P_0:z\mapsto z^d$ has a fixed point at $1$. As the hyperbolic component $\mathcal{H}_d$ is simply component (see \cite[\S 5]{milnor-hyp-comp}, for instance), we can use \cite[Theorem~8.1]{douady-julia-cont} to find a unique family of homeomorphisms 
$\lbrace\tau_P:\mathcal{J}(P_0)\to\mathcal{J}(P)\rbrace_{P\in\mathcal{H}_d}$
such that $\tau_{P_0}$ is the identity map, $(P,z)\mapsto \tau_P(z)$ is continuous, and $\tau_P$ conjugates $P_0$ to $P$. This yields a naturally marked fixed point $\tau_P(1)$ of $P$. 

Thus, Proposition~\ref{conformal_mating_general_prop} provides us with a canonical conformal mating between $A_\Gamma$ and $P$ once we prescribe that the gluing conjugacy between $A_\Gamma\vert_{\Lambda(\Gamma)}$ and $P\vert_{\mathcal{J}(P)}$ carries $\phi_\Gamma(1)$  to $\tau_P(1)$. We shall henceforth assume that
such a \emph{normalized gluing map} has been chosen.

\begin{defn}\label{defn-moduli}
	The \emph{moduli space of matings} between a topological surface $\Sigma$ and complex polynomials in principal hyperbolic components consists of equivalence classes of triples $(\Gamma, A_\Gamma, P)$, where
\begin{enumerate}
\item $\Gamma\in\mathcal{B}(\Gamma_0)$. Here $\Gamma_0$ is a Fuchsian group with $\D/\Gamma_0\cong\Sigma$,

\item $A_\Gamma$ is a mateable map associated to $\Gamma$ compatible with a minimal mateable map $A_{\Gamma_0}$ orbit equivalent to $\Gamma_0$, and

\item $P$ is a polynomial in a principal hyperbolic component with $\mathrm{deg}(P)=\mathrm{deg}(A_{\Gamma_0}:\bS^1\to\bS^1)$.
\end{enumerate}
Two such triples $(\Gamma_1, A_{\Gamma_1}, P_1)$ and $(\Gamma_2, A_{\Gamma_2}, P_2)$ are said to be \emph{equivalent} if the conformal matings of $A_{\Gamma_i}$ and $P_i$ ($i=1,2$) are M{\"o}bius conjugate.

	If, in addition, $A_{\Gamma_0}$ is backward edge-orbit equivalent to $\Gamma_0$, the corresponding
	space of triples $(\Gamma, A_\Gamma, P)$ is called the \emph{backward edge-orbit equivalence moduli space of matings}.
	
	If $A_{\Gamma_0}$ is  edge-orbit equivalent to $\Gamma_0$, the corresponding
	space of triples $(\Gamma, A_\Gamma, P)$ is called the \emph{edge-orbit equivalence moduli space of matings}. Further, if $A_{\Gamma_0}$ has no folding, then the corresponding space of triples $(\Gamma, A_\Gamma, P)$ is called the \emph{unfolded edge-orbit equivalence moduli space of matings}.
\end{defn}

\begin{theorem}\label{thm-moduli-eoe} Suppose $\Sigma$ has no orbifold points.
	The unfolded 
	edge-orbit equivalence moduli space of matings, and also the 
	backward edge-orbit equivalence moduli space of matings
	is non-empty if and only if $\Sigma$ is a sphere $S_{0,k}$ with 
	$k \geq 3$ punctures. The backward edge-orbit equivalence moduli space of matings has one connected component, while the unfolded 
	edge-orbit equivalence moduli space of matings has two.
\end{theorem}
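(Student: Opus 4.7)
The plan is to assemble three ingredients already established in the paper: the forward characterizations of mateable maps satisfying (backward) edge-orbit equivalence from Theorems~\ref{thm-aisbs} and~\ref{thm-char-hdm-eoe}, the existence of conformal matings from Theorems~\ref{moduli_interior_mating_thm} and~\ref{thm-cfdmateable}, and the elementary connectedness of Teichm\"uller spaces and of principal hyperbolic components. I would treat the two moduli spaces in parallel, separating necessity (identifying $A_\Gamma$ and $\Sigma$), sufficiency (producing matings), and the count of components.

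For the backward edge-orbit equivalence moduli space, I would invoke Theorem~\ref{thm-aisbs} directly: any $(\Gamma, A_\Gamma, P)$ in the moduli space must have $A_\Gamma$ equal to the Bowen-Series map of $\Gamma$, with $\disk/\Gamma$ a sphere with punctures and possibly one or two order-two orbifold points. The torsion-free hypothesis on $\Gamma$ rules out orbifold points, so $\Sigma=S_{0,k}$, and hyperbolicity forces $k\geq 3$. Conversely, for each $k\geq 3$, Proposition~\ref{b_s_poly_conjugate_prop_1} shows the Bowen-Series map of every $\Gamma\in\mathrm{Teich}(S_{0,k})$ is a mateable map backward edge-orbit equivalent to $\Gamma$, of degree $2k-3$, and Theorem~\ref{moduli_interior_mating_thm} pairs it with any $P\in\mathcal{H}_{2k-3}$ to give a conformal mating. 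Since $A_{\Gamma,\mathrm{BS}}$ depends continuously on $\Gamma$ via quasiconformal conjugation and the normalized gluing data vary continuously with $(\Gamma,P)$, the moduli space is parametrized by the connected set $\mathrm{Teich}(\Gamma)\times\mathcal{H}_{2k-3}$, yielding a single component.

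For the unfolded edge-orbit equivalence moduli space, I would first apply Proposition~\ref{prop-goah-dichotomy} to split into two cases: either $\widehat{A}_\Gamma(\partial R)=\partial R$, in which case Proposition~\ref{bs_char_1} together with torsion-freeness forces $A_\Gamma$ to be the Bowen-Series map of $\Gamma$ (of degree $2k-3$ when $\Sigma=S_{0,k}$); or $\widehat{A}_\Gamma^{\circ n}(\alpha)$ eventually lands in a diagonal of $R$ contained in $\overline{D}$ for every edge $\alpha$, in which case Theorem~\ref{thm-char-hdm-eoe} identifies $A_\Gamma$ as a higher Bowen-Series map (of degree $(k-1)^2$ by Equation~\eqref{eq-degcfm}). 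In either case $\Sigma=S_{0,k}$ with $k\geq 3$. Existence of matings in each case comes from Theorems~\ref{moduli_interior_mating_thm} and~\ref{thm-cfdmateable}, producing two parametrized continuous families $\mathrm{Teich}(\Gamma)\times\mathcal{H}_{2k-3}$ and $\mathrm{Teich}(\Gamma)\times\mathcal{H}_{(k-1)^2}$, each of which is connected.

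The hard part, as I see it, will be arguing that these two families represent genuinely distinct connected components rather than meeting along some shared boundary in parameter space. The cleanest approach is to use the topological degree of the circle covering $A_\Gamma:\bS^1\to\bS^1$ (equivalently, the degree of $P$) as a locally constant $\Z$-valued invariant under continuous deformation within the moduli space; since $2k-3\neq(k-1)^2$ for every $k\geq 3$, the two families carry distinct degrees and cannot be joined by a path. Combined with the dichotomy from Proposition~\ref{prop-goah-dichotomy}, this yields exactly two components, completing the proof.
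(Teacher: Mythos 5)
Your proposal is correct and follows essentially the same route as the paper: the paper's proof likewise deduces the backward edge-orbit case from Theorem~\ref{thm-aisbs} and the unfolded edge-orbit case from Theorem~\ref{thm-char-hdm-eoe} (via the dichotomy of Proposition~\ref{prop-goah-dichotomy}), identifying the two families as Bowen-Series triples of degree $1-2\chi(S_{0,k})=2k-3$ and higher Bowen-Series triples of degree $(\chi(S_{0,k})-1)^2=(k-1)^2$, with non-emptiness supplied by Theorems~\ref{moduli_interior_mating_thm} and~\ref{thm-cfdmateable}. Your explicit connectedness argument via $\mathrm{Teich}(\Gamma)\times\mathcal{H}_d$ and the locally constant degree invariant (noting $2k-3\neq(k-1)^2$ for $k\geq 3$) merely makes precise what the paper leaves implicit.
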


\begin{proof}
	Theorem \ref{thm-aisbs} shows that every point in the backward edge-orbit equivalence moduli space of matings consists of triples $(\Gamma, A_\Gamma, P)$, where $\Gamma\in\mathcal{B}(\Gamma_0)$ such that $\Gamma_0$ is a Fuchsian group uniformizing a sphere $S_{0,k}$ (with $k \geq 3$ punctures), $A_\Gamma$ is the Bowen-Series map for $\Gamma$, and $P$ is a polynomial in the principal hyperbolic component of degree $1-2\chi (S_{0,k})$.

	Theorem \ref{thm-char-hdm-eoe} shows that  the unfolded  edge-orbit equivalence moduli space of matings consists of two components. One component coincides with that of the backward edge-orbit equivalence moduli space of matings. The other consists of triples $(\Gamma, A_\Gamma, P)$, where $\Gamma\in\mathcal{B}(\Gamma_0)$ such that $\Gamma_0$ is a Fuchsian group uniformizing a sphere $S_{0,k}$ (with $k \geq 3$ punctures), $A_\Gamma$ is the higher Bowen-Series map for $\Gamma$, and $P$ is a polynomial in the principal hyperbolic component of degree $(\chi (S_{0,k})-1)^2$ (by Equation \eqref{eq-degcfm}).
\end{proof}

We end this section by explicating the general questions that Theorem \ref{thm-moduli-eoe} addresses:

\begin{qn}\label{qn-moduli}
	1) Determine the (unconstrained) moduli space of matings between a topological surface $\Sigma$ and complex polynomials. \\
	2) It is tempting to call the numbers $1-2\chi (S_{0,k})$ and $(\chi (S_{0,k})-1)^2$ the \emph{Euler numbers} of the Bowen-Series map and the higher 
	Bowen-Series map respectively, in the expectation that they have a topological significance beyond the degree of $A_\Gamma$. However, a proper understanding
	of the spectrum of Euler numbers of mateable maps corresponding to $S_{0,k}$ requires a complete enumeration of the components of the (unconstrained) moduli space of matings and takes us back to the first part of this question.
\end{qn}

\section{Pinching laminations and mateability of Bers boundary groups}\label{limit_julia_homeo_sec}

We now turn our attention to groups on the Bers boundary of the Fuchsian group $G_d$ uniformizing a sphere with $d+1$ punctures (see Subsection~\ref{b_s_punc_sphere_subsec}). Our first goal is to answer the first half of Question~\ref{problem_1} for such groups; i.e., to isolate groups on $\partial\mathcal{B}(G_d)$ whose limit sets are homeomorphic to Julia sets of suitable complex polynomials in a dynamically natural way. To give a precise meaning to the word `dynamically natural', we first observe that if the limit set of some group $\Gamma\in\partial\mathcal{B}(G_d)$ is homeomorphic to the Julia set of some polynomial $P$, then such a homeomorphism would conjugate $P\vert_{\mathcal{J}(P)}$ to a continuous self-map of the limit set $\Lambda(\Gamma)$. We say that a homeomorphism $\pmb{\Phi}:\mathcal{J}(P)\rightarrow\Lambda(\Gamma)$ is \emph{dynamically natural} if $\pmb{\Phi}\circ P\circ\pmb{\Phi}^{-1}:\Lambda(\Gamma)\to\Lambda(\Gamma)$ is orbit equivalent to $\Gamma$. 
This raises the following:

\begin{qn}\label{problem_bers_boundary_map}
When does the limit set of a group $\Gamma$, lying on the boundary of $\mathcal{B}(G_d)$, admit a continuous self-map that is orbit equivalent to $\Gamma$?
\end{qn} 

While we do not know the answer to the above question in full generality, we will address the following special case of  Question~\ref{problem_bers_boundary_map}.

\begin{qn}\label{problem_bers_boundary_mateable_map}
Fix a mateable map $A_{G_d}:\bS^1\to\bS^1$ orbit equivalent to $G_d$ (e.g., $A_{G_d,\mathrm{BS}}, A_{G_d,\mathrm{hBS}}$). Which groups $\Gamma\in\partial\mathcal{B}(G_d)$ admit a mateable map $A_\Gamma:\Lambda(\Gamma)\rightarrow\Lambda(\Gamma)$, orbit equivalent to $\Gamma$ and compatible with $A_{G_d}$ (in the sense of Subsection~\ref{mateable_gen_subsec})?
\end{qn}

\subsection{Cannon-Thurston maps}\label{geod_lami_subsec}

 Pick a group $\Gamma\in\partial\mathcal{B}(G_d)$. According to Theorem~\ref{ctsurf}, there exists a continuous map from $\mathbb{S}^1$ onto the limit set of $\Gamma$, called the \emph{Cannon-Thurston map} after \cite{CTpub}, that semi-conjugates the action of $G_d$ to that of $\Gamma$.
The second part of Theorem 
\ref{ctsurf} shows that the data of the ending lamination can be recovered from the Cannon-Thurston map. (The existence of Cannon-Thurston maps
 for surface groups without accidental parabolics was proved in \cite{mahan-split} and their structure in terms of ending laminations in \cite{mahan-elct}. This was extended to the case with accidental parabolics in \cite{mahan-red}. The case of general Kleinian groups is treated in \cite{mahan-kl}.)
 
 \begin{figure}[ht]

	\includegraphics[height=3cm]{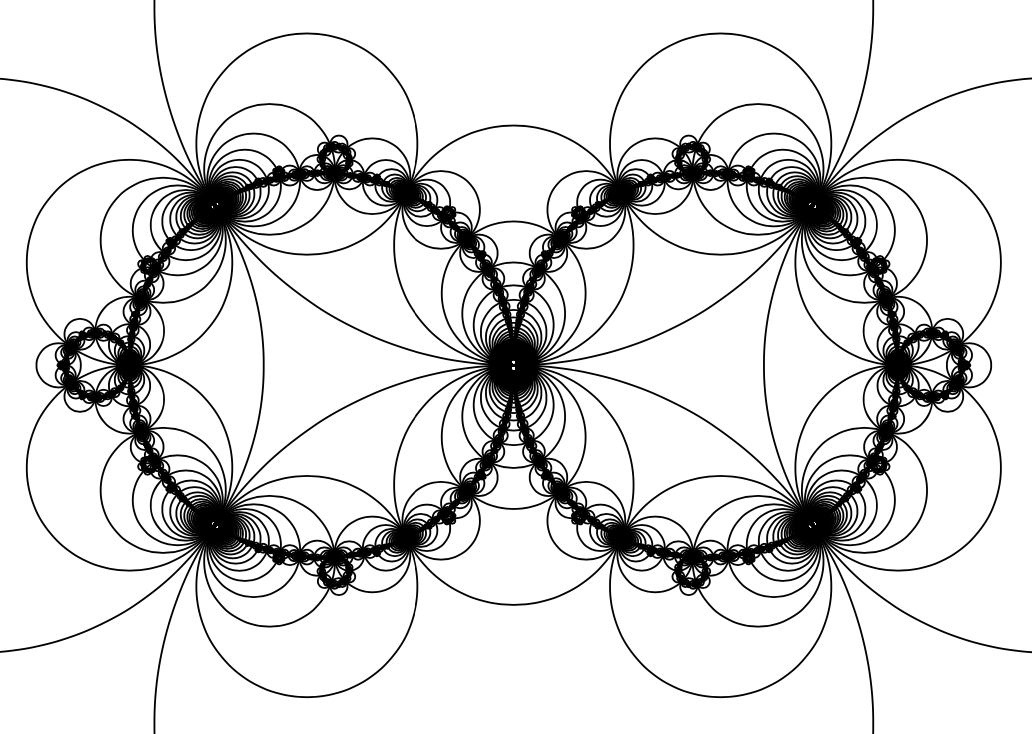}
	
\caption{Limit set of a Bers boundary group}
	\label{fig-bersbdy}
\end{figure}

  For $\Gamma \subset   \pslc$  a  Kleinian surface  group, and $M,\Sigma,i$ as in 
Theorem \ref{ctsurf}, $M \setminus i(\Sigma)$ consists of two components $M^+, M^-$ say
(assuming without loss of generality that $i$ is an embedding). Then each of 
$M^+, M^-$ furnishes end-invariants $\EE^+, \EE^-$ \cite[Chapter 9]{thurstonnotes},
consisting of conformal structures at infinity $\CC(M)$ and ending laminations $\EE\LL(M)$.
The celebrated Ending Lamination Theorem of Brock-Canary-Minsky \cite{minsky-elc2} establishes that a Kleinian surface group is determined uniquely by its end-invariants. The case $\EE\LL(M)=\emptyset$ corresponds precisely to quasi-Fuchsian groups. The Bers slice $\BB(\tau)$ in quasi-Fuchsian space corresponding to a conformal structure $\tau$ consists of the subset where $\EE^+=\tau$. More generally, the collection of\emph{ all} Kleinian surface groups with 
$\EE^+=\tau$ gives the closure $\bbar{\BB(\tau)}$ of the Bers slice \cite{minsky-elc2}. The case where  $\EE\LL(M)$ consists only of accidental parabolics  corresponds precisely to geometrically finite groups with $i$ weakly type-preserving, i.e.,  $i$ maps parabolics to parabolics. Since there can only be finitely many disjoint simple closed curves on a surface, geometrically finite Kleinian surface groups are precisely those where 
$\EE\LL(M)$ is finite.  The subcollection with $\EE^+=\tau$ gives the geometrically
finite groups on the boundary of the Bers slice $\BB(\tau)$ corresponding to a conformal structure $\tau$. Suppose $\sigma \subset \Sigma$ is a simple closed curve representing an accidental parabolic in $\Gamma\in \partial \BB(\tau)$. Then, by the Bers density theorem of \cite{minsky-elc2}, there exists a sequence of groups in $\BB(\tau)$ of the form $(\tau, \tau_n)$ converging to $\Gamma$ such that the length of $\sigma $ in the hyperbolic structure corresponding to $\tau_n$ converges to zero as $n\to \infty$.
We say informally that
the group $\Gamma$ is obtained by pinching the support  of a measured lamination on the surface $\Sigma$ (while keeping $\tau$ unchanged). When $\EE\LL(M)$ consists of infinitely many leaves, we obtain \emph{degenerate}
Kleinian surface groups \cite[Chapter 9]{thurstonnotes}: the surface
is thought of as \emph{degenerating} to a projectivized measured lamination on
the Thurston boundary of $\Teich(\Sigma)$. 
In all cases, Theorem \ref{ctsurf} says that the endpoints of the corresponding geodesic lamination $\mathcal{L}$ (on $\disk$) generate a $\pi_1(\Sigma)$-invariant equivalence relation on $\mathbb{S}^1$, and this equivalence relation agrees with the one defined by the fibers of the Cannon-Thurston map of $\Gamma$. 

\begin{defn}\label{b_s_invariance_def} 
We denote the equivalence relation on $\mathbb{S}^1$ generated by the endpoints of $\mathcal{L}$ by $\sim$. We say that $\mathcal{L}$ is \emph{invariant under a \pwfm map $A$} if $\sim$ is invariant under $A$; i.e., $x\sim y \implies A(x)\sim A(y)$ for $x, y\in\mathbb{S}^1$.
\end{defn}

\begin{lemma}\label{pwfm_invariant_lami_lem}
Let $A:\bS^1\to\bS^1$ be a \pwfm map orbit equivalent to $G_d$, and $\Gamma\in\partial\mathcal{B}(G_d)$. Then, the Cannon-Thurston map of $\Gamma$ semi-conjugates $A$ to a continuous map $A_{\Gamma}:\Lambda(\Gamma)\to\Lambda(\Gamma)$ that is orbit equivalent to $\Gamma$ if and only if the associated lamination $\mathcal{L}$ is invariant under $A$.
\end{lemma}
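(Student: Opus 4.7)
Let $\pi:\bS^1 \to \Lambda(\Gamma)$ denote the Cannon--Thurston map of $\Gamma$. By Theorem~\ref{ctsurf}, the fibers of $\pi$ are precisely the $\sim$-equivalence classes generated by the endpoints of leaves of $\mathcal{L}$ (and vertices of complementary ideal polygons). Moreover, $\pi$ intertwines the two group actions: for every $g \in G_d$ and $x \in \bS^1$, one has $\pi(g \cdot x) = g \cdot \pi(x)$, where the action on $\Lambda(\Gamma)$ is via the natural isomorphism $G_d \to \Gamma$.

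\emph{Forward direction.} Suppose a continuous $A_\Gamma:\Lambda(\Gamma)\to\Lambda(\Gamma)$ exists with $\pi\circ A = A_\Gamma\circ \pi$. If $x\sim y$, then $\pi(x)=\pi(y)$, so $\pi(A(x))=A_\Gamma(\pi(x))=A_\Gamma(\pi(y))=\pi(A(y))$, i.e.\ $A(x)\sim A(y)$. Hence $\mathcal{L}$ is $A$-invariant. Note that for this direction, orbit equivalence of $A_\Gamma$ with $\Gamma$ is not even needed.

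\emph{Reverse direction.} Suppose $\mathcal{L}$ is $A$-invariant. Then the rule $A_\Gamma(\pi(x)):=\pi(A(x))$ is well-defined on $\Lambda(\Gamma)$ by the invariance of $\sim$ under $A$. Since $\pi:\bS^1\to\Lambda(\Gamma)$ is a continuous surjection from a compact space onto a Hausdorff space, it is a quotient map; continuity of $A_\Gamma$ therefore follows from continuity of $\pi\circ A = A_\Gamma\circ\pi$. By construction $\pi$ semi-conjugates $A$ to $A_\Gamma$.

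It remains to verify orbit equivalence of $A_\Gamma$ with $\Gamma$. Fix $p\in\Lambda(\Gamma)$ and write $p=\pi(x)$. For the inclusion $\mathrm{GO}_{A_\Gamma}(p)\subseteq \Gamma\cdot p$, observe that on each piece $I_j$ of $A$, the map $A$ coincides with some $g_j\in G_d$; hence for any $y\in I_j$, $A_\Gamma(\pi(y))=\pi(g_j\cdot y)=g_j\cdot\pi(y)\in \Gamma\cdot\pi(y)$. Iterating shows $\mathrm{GO}_{A_\Gamma}(p)\subseteq \Gamma\cdot p$. For the reverse inclusion, use the semi-conjugacy of the two group actions and the hypothesis that $A$ is orbit equivalent to $G_d$ on $\bS^1$ to write
\[
\Gamma\cdot p \;=\; \pi(G_d\cdot x) \;=\; \pi(\mathrm{GO}_A(x)).
\]
Every $y\in \mathrm{GO}_A(x)$ satisfies $A^{\circ m}(y)=A^{\circ n}(x)$ for some $m,n\geq 0$, whence $A_\Gamma^{\circ m}(\pi(y))=\pi(A^{\circ m}(y))=\pi(A^{\circ n}(x))=A_\Gamma^{\circ n}(p)$, so $\pi(y)\in \mathrm{GO}_{A_\Gamma}(p)$. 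Thus $\Gamma\cdot p\subseteq \mathrm{GO}_{A_\Gamma}(p)$, completing the equality $\Gamma\cdot p = \mathrm{GO}_{A_\Gamma}(p)$.

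\emph{Main obstacle.} The only subtlety is in the reverse direction: a priori, two points of $\Lambda(\Gamma)$ lying in the same $A_\Gamma$-grand orbit only satisfy a relation of the form $A^{\circ m}(y)\sim A^{\circ n}(x)$ in $\bS^1$, which is strictly weaker than equality. The resolution is to bypass this comparison of grand orbits directly on $\bS^1$ and instead sandwich $\Gamma\cdot p$ between $\mathrm{GO}_{A_\Gamma}(p)$ from both sides, using orbit equivalence of $A$ with $G_d$ on one side and the piecewise-Fuchsian description of $A$ on the other.
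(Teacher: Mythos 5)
Your proposal is correct and follows essentially the same route as the paper's proof: the equivalence of $A$-invariance of $\mathcal{L}$ with the existence of the continuous factor map $A_\Gamma$ is the quotient-map argument the paper leaves implicit, and the two inclusions for orbit equivalence are proved exactly as in the paper (the piecewise-$\Gamma$ action of $A_\Gamma$ gives $\mathrm{GO}_{A_\Gamma}(p)\subseteq\Gamma\cdot p$, while equivariance of the Cannon--Thurston map together with orbit equivalence of $A$ and $G_d$ on $\bS^1$ gives the reverse inclusion). No gaps.
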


\begin{center}
\begin{tikzcd}
\mathbb{S}^1 \arrow{r}{A} \arrow[swap]{d}{\mathrm{C. T.}} & \mathbb{S}^1 \arrow{d}{\mathrm{C. T.}} \\
\Lambda(\Gamma) \arrow[swap]{r}{A_{\Gamma}}& \Lambda(\Gamma)
\end{tikzcd}
\end{center}

\begin{proof}
Since $\Lambda(\Gamma)\cong \mathbb{S}^1/\sim$, it is easy to see that invariance of $\mathcal{L}$ under the Bowen-Series map $A$ is a necessary and sufficient condition for the Cannon-Thurston map (of $\Gamma$) to semi-conjugate $A$ to a continuous self-map of $\Lambda(\Gamma)$. It remains to argue that if $\mathcal{L}$ is $A$-invariant, then the induced map $A_{\Gamma}$ is orbit equivalent to $\Gamma$ on $\Lambda(\Gamma)$.

To this end, let us suppose that $\mathcal{L}$ is $A$-invariant. Observe that since the Cannon-Thurston map of $\Gamma$ semi-conjugates $G_d$ to $\Gamma$, the map $A_{\Gamma}$ acts piecewise by elements of $\Gamma$. From this, it is easy to see that if $x, y\in\Lambda(\Gamma)$ lie in the same grand orbit of $A_{\Gamma}$, then there exists an element of $\Gamma$ that takes $x$ to $y$; i.e., $x$ and $y$ lie in the same $\Gamma$-orbit. Conversely, let $y=g(x)$, for some $g\in \Gamma$, and $x, y\in\Lambda(\Gamma)$. But this means that there exist $g'\in G_d$ and $x', y'\in\mathbb{S}^1$ such that the Cannon-Thurston map of $\Gamma$ sends $x', y'$ to $x, y$ (respectively), and $y'=g'(x')$. By orbit equivalence of $G_d$ and $A$, we have that $A^{\circ n}(x')=A^{\circ m}(y')$, for some $n, m\geq 0$. Since the Cannon-Thurston map of $\Gamma$ semi-conjugates $A$ to $A_{\Gamma}$, it now follows that $A_{\Gamma}^{\circ n}(x)=A_{\Gamma}^{\circ m}(y)$. This completes the proof.
\end{proof}

\begin{rmk}\label{rmk-lts}
The Bers density conjecture ensures that groups in the Bers boundary are strong limits of groups in the Bers slice. This conjecture is a consequence of the Ending Lamination Theorem \cite{minsky-elc2} of Brock-Canary-Minsky. Next, let $\Gamma_n=\phi_n\circ G_d\circ\phi_n^{-1}$ be a sequence of groups in $\BB(G_d)$ (where $\phi_n$ is a quasiconformal homeomorphism inducing the representation $G_d\to\Gamma_n$) converging strongly to
$\Gamma \in \partial \BB(G_d)$.
By work
of the first author and Series \cite{mahan-series1,mahan-series2}, Cannon-Thurston maps of $\Gamma_n$ converge uniformly to the 
Cannon-Thurston map of $\Gamma$. Thus, the map $A_{\Gamma}$ in Lemma \ref{pwfm_invariant_lami_lem} can, in fact, be regarded as a limit of the piecewise M{\"o}bius Markov maps $A_{\Gamma_n}=\phi_n\circ A\circ \phi_n^{-1}$ orbit equivalent to the quasi-Fuchsian groups $\Gamma_n$ on their limit sets.
\end{rmk}

\subsection{Invariant laminations under Bowen-Series and higher Bowen-Series maps}\label{lami_inv_subsec}

We will now carry out the task of classifying supports of measured laminations $\mathcal{L}^*$ (associated to groups on the Bers boundary $\partial\mathcal{B}(G_d)$) for which the $G_d$-lift $\mathcal{L}$ (to $\D$) is invariant under the Bowen-Series/ higher Bowen-Series map of $G_d$.

We start with a general statement that puts a severe constraint on such laminations. To this end, let $A:\bS^1\to\bS^1$ be a \pwfm map. The pieces of $A$ are $h_1,\cdots, h_k$ such that $A\vert_{I_j}=h_j$, where $\{I_1,\cdots, I_k\}$ is a partition of $\bS^1$ by closed arcs. The canonical domain of definition of $\widehat{A}$ in $\overline{\D}$ is denoted by $\mathcal{D}$, and the fundamental domain of $\widehat{A}$ is $R=\disk\setminus\mathcal{D}$. Throughout this subsection, $\mathcal{L}$ will stand for the $G_d$-lift (to $\D$) of the support of a measured lamination $\mathcal{L}^*$ on $\D/G_d$. 

\begin{lemma}\label{inv_lami_pwfm}
Let $A:\bS^1\to\bS^1$ be a \pwfm map such that $\overline{R}$ contains a fundamental domain of $G_d$. Then, $\mathcal{L}$ is invariant under $A$ only if $\mathcal{L}^*$ is a union of mutually disjoint, simple, closed, non-peripheral geodesics on $\disk/G_d$ that are represented by group elements in the finite set $\{h_j^{-1}\circ h_i: i,j\in\{1,\cdots, k\}, i\neq j\}$. In particular, any group $\Gamma\in\partial\mathcal{B}(G_d)$ obtained by pinching such a lamination $\mathcal{L}^*$ on the surface $\D/G_d$ is geometrically finite.
\end{lemma}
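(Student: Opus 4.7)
The plan is to pick a leaf $\ell$ of $\mathcal{L}$ crossing the interior of $R$; such a leaf exists in every $G_d$-orbit of leaves because $\overline{R}$ contains a fundamental domain of $G_d$. Its endpoints $p,q\in\bS^1$ then lie in distinct pieces $I_i$, $I_j$ of $A$, with $i\neq j$. The strategy is to combine $A$-invariance of $\sim$ with the $G_d$-invariance of $\mathcal L$ in order to extract an algebraic constraint on the pair $\{p,q\}$.

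Applying the invariance relation first gives $h_i(p)\sim h_j(q)$; since the pieces of $A$ lie in $G_d$ (cf.\ Lemma~\ref{pwa_is_pwm_lem}) and $\sim$ is $G_d$-invariant, one can translate by $h_i^{-1}\in G_d$ to obtain $p\sim g(q)$, where $g:=h_i^{-1}\circ h_j\in G_d$. Transitivity with $p\sim q$ yields $q\sim g(q)$, and iterating produces $g^n(q)\sim q$ for every $n\in\Z$. I would then invoke the description of $\sim$-classes supplied by the second half of Theorem~\ref{ctsurf}: each class is a finite set of ideal endpoints of either a leaf or a complementary ideal polygon of the ending lamination. Since $G_d$ is torsion-free, an infinite cyclic orbit $\{g^n(q):n\in\Z\}$ cannot fit inside any finite set (any power of $g$ that fixes $q$ forces $g$ itself to fix $q$). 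Hence $g(q)=q$, and the symmetric argument, obtained by exchanging the roles of $i$ and $j$, yields $g(p)=p$. Thus $g$ is a nontrivial hyperbolic element of $G_d$ with axis $\ell$, and $\ell$ descends to a closed geodesic on $\D/G_d$ represented by $h_i^{-1}\circ h_j$.

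The remaining assertions follow quickly. Mutual disjointness and simplicity of the closed geodesics are immediate consequences of the laminar structure of $\mathcal L$: distinct leaves, and hence all $G_d$-translates of the relevant axes, are pairwise non-crossing. Non-peripherality holds because $g$ is hyperbolic rather than parabolic. The set of representatives is contained in the finite set $\{h_i^{-1}\circ h_j:i\neq j\}$, so $\mathcal L^*$ is necessarily a finite multicurve. Geometric finiteness of any resulting group $\Gamma\in\partial\mathcal B(G_d)$ is then the standard fact that pinching a finite disjoint union of simple, non-peripheral, closed geodesics on a finite-area hyperbolic surface produces a Kleinian surface group all of whose ends are rank-one cusps.

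The main obstacle is the step converting the chain-equivalence $q\sim g(q)$ into the algebraic identity $g(q)=q$. This is where the two essential inputs are simultaneously used: the finiteness of Cannon-Thurston fibers given by Theorem~\ref{ctsurf}, together with the torsion-freeness of $G_d$, which rules out spurious finite cyclic orbits. A secondary minor point is the possibility $h_i=h_j$ for some $i\neq j$ (in which case $g=\mathrm{id}$ gives no information about that particular leaf); this can be handled by first replacing $A$ with its unique minimal \pwfm representative, which agrees with $A$ on $\bS^1$, shares the same $\sim$-invariance, and eliminates these degenerate pairs.
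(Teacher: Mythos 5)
Your argument is correct and is essentially the paper's own proof: both pick a leaf of $\mathcal{L}$ meeting $\overline{R}$ so that its ideal endpoints lie in distinct pieces $I_i, I_j$, combine $A$-invariance of $\sim$ with $G_d$-equivariance to place $g$-translates of the endpoints (with $g=h_i^{-1}\circ h_j$) in a single $\sim$-class, and then use the finiteness of Cannon--Thurston fibers (Theorem~\ref{ctsurf}) together with discreteness and torsion-freeness to force $g$ to fix both endpoints, so the leaf is the axis of $g$, $\mathcal{L}^*$ is the asserted finite multicurve, and geometric finiteness follows as in the paper. One small caveat about your side remark: minimality of a \pwfm map only forbids \emph{adjacent} pieces from coinciding, so passing to the minimal representative does not by itself rule out $h_i=h_j$ for non-adjacent $i\neq j$ --- though this degenerate possibility is likewise left implicit in the paper's proof and does not arise for the Bowen-Series and higher Bowen-Series maps to which the lemma is applied.
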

\begin{proof}
Pick $\gamma\in\mathcal{L}^*$, lift it to a fundamental domain of $G_d$ contained in $\overline{R}$, and choose a leaf $\ell$ of $\mathcal{L}$ that contains    a connected component of the lift. By construction, the ideal endpoints $x$ and $y$ of $\ell$ are contained in $I_i$, $I_j$ with $i, j\in\{1,\cdots, k\}$, $i\neq j$.

Since $x\in I_i$ and $y\in I_j$, we have that $A(x)=h_i(x)$ and $A(y)=h_j(y)$. Now, $A$-invariance of $\mathcal{L}$ implies that $h_i(x)\sim h_j(y)$, and $G_d$-invariance of $\mathcal{L}$ yields that $y\sim h_j^{-1}\circ h_i(x)$. Since $x \sim y$, group invariance  of $\mathcal{L}$ also implies that $h_j^{-1}\circ h_i(x)\sim h_j^{-1}\circ h_i(y)$. Hence, $y\sim h_j^{-1}\circ h_i(y)$.

Let $\rho: G_d \to \pslc$ be the discrete faithful representation with $\rho(G_d) =\Gamma$. Also, let $\phi_\Gamma: \bS^1 \to \Lambda(\Gamma)$ be the surjective Cannon-Thurston map in
Theorem \ref{ctsurf} identifying the ideal end-points of leaves of $\LL$. Note that $\Lambda(\Gamma)$ is an equivariant quotient of $\bS^1$ obtained  precisely from this identification by Theorem \ref{ctsurf}.

Since $y\sim h_j^{-1}\circ h_i(y)$, 
\begin{enumerate}
\item $\phi_\Gamma (y) = \phi_\Gamma \big(h_j^{-1}\circ h_i(y)\big) \in \Lambda(\Gamma)$,
\item $\phi_\Gamma (y) = \rho (h_j^{-1}\circ h_i) \cdot \phi_\Gamma (y)$,
\end{enumerate}
where the first equality follows from the structure of Cannon-Thurston maps, and the
second from equivariance of Cannon-Thurston maps (Theorem \ref{ctsurf}).

Hence, $\rho (h_j^{-1}\circ h_i)$ fixes  $\phi_\Gamma(x)=\phi_\Gamma ( y)$. As $x$ and $y$ are two different points on $\mathbb{S}^1$, we conclude that the element $\rho(h_j^{-1}\circ h_i)$ of $\Gamma$ must be an accidental parabolic, and $x$ and $y$ are fixed points of $h_j^{-1}\circ h_i$. Hence, $\ell$ is the axis of $h_j^{-1}\circ h_i$.

Since $\gamma$ was an arbitrary member of $\mathcal{L}^*$, the first part of the lemma follows. The second statement is now an immediate consequence of the fact that geometrically finite groups on the Bers boundary correspond precisely to laminations $\mathcal{L}^*$ given by the union of a finite collection of mutually disjoint simple, closed, non-peripheral geodesics on the surface (see the discussion in Subsection~\ref{geod_lami_subsec}).
\end{proof}

We now proceed to classify all laminations $\mathcal{L}$ (associated with groups on the Bers boundary of $G_d$) that are invariant under the Bowen-Series map $A_{G_d, \mathrm{BS}}$. The following subset of $G_d$ (see Subsection~\ref{b_s_punc_sphere_subsec} for the definition of $g_i$s) will play a special role in this description:
$$
\mathbf{S}_d:=\{g_2,\cdots,g_{d-1}\}\cup\{g_i^{-1}\circ g_j: i, j\in\{1,\cdots, d\},\ \vert i-j\vert>1\}.
$$

\begin{prop}\label{b_s_inv_simp_closed_geod_lem}
1) $\mathcal{L}$ is invariant under $A_{G_d, \mathrm{BS}}$ if and only if $\mathcal{L}^*$ is a union of mutually disjoint, simple, closed, non-peripheral geodesics on $\disk/G_d$ that are represented by group elements in the finite set $\mathbf{S}_d$.

2) Let $\Gamma\in\partial\mathcal{B}(G_d)$ be obtained by pinching a measured lamination $\mathcal{L}^*$ on the surface $\D/G_d$. Then, the Cannon-Thurston map of $\Gamma$ semi-conjugates $A_{G_d, \mathrm{BS}}$ to a self-map of $\Lambda(\Gamma)$ that is orbit equivalent to $\Gamma$ if and only if $\mathcal{L}^*$ is as in part (1) of the proposition. In particular, there are only finitely many quasiconformal conjugacy classes of groups $\Gamma\in\partial\mathcal{B}(G_d)$ for which the Cannon-Thurston map of $\Gamma$ semi-conjugates $A_{G_d, \mathrm{BS}}$ to a self-map of $\Lambda(\Gamma)$ that is orbit equivalent to $\Gamma$. Moreover, all such groups $\Gamma$ are geometrically finite. 
\end{prop}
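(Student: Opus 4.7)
My plan is to prove Part (1) by invoking Lemma~\ref{inv_lami_pwfm} for the Bowen-Series map $A_{G_d,\mathrm{BS}}$, whose pieces are the side pairings $g_1^{\pm 1},\ldots,g_d^{\pm 1}$ acting on the arcs $I_{\pm 1},\ldots,I_{\pm d}$, and to deduce Part~(2) by combining Part~(1) with Lemma~\ref{pwfm_invariant_lami_lem}. The forward (``only if'') direction of Part~(1) is the heart of the argument, while the reverse direction should be a direct verification.

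For the forward direction, Lemma~\ref{inv_lami_pwfm} forces every leaf $\ell$ of $\mathcal{L}$ to be the axis (in $G_d$) of some element of the form $h_i^{-1}\circ h_j$ with $i\neq j$; in our case, this element is one of $g_a^{\pm 1}\circ g_b^{\pm 1}$ for appropriate indices $a,b\in\{1,\ldots,d\}$ and sign choices determined by which arcs $I_{\pm a}, I_{\pm b}$ contain the endpoints. Since $\Gamma\in\partial\mathcal{B}(G_d)$ pinches accidental hyperbolics of $G_d$ into parabolics, this element must be \emph{hyperbolic} in $G_d$. The explicit cusp dynamics $g_j\colon p_j\mapsto p_{-j}$, $g_j\colon p_{j+1}\mapsto p_{-(j+1)}$ (with $p_{-1}:=p_1$, $p_{-(d+1)}:=p_{d+1}$) lets me tabulate when such a product fixes some parabolic vertex $p_\ell$: the generators $g_1,g_d$ are themselves parabolic, each $g_i^{-1}\circ g_{i\pm 1}$ fixes a cusp vertex, and the products $g_i^{\pm}\circ g_j^{\pm}$ with the ``wrong'' signs are conjugate in $G_d$ to already-listed parabolics or are trivial. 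After reducing modulo $G_d$-conjugacy and inversion, the surviving hyperbolic representatives are exactly those in $\mathbf{S}_d$; a separate check that the corresponding axis projects to a simple non-peripheral curve on $\D/G_d$ is easy from the position of these axes relative to the ideal fundamental polygon.

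For the reverse direction of Part (1), I would use that the equivalence relation $\sim$ is automatically $G_d$-invariant and that $A_{G_d,\mathrm{BS}}$ acts piecewise by elements of $G_d$. Thus $x\sim y\Rightarrow A(x)\sim A(y)$ is automatic whenever $x,y$ lie in the same piece, and need only be verified across pieces. If $x\in I_i, y\in I_j$ are endpoints of a leaf whose stabilizer in $G_d$ represents a curve in $\mathbf{S}_d$, then by the classification above $h_i^{-1}\circ h_j$ is (a conjugate of) this stabilizer, so $h_i^{-1}\circ h_j$ projects to an accidental parabolic in $\Gamma$ fixing the pinched point $[x]=[y]\in\Lambda(\Gamma)$, which rewrites as $h_i(x)\sim h_j(y)$, i.e., $A(x)\sim A(y)$.

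Part (2) then follows by feeding Part (1) into Lemma~\ref{pwfm_invariant_lami_lem}. Finiteness of quasiconformal conjugacy classes is immediate since $\mathbf{S}_d$ is finite and the quasiconformal class of a pinching on the Bers' boundary is determined by the set of pinched homotopy classes; geometric finiteness is the second assertion of Lemma~\ref{inv_lami_pwfm}. The principal obstacle I foresee is the combinatorial bookkeeping in the classification step: enumerating all sign patterns in $g_a^{\pm}\circ g_b^{\pm}$, identifying parabolic versus hyperbolic cases, and reducing the hyperbolic list modulo conjugacy and inversion to exactly the canonical set $\mathbf{S}_d$—while simultaneously verifying simplicity and non-peripherality of the projected geodesics. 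The analysis is delicate rather than deep, and it is the only place where the specific structure of the punctured-sphere Bowen-Series map (as opposed to a general \pwfm map) is essentially used.
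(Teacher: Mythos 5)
Your overall architecture is the same as the paper's: the ``only if'' direction of (1) via Lemma~\ref{inv_lami_pwfm} plus a classification of the two-letter products $h_i^{-1}\circ h_j$, the ``if'' direction by direct verification, and part (2) together with finiteness and geometric finiteness by feeding (1) into Lemma~\ref{pwfm_invariant_lami_lem} and Lemma~\ref{inv_lami_pwfm}. But the ``if'' direction --- which is where the paper does its actual work --- has a genuine gap as you have written it. The step ``by the classification above $h_i^{-1}\circ h_j$ is (a conjugate of) this stabilizer'' is not available to you: that classification was derived \emph{assuming} $A$-invariance of $\mathcal{L}$, which is exactly what you are now trying to prove, so invoking it is circular. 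Moreover, even granted, ``a conjugate of the stabilizer'' is not enough: a conjugate $gsg^{-1}$ with $g$ outside the stabilizer fixes a \emph{different} pinched point, not $[x]=[y]$, so the rewriting $h_i(x)\sim h_j(y)$ does not follow. What is actually needed --- and what the paper establishes by explicit computation --- is that the only leaves of $\mathcal{L}$ whose two endpoints lie in different pieces are the finitely many leaves meeting the fundamental domain $R$, namely the axis of $g_j$ (endpoints in $I_j$ and $I_{-j}$) and, for a curve of $g_i^{-1}g_j$ with $\vert i-j\vert>1$, the pair of axes of $g_i^{-1}g_j$ and $g_ig_j^{-1}$ (endpoints in $I_i,I_j$ and $I_{-i},I_{-j}$ respectively); one then checks that $A_{G_d,\mathrm{BS}}$ fixes the first endpoint pair and interchanges the latter two, while every other $G_d$-translate has both endpoints in a single piece $I_{\pm k}$, which is what makes your ``same piece'' observation cover all remaining leaves. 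Locating these endpoints (e.g.\ $\lim_{n\to\infty}(g_i^{-1}g_j)^{n}(0)\in I_i$, etc.) is the missing content; without it the cross-piece case is unsupported.

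A secondary error sits in your tabulation for the ``only if'' direction: the claim that the products with the ``wrong'' signs are conjugate in $G_d$ to already-listed parabolics or are trivial is false. Only $g_ag_b^{-1}$ is of that harmless kind (it is conjugate to $g_b^{-1}g_a$, hence gives the same geodesic as $g_a^{-1}g_b$). The products $g_ag_b$ and $g_a^{-1}g_b^{-1}$ with $a\neq b$, which do arise from endpoint pairs in $I_{-a}\times I_b$ and $I_a\times I_{-b}$, are hyperbolic (their classes in the abelianization of the free group $G_d$ are not multiples of any cusp class, so they cannot be parabolic); they are excluded from $\mathbf{S}_d$ because their axes project to \emph{non-simple} geodesics --- this is precisely the paper's parenthetical dichotomy ``peripheral or non-simple'' --- not because they are parabolic. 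So the simplicity check you defer to the surviving representatives must also be run, in the negative, on these mixed products. Part (2) and the finiteness/geometric finiteness assertions are fine and coincide with the paper's treatment.
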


\begin{proof}
1) Applying Lemma~\ref{inv_lami_pwfm} to the Bowen-Series map $A_{G_d, \mathrm{BS}}$, one sees that for any $A_{G_d, \mathrm{BS}}$-invariant lamination $\mathcal{L}$, the geodesic lamination $\mathcal{L}^*$ on the surface is a union of mutually disjoint, simple, closed, non-peripheral geodesics on $\disk/G_d$ that are represented by group elements in the finite set $\mathbf{S}_d$ (the other group elements give rise to peripheral/non-simple geodesics).

It remains to show that if $\mathcal{L}^*$ is the union of finitely many mutually disjoint, simple, closed, non-peripheral geodesics on $\disk/G_d$ represented by group elements in the finite set $\mathbf{S}_d$, then $\mathcal{L}$ is $A_{G_d, \mathrm{BS}}$-invariant. 

To this end, let us first assume that $\gamma\in\mathcal{L}^*$ is represented by $g_j$, for some $j\in\{2,\cdots, d-1\}$. Then, the lift of $\gamma$ to the fundamental domain of $G_d$ (which is also the fundamental domain of $A_{G_d, \mathrm{BS}}$) consists of a single arc that is contained in the axis $\ell$ of $g_j$. In particular, $\ell$ is a leaf of $\mathcal{L}$ that projects to $\gamma$ and connects  the two fixed points of $g_j$. Hence, $\ell$ has its endpoints at
$$
\lim_{n\to+\infty} g_j^{-n}(0)\in I_j\quad \textrm{and}\quad \lim_{n\to+\infty} g_j^n(0)\in I_{-j}.
$$
It follows that $A_{G_d, \mathrm{BS}}$ preserves the endpoints of the leaf $\ell$. It is also easy to see that the endpoints of any leaf of $\mathcal{L}$ of the form $g\cdot\ell$ (for $g\neq 1$) is contained in some $I_{\pm k}$. Hence, every iterate of $A_{G_d, \mathrm{BS}}$ carries the endpoints of such a leaf to the endpoints of some leaf $g'\cdot\ell$ (where $g'\in G_d$) of $\mathcal{L}$.

Now suppose that $\gamma\in\mathcal{L}^*$ is represented by $g_i^{-1}\circ g_j$, for some $i, j\in\{1,\cdots, d\}$ with $\vert i-j\vert>1$. Then, the lift of $\gamma$ to the fundamental domain of $G_d$ consists of two geodesic arcs; one of them is contained in the axis $\ell_1$ of $g_i^{-1}\circ g_j$, and the other is contained in the axis $\ell_2$ of $g_i\circ g_j^{-1}$. Thus, $\ell_1$ (respectively, $\ell_2$) is a leaf of $\mathcal{L}$ that projects to $\gamma$ and connects  the two fixed points of $g_i^{-1}\circ g_j$ (respectively, of $g_i\circ g_j^{-1}$). It follows that $\ell_1$ has its endpoints at
$$
\lim_{n\to+\infty} (g_i^{-1}\circ g_j)^n(0)\in I_i\quad \textrm{and}\quad \lim_{n\to+\infty} (g_j^{-1}\circ g_i)^n(0)\in I_j,
$$
while 
$\ell_2$ has its endpoints at
$$
\lim_{n\to+\infty} (g_i\circ g_j^{-1})^n(0)\in I_{-i}\quad \textrm{and}\quad \lim_{n\to+\infty} (g_j\circ g_i^{-1})^n(0)\in I_{-j}.
$$
It is now straightforward to check that $A_{G_d, \mathrm{BS}}$ maps the endpoints of $\ell_1$ to those of $\ell_2$, and vice versa.
Moreover, the mapping properties of the generators $g_1,\cdots, g_d$ imply that any leaf of $\mathcal{L}$ of the form $g\cdot\ell_i$ (for $g\in G_d$, $i\in\{1,2\}$) either coincides with $\ell_{i'}$ ($i'\in\{1,2\}$) or has both its endpoints in some $I_{\pm k}$. Hence, every iterate of $A_{G_d, \mathrm{BS}}$ carries the endpoints of such a leaf to the endpoints of some leaf $g'\cdot\ell_i$ (where $g'\in G_d$, and $i\in\{1,2\}$) of $\mathcal{L}$.

Since $\gamma\in\mathcal{L}^*$ was arbitrarily chosen, we now conclude that the $G_d$-invariant geodesic lamination $\mathcal{L}$ obtained by lifting $\mathcal{L}^*$ to the universal cover is $A_{G_d, \mathrm{BS}}$-invariant.

2) This follows from the first part and Lemma~\ref{pwfm_invariant_lami_lem}.
\end{proof}

\begin{defn}\label{b_s_boundary_group_def}
For $\Gamma\in\partial\mathcal{B}(G_d)$ with an $A_{G_d, \mathrm{BS}}$-invariant lamination $\mathcal{L}$, the continuous self-map $A_{\Gamma, \mathrm{BS}}$ of $\Lambda(\Gamma)$ provided by Proposition~\ref{b_s_inv_simp_closed_geod_lem} is a mateable map associated to $\Gamma$ that is compatible with  $A_{G_d, \mathrm{BS}}$. We call the map $A_{\Gamma, \mathrm{BS}}$ the \emph{Bowen-Series} map of $\Gamma$.
\end{defn}

We now turn our attention to higher Bowen-Series maps. In fact, giving a complete description of $A_{G_d, \mathrm{hBS}}$-invariant laminations $\mathcal{L}$ is a tedious and combinatorially involved task. To avoid this, we content ourselves with the following result which states that there is a non-empty, finite collection of (supports of) measured laminations $\mathcal{L}^*$ for which $\mathcal{L}$ is $A_{G_d, \mathrm{hBS}}$-invariant. In the following proposition, we will use the notation of Section~\ref{sec-fold}.

\begin{prop}\label{higher_b_s_inv_simp_closed_geod_lem}
1) The set of laminations $\mathcal{L}$ that are invariant under $A_{G_d, \mathrm{hBS}}$ is non-empty and finite. All such laminations correspond to geometrically finite groups on $\partial\mathcal{B}(G_d)$.

2) There are only finitely many quasiconformal conjugacy classes of groups $\Gamma\in\partial\mathcal{B}(G_d)$ for which the Cannon-Thurston map of $\Gamma$ semi-conjugates $A_{G_d, \mathrm{hBS}}$ to a self-map of $\Lambda(\Gamma)$ that is orbit equivalent to $\Gamma$.
\end{prop}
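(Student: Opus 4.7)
My strategy is to parallel the proof of Proposition~\ref{b_s_inv_simp_closed_geod_lem}, adapted to the higher Bowen-Series setting. First, I would verify that the hypothesis of Lemma~\ref{inv_lami_pwfm} is satisfied: by Proposition~\ref{hbs_alternative_prop}, the fundamental domain $R$ of $A_{G_d, \mathrm{hBS}}$ contains $\overline{D}\cup\overline{g_j(P)}$ for each $j$, and this is a fundamental domain of $G_d$. Consequently, Lemma~\ref{inv_lami_pwfm} yields the key necessary condition: any $A_{G_d, \mathrm{hBS}}$-invariant lamination $\mathcal{L}$ projects to a finite union of pairwise disjoint simple non-peripheral closed geodesics on $\D/G_d$, each represented by an element of the finite set $\{h_i^{-1}\circ h_j:i\neq j\}$, where $h_1,\ldots,h_m$ are the pieces of $A_{G_d, \mathrm{hBS}}$ enumerated in Subsection~\ref{sec-genfold}. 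Since this set is finite, only finitely many such configurations can occur, giving the finiteness in (1); moreover, pinching such a finite collection of disjoint simple closed geodesics always produces a geometrically finite group on $\partial\mathcal{B}(G_d)$.

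For non-emptiness, which will be the principal obstacle, I plan a two-pronged attack. The primary strategy exploits Corollary~\ref{second_iterate_hbs_cor}: for each appropriate $d$, I can select a base group $\Gamma\in\mathrm{Teich}(G_{d_0})$, $\mathrm{Teich}(G_{d_0,1})$, or $\mathrm{Teich}(G_{d_0,2})$ (choosing $d_0$ according to the parity of $d$, so that the index two subgroup $\Gamma'$ produced by the corollary is conjugate to $G_d$). Then $A_{\Gamma,\mathrm{BS}}^{\circ 2}=A_{G_d,\mathrm{hBS}}$ as maps of $\bS^1$. Any $A_{\Gamma,\mathrm{BS}}$-invariant lamination furnished by Proposition~\ref{b_s_inv_simp_closed_geod_lem} is automatically $A_{\Gamma,\mathrm{BS}}^{\circ 2}$-invariant, and, being $\Gamma$-invariant, descends to a $G_d$-invariant geodesic lamination whose leaves project to simple closed non-peripheral geodesics on $\D/G_d$ (the cover), hence an $A_{G_d,\mathrm{hBS}}$-invariant lamination.

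The secondary strategy handles the small values of $d$ for which $\D/\Gamma$ does not support non-peripheral simple closed curves and the primary strategy fails. Here, I would produce an invariant lamination by hand: take an element of the form $g_i^{-1}\circ g_j\in G_d$ whose axis projects to a simple non-peripheral closed geodesic on $\D/G_d$, and verify directly that the endpoints of this axis and of its $G_d$-translates are carried to endpoints of leaves in the same $G_d$-orbit. The verification is analogous to the analysis in the proof of Proposition~\ref{b_s_inv_simp_closed_geod_lem}(1), except that one must track carefully which sub-arc (short folding, long folding, or lower semicircle arc) each endpoint lies in, since on the long folding arcs the pieces of $A_{G_d,\mathrm{hBS}}$ are compositions $g_s\circ g_i^{-1}$ rather than single generators. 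This case analysis is where the main technical bookkeeping occurs.

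Finally, for part (2), Lemma~\ref{pwfm_invariant_lami_lem} applied with $A=A_{G_d,\mathrm{hBS}}$ says that the Cannon-Thurston map of $\Gamma\in\partial\mathcal{B}(G_d)$ semi-conjugates $A_{G_d,\mathrm{hBS}}$ to an orbit-equivalent self-map of $\Lambda(\Gamma)$ if and only if the associated ending lamination is $A_{G_d,\mathrm{hBS}}$-invariant. Since the quasiconformal conjugacy class of $\Gamma$ is determined (for geometrically finite groups on the Bers' boundary) by the pinched lamination, and by part (1) only finitely many such laminations exist, only finitely many quasiconformal conjugacy classes of such $\Gamma$ can arise. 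The hard part, as noted, will be the non-emptiness step (especially ensuring that the lifted lamination from Strategy A does not collapse to a peripheral class in the cover, and handling the small-$d$ cases of Strategy B).
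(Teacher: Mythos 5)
Your treatment of finiteness, of geometric finiteness, and of part (2) coincides with the paper's: finiteness comes from Lemma~\ref{inv_lami_pwfm} (your verification via Proposition~\ref{hbs_alternative_prop} that $\overline{R}$ contains a fundamental domain of $G_d$ is correct and slightly more careful than the paper, which takes this for granted), and part (2) is exactly the combination of part (1) with Lemma~\ref{pwfm_invariant_lami_lem}. The divergence — and the gap — is in the non-emptiness step, which is indeed the heart of the statement.

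Your primary strategy does not cover half of the degrees as written. To realize $A_{G_d,\mathrm{hBS}}$ as $A_{\Gamma,\mathrm{BS}}^{\circ 2}$ via Corollary~\ref{second_iterate_hbs_cor} with $\D/\Gamma'\cong S_{0,d+1}$, you need the torsion-free base $G_{d_0}$ only when $d$ is odd (and then $d_0=(d+1)/2\geq 3$ forces $d\geq 5$); for every even $d$ the base must be one of the orbifold groups $G_{d_0,1}$, $G_{d_0,2}$, and Proposition~\ref{b_s_inv_simp_closed_geod_lem}, which you invoke to furnish a Bowen-Series-invariant lamination, is stated and proved only for the punctured-sphere groups $G_d$ — there is no orbifold analogue in the paper, so the citation does not apply and you would have to prove it (or run a direct check) separately. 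Together with the failure at $d=3$ (where the base would be the thrice-punctured sphere, which carries no essential curves), this means your fallback "secondary strategy" — a direct verification, which you restrict to small $d$ — is in fact the load-bearing argument for infinitely many degrees, and you leave it as unexecuted bookkeeping with an unverified choice of curve of the form $g_i^{-1}\circ g_j$. The paper avoids all of this: it simply exhibits the laminations coming from the simple closed geodesics represented by the single generators $g_2,\ldots,g_{d-1}$ (available for every $d\geq 3$) and checks invariance under $A_{G_d,\mathrm{hBS}}$ in a few lines, using that the endpoints of the axis of $g_j$ are fixed by the relevant pieces $g_j^{\pm 1}$ and that every translate $g_i(\ell)$ has its endpoints in a short folding arc (piece $g_i^{-1}$) and a long folding arc (piece $g_j\circ g_i^{-1}$). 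Your pair-type curves may well also be invariant, but that needs the full leaf-by-leaf check you defer, whereas the single-generator curves make the whole covering/second-iterate machinery, and hence your case split, unnecessary. A smaller point in the same direction: the identity $A_{\Gamma,\mathrm{BS}}^{\circ 2}=A_{G_d,\mathrm{hBS}}$ holds on the nose only for a suitably normalized $\Gamma$; otherwise you need to transport laminations through the quasiconformal conjugacy implicit in Definition~\ref{higher_bs_def}, which you should at least record.
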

\begin{proof}
1) The finiteness assertion is a consequence of Lemma~\ref{inv_lami_pwfm}. 

Examples of $A_{G_d, \mathrm{hBS}}$-invariant laminations $\mathcal{L}^*$ are given by (simple, closed, non-peripheral) geodesics on $\disk/G_d$ represented by $g_2,\cdots, g_{d-1}$. To see this, assume without loss of generality that $\mathcal{L}^*$ consists of a single curve on $\disk/G_d$ represented by $g_j$, for some $j\in\{2,\cdots, d-1\}$. Then $\mathcal{L}$ (which is the $G_d$-lift of this curve to the universal cover) intersects the fundamental domain of $A_{G_d, \mathrm{hBS}}$ in $d$ geodesic arcs. Call the collection of these $d$ bi-infinite geodesics $\mathcal{L}'$. One of these bi-infinite geodesics has the fixed points of $g_j$ as its endpoints; we denote this geodesic by $\ell$. With this notation, we have that $\mathcal{L}'=\{g_1(\ell),\cdots, g_j(\ell)=\ell,\cdots, g_d(\ell)\}$. Note that each leaf of $\mathcal{L}\setminus\mathcal{L}'$ has its endpoints in a single
piece of $A_{G_d, \mathrm{hBS}}$. Hence, it suffices to argue that if $p, q$ are the endpoints of some leaf in $\mathcal{L}'$, then $A_{G_d, \mathrm{hBS}}(p), A_{G_d, \mathrm{hBS}}(q)$ are also the endpoints of some leaf in $\mathcal{L}'$. Clearly, this property is satisfied by the endpoints of $\ell$. Now pick $i\in\{1,\cdots, d\},\ i\neq j$. Then one endpoint of $g_i(\ell)$ lies in the sub-arc of $\bS^1$ where $A_{G_d, \mathrm{hBS}}\equiv g_i^{-1}$, and the other endpoint lies in the sub-arc where $A_{G_d, \mathrm{hBS}}\equiv g_j\circ g_i^{-1}$ (see Figure~\ref{cfmgen}). Since the endpoints of $\ell$ are fixed by $g_j$, it is now easy to see that $A_{G_d, \mathrm{hBS}}$ maps the endpoints of $g_i(\ell)$ to those of $\ell$. This completes the proof of the fact that $\mathcal{L}$ is $A_{G_d, \mathrm{hBS}}$-invariant.

2) Follows from the first part and Lemma~\ref{pwfm_invariant_lami_lem}.
\end{proof}

\begin{defn}\label{higher_b_s_boundary_group_def}
For $\Gamma\in\partial\mathcal{B}(G_d)$ with an $A_{G_d, \mathrm{hBS}}$-invariant lamination $\mathcal{L}$, the continuous self-map $A_{\Gamma, \mathrm{hBS}}$ of $\Lambda(\Gamma)$ provided by Proposition~\ref{higher_b_s_inv_simp_closed_geod_lem} is a mateable map associated to $\Gamma$ that is compatible with  $A_{G_d, \mathrm{hBS}}$. We call the map $A_{\Gamma, \mathrm{hBS}}$ the \emph{higher Bowen-Series} map of $\Gamma$.
\end{defn}

\subsection{Dynamics of Bowen-Series maps for Bers boundary groups}\label{boundary_group_b_s_subsec}

 Recall from Theorem \ref{ctsurf}) that for $\Gamma\in\partial\mathcal{B}(G_d)$, the limit set $\Lambda_\Gamma$ is obtained topologically as a quotient space of the circle $\bS^1$ by identifying ideal end-points of lifts of leaves of a non-trivial ending lamination (see Figure \ref{fig-bersbdy}). We recall the notation $\Omega(\Gamma), \Omega_\infty(\Gamma), K(\Gamma)$ from Section \ref{mateable_gen_subsec} to deal with Bers boundary groups.

\subsubsection{Explicit description of Bowen-Series maps for Bers boundary groups}
Let $\Gamma\in\partial\mathcal{B}(G_d)$ be a group that admits a Bowen-Series map $A_{\Gamma,\mathrm{BS}}$ (see Definition~\ref{b_s_boundary_group_def}). We will now see how $A_{\Gamma, \mathrm{BS}}$ can be explicitly written in terms of suitable elements of $\Gamma\leq\pslc$.

Following Subsection~\ref{b_s_punc_sphere_subsec}, we denote by $I_{\pm j}$ the arc of $\bS^1$ where $A_{G_d, \mathrm{BS}}\equiv g_j^{\pm 1}$, $j\in\{1,\cdots, d\}$. Since $\Gamma\in\partial\mathcal{B}(G_d)$, there is a (weakly type-preserving) group isomorphism $\rho: G_d\to\Gamma$ and a conformal isomorphism $\phi_\Gamma: \widehat{\C}\setminus\overline{\D}\to\Omega_\infty(\Gamma)$ such that $\phi_\Gamma$ conjugates the $g_j$-action on $\widehat{\C}\setminus\overline{\D}$ to the action of $\widehat{g}_j:=\rho(g_j)\in\pslc$ on $\Omega_\infty(\Gamma)$. In fact, the Cannon-Thurston map of $\Gamma$ is the continuous extension of $\phi_\Gamma$ to $\bS^1$. Abusing notation, we will denote the Cannon-Thurston map of $\Gamma$ by $\phi_\Gamma:\bS^1\to\Lambda(\Gamma)$. It now follows from the definition of $A_{\Gamma, \mathrm{BS}}$ (for instance, see the commutative diagram in Lemma~\ref{pwfm_invariant_lami_lem}) that 
$$
A_{\Gamma, \mathrm{BS}}\equiv \widehat{g}_j^{\pm 1},\ \mathrm{on}\ \phi_\Gamma(I_{\pm j}),\ j\in\{1,\cdots, d\}.
$$
We will see in the next subsection that using the M{\"o}bius maps $\widehat{g}_j$, the Bowen-Series map $A_{\Gamma, \mathrm{BS}}\vert_{\Lambda(\Gamma)}$ can be extended to a subset of $K(\Gamma)$ as a continuous piecewise complex-analytic map.

\subsubsection{Canonical extension of Bowen-Series maps for Bers boundary groups}\label{bs_extension_bdry_subsec}
We will now define a canonical extension of $A_{\Gamma, \mathrm{BS}}$. To this end, choose a lamination $\mathcal{L}$ such that $\mathcal{L}^*$ is a union of mutually disjoint, simple, closed, non-peripheral geodesics on $\disk/G_d$ that are represented by group elements in the finite set $\mathbf{S}_d$ (recall from Proposition~\ref{b_s_inv_simp_closed_geod_lem} that these are precisely the $A_{G_d, \mathrm{BS}}$-invariant laminations). The $G_d$-lift of each leaf of $\mathcal{L}^*$ corresponding to a group element in $\{g_2,\cdots,g_{d-1}\}$ intersects the fundamental domain $R_{G_d}$ of $A_{G_d, \mathrm{BS}}$ (whose closure is a closed fundamental domain of $G_d$) in exactly one geodesic arc. This geodesic is symmetric with respect to the real line. We call such an arc a \emph{1-arc}. On the other hand, the $G_d$-lift of each leaf of $\mathcal{L}^*$ corresponding to a group element in $\{g_i^{-1}\circ g_j: i, j\in\{1,\cdots, d\},\ \vert i-j\vert>1\}$ intersects the fundamental domain $R_{G_d}$ in exactly two geodesic arcs, that are mapped to each other by the complex conjugation map. We call such a pair of arcs a \emph{2-arc}. Thus, every $A_{G_d, \mathrm{BS}}$-invariant lamination $\mathcal{L}$ intersects $R_{G_d}$ in a disjoint collection of $1$-arcs and $2$-arcs. We denote the union of such arcs by $\mathcal{A}_\mathcal{L}$.

It follows from the above discussion that for an $A_{G_d, \mathrm{BS}}$-invariant lamination $\mathcal{L}$, the associated collection of $1$ and $2$-arcs $\mathcal{A}_\mathcal{L}$ cuts $R_{G_d}$ into finitely many components. We call such a component \emph{equatorial} if it intersects the real line (i.e., the set $\mathrm{Im}(z)=0$ in $\D$). Otherwise, it is called a \emph{polar} component. (See Figure~\ref{induced_bs_fig}, where a certain $G_6$-invariant lamination gives rise to two equatorial and two polar components.) 

If a geometrically finite group $\Gamma$ is obtained by pinching $\mathcal{L}^*$, then $R_{G_d}$ naturally determines a \emph{pinched fundamental domain} for the $\Gamma$-action on $\Omega(\Gamma)\setminus \Omega_\infty(\Gamma)$. The closure of this pinched fundamental domain can be topologically realized by pinching the polygon $\overline{R_{G_d}}$ along $\mathcal{A}_\mathcal{L}$. In fact, the components of $R_{G_d}\setminus\mathcal{A}_\mathcal{L}$ bijectively correspond to the components of the pinched fundamental domain. We call a component of the pinched fundamental domain \emph{equatorial} (respectively, \emph{polar}) if it corresponds to an equatorial (respectively, polar) component of $R_{G_d}\setminus \mathcal{A}_\mathcal{L}$ (see Figure~\ref{induced_bs_fig}). We denote the interior of this fundamental domain (for the $\Gamma$-action on $\Omega(\Gamma)\setminus \Omega_\infty(\Gamma)$) by $R_\Gamma$.

As $A_\Gamma$ acts by M{\"o}bius maps (in $\pslc$), it admits a natural extension to $K(\Gamma)\setminus R_\Gamma$ as follows. Fix $j\in\{\ 1,\cdots, d\}$. There is a unique arc $c_j$ (respectively, $c_{-j}$) in $\partial R_\Gamma$ connecting the endpoints of $\phi_\Gamma(I_ j)$ (respectively, the endpoints of $\phi_\Gamma(I_{- j})$) such that $c_j$ (respectively, $c_{-j}$) does not contain any non-accidental parabolic fixed point of $\Gamma$ other than its endpoints (i.e., the arcs $c_j, c_{-j}$ are allowed to pass through accidental parabolics, but not through non-accidental parabolics). On the (closed) subset of $K(\Gamma)$ bounded by $c_j$ and $\phi_\Gamma(I_ j)$ (respectively, bounded by $c_{-j}$ and $\phi_\Gamma(I_{-j})$), we extend $A_{\Gamma, \mathrm{BS}}$ as $\widehat{g}_j$ (respectively, $\widehat{g}_j^{-1}$).

We call this extended map the \emph{canonical extension} of $A_{\Gamma, \mathrm{BS}}$, and denote it by $\widehat{A}_{\Gamma, \mathrm{BS}}: K(\Gamma)\setminus R_\Gamma\to K(\Gamma)$.

\subsubsection{Dynamics of $\widehat{A}_{\Gamma, \mathrm{BS}}$}\label{sec-dynaAhbs}

We call a component $U$ of $\Omega(\Gamma)$ a \emph{principal component} if $U$ contains a component of $R_\Gamma$. Further, we call such a component $U$ an \emph{equatorial/polar principal component} if the component of $R_\Gamma$ contained in $U$ is equatorial/polar.

Note that since the restriction of $\widehat{A}_{G_d, \mathrm{BS}}$ on $\partial R_{G_d}$ is a homeomorphism of order two, the same is true for the restriction of $\widehat{A}_{\Gamma, \mathrm{BS}}$ on $\partial R_\Gamma$. More precisely, $\widehat{A}_{\Gamma, \mathrm{BS}}$ maps $c_{\pm j}$ onto $c_{\mp j}$, for $j\in\{1,\cdots, d\}$. The next lemma follows from this observation and the definitions of equatorial/polar principal components.

\begin{lemma}\label{period_one_two_lem}
\noindent
\begin{enumerate}\upshape
\item Each equatorial principal component of $\Omega(\Gamma)$ is preserved under $\widehat{A}_{\Gamma, \mathrm{BS}}$, while each polar principal component forms a $2$-cycle under $\widehat{A}_{\Gamma, \mathrm{BS}}$.

\item Each non-principal component of $\Omega(\Gamma)\setminus\Omega_\infty(\Gamma)$ is eventually mapped by $\widehat{A}_{\Gamma, \mathrm{BS}}$ to a principal component.

\end{enumerate}
\end{lemma}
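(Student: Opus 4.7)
The plan is to exploit the reflection symmetry of $R_{G_d}$ under the involution $\iota:z\mapsto\bar{z}$, combined with the explicit piecewise formula $\widehat{A}_{\Gamma,\mathrm{BS}}\vert_{H_j}=\widehat{g}_j$ (where $H_j\subset K(\Gamma)$ denotes the closed region bounded by $c_j$ and $\phi_\Gamma(I_j)$). First I would record the following combinatorial consequence of Proposition~\ref{b_s_inv_simp_closed_geod_lem}: since $\mathcal{L}$ is $A_{G_d,\mathrm{BS}}$-invariant, $\mathcal{A}_{\mathcal{L}}$ is a disjoint union of $1$-arcs and $2$-arcs, hence $\iota$-symmetric, and $\iota$ swaps $C_j$ with $C_{-j}$ in $\partial R_{G_d}$. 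This dichotomy passes to $R_\Gamma$: an equatorial component $V$ of $R_\Gamma$ satisfies $c_j\subset\partial V\Leftrightarrow c_{-j}\subset\partial V$, while a polar $V$ has all its boundary $c$-arcs on a single side, and its $\iota$-partner $V'$ is a distinct polar component of $R_\Gamma$ whose boundary carries only the opposite-signed $c$-arcs.

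For part (1), let $V=U\cap R_\Gamma$ for a principal $U$. The stabilizer $\mathrm{Stab}_\Gamma(U)$ is generated by the side-pairings of $V$ in $\overline{U}$, i.e., by those $\widehat{g}_j$'s that identify a pair $c_j,c_{-j}\subset\partial V$. In the equatorial case, every boundary $c_j$ of $V$ is accompanied by its partner $c_{-j}$, so $\widehat{g}_j\in\mathrm{Stab}_\Gamma(U)$ for each such $j$; since $\widehat{A}_{\Gamma,\mathrm{BS}}\vert_{H_j}=\widehat{g}_j$, we get $\widehat{A}_{\Gamma,\mathrm{BS}}(U\cap H_j)\subset U$, and $U$ is preserved. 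In the polar case, let $V'$ be the $\iota$-partner of $V$ and $U'$ the principal component containing $V'$; then $U\neq U'$ since $V\neq V'$. For each $c_j\subset\partial V$, the element $\widehat{g}_j$ maps $c_j$ onto $c_{-j}\subset\partial V'$ and therefore sends the side of $c_j$ containing $V$ to the side of $c_{-j}$ containing $V'$, forcing $\widehat{g}_j(U)=U'$ (by connectedness, and the fact that $\widehat{g}_j$ permutes components of $\Omega(\Gamma)$). Hence $\widehat{A}_{\Gamma,\mathrm{BS}}(U\cap H_j)\subset U'$, and the symmetric argument yields the $2$-cycle $\{U,U'\}$.

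For part (2), the key mechanism is that $\widehat{A}_{\Gamma,\mathrm{BS}}$ reduces the $\Gamma$-word length of a tile in the tiling $\{\gamma\cdot\overline{R_\Gamma}:\gamma\in\Gamma\}$ of $K(\Gamma)\setminus\Lambda(\Gamma)$ by one, in parallel with the Fuchsian fact expressed in Remark~\ref{rmk-semigroup}. A non-principal component $U$ satisfies $U\cap R_\Gamma=\emptyset$ and is contained in a unique $H_j$; moreover, $U=\gamma(U_0)$ for some principal $U_0$ and $\gamma\in\Gamma$ of minimal word length $n\geq 1$ in the generators $\widehat{g}_1^{\pm 1},\ldots,\widehat{g}_d^{\pm 1}$. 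Since $U\subset H_j$, the minimal reduced expression for $\gamma$ begins on the left with $\widehat{g}_j^{-1}$, so $\widehat{A}_{\Gamma,\mathrm{BS}}(U)=\widehat{g}_j(U)=(\widehat{g}_j\gamma)(U_0)$ is a translate of the principal $U_0$ by a word of length $n-1$. Iterating this $n$ times delivers $U_0$ itself, completing the argument.

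The main obstacle is justifying the word-length reduction rigorously for the pinched group $\Gamma$: in the Fuchsian model this is standard because the dual graph of the tiling $G_d\cdot\overline{R_{G_d}}$ is a tree on which $\widehat{A}_{G_d,\mathrm{BS}}$ acts as a retraction toward the root tile $R_{G_d}$. For the Bers' boundary group one must verify that the identifications introduced by pinching occur only at accidental parabolic cusps on $\partial R_\Gamma$, and hence do not disturb the tree structure of the tiling away from these cusps. This will rely on the specific choice of $c_{\pm j}$ avoiding non-accidental parabolic fixed points together with the Cannon-Thurston semiconjugacy of Lemma~\ref{pwfm_invariant_lami_lem}; the remaining steps are a direct transport of the Fuchsian symmetry picture to the Kleinian setting via $\phi_\Gamma$.
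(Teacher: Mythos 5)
Your overall skeleton (the conjugation symmetry $\iota(z)=\bar z$, equatorial pieces carrying $c_j$ and $c_{-j}$ in pairs, polar pieces coming in mirror pairs) is the right one and matches the picture the paper is invoking, but the central step of part (1) is not justified as written. You claim that $\widehat{g}_j$ "sends the side of $c_j$ containing $V$ to the side of $c_{-j}$ containing $V'$, forcing $\widehat{g}_j(U)=U'$". First, as a statement about sides this is backwards: a side pairing of a fundamental domain sends the $R_\Gamma$-side of $c_j$ to the side of $c_{-j}$ \emph{not} containing $R_\Gamma$ (this is exactly why $\widehat{g}_j(R_\Gamma)\cap R_\Gamma=\emptyset$), and in the Fuchsian model $g_j$ maps the half-plane containing $R_{G_d}$ across $C_j$ onto the half-plane \emph{beyond} $C_{-j}$. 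Second, and more importantly, the argument does not engage with the actual difficulty: the arc $c_{-j}$ passes through the accidental parabolic pinch points and hence traverses infinitely many components of $\Omega(\Gamma)$, and both local sides of $c_{-j}\cap U^-$ lie in the single component $U^-$; so knowing only that $\widehat{g}_j(c_j)=c_{-j}$ does not tell you which component receives $\widehat{g}_j(c_j\cap U)$ --- identifying that component is precisely the content of the lemma. What has to be proved is that $\widehat{g}_j(c_j\cap U)=c_{-j}\cap U'$ with $U'$ the component of the mirror piece; equivalently, upstairs in $\D$, that $g_j$ carries each maximal arc of $C_j\setminus\mathcal{A}_{\mathcal{L}}$ on $\partial V$ onto the $\iota$-mirror arc of $C_{-j}$ on $\partial\iota(V)$. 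For the model group this does hold: with the paper's choice $g_j=\iota\circ(\text{reflection in }C_j)$ one has $g_j\vert_{C_j}=\iota\vert_{C_j}$, and $\mathcal{A}_{\mathcal{L}}$ is $\iota$-symmetric by Proposition~\ref{b_s_inv_simp_closed_geod_lem}; since no leaf of $\mathcal{L}$ crosses the interior of such a boundary arc, $g_j(V)$ and $\iota(V)$ are adjacent along a common arc not separated by $\mathcal{L}$, hence lie in the same complementary component, and this is what forces $\widehat{g}_j(U)=U'$ (and, in the equatorial case where $\iota(V)=V$, forces $\widehat{g}_j(U)=U$, which is what your assertion $\widehat{g}_j\in\mathrm{Stab}_\Gamma(U)$ silently uses). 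You also need, for the ``$2$-cycle'' conclusion, that $V\neq V'$ implies $U\neq U'$; this is true but requires the observation that distinct components of $R_{G_d}\setminus\mathcal{A}_{\mathcal{L}}$ are separated by a leaf (convexity of $R_{G_d}$), hence lie in distinct components of $\Omega(\Gamma)$.

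Part (2) is sound in outline, but you have flagged rather than closed the key point. The cleanest repair is to work in the Fuchsian model via the Cannon--Thurston semiconjugacy: components of $\Omega(\Gamma)\setminus\Omega_\infty(\Gamma)$ correspond to complementary components of $\mathcal{L}$ in $\D$, the tiling by $G_d$-translates of $\overline{R_{G_d}}$ is dual to the Cayley tree of the free group $G_d=\langle g_1,\dots,g_d\rangle$, and ping-pong shows that every tile meeting the half-plane beyond $C_j$ has reduced word beginning with $g_j^{-1}$; since a non-principal component is contained in exactly one such half-plane and meets only tiles of that form, the minimal word length of elements carrying a principal component onto it drops by one under the map, which is your induction. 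Alternatively one can avoid word length altogether: a component that never becomes principal would be contained in the nested preimages $\hA^{-n}(\Int{\Omega})$ of the half-plane regions, whose diameters shrink to zero by expansivity (compare Corollary~\ref{cor-strongrepulse}), impossible for a nonempty open set. Either route makes the word-length reduction rigorous without any delicate analysis of the pinched tiling at the accidental parabolics.
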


\begin{figure}[h!]
\begin{tikzpicture}
\node[anchor=south west,inner sep=0] at (0,0) {\includegraphics[width=0.42\textwidth]{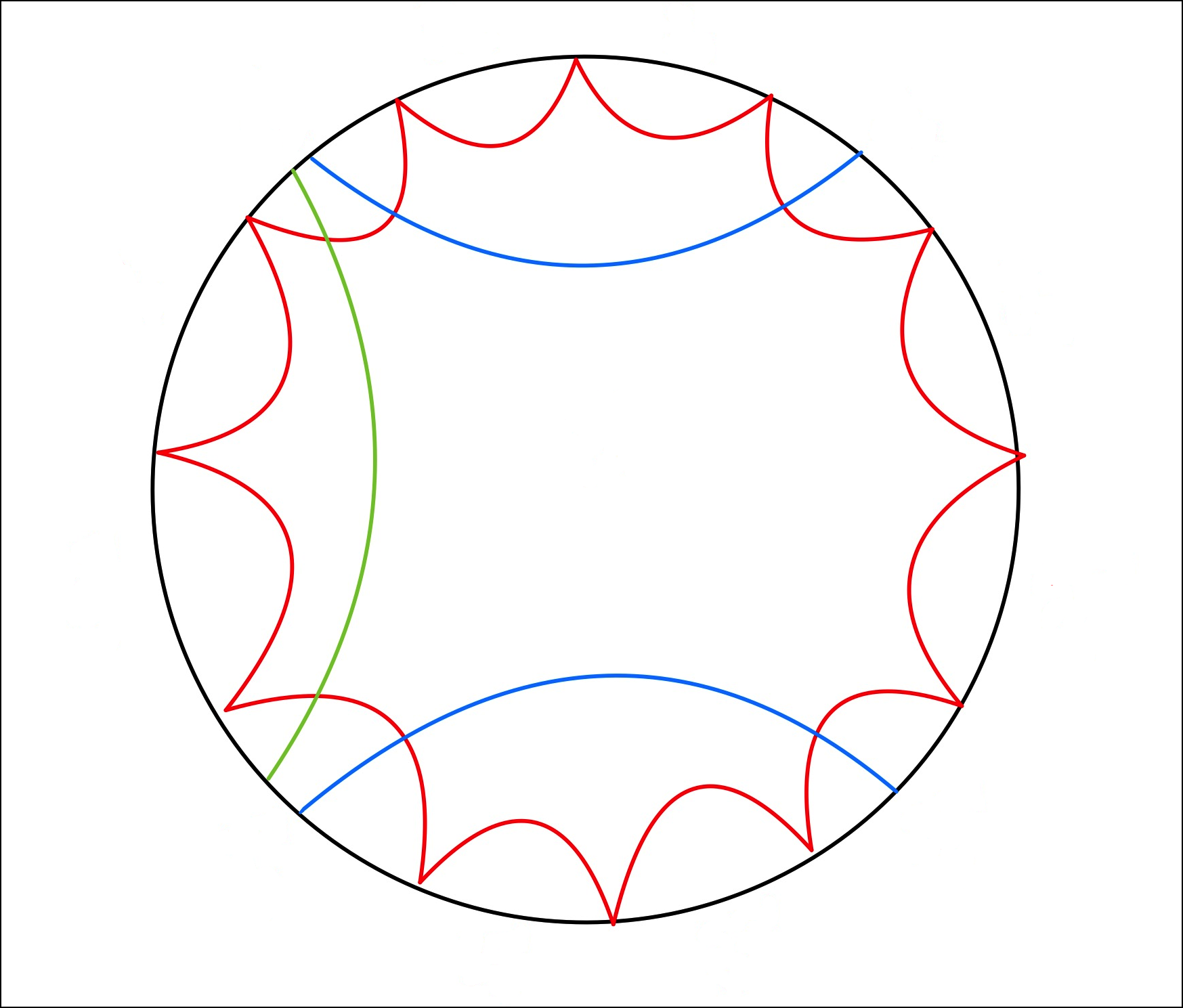}};
\node[anchor=south west,inner sep=0] at (5.4,0) {\includegraphics[width=0.54\textwidth]{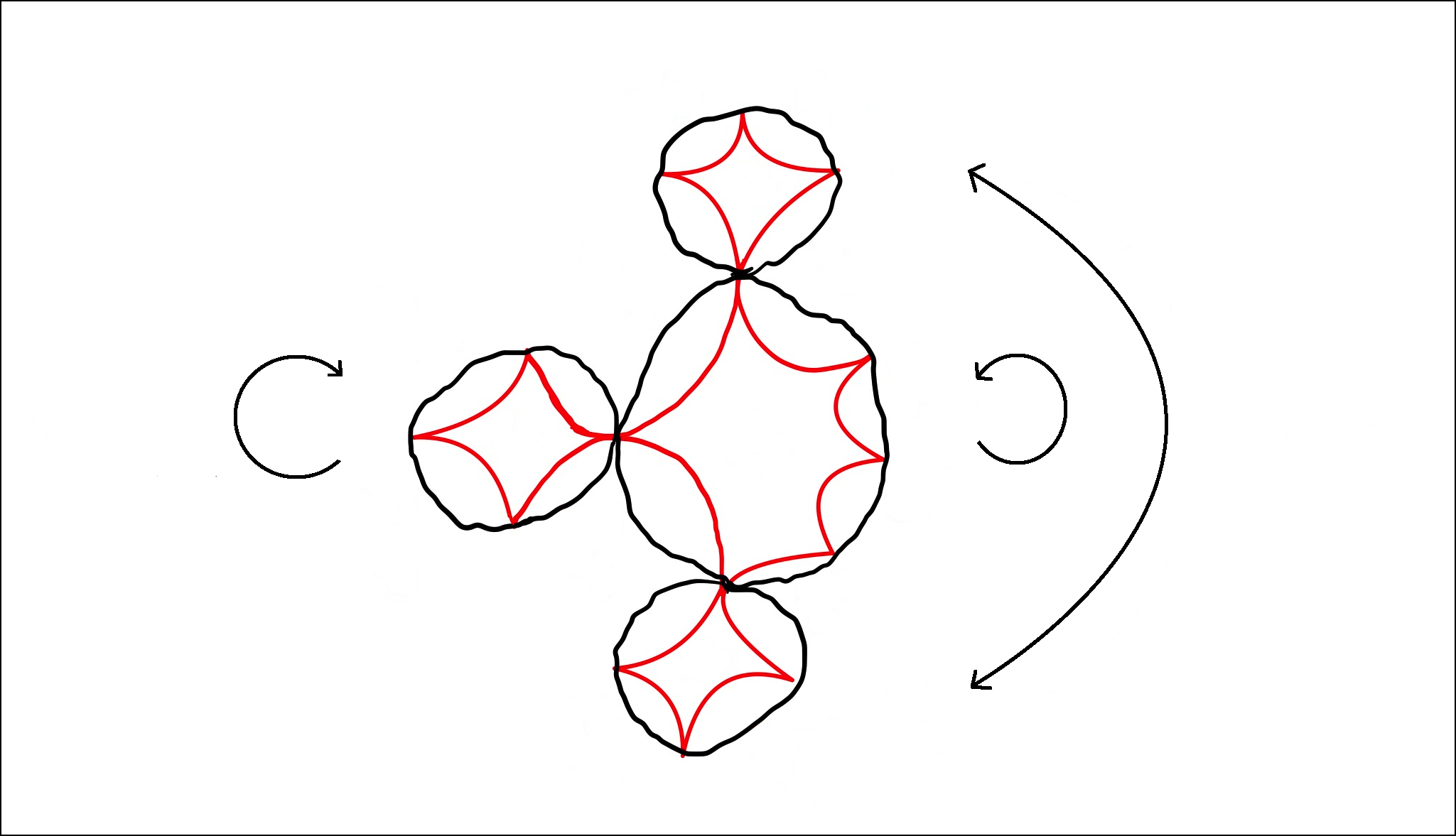}};
\node at (0.4,3.1) {$g_6$};
\node at (0.36,2) {$g_6^{-1}$};
\node at (1.2,4.1) {$g_5$};
\node at (1,0.6) {$g_5^{-1}$};
\node at (4.8,3.1) {$g_1$};
\node at (5,1.8) {$g_1^{-1}$};
\node at (4.1,4.1) {$g_2$};
\node at (4.4,0.8) {$g_2^{-1}$};
\node at (3.2,4.4) {$g_3$};
\node at (3.28,0.32) {$g_3^{-1}$};
\node at (2.2,4.4) {$g_4$};
\node at (2,0.22) {$g_4^{-1}$};
\node at (2.7,2.4) {\begin{Large}$R_{G_d}$\end{Large}};
\node at (9.7,3.16) {\begin{small}$U^+$\end{small}};
\node at (9.66,0.7) {\begin{small}$U^-$\end{small}};
\node at (9,1.8) {$R_\Gamma$};
\node at (7.9,1.84) {\begin{tiny}$R_\Gamma$\end{tiny}};
\node at (8.92,3.06) {\begin{tiny}$R_\Gamma$\end{tiny}};
\node at (8.75,0.84) {\begin{tiny}$R_\Gamma$\end{tiny}};
\node at (9.75,2.1) {\begin{tiny}$\widehat{g}_1$\end{tiny}};
\node at (9.8,1.6) {\begin{tiny}$\widehat{g}_1^{-1}$\end{tiny}};
\node at (9.36,2.7) {\begin{tiny}$\widehat{g}_2$\end{tiny}};
\node at (9.4,1.1) {\begin{tiny}$\widehat{g}_2^{-1}$\end{tiny}};
\node at (9.2,3.6) {\begin{tiny}$\widehat{g}_3$\end{tiny}};
\node at (9.2,0.4) {\begin{tiny}$\widehat{g}_3^{-1}$\end{tiny}};
\node at (8.6,3.6) {\begin{tiny}$\widehat{g}_4$\end{tiny}};
\node at (8.2,0.4) {\begin{tiny}$\widehat{g}_4^{-1}$\end{tiny}};
\node at (8.4,2.7) {\begin{tiny}$\widehat{g}_5$\end{tiny}};
\node at (8.3,1.2) {\begin{tiny}$\widehat{g}_5^{-1}$\end{tiny}};
\node at (7.4,2.25) {\begin{tiny}$\widehat{g}_6$\end{tiny}};
\node at (7.24,1.4) {\begin{tiny}$\widehat{g}_6^{-1}$\end{tiny}};
\node at (6.4,2.5) {\begin{small}$\widehat{A}_{\Gamma, \mathrm{BS}}$\end{small}};
\node at (10.2,2.5) {\begin{small}$\widehat{A}_{\Gamma, \mathrm{BS}}$\end{small}};
\node at (11.4,2) {\begin{small}$\widehat{A}_{\Gamma, \mathrm{BS}}$\end{small}};
\end{tikzpicture}
\caption{Left: $R_{G_6}$ is (the interior of) a fundamental domain of $G_6$.
Right: A cartoon of the limit set of the group $\Gamma$, which is obtained by pinching $\mathcal{L}^*$.}
\label{induced_bs_fig}
\end{figure}

In the left picture of figure \ref{induced_bs_fig}, the blue and green geodesics comprise $\mathcal{A}_\mathcal{L}$, where $\mathcal{L}^*$ consists of two simple, closed curves on $\D/G_6$ corresponding to the elements $g_5, g_5^{-1}\circ g_2\in G_6$. Thus, there are two equatorial and two polar components of $R_{G_6}\setminus\mathcal{A}_\mathcal{L}$. The M{\"o}bius maps defining $\widehat{A}_{G_d, \mathrm{BS}}$ are marked. In  the  picture on the right, the $\Gamma$-action on $\Omega(\Gamma)\setminus \Omega_\infty(\Gamma)$ admits a pinched fundamental domain whose interior is $R_\Gamma$ and whose closure is a finite tree of polygons such that two adjacent polygons share a vertex at an accidental parabolic fixed point of $\Gamma$. $\Omega(\Gamma)$ has four principal components; two of them (the equatorial ones) are preserved by $\widehat{A}_{\Gamma, \mathrm{BS}}$, while the other two (the polar ones) are exchanged. The M{\"o}bius maps defining $\widehat{A}_{\Gamma, \mathrm{BS}}$ are also marked.

The following theorem provides us with conformal models for the first return maps of $\widehat{A}_{\Gamma, \mathrm{BS}}$ on the principal components of $\Omega(\Gamma)$. The explicit description of first return maps of Bowen-Series maps will play a key role in the David surgery step in the proof of existence of conformal matings between $\widehat{A}_{\Gamma, \mathrm{BS}}$ and polynomials in principal hyperbolic components (see Subsection~\ref{mating_boundary_groups_subsec}). It is worth noting that the conformal models of first return maps of Bowen-Series maps naturally involve higher Bowen-Series maps.

\begin{theorem}\label{first_return_conf_model_thm}
Let $U$ be a principal component of $\Omega(\Gamma)$. 
\begin{enumerate}\upshape
\item If $U$ is equatorial, then $\widehat{A}_{\Gamma, \mathrm{BS}}: \overline{U}\setminus R_\Gamma\to\overline{U}$ is conformally conjugate to (the canonical extension of) the Bowen-Series map of a punctured sphere Fuchsian group. 

\item If $U$ is polar, then the dynamical system 
$$
\overline{U}\ \xleftrightarrows[\quad \widehat{A}_{\Gamma, \mathrm{BS}}\quad ]{\quad \widehat{A}_{\Gamma, \mathrm{BS}}\quad }\ 
\widehat{A}_{\Gamma, \mathrm{BS}}(\overline{U})
$$
is conformally conjugate to a fiberwise dynamical system 
$$
\mathbf{A}:\left(\overline{\D}\setminus R_+\right)\times\{+\}\bigsqcup\left(\overline{\D}\setminus R_-\right)\times\{-\}\to\overline{\D}\times\{+,-\},\ (z,\pm)\mapsto (\widehat{A}_\pm(z), \mp),
$$
where $A_\pm$ are \pwfm maps satisfying the conditions of Corollary~\ref{hbs_first_return_cor}. Consequently,
the first return map of $\widehat{A}_{\Gamma, \mathrm{BS}}$ on $\overline{U}$ is conformally conjugate to (the canonical extension of) the higher Bowen-Series map of a punctured sphere Fuchsian group.

\end{enumerate}
\end{theorem}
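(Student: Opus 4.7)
The idea is to uniformize each principal component of $\Omega(\Gamma)$ by the unit disk and transport the dynamics of $\widehat{A}_{\Gamma,\mathrm{BS}}$ there. Lemma~\ref{period_one_two_lem} partitions principal components into the invariant (equatorial) and the period-two (polar) cases, which suggests using the combinatorial characterization of Proposition~\ref{bs_char_1} in the first case and the first-return criterion of Corollary~\ref{hbs_first_return_cor} in the second. A unifying preliminary step is that, since $\Gamma$ is geometrically finite and each principal component $U$ is simply connected, the stabilizer $\Gamma_U=\{\gamma\in\Gamma:\gamma(U)=U\}$ acts on $U$ properly discontinuously with quotient a finite-type Riemann surface; uniformizing $U$ by a conformal isomorphism $\psi:\overline{U}\to\overline{\D}$ conjugates $\Gamma_U$ to a Fuchsian group $\widetilde{\Gamma}_U$. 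The key book-keeping task, carried out once, is to show that the component(s) of $R_\Gamma$ lying in $U$ descend under $\psi$ to (the interior of) a fundamental polygon for $\widetilde{\Gamma}_U$ whose side pairings are precisely the pieces of the relevant M\"obius maps $\widehat g_j^{\pm 1}$ that define $\widehat{A}_{\Gamma,\mathrm{BS}}$.

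For the equatorial case, assume $U$ is invariant under $\widehat{A}_{\Gamma,\mathrm{BS}}$. The boundary arcs of the equatorial component $U\cap R_\Gamma$ are precisely pairs $c_j,c_{-j}$ for a subset of indices $j\in\{1,\dots,d\}$, and by definition of the canonical extension $\widehat{A}_{\Gamma,\mathrm{BS}}$ acts by $\widehat g_j$ on the subset of $\overline{U}\setminus R_\Gamma$ bounded by $c_j$ and $\phi_\Gamma(I_j)$, sending $c_j$ to $c_{-j}$. Thus after conjugating by $\psi$, the induced map $A_U:\psi(\partial U)\to\psi(\partial U)$ is piecewise Fuchsian Markov, and its canonical extension sends $\partial(\psi(U\cap R_\Gamma))$ onto itself by the involution $c_j\leftrightarrow c_{-j}$. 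Proposition~\ref{bs_char_1} then identifies $A_U$ as the Bowen-Series map of $\widetilde{\Gamma}_U$, and since the quotient $U/\Gamma_U$ is topologically obtained from $\D/G_d$ by cutting along the curves in $\mathcal{L}^*$ bounding the equatorial face, it is a punctured sphere (possibly with torsion if an ideal vertex of $U\cap R_\Gamma$ has $\Z/2$ stabilizer, but the torsion-free case is typical).

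For the polar case, let $U^+,U^-$ be a $2$-cycle under $\widehat{A}_{\Gamma,\mathrm{BS}}$, and choose conformal isomorphisms $\psi^\pm:\overline{U^\pm}\to\overline{\D}$. Let $A_\pm$ be the maps on $\bS^1$ induced by $\psi^\pm$ from the restrictions of $\widehat{A}_{\Gamma,\mathrm{BS}}$ to $\partial U^\pm$, and let $R_\pm$ be the image under $\psi^\pm$ of the polar component of $R_\Gamma$ in $U^\pm$. The boundary arcs of the polar component of $R_\Gamma$ in $U^+$ are of the form $c_{j_1},\dots,c_{j_k}$ (for some indices); by construction of $\widehat{A}_{\Gamma,\mathrm{BS}}$, the maps $\widehat g_{j_1},\dots,\widehat g_{j_k}$ send these arcs to $c_{-j_1},\dots,c_{-j_k}$, which are precisely the boundary arcs of the polar component of $R_\Gamma$ in $U^-$. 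Thus, transporting by $\psi^\pm$, the pieces of $A_+$ are some $\{h_1,\dots,h_k\}\subset\widetilde{\Gamma}_{U^+}$, the pieces of $A_-$ are $\{h_1^{-1},\dots,h_k^{-1}\}$, and the edges of $\partial R_\pm$ satisfy $\widehat{A}_+|_{\delta_j}\equiv h_j$, $\widehat{A}_-|_{\delta_{-j}}\equiv h_j^{-1}$, and $h_j(\delta_j)=\delta_{-j}$. Since $\widetilde{\Gamma}_{U^+}=\widetilde{\Gamma}_{U^-}$ is Fuchsian, all three hypotheses of Corollary~\ref{hbs_first_return_cor} hold, and the corollary concludes that $A_-\circ A_+$ is the higher Bowen-Series map of a punctured sphere Fuchsian group.

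The principal obstacle is the book-keeping embedded in the unifying preliminary step: verifying that the component(s) of $R_\Gamma$ inside $U$ descend under $\psi$ to a genuine fundamental polygon for $\widetilde{\Gamma}_U$ whose side pairings match the prescribed $\widehat g_j$. This amounts to checking that the pinched fundamental domain construction, restricted to $U$, is compatible with the Fuchsian structure on $U/\Gamma_U$ induced by the Kleinian uniformization, and that the arcs $c_{\pm j}$ (which may pass through accidental parabolics) correspond to genuine edges of this Fuchsian fundamental polygon with parabolic vertices. Once this compatibility is verified, in particular establishing that $\Gamma_U$ is generated by the relevant $\widehat g_j$'s and has quotient a punctured sphere, Propositions~\ref{bs_char_1} and Corollary~\ref{hbs_first_return_cor} deliver the conclusions essentially automatically.
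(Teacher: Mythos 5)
Your overall route is the same as the paper's: uniformize each principal component by a Riemann map, transport $\widehat{A}_{\Gamma,\mathrm{BS}}$, and invoke Proposition~\ref{bs_char_1} in the equatorial case and Corollary~\ref{hbs_first_return_cor} in the polar case. The equatorial half of your argument is essentially the paper's and is correct: there the relevant $\widehat{g}_{i_s}^{\pm 1}$ genuinely stabilize $U$ (they send $c_{i_s}\cap U$ to $c_{-i_s}\cap U$ inside $U$), so the transported pieces do lie in a Fuchsian conjugate of $\mathrm{Stab}_\Gamma(U)$ and Proposition~\ref{bs_char_1} applies directly. (Your parenthetical about possible $\Z/2$ torsion is a red herring: the pieces lie in a conjugate of the torsion-free group $\Gamma$, and the edge-involution fixes vertices, so the orbifold subcases of Proposition~\ref{bs_char_1} do not occur.)

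The genuine problem is the ``unifying preliminary step'' you call the principal obstacle, and the use you make of it in the polar case. First, it is not needed: Propositions~\ref{bs_char_1} and~\ref{higher_bs_char_prop} (via Corollary~\ref{hbs_first_return_cor}) take as input only the transported arc-pairing data and produce the group, its fundamental polygon and the punctured-sphere quotient as \emph{output}; the paper never verifies that $R_\Gamma\cap U$ descends to a fundamental polygon for $\Gamma_U$. Second, for a polar component the claim is false as stated: $\widehat{g}_{j_s}$ maps $U^+$ onto $U^-$, so it is not a side pairing of anything inside $U^+$; the transported pieces $h_s=(\psi^-)^{-1}\circ\widehat{g}_{j_s}\circ\psi^+$ are \emph{not} elements of $\widetilde{\Gamma}_{U^+}$, and $\widetilde{\Gamma}_{U^+}$ and $\widetilde{\Gamma}_{U^-}$ (conjugated by two unrelated Riemann maps) are not literally equal; moreover, since $U^+$ and $U^-$ are $\Gamma$-translates of each other, $R_\Gamma\cap U^+$ is only the inner domain of the resulting higher Bowen-Series structure, the actual fundamental polygon being the strictly larger region $\overline{D}\cup\overline{P_j}$ of Proposition~\ref{hbs_alternative_prop}. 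Consequently your sentence ``since $\widetilde{\Gamma}_{U^+}=\widetilde{\Gamma}_{U^-}$ is Fuchsian, all three hypotheses of Corollary~\ref{hbs_first_return_cor} hold'' does not discharge the hypothesis that actually needs an argument, namely that $A_\pm$ are \pwfm maps, in particular that $\langle h_1,\cdots,h_k\rangle$ is discrete. What is true, and suffices, is that each $h_s$ is a conformal automorphism of $\D$ and that the pieces $h_t^{-1}\circ h_s$ of the first return map are $\psi^+$-conjugates of the elements $\widehat{g}_{j_t}^{-1}\circ\widehat{g}_{j_s}\in\mathrm{Stab}_\Gamma(U^+)\subset\Gamma$, which supplies the discreteness used in Proposition~\ref{hbs_return_map_prop}; with that repair, the arc-pairing data you verify correctly yields the hypotheses of Corollary~\ref{hbs_first_return_cor} and the argument closes as in the paper.
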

\begin{proof}
1) Suppose that $U$ is an equatorial principal component of $\Omega(\Gamma)$. Let $W$ be the component of $R_\Gamma$ contained in $U$. Then, $\partial W$ is formed by parts of $c_{\pm i_1},\cdots, c_{\pm i_r}$, for some $i_1,\cdots, i_r\in\{1,\cdots, d\}$ (compare Figure~\ref{induced_bs_fig}). Further, setting $c_{\pm i_s}^U:= c_{\pm i_s}\cap U$, we see that $\widehat{A}_{\Gamma, \mathrm{BS}}$ acts on $c_{\pm i_s}^U$ as $\widehat{g}_{i_s}^{\pm 1}$, and $\widehat{g}_{i_s}$ carries $c_{i_s}^U$ onto $c_{-i_s}^U$.

Let $\psi_{U}:\overline{\D}\to\overline{U}$ be (the homeomorphic extension of) a Riemann uniformization. It now readily follows that 
$$
\psi_{U}^{-1}\circ\widehat{A}_{\Gamma, \mathrm{BS}}\vert_{\overline{U}}\circ\psi_{U}: \overline{\D}\setminus \psi_{U}^{-1}(\Int{W})\to\overline{\D}
$$
is the canonical extension of a \pwfm map that carries the boundary of its fundamental domain $\psi_{U}^{-1}(\Int{W})$ onto itself. By Proposition~\ref{bs_char_1}, $\psi_{U}^{-1}\circ\widehat{A}_{\Gamma, \mathrm{BS}}\vert_{\overline{U}}\circ\psi_{U}$ is the canonical extension of the Bowen-Series map of a punctured sphere Fuchsian group.

2) We now assume that $U=U^+$ is a polar principal component of $\Omega(\Gamma)$. By Lemma~\ref{period_one_two_lem}, $U^+$ forms a $2$-cycle under $\widehat{A}_{\Gamma, \mathrm{BS}}$. Let $U^-$ be the image polar principal component; i.e., $\partial U^-= \widehat{A}_{\Gamma, \mathrm{BS}}(\partial U^+)$. Denote the component of $R_\Gamma$ contained in $U^+$ (respectively, $U^-$) by $D^+$ (respectively, $D^-$). Without loss of generality, we can assume that $\partial D^+$ is formed by parts of $c_{i_1},\cdots, c_{i_r}$, for some $i_1,\cdots, i_r\in\{1,\cdots, d\}$. Then, $\partial D^-$ is formed by parts of $c_{-i_1},\cdots, c_{-i_r}$ (compare Figure~\ref{induced_bs_fig}). As in the previous case, we set $c_{i_s}^{U^+}:= c_{i_s}\cap U^+$, and $c_{-i_s}^{U^-}:= c_{-i_s}\cap U^-$, for $s\in\{1,\cdots, r\}$. Note that $\widehat{A}_{\Gamma, \mathrm{BS}}$ acts on $c_{i_s}^{U^+}$ as $\widehat{g}_{i_s}$, and on $c_{-i_s}^{U^-}$ as $\widehat{g}_{i_s}^{-1}$. Moreover, $\widehat{g}_{i_s}\left(c_{i_s}^{U^+}\right)=c_{- i_s}^{U^-}$.

Let $\psi_{\pm}:\overline{\D}\to\overline{U^\pm}$ be (the homeomorphic extensions of) Riemann uniformizations. Then, for each $s\in\{1,\cdots, r\}$, $\left(\psi_{\mp}\right)^{-1}\circ\widehat{g}_{i_s}^{\pm 1}\circ\psi_{\pm}$ is a conformal automorphism of $\D$. Hence, $\left(\psi_{-}\right)^{-1}\circ\widehat{A}_{\Gamma, \mathrm{BS}}\vert_{\overline{U^+}}\circ\psi_{+}$ and $\left(\psi_{+}\right)^{-1}\circ\widehat{A}_{\Gamma, \mathrm{BS}}\vert_{\overline{U^-}}\circ\psi_{-}$ are the canonical extensions of two \pwfm maps, which we denote by $A_+$ and $A_-$ (respectively). By construction, 
$$
\overline{U}\ \xleftrightarrows[\quad \widehat{A}_{\Gamma, \mathrm{BS}}\quad ]{\quad \widehat{A}_{\Gamma, \mathrm{BS}}\quad }\ \widehat{A}_{\Gamma, \mathrm{BS}}(\overline{U})
$$
is conjugate via the pair of conformal maps $\psi_\pm$ to the fiberwise dynamical system $\mathbf{A}$ defined by $A_+$ and $A_-$. The description of the action of $\widehat{A}_{\Gamma, \mathrm{BS}}$ on $\partial D^\pm$ (given in the previous paragraph) now readily implies that the \pwfm maps $A_+, A_-$ satisfy the conditions of Corollary~\ref{hbs_first_return_cor}. Hence, the first return map $\widehat{A}_{\Gamma, \mathrm{BS}}^{\circ 2}$ on $\overline{U^+}$ is conformally conjugate (via $\psi_+$) to the canonical extension of the higher Bowen-Series map of a punctured sphere Fuchsian group.
\end{proof}

\subsection{From geodesic laminations to polynomial laminations} For a monic, centered, complex polynomial $P$ of degree $n$ with connected filled Julia set $\mathcal{K}(P)$, there exists a conformal isomorphism 
$\phi_P:\widehat{\C}\setminus\overline{\disk}\to\mathcal{B}_\infty(P):=\widehat{\C}\setminus\mathcal{K}(P)$ 
that conjugates $z^n$ to $P$ \cite[Theorem~9.5]{milnor-book}. The map $\phi_P$ is unique up to precomposition with multiplication by an $(n-1)$-th root of unity. We normalize $\phi_P$ so that $\phi_P(z)/z\to 1$ as $z\to\infty$, and call $\phi_P$ the \emph{B{\"o}ttcher coordinate} for $P$. If $\mathcal{J}(P)=\partial\mathcal{K}(P)$ is locally connected, then the conformal map $\phi_P$ extends continuously to a semi-conjugacy $\phi_P$ between $z^n\vert_{\mathbb{S}^1}$ and $P\vert_{\mathcal{J}(P)}$, and the fibers of $\phi_P$ define a $z^n$-invariant closed equivalence relation on $\mathbb{S}^1$. This equivalence relation, which is called the \emph{lamination} of $P$ (denoted by $\lambda(P)$), can be used to topologically model the dynamics of $P$ on its Julia set $\mathcal{J}(P)$ (see \cite{kiwi1} for a comprehensive account of polynomial laminations).

\begin{defn}\label{poly_lami_def}
 (1) Let $\lambda$ be an equivalence relation on $\mathbb{S}^1\cong \reals/\integers$ satisfying the following conditions, where the map $m_n:\reals/\integers\to\reals/\integers$ is given by $\theta\mapsto n\theta$.\\
\noindent {$a)$} $\lambda$ is closed in $\reals/\integers\times\reals/\integers$.\\
\noindent {$b)$} Each equivalence class $X$ of $\lambda$ is a finite subset of $\reals/\integers$.\\
\noindent {$c)$} If $X$ is a $\lambda$-equivalence class, then $m_n(X)$ is also a $\lambda$-equivalence class.\\
\noindent {$d)$} If $X$ is a $\lambda$-equivalence class, then $X\mapsto m_n(X)$ is cyclic order preserving.\\
\noindent {$e)$} $\lambda$-equivalence classes are pairwise \emph{unlinked}; i.e., if $X$ and $Y$ are two distinct equivalence classes of $\lambda$, then there exist disjoint intervals $I_X, I_Y\subset\reals/\integers$ such that $X\subset I_X$ and $Y\subset I_Y$.\\
Then, $\lambda$ is called a \emph{$\reals$eal lamination}.

\smallskip

\noindent (2) A $\reals$eal lamination $\lambda$ is said to have \emph{no rotation curves} if for every periodic simple curve $\gamma\subset\faktor{(\reals/\integers)}{\lambda}$ (periodic under the self-map of $\faktor{(\reals/\integers)}{\lambda}$ induced by $m_n$), the corresponding return map is not a homeomorphism.

\smallskip

\noindent (3) An equivalence class $X$ of a $\reals$eal lamination $\lambda$ is called a \emph{Julia critical element} if the degree of the map $m_n:X\to m_n(X)$ is greater than one. Finally, a $\reals$eal lamination $\lambda$ is called \emph{postcritically finite} if every Julia critical element of $\lambda$ is contained in $\ratls/\integers$. 
\end{defn}

Using this terminology, we can now state a realization theorem that will play a crucial role in relating group-invariant geodesic laminations to polynomial laminations (see \cite{kiwi1} for a more general statement).

\begin{theorem}\cite[Theorem~9.6]{LMMN}\label{poly_laminations_realized_thm}
For a postcritically finite $\reals$eal lamination $\lambda$ (with no rotation curves), there exists a monic, centered, postcritically finite polynomial $P$ with $\lambda(P)=\lambda$. In particular, $\faktor{(\reals/\integers)}{\lambda}\cong\mathcal{J}(P)$, and $\phi_P$ descends to a topological conjugacy between 
$m_n:\faktor{(\reals/\integers)}{\lambda}\to\faktor{(\reals/\integers)}{\lambda}$ and $P: \mathcal{J}(P)\to\mathcal{J}(P).$
\end{theorem}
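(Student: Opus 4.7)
The plan is to realize $\lambda$ via the classical Thurston--Kiwi strategy: from the combinatorial data of $\lambda$ build a postcritically finite topological branched self-covering $F$ of the $2$-sphere, invoke Thurston's topological characterization of postcritically finite rational maps to upgrade $F$ to an honest polynomial $P$, and then verify that $\lambda(P)=\lambda$.

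First, I would construct the topological model. Extend $\lambda$ from $\bS^1$ to a closed equivalence relation $\widetilde\lambda$ on $\overline{\disk}$ by declaring two points of $\overline{\disk}$ equivalent whenever they lie in the closed hyperbolic convex hull of a common $\lambda$-class. Since $\lambda$-classes are finite and pairwise unlinked, Moore's decomposition theorem guarantees that $\overline{\disk}/\widetilde\lambda$ is a topological closed disk whose boundary is canonically $(\reals/\integers)/\lambda$; gluing this quotient to $\widehat{\C}\setminus\disk$ along the seam circle yields a topological $2$-sphere $\bS^2_\lambda$. The maps $m_n$ on $\bS^1$ and $z\mapsto z^n$ on $\widehat{\C}\setminus\disk$ agree on the seam and are compatible with $\widetilde\lambda$ (by the $m_n$-invariance of $\lambda$-classes), so they assemble into a continuous orientation-preserving degree-$n$ branched self-covering $F:\bS^2_\lambda\to\bS^2_\lambda$; across any preperiodic ``Fatou gap'' of $\overline{\disk}/\widetilde\lambda$, one extends $F$ by a prescribed preperiodic branched homeomorphism chosen consistently on gap orbits.

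Next, I would check that $F$ is a Thurston map and invoke Thurston's characterization. The critical set of $F$ consists of the point coming from $\infty\in\widehat{\C}\setminus\disk$ (a totally ramified fixed point of local degree $n$), the images of the Julia critical elements of $\lambda$ (preperiodic because, by hypothesis, every Julia critical element lies in $\ratls/\integers$), and the prescribed preperiodic gap critical points; in particular the postcritical set is finite, so $F$ is a Thurston map. Thurston's theorem then implies that $F$ is combinatorially equivalent to a rational map iff it admits no Thurston obstruction, and since $F$ carries a totally ramified fixed point of maximal local degree, Berstein--Levy reduces any such obstruction to a Levy cycle. A Levy cycle of $F$, after an isotopy in $\bS^2_\lambda\setminus P_F$ pushing it into the pinched-disk side, descends to a periodic simple closed curve in $(\reals/\integers)/\lambda$ whose first-return map under the action induced by $m_n$ is a homeomorphism --- that is, a rotation curve in the sense of Definition~\ref{poly_lami_def}. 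The no-rotation-curves hypothesis therefore rules out all obstructions, producing a postcritically finite rational map Thurston-equivalent to $F$; since this map inherits a totally ramified fixed point of maximal local degree from $F$, it must be a polynomial, and normalizing to be monic and centered yields the required $P$. Matching laminations is then automatic: $\phi_P$ extends continuously to $\bS^1$ (postcritically finite polynomials have locally connected Julia sets by Douady--Hubbard), and the combinatorial equivalence forces its fibres to coincide with the $\lambda$-classes.

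The technical heart of the argument is the Levy-cycle/rotation-curve correspondence invoked in the previous step: one must show that every Levy obstruction of $F$ is isotopic, relative to $P_F$, to a multicurve lying entirely in the pinched-disk side of $\bS^2_\lambda$ and projecting to a rotation curve of $\lambda$, and conversely that any rotation curve lifts to an honest Levy cycle. Controlling these isotopies when components of an obstructing multicurve thread through several distinct Fatou gaps, and checking that the projected curves are genuinely simple in $(\reals/\integers)/\lambda$, is where the argument is most delicate; once this dictionary is in hand, the no-rotation-curves hypothesis translates directly into the absence of Thurston obstructions, and the rest of the proof is standard Thurston theory.
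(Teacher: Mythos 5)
The statement you are asked about is not proved in this paper at all: it is imported from the companion work \cite{LMMN} (quoted as Theorem~7.7 there), so there is no internal argument to compare yours against. Judged on its own terms, your proposal follows the standard Thurston-theoretic realization strategy, which is also the spirit of the cited source: form the Moore quotient of the pinched disk, glue it to the exterior model of $z\mapsto z^n$, extend over the gaps, observe that postcritical finiteness of $\lambda$ makes the result a Thurston map, reduce obstructions to Levy cycles (legitimate here because of the totally ramified fixed point coming from $\infty$), and translate Levy cycles into rotation curves so that the hypothesis kills all obstructions. Two points deserve more care than you give them. First, over a periodic gap the return map on the boundary has degree at least two -- precisely because the gap boundary descends to a periodic simple closed curve of the quotient and rotation curves are excluded -- so the extension there is a degree-$k$ branched cover modeled on $z\mapsto z^k$, not a ``branched homeomorphism''; this choice is what keeps the postcritical set finite, and it is also where the no-rotation-curve hypothesis first enters.

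Second, and more substantively, your closing claim that ``matching laminations is then automatic'' is not automatic. Thurston equivalence is an isotopy-level statement and does not by itself identify $\lambda(P)$ with $\lambda$: two a priori different invariant data could conceivably sit in the same combinatorial class unless you actually produce a map realizing the identification on the Julia set. The standard repair is a Rees--Shishikura-type pullback argument: starting from the combinatorial equivalence, normalized to be conformal (hence angle-preserving) near infinity where both maps are $z\mapsto z^n$, iterate lifting and use expansivity of $m_n$ on the quotient circle (together with the rigid models on the gaps) to obtain a semiconjugacy from the model sphere to $\widehat{\C}$; its restriction to the quotient circle then intertwines $m_n$ with $P\vert_{\mathcal{J}(P)}$, is compatible with the boundary extension of $\phi_P$ (which exists since $\mathcal{J}(P)$ is locally connected), and exhibits the fibers over $\mathcal{J}(P)$ as exactly the $\lambda$-classes, giving $\lambda(P)=\lambda$. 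This is a known but genuine step, not a formality, and your write-up should include it; with that addition, and with the Levy-cycle/rotation-curve dictionary you already flag as the delicate point carried out, the argument is sound.
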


The following result gives a positive answer to Problem~\ref{problem_1} for a special class of Bers boundary groups.

\begin{theorem}\label{julia_limit_dyn_equiv_thm}
Let $\Gamma\in\partial\mathcal{B}(G_d)$ be a group obtained by pinching a measured lamination $\mathcal{L}^*$ (on $\D/G_d$) such that $\mathcal{L}$ is $A_{G_d, \mathrm{BS}}$-invariant (respectively, $A_{G_d, \mathrm{hBS}}$-invariant). Then, there exists a (monic, centered) postcritically finite complex polynomial $P_\Gamma$ of degree $2d-1$ (respectively, $d^2$) such that 
$$
A_{\Gamma, \mathrm{BS}}:\Lambda(\Gamma)\to\Lambda(\Gamma)\ \qquad \textrm{and} \qquad P_\Gamma:\mathcal{J}(P_\Gamma)\to\mathcal{J}(P_\Gamma)
$$
$$
\left(\textrm{respectively}, \qquad A_{\Gamma, \mathrm{hBS}}:\Lambda(\Gamma)\to\Lambda(\Gamma)\ \qquad \textrm{and} \qquad P_\Gamma:\mathcal{J}(P_\Gamma)\to\mathcal{J}(P_\Gamma)\ \right)
$$
are topologically conjugate. 
\end{theorem}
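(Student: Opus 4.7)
The strategy is to realize $\Lambda(\Gamma)$ as the quotient $(\reals/\integers)/\lambda$ of a suitable polynomial-type $\reals$eal lamination and invoke Theorem~\ref{poly_laminations_realized_thm}. Set $n=2d-1$ in the Bowen-Series case and $n=d^2$ in the higher Bowen-Series case. By Propositions~\ref{b_s_poly_conjugate_prop_1} and~\ref{prop-cfdhd-orbeq-gen} (together with the topological classification of expansive circle coverings of \cite{coven-reddy}), there exists a homeomorphism $H:\bS^1\to\bS^1$ conjugating $z^n$ to $A_{G_d, \mathrm{BS}}$ (resp.~$A_{G_d, \mathrm{hBS}}$). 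Post-composing with the Cannon-Thurston map $\phi_\Gamma:\bS^1\to\Lambda(\Gamma)$ yields a continuous surjection $\Phi=\phi_\Gamma\circ H:\reals/\integers\to\Lambda(\Gamma)$ that semi-conjugates $m_n:\theta\mapsto n\theta$ to $A_{\Gamma, \mathrm{BS}}$ (resp.~$A_{\Gamma, \mathrm{hBS}}$) by Lemma~\ref{pwfm_invariant_lami_lem}. Define $\lambda$ on $\reals/\integers$ by $x\sim y$ iff $\Phi(x)=\Phi(y)$; then $\bS^1/\lambda$ is canonically homeomorphic to $\Lambda(\Gamma)$.

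Next, I will verify that $\lambda$ satisfies the five conditions of Definition~\ref{poly_lami_def}. Closedness of $\lambda$ is immediate from continuity of $\Phi$. Because the pinched measured lamination $\mathcal{L}^*$ is a finite disjoint union of simple closed geodesics on $\D/G_d$, Theorem~\ref{ctsurf} shows that every fiber of $\phi_\Gamma$ consists of at most two points (the two endpoints of a single leaf of $\mathcal{L}$), so equivalence classes of $\lambda$ have size at most two. Forward invariance and cyclic-order preservation follow from the semi-conjugacy and the fact that $m_n$ is an orientation-preserving covering. The non-trivial classes are in bijection with $\{H^{-1}(p),H^{-1}(q)\}$ where $\{p,q\}$ runs over the endpoints of leaves of $\mathcal{L}$; since leaves of $\mathcal{L}$ are pairwise disjoint geodesics of $\D$, their endpoints give unlinked pairs, so the classes are pairwise unlinked.

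For postcritical finiteness, observe that a Julia critical class is a pair $\{x,y\}$ on which $m_n$ is not injective, i.e., $\{H(x),H(y)\}=\{p,q\}$ for some leaf of $\mathcal{L}$ such that $A_{G_d, \mathrm{BS}}(p)=A_{G_d, \mathrm{BS}}(q)$ (resp.\ for $A_{G_d, \mathrm{hBS}}$). By the explicit analysis carried out in the proofs of Propositions~\ref{b_s_inv_simp_closed_geod_lem} and~\ref{higher_b_s_inv_simp_closed_geod_lem}, the ideal endpoints of leaves of $\mathcal{L}$ are either fixed or form $2$-cycles under $A_{G_d, \mathrm{BS}}$ (resp.~$A_{G_d, \mathrm{hBS}}$) and their forward orbits land on parabolic break-points; hence $x,y$ are pre-periodic under $m_n$, so they lie in $\ratls/\integers$. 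To rule out rotation curves, note that a periodic simple curve in $\bS^1/\lambda\cong\Lambda(\Gamma)$ whose return map is a homeomorphism would have to bound a Fatou-like region that is a component of $\Omega(\Gamma)$. Components of $\Omega(\Gamma)$ other than $\Omega_\infty(\Gamma)$ are eventually mapped into principal components (Lemma~\ref{period_one_two_lem}), and Theorem~\ref{first_return_conf_model_thm} identifies the first return maps on boundaries of principal components with (canonical extensions of) Bowen-Series or higher Bowen-Series maps of punctured-sphere Fuchsian groups, all of which are expansive coverings of degree $\geq 2$, not homeomorphisms.

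Theorem~\ref{poly_laminations_realized_thm} therefore produces a monic, centered, postcritically finite polynomial $P_\Gamma$ of degree $n$ with $\lambda(P_\Gamma)=\lambda$, and the B{\"o}ttcher coordinate of $P_\Gamma$ descends to a homeomorphism $\bS^1/\lambda\to\mathcal{J}(P_\Gamma)$ conjugating $m_n$ to $P_\Gamma$. Composing with the homeomorphism $\bS^1/\lambda\to\Lambda(\Gamma)$ induced by $\Phi$ gives the desired topological conjugacy between $A_{\Gamma, \mathrm{BS}}$ (resp.~$A_{\Gamma, \mathrm{hBS}}$) on $\Lambda(\Gamma)$ and $P_\Gamma$ on $\mathcal{J}(P_\Gamma)$. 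The main delicate point will be the postcritical finiteness check in the higher Bowen-Series case: one must verify that the identifications coming from \emph{all} leaves of $\mathcal{L}$ (not merely those of the pinched curve we started with) produce only rational Julia critical classes, which requires combining the $A_{G_d, \mathrm{hBS}}$-invariance established in Proposition~\ref{higher_b_s_inv_simp_closed_geod_lem} with the group-invariance of $\mathcal{L}$.
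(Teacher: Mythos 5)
Your proposal follows essentially the same route as the paper's proof: push the identifications coming from the Cannon-Thurston map through the topological conjugacy with $z^n$, check the conditions of Definition~\ref{poly_lami_def} (classes of size at most two, unlinkedness from disjointness of leaves, no rotation curves via Lemma~\ref{period_one_two_lem} and Theorem~\ref{first_return_conf_model_thm}), invoke Theorem~\ref{poly_laminations_realized_thm}, and compose the resulting conjugacies. The only real difference is cosmetic: for postcritical finiteness the paper simply observes that the (higher) Bowen-Series map carries each equivalence class bijectively onto its image class, so there are \emph{no} Julia critical elements at all, which is cleaner than your pre-periodicity argument (and note that the leaf endpoints are periodic or pre-periodic points of hyperbolic type -- they do not land on parabolic break-points, though this slip does not affect the conclusion).
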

\begin{proof}
We write a proof in the case of Bowen-Series maps. Exactly the same proof can be carried through in the higher Bowen-Series case.

We set $n=2d-1$.

Recall that the equivalence relation on $\mathbb{S}^1$ generated by the endpoints of $\mathcal{L}$ is denoted by $\sim$. Using the topological conjugacy between $A_{G_d, \mathrm{BS}}$ and $z^n$, we can push forward the closed equivalence relation $\sim$ to a closed equivalence relation $\lambda$ on $\reals/\integers$.
As each equivalence class of $\sim$ has cardinality at most two by Lemma \ref{inv_lami_pwfm}, the same is true for $\lambda$. 
Since $A_{G_d, \mathrm{BS}}$ maps each equivalence class of $\sim$ bijectively onto its image class, the same holds for the $\lambda$-equivalence classes under $m_n$.
The fact that every class $X'$ of $\sim$ has at most two members implies that $X'\mapsto A_{G_d, \mathrm{BS}}(X')$ is trivially cyclic order preserving. Therefore, $X\mapsto m_n(X)$ is cyclic order preserving for every $\lambda$-equivalence class $X$. 
Since no two leaves of $\mathcal{L}$ can cross, $\sim$-equivalence classes are pairwise unlinked. So the same is true for $\lambda$.  
That $A_{G_d, \mathrm{BS}}$ carries each equivalence class of $\sim$ bijectively onto its image class translates to the fact that $\lambda$ is postcritically finite. By Lemma~\ref{period_one_two_lem} and Theorem~\ref{first_return_conf_model_thm}, $\lambda$ has no rotation curves. 

Thanks to the above properties, we can invoke Theorem~\ref{poly_laminations_realized_thm}, which provides us with a degree $n$ polynomial $P_\Gamma$ such that 
$$
m_n:\faktor{(\reals/\integers)}{\lambda}\to\faktor{(\reals/\integers)}{\lambda}\qquad \textrm{and}\qquad P_\Gamma: \mathcal{J}(P_\Gamma)\to\mathcal{J}(P_\Gamma)
$$
are topologically conjugate.

On the other hand, the Cannon-Thurston map of $\Gamma$ induces a topological conjugacy between 
$$A_{G_d, \mathrm{BS}}:\faktor{\mathbb{S}^1}{\sim}\to\faktor{\mathbb{S}^1}{\sim}\qquad \textrm{and}\qquad  A_{\Gamma, \mathrm{BS}}: \Lambda(\Gamma)\to\Lambda(\Gamma).
$$

Finally, since the topological conjugacy between $A_{G_d, \mathrm{BS}}\vert_{\mathbb{S}^1}$ and $m_n\vert_{\reals/\integers}$ pushes forward the relation $\sim$ to $\lambda$, it descends to a topological conjugacy between
$$A_{G_d, \mathrm{BS}}:\faktor{\mathbb{S}^1}{\sim}\to\faktor{\mathbb{S}^1}{\sim} \qquad \textrm{and} \qquad m_n:\faktor{(\reals/\integers)}{\lambda}\to\faktor{(\reals/\integers)}{\lambda}.
$$  
Composing the above topological conjugacies, we get a homeomorphism $\pmb{\Phi}:\mathcal{J}(P_\Gamma)\to\Lambda(\Gamma)$ that conjugates $P_\Gamma$ to $A_{\Gamma, \mathrm{BS}}$. Thus, $P_\Gamma$ is our desired polynomial.
\end{proof}

\begin{rmk}\label{antiholo_dictionary_rmk}
In the anti-holomorphic world, analogues of Theorem~\ref{julia_limit_dyn_equiv_thm} were proved in \cite{LLM20} (also compare \cite[Theorem~B]{LMM2}), where dynamically natural homeomorphisms between limit sets of kissing reflection groups and Julia sets of critically fixed anti-rational maps were constructed.
\end{rmk}

We will conclude this subsection with an analysis of some dynamical properties of the polynomial $P_\Gamma$ obtained in Theorem~\ref{julia_limit_dyn_equiv_thm} (in the Bowen-Series case). These properties will play an important role in the next subsection.

Note that the topological conjugacy $\pmb{\Phi}$ between $P_\Gamma\vert_{\mathcal{J}(P_\Gamma)}$ and $A_{\Gamma, \mathrm{BS}}\vert_{\Lambda(\Gamma)}$ maps the boundaries of the bounded Fatou components of $P_\Gamma$ to the boundaries of the components of $\Omega(\Gamma)\setminus\Omega_\infty(\Gamma)$. In light of Lemma~\ref{period_one_two_lem}, the boundaries of the periodic (respectively, strictly pre-periodic) bounded Fatou components of $P_\Gamma$ are sent (by $\pmb{\Phi}$) to the boundaries of the principal (respectively, non-principal) components of $\Omega(\Gamma)$ (see Subsection~\ref{sec-dynaAhbs} for definitions). 
 
 We will call a periodic bounded Fatou component of $P_\Gamma$ \emph{equatorial} (respectively, \emph{polar}) if its boundary is mapped by $\pmb{\Phi}$ to the boundary of an equatorial (respectively, polar) principal component of $\Omega(\Gamma)$. Clearly, each equatorial (respectively, polar) Fatou component of $P_\Gamma$ is invariant under $P_\Gamma$ (respectively, forms a $2$-cycle of Fatou components).

\begin{prop}\label{dyn_p_gamma_prop}
Suppose that $\Gamma\in\partial\mathcal{B}(G_d)$ admits a Bowen-Series map, and $P_\Gamma$ is the postcritically finite polynomial obtained in Theorem~\ref{julia_limit_dyn_equiv_thm} such that $P_\Gamma\vert_{\mathcal{J}(P_\Gamma)}$ and $A_{\Gamma, \mathrm{BS}}\vert_{\Lambda(\Gamma)}$ are topologically conjugate. Then the following hold.
\begin{enumerate}
\item Each finite critical point of $P_\Gamma$ lies in a periodic bounded Fatou component.

\item If $\mathcal{U}$ is an equatorial Fatou component of $P_\Gamma$, then $P_\Gamma\vert_{\overline{\mathcal{U}}}$ is conformally conjugate to $z^k\vert_{\overline{\D}}$, for some $k\geq 2$.

\item If $\mathcal{U}$ is a polar Fatou component of $P_\Gamma$, then 
$$
\overline{\mathcal{U}}\ \xleftrightarrows[\quad P_\Gamma\quad ]{\quad P_\Gamma \quad}\  P_\Gamma(\overline{\mathcal{U}})
$$
is conformally conjugate to the fiberwise dynamical system
$$
\mathbf{B}: \overline{\D}\times\{+,-\}\to\overline{\D}\times\{+,-\},\ (z,\pm)\mapsto (z^k, \mp),
$$
for some $k\geq 2$
\end{enumerate}
\end{prop}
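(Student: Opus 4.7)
The strategy is to transport the conformal information about the canonical extension $\widehat{A}_{\Gamma,\mathrm{BS}}$ on principal components of $\Omega(\Gamma)$ (furnished by Theorem~\ref{first_return_conf_model_thm}) to the periodic Fatou components of $P_\Gamma$ via the topological conjugacy $\pmb{\Phi}:\mathcal{J}(P_\Gamma)\to\Lambda(\Gamma)$ constructed in the proof of Theorem~\ref{julia_limit_dyn_equiv_thm}. The conjugacy $\pmb{\Phi}$ induces a dynamics-respecting bijection between bounded Fatou components of $P_\Gamma$ and components of $\Omega(\Gamma)\setminus\Omega_\infty(\Gamma)$; by construction, periodic (resp. strictly pre-periodic) bounded Fatou components go to principal (resp. non-principal) components, and equatorial/polar Fatou components correspond to equatorial/polar principal components by definition. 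The main tools will be (i) the description of $\lambda(P_\Gamma)$ from the proof of Theorem~\ref{julia_limit_dyn_equiv_thm} together with the characterization of critical points on $\mathcal{J}(P_\Gamma)$ in Definition~\ref{poly_lami_def}(3); (ii) the explicit conformal models in Theorem~\ref{first_return_conf_model_thm}; and (iii) the classical Douady--Hubbard extension of the B\"ottcher coordinate at a superattracting periodic point to the entire immediate basin of a postcritically finite polynomial.

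\textbf{Proof of (1).} I would first rule out critical points on $\mathcal{J}(P_\Gamma)$. By Definition~\ref{poly_lami_def}(3), such critical points correspond to Julia critical elements of $\lambda(P_\Gamma)$; unwinding the construction of $\lambda$ in the proof of Theorem~\ref{julia_limit_dyn_equiv_thm}, these correspond to $\sim$-classes $\{x_1,x_2\}\subset\bS^1$ with $A_{G_d,\mathrm{BS}}(x_1)=A_{G_d,\mathrm{BS}}(x_2)$. The case analysis of leaves of $\mathcal{L}$ carried out within the proof of Proposition~\ref{b_s_inv_simp_closed_geod_lem} (leaves over $g_j$ with $2\le j\le d-1$ have their two endpoints individually fixed by $A_{G_d,\mathrm{BS}}$; for each accidental element $g_i^{-1}\circ g_j$ with $|i-j|>1$ the two corresponding leaves $\ell_1,\ell_2$ are swapped, endpoint-to-endpoint) shows that no $\sim$-class collapses under $A_{G_d,\mathrm{BS}}$, so $\lambda(P_\Gamma)$ has no Julia critical elements. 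Next, I would rule out critical points in strictly pre-periodic Fatou components: if $\mathcal{U}$ is such a component, then the corresponding component $V$ of $\Omega(\Gamma)\setminus\Omega_\infty(\Gamma)$ is non-principal and therefore, by the definition of $\widehat{A}_{\Gamma,\mathrm{BS}}$, lies inside a single piece, so $\widehat{A}_{\Gamma,\mathrm{BS}}|_{\overline{V}}$ is a single M\"obius transformation and hence a conformal isomorphism onto its image. Transferring through $\pmb{\Phi}$, the boundary map $P_\Gamma|_{\partial\mathcal{U}}$ is a homeomorphism, so the proper holomorphic map $P_\Gamma:\mathcal{U}\to P_\Gamma(\mathcal{U})$ has degree one and carries no critical point. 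Postcritical finiteness of $P_\Gamma$ then places every critical point in a periodic bounded Fatou component.

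\textbf{Proofs of (2) and (3); main obstacle.} For (2), let $\mathcal{U}$ be an equatorial Fatou component, so $P_\Gamma(\mathcal{U})=\mathcal{U}$ and the corresponding equatorial principal component $\overline{U}$ satisfies Theorem~\ref{first_return_conf_model_thm}(1): $\widehat{A}_{\Gamma,\mathrm{BS}}|_{\overline{U}\setminus R_\Gamma}$ is conformally conjugate to the canonical extension of the Bowen-Series map of a punctured-sphere Fuchsian group, which by Propositions~\ref{b_s_poly_conjugate_prop_1},~\ref{b_s_poly_conjugate_prop_2},~\ref{b_s_poly_conjugate_prop_3} is a degree-$k$ covering of $\bS^1$ topologically conjugate to $z\mapsto z^k$ for some $k\ge 2$. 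Hence $P_\Gamma:\mathcal{U}\to\mathcal{U}$ is a proper holomorphic self-map of degree $k$ on a Jordan disk (local connectivity of $\mathcal{J}(P_\Gamma)$, i.e. the Jordan-disk property of $\mathcal{U}$, follows from postcritical finiteness). By Part (1), all critical points in $\mathcal{U}$ lie at the unique superattracting fixed point, so the extended B\"ottcher coordinate yields a conformal isomorphism $\mathcal{U}\xrightarrow{\sim}\D$ conjugating $P_\Gamma|_{\mathcal{U}}$ to $z\mapsto z^k$. For (3), let $\mathcal{U}$ be a polar Fatou component; by Theorem~\ref{first_return_conf_model_thm}(2) the $2$-periodic system on $\overline{U^+}\sqcup\overline{U^-}$ is conformally conjugate to a fibered system $\mathbf{A}$ built from \pwfm maps $A_\pm$ satisfying Corollary~\ref{hbs_first_return_cor}, and Proposition~\ref{extension_david_normal_fibered}(2) provides a fibered topological conjugacy of this system with $\mathbf{B}$. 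Running the extended B\"ottcher construction of (2) on the second return maps $P_\Gamma^{\circ 2}|_{\mathcal{U}}$ and $P_\Gamma^{\circ 2}|_{P_\Gamma(\mathcal{U})}$ (each of degree $k^2$ with all critical points concentrated at the superattracting periodic point, by Part (1)), and then using $P_\Gamma|_{\mathcal{U}}$ itself to pair the two B\"ottcher coordinates compatibly, produces simultaneous conformal isomorphisms $\overline{\mathcal{U}},\overline{P_\Gamma(\mathcal{U})}\xrightarrow{\sim}\overline{\D}$ realizing the $2$-cycle as $\mathbf{B}$. The principal technical point is establishing that both half-maps in the polar $2$-cycle have the \emph{same} degree $k$ (rather than a pair $k_1,k_2$ with $k_1k_2=k^2$); this requires reading off from the proof of Theorem~\ref{first_return_conf_model_thm}(2) that the symmetric piece count of $A_\pm$ (pieces $\{h_1,\dots,h_k\}$ of $A_+$ are inverses of pieces $\{h_1^{-1},\dots,h_k^{-1}\}$ of $A_-$) forces equality of the degrees of $A_\pm$ as circle coverings, and then absorbing the residual $(k-1)$-th root of unity ambiguity in the B\"ottcher coordinate on $P_\Gamma(\mathcal{U})$ to align the two first-iterate maps simultaneously with $z\mapsto z^k$.
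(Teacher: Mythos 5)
Your Part (1) is essentially the paper's argument and is correct. The problem lies in Parts (2) and (3), at the step ``By Part (1), all critical points in $\mathcal{U}$ lie at the unique superattracting fixed point.'' Part (1) only locates the finite critical points in periodic bounded Fatou components; it gives no information about where inside such a component they sit. Nothing you have proved excludes a second critical point $c\in\mathcal{U}$, distinct from the superattracting periodic point, whose orbit lands on that point after finitely many steps: such a $c$ would be perfectly consistent with Part (1) and with postcritical finiteness, yet it would block the Douady--Hubbard extension of the B\"ottcher coordinate to all of $\mathcal{U}$ (the coordinate extends to the whole immediate basin only when the basin contains no further critical point) and would make $P_\Gamma\vert_{\mathcal{U}}$ non-conjugate to $z^k$. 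Ruling out this configuration is exactly the nontrivial content of (2) and (3), so your argument assumes what has to be proved; the same unjustified assertion is reused in (3) (``all critical points concentrated at the superattracting periodic point, by Part (1)'').

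The paper closes this gap by a different route: it transports $P_\Gamma\vert_{\overline{\mathcal{U}}}$ (respectively the polar $2$-cycle) by Riemann maps normalized to send $0$ to a critical point, observes that the resulting proper holomorphic self-maps of $\D$ are degree $k$ Blaschke products which inherit postcritical finiteness from $P_\Gamma$, and then invokes the rigidity of postcritically finite Blaschke products (the citation of Lemma~4.17 of Milnor's \emph{Hyperbolic components}, in essence uniqueness of centers of hyperbolic components) to conclude that, after precomposition with an automorphism of $\D$, these Blaschke products are $z\mapsto z^k$. The fact that all critical points coincide with the superattracting point is thus a \emph{consequence} of this classification, not an available input. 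If you wish to keep your B\"ottcher-coordinate approach, you must first supply this missing step (or some other argument showing $\mathcal{U}$ contains no critical point besides its center). The rest of your write-up --- the degree count $k\geq 2$ via Theorem~\ref{first_return_conf_model_thm}, the equality of the two degrees in the polar cycle, and the treatment of strictly pre-periodic components --- agrees with the paper; the appeal to Proposition~\ref{extension_david_normal_fibered}(2) in (3) is unnecessary (it only produces a topological conjugacy) though harmless.
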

\begin{proof}
1) The fact that $m_{2d-1}$ maps each equivalence class of $\lambda$ bijectively onto its image class translates to the fact that $P_\Gamma$ has no critical point on its Julia set. Also recall that $A_{\Gamma, \mathrm{BS}}$ acts as a single group element on the boundary of each non-principal component of $\Omega(\Gamma)$. Thus, the boundary of each non-principal component of $\Omega(\Gamma)$ is mapped homeomorphically onto the boundary of a principal component by some positive iterate of $A_{\Gamma, \mathrm{BS}}$. It follows that the boundary of each strictly pre-periodic bounded Fatou component of $P_\Gamma$ is mapped homeomorphically onto the boundary of a periodic Fatou component by some iterate of $P_\Gamma$. This shows that no strictly pre-periodic bounded Fatou component of $P_\Gamma$ contains a critical point. Hence, every finite critical point of the postcritically finite polynomial $P_\Gamma$ lies in a periodic bounded Fatou component. 

2) Let $\mathcal{U}$ be an equatorial Fatou component of $P_\Gamma$. Then $\pmb{\Phi}(\partial\mathcal{U})$ is the boundary of an equatorial principal component of $\Omega(\Gamma)$, and hence $A_{\Gamma, \mathrm{BS}}:\pmb{\Phi}(\partial\mathcal{U})\to\pmb{\Phi}(\partial\mathcal{U})$ is a covering of some degree $k\geq 2$ (by part (1) of Theorem~\ref{first_return_conf_model_thm}). Therefore, $P_\Gamma:\partial\mathcal{U}\to\partial\mathcal{U}$ is a covering map of degree $k\geq 2$, so $P_\Gamma:\mathcal{U}\to\mathcal{U}$ is a proper branched covering of degree $k$. Choose (the homeomorphic extension of) a Riemann uniformization $\phi_{\mathcal{U}}:\overline{\D}\to\overline{\mathcal{U}}$ that sends $0$ to a critical point of $P_\Gamma$. Then, $\phi_{\mathcal{U}}^{-1}\circ P_\Gamma\circ \phi_{\mathcal{U}}$ is a degree $k$ proper branched covering of $\D$, and hence a Blaschke product $B$ of degree $k$. Moreover, postcritical finiteness of $P_\Gamma$ implies postcritical finiteness of $B$. Hence, possibly after pre-composing $\phi_{\mathcal{U}}$ with an element of $\mathrm{Aut}(\D)$, we can assume that the postcritically finite degree $k$ Blaschke product $B$ is of the form $z\mapsto z^k$ (for instance, see \cite[Lemma~4.17]{milnor-hyp-comp}). 

3) Let $\mathcal{U}^+=\mathcal{U}$ be a polar Fatou component of $P_\Gamma$. Then $\pmb{\Phi}(\partial\mathcal{U}^+)$ is the boundary of a polar principal component $U^+$ of $\Omega(\Gamma)$. Suppose that 
$\partial U^-=A_{\Gamma, \mathrm{BS}}(\partial U^+)$; equivalently,  $U^-$ is the image polar principal component under $\widehat{A}_{\Gamma, \mathrm{BS}}$. By part (2) of Theorem~\ref{first_return_conf_model_thm}, the maps $A_{\Gamma, \mathrm{BS}}:\partial U^\pm\to\partial U^\mp$ are coverings of (a common) degree $k\geq 2$. 

Let $\mathcal{U}^-$ be the image of $\mathcal{U}^+$ under $P_\Gamma$, so $\pmb{\Phi}(\partial\mathcal{U}^-)=\partial U^-$. It now follows from the previous paragraph that $P_\Gamma:\partial \mathcal{U}^\pm\to\partial \mathcal{U}^\mp$ are coverings of (a common) degree $k\geq 2$. Hence, $P_\Gamma:\mathcal{U}^\pm\to\mathcal{U}^\mp$ are proper branched coverings of degree $k$. Choose (homeomorphic extensions of) Riemann uniformizations $\phi_\pm:\overline{\D}\to\overline{\mathcal{U}^\pm}$ that send $0$ to critical points of $P_\Gamma$ (in $\mathcal{U}^\pm$). Then, the maps $\phi_{\mp}^{-1}\circ P_\Gamma\vert_{\mathcal{U}^\pm}\circ \phi_{\pm}$ are degree $k$ proper branched coverings of $\D$, and hence are Blaschke products of degree $k$ with a critical point at the origin. We call these Blaschke products $B_+, B_-$ (respectively). By construction,
$$
\overline{\mathcal{U}^+}\ \xleftrightarrows[\quad P_\Gamma\quad ]{\quad P_\Gamma\quad }\ \overline{\mathcal{U}^-}
$$
is conjugate via the pair of conformal maps $\phi_\pm$ to the fiberwise Blaschke product dynamical system
$$
\mathbf{B}: \overline{\D}\times\{+,-\}\to\overline{\D}\times\{+,-\},\ (z,\pm)\mapsto (B_\pm(z), \mp).
$$
Finally, postcritical finiteness of $P_\Gamma$ implies postcritical finiteness of the above fiberwise Blaschke product dynamical system $\mathbf{B}$. By \cite[Lemma~4.17]{milnor-hyp-comp}, possibly after pre-composing $\phi_{\pm}$ with suitable elements of $\mathrm{Aut}(\D)$, both the degree $k$ Blaschke products $B_+, B_-$ can be chosen to be $z\mapsto z^k$. The result follows.
\end{proof}

\subsection{Mateability of Bowen-Series maps of Bers boundary groups}\label{mating_boundary_groups_subsec}

In Subsection~\ref{boundary_group_b_s_subsec}, we showed that if $\Gamma\in\partial\mathcal{B}(G_d)$ is obtained by pinching a measured lamination $\mathcal{L}^*$ on the surface $\D/G_d$ such that $\mathcal{L}$ is $A_{G_d, \mathrm{BS}}$-invariant, then the Bowen-Series map $A_{\Gamma, \mathrm{BS}}:\Lambda(\Gamma)\to\Lambda(\Gamma)$ (given by Proposition~\ref{b_s_inv_simp_closed_geod_lem}) admits a continuous, piecewise (complex) M{\"o}bius extension
$\widehat{A}_{\Gamma, \mathrm{BS}}: K(\Gamma)\setminus R_\Gamma\to K(\Gamma)$
that is orbit equivalent to $\Gamma$ on $\Lambda(\Gamma)$. 

The purpose of the current subsection is to show that the canonical extensions of these Bowen-Series maps can be conformally mated with all polynomials in the principal hyperbolic components of a suitable degree (in the sense of Subsection~\ref{mat_def_subsec}). The main theorem of this section is the following:

\begin{theorem}\label{bers_bdry_mating_thm}
Let $\Gamma\in\partial\mathcal{B}(G_d)$ admit a Bowen-Series map $A_{\Gamma,\mathrm{BS}}$. 
Further, let $P\in\mathcal{H}_{2d-1}$. Then, $\widehat{A}_{\Gamma, \mathrm{BS}}$ can be conformally mated with $P$.
\end{theorem}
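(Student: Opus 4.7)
The strategy is to combine the David surgery of Proposition~\ref{conformal_mating_general_prop} with the polynomial model $P_\Gamma$ of degree $2d-1$ supplied by Theorem~\ref{julia_limit_dyn_equiv_thm}. First I would extend the topological conjugacy $\pmb{\Phi}:\mathcal{J}(P_\Gamma)\to\Lambda(\Gamma)$ to a homeomorphism $\Psi:\mathcal{K}(P_\Gamma)\to K(\Gamma)$ conjugating $P_\Gamma|_{\mathcal{K}(P_\Gamma)\setminus\Int{R_\Psi}}$ to $\widehat{A}_{\Gamma,\mathrm{BS}}|_{K(\Gamma)\setminus R_\Gamma}$, where $R_\Psi:=\Psi^{-1}(R_\Gamma)$. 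The extension is built componentwise by matching the power-map normal forms of $P_\Gamma$ on its periodic bounded Fatou components (Proposition~\ref{dyn_p_gamma_prop}) with the Bowen-Series/higher Bowen-Series normal forms of $\widehat{A}_{\Gamma,\mathrm{BS}}$ on the principal components of $\Omega(\Gamma)$ (Theorem~\ref{first_return_conf_model_thm}). Part~(1) of Proposition~\ref{dyn_p_gamma_prop} guarantees that no strictly pre-periodic bounded Fatou component contains a critical point of $P_\Gamma$, so $\Psi$ can be spread equivariantly along the tree of pre-periodic components by conformal pullback.

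Next I would upgrade the restriction of $\Psi$ to the boundary of each periodic bounded Fatou component to a David homeomorphism of the associated Jordan disk: equatorial components are handled by part~(1) of Proposition~\ref{extension_david_normal_fibered}, while $2$-cycles of polar components are handled by part~(2) of the same proposition (both of which apply because no periodic break-point of a Bowen-Series or higher Bowen-Series map is asymmetrically hyperbolic, all such break-points being parabolic). Pulling back along the pre-periodic tree preserves David regularity thanks to \cite[Proposition~2.5]{LMMN}. Assembling these componentwise extensions yields a David homeomorphism (outside $R_\Psi$) that plays the role of $\psi\circ h$ in Proposition~\ref{conformal_mating_general_prop}. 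One then follows the David-integration template of that proposition verbatim: precompose with a quasiconformal conjugacy between $P\in\mathcal{H}_{2d-1}$ and a degree $2d-1$ Blaschke product on $\widehat{\mathbb{C}}\setminus\mathcal{K}(P)$, form the topological dynamical system $G$ gluing $\widehat{A}_{\Gamma,\mathrm{BS}}$ with $P$ along the identification $\Lambda(\Gamma)\simeq\mathcal{J}(P)$, push the standard complex structure through to obtain a global David Beltrami coefficient on $\widehat{\mathbb{C}}$, apply the David Integrability Theorem (\cite[Theorem~20.6.2]{AIM09}) to obtain a homeomorphism $\pmb{\Omega}$ of $\widehat{\mathbb{C}}$, and invoke conformal removability (\cite[Theorem~2.7]{LMMN}) of the separating Jordan curve to conclude that $\pmb{\Omega}\circ G\circ\pmb{\Omega}^{-1}$ is holomorphic on the interior of its domain. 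This is the desired conformal mating.

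The main obstacle is ensuring that the David condition \eqref{david_cond} holds globally across the infinite cascade of pre-periodic components. Although Proposition~\ref{extension_david_normal_fibered} furnishes uniform David constants on each periodic-cycle representative, summing the resulting exceptional-set estimates over the (infinite) tree of pre-periodic components is delicate. The control ultimately comes from geometric finiteness of $\Gamma$ (only finitely many orbits of cusps, by Proposition~\ref{b_s_inv_simp_closed_geod_lem}) together with the postcritical finiteness of $P_\Gamma$, which together yield uniformly bounded distortion of pullback branches of $\widehat{A}_{\Gamma,\mathrm{BS}}^{-1}$ away from the accidental parabolic fixed points, and sharp geometric shrinking of the pre-periodic components near those cusps --- exactly the ingredients needed to convert a finite family of David constants into a global one.
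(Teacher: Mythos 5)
Your route is essentially the paper's: build the topological mating on the dynamical plane of $P_\Gamma$, replace the dynamics on the periodic bounded Fatou components by the Bowen--Series/higher Bowen--Series models through the David extensions of Proposition~\ref{extension_david_normal_fibered}, replace the dynamics on $\mathcal{B}_\infty(P_\Gamma)$ by a Blaschke model of $P$, spread over the strictly pre-periodic components by pullback, integrate the resulting Beltrami coefficient by the David Integrability Theorem, and finish with removability. However, the step you yourself flag as delicate --- the global David condition \eqref{david_cond} over the infinitely many strictly pre-periodic Fatou components --- is precisely where the paper does real work, and the control you invoke (geometric finiteness of $\Gamma$, distortion of branches of $\widehat{A}_{\Gamma,\mathrm{BS}}^{-1}$, shrinking of components near the accidental cusps) is not the mechanism that closes the estimate; a cusp-geometric argument on the group side does not by itself produce a measure estimate on the sphere of $P_\Gamma$. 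The inequality is obtained purely on the polynomial side: by Proposition~\ref{dyn_p_gamma_prop}(1) no critical point of $P_\Gamma$ meets a neighbourhood of the strictly pre-periodic components, so the rescaled first-landing maps $P_\Gamma^{\circ r}$ are uniformly bi-Lipschitz by Koebe distortion (cf.\ \eqref{koebe_control}), and since the Fatou components of a postcritically finite polynomial are uniform John domains one has $(\diam\mathcal{U}')^2\leq C_1\,\sigma(\mathcal{U}')$ (cf.\ \eqref{diam_area_inequality}); these two facts let the exceptional-set measures over all pre-periodic components be dominated by the finite total spherical area, converting the finitely many David constants on the periodic cycle representatives into a global exponential decay. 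As written, your proposal asserts this step rather than proving it.

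A second, smaller inaccuracy: the gluing locus is not a Jordan curve and there is no identification $\Lambda(\Gamma)\simeq\mathcal{J}(P)$. Since $P\in\mathcal{H}_{2d-1}$, $\mathcal{J}(P)$ is a Jordan curve, whereas $\Lambda(\Gamma)\cong\mathcal{J}(P_\Gamma)$ is a pinched, non-Jordan set; the natural map from $\mathcal{J}(P)$ onto it is only a semi-conjugacy collapsing lamination classes, which is exactly why the notion of conformal mating here asks only for a semi-conjugacy $\mathfrak{X}_P$ on $\mathcal{K}(P)$. Correspondingly, the removability input is not removability of a separating Jordan curve but local conformal removability of $\xi(\mathcal{J}(P_\Gamma))$, which holds because $\mathcal{B}_\infty(P_\Gamma)$ is a John domain (\cite[Theorem~4]{Jones-Smirnov} together with \cite[Theorem~2.7]{LMMN}). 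With the summation estimate supplied and this correction made, your outline matches the paper's proof.
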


\noindent {\bf Notation for the proof of Theorem~\ref{bers_bdry_mating_thm}:}\\
\noindent$\bullet$  We denote (as in the previous subsection) the topological conjugacy between $P_\Gamma\vert_{\mathcal{J}(P_\Gamma)}$ and $A_{\Gamma, \mathrm{BS}}\vert_{\Lambda(\Gamma)}$ by $\pmb{\Phi}$.

\noindent$\bullet$ Denote the equatorial components of $\Omega(\Gamma)$ by $V_1,\cdots, V_m$, and the polar components by $U^\pm_1,\cdots, U^\pm_n$ such that $A_{\Gamma, \mathrm{BS}}(\partial U^\pm_j)=\partial U^\mp_j$, for $j\in\{1,\cdots, n\}$.

\noindent$\bullet$ Let $\mathcal{V}_1,\cdots, \mathcal{V}_m$ be the equatorial Fatou components of $P_\Gamma$ such that $\pmb{\Phi}(\partial\mathcal{V}_i)=\partial V_i$ ($i\in\{1,\cdots, m\}$), and $\mathcal{U}^\pm_1,\cdots, \mathcal{U}^\pm_n$ be the polar Fatou components such that $\pmb{\Phi}(\partial\mathcal{U}^\pm_j)=\partial U_j^\pm$ ($j\in\{1,\cdots, n\}$). As $\pmb{\Phi}$ is a conjugacy, we have that $P_{\Gamma}(\mathcal{U}^\pm_j)=\mathcal{U}^\mp_j$, for $j\in\{1,\cdots, n\}$.

\noindent$\bullet$ By Theorem~\ref{first_return_conf_model_thm}, for each $i\in\{1,\cdots, m\}$, there exists a conformal map $\psi_i:\overline{\D}\to\overline{V_i}$ that conjugates (the canonical extension of) the Bowen-Series map $A_i$ of a punctured sphere Fuchsian group to $\widehat{A}_{\Gamma, \mathrm{BS}}\vert_{\overline{V_i}}$. We assume that $A_i:\bS^1\to\bS^1$ is a covering of degree $k_i$. Moreover, the same theorem asserts that for each $j\in\{1,\cdots, n\}$, there exists a pair of conformal maps $\psi_{j, \pm}:\overline{\D}\to\overline{U^\pm_j}$ that conjugates the fiberwise dynamical system 
$$
\mathbf{A}_j:\left(\overline{\D}\setminus R_{j,+}\right)\times\{+\}\bigsqcup\left(\overline{\D}\setminus R_{j,-}\right)\times\{-\}\to\overline{\D}\times\{+,-\},\ (z,\pm)\mapsto (\widehat{A}_{j,\pm}(z), \mp)
$$
 to
$$
\overline{U^+_j}\ \xleftrightarrows[\quad \widehat{A}_{\Gamma, \mathrm{BS}}\quad ]{\quad \widehat{A}_{\Gamma, \mathrm{BS}}\quad }\ \overline{U^-_j},
$$
(where $A_{j,\pm}$ are \pwfm maps satisfying the conditions of Corollary~\ref{hbs_first_return_cor}, and $R_{j,\pm}$ are the fundamental domains of $A_{j,\pm}$)
We can assume that both $A_{j,+}, A_{j,-}$ are circle coverings of (a common) degree $d_j$. By Corollary~\ref{hbs_first_return_cor}, $A_{j,-}\circ A_{j,+}$ and $A_{j,+}\circ A_{j,-}$ are higher Bowen-Series maps of punctured sphere Fuchsian groups.

\noindent$\bullet $ By Proposition~\ref{dyn_p_gamma_prop}, for each $i\in\{1,\cdots, m\}$, there exists a conformal map $\phi_i:\overline{\D}\to\overline{\mathcal{V}_i}$ that conjugates $z^{k_i}\vert_{\overline{\D}}$ to $P_{\Gamma}\vert_{\overline{\mathcal{V}_i}}$. Moreover, the same proposition states that for each $j\in\{1,\cdots, n\}$, there exists a pair of conformal maps $\phi_{j, \pm}:\overline{\D}\to\overline{\mathcal{U}^\pm_j}$ that conjugates the fiberwise dynamical system 
$$
\mathbf{B}_j:\overline{\D}\times\{+,-\} \to\overline{\D}\times\{+,-\},\ (z,\pm)\mapsto (z^{d_j}, \mp)
$$
to
$$
\overline{\mathcal{U}^+_j}\ \xleftrightarrows[\quad P_{\Gamma}\quad ]{\quad P_{\Gamma}\quad }\ \overline{\mathcal{U}^-_j}.
$$

\noindent$\bullet$ The filled Julia set $\mathcal{K}(P)$ of the polynomial $P$ in the principal hyperbolic component of degree $(2d-1)$ polynomials is a closed Jordan disk. Hence, there exists (a homeomorphic extension of) a conformal isomorphism $\kappa:\widehat{\C}\setminus\D\to\mathcal{K}(P)$ that conjugates a degree $(2d-1)$ Blaschke product $B$ (with an attracting fixed point in $\widehat{\C}\setminus\D$) to $P$. We choose a quasisymmetric homeomorphism $\eta:\bS^1\to\bS^1$ that conjugates $z^{2d-1}$ to $B$, and extend $\eta$ to a quasiconformal self-homeomorphism of $\widehat{\C}\setminus\D$, also called $\eta$.

\noindent$\bullet$ The B{\"o}ttcher coordinate of $P_\Gamma$ is denoted by $\phi_{P_\Gamma}:\widehat{\C}\setminus\overline{\D}\to\mathcal{B}_\infty(P_\Gamma)$. As the Julia set $\mathcal{J}(P_\Gamma)=\partial\mathcal{B}_\infty(P_\Gamma)$ is locally connected (recall that $P_\Gamma$ is postcritically finite), the map $\phi_{P_\Gamma}$ extends to a continuous semi-conjugacy between $z^{2d-1}\vert_{\bS^1}$ and $P_\Gamma\vert_{\mathcal{J}(P_\Gamma)}$.

\begin{equation}
\begin{tikzcd}
\mathcal{K}(P)\arrow[swap]{d}{P} & \widehat{\C}\setminus\D \arrow[swap]{l}{\kappa}  \arrow[swap]{d}{B} &  \widehat{\C}\setminus\D  \arrow{r}{\phi_{P_\Gamma}} \arrow[swap]{d}{z^{2d-1}} \arrow[swap]{l}{\eta} & \overline{\mathcal{B}_\infty(P_\Gamma)} \arrow{d}{P_\Gamma}\\
\mathcal{K}(P) & \widehat{\C}\setminus\D \arrow{l}{\kappa} & \ \widehat{\C}\setminus\D \arrow{l}{\eta} \arrow[swap]{r}{\phi_{P_\Gamma}} & \overline{\mathcal{B}_\infty(P_\Gamma)}.
\end{tikzcd}
\label{comm_diag_blaschke}
\end{equation}

\noindent$\bullet$  Diagram~\eqref{comm_diag_blaschke} shows the homeomorphisms $\kappa, \eta$, and $\phi_{P_\Gamma}$. Here, $\eta$ is a topological conjugacy between $z^{2d-1}$ and $B$ only on $\bS^1$, the map $\kappa$ is a topological conjugacy between $B\vert_{\widehat{\C}\setminus\D}$ and $P\vert_{\mathcal{K}(P)}$ that is conformal on the interior, and the map $\phi_{P_\Gamma}$ is a topological semi-conjugacy between $z^{2d-1}\vert_{\widehat{\C}\setminus\D}$ and $P_\Gamma\vert_{\overline{\mathcal{B}_\infty(P_\Gamma)}}$ that is conformal on the interior.

\begin{lemma}\label{david_existence_lem}
\noindent
\begin{enumerate}\upshape
\item For $i\in\{1,\cdots, m\}$, there exist homeomorphisms $H_i:\bS^1\to\bS^1$ conjugating $z^{k_i}\vert_{\bS^1}$ to $A_i$. Moreover, $H_i$ admits a continuous extension to a David homeomorphisms of $\D$.

\item For $j\in\{1,\cdots, n\}$, there exists a pair of homeomorphisms $H_j^\pm:\bS^1\to\bS^1$ conjugating 
$$
\mathbf{B}_j:\bS^1\times\{+,-\} \to\bS^1\times\{+,-\},\ (z,\pm)\mapsto (z^{d_j}, \mp)
$$
to$$
\mathbf{A}_j:\bS^1\times\{+,-\}\to\bS^1\times\{+,-\},\ (z,\pm)\mapsto (A_{j,\pm}(z), \mp).
$$
Moreover, $H_j^\pm$ continuously extend as David homeomorphisms of $\D$.
\end{enumerate}
\end{lemma}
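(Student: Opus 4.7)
The plan is to reduce both parts of Lemma~\ref{david_existence_lem} to Proposition~\ref{extension_david_normal_fibered}, the main technical input being verification that the relevant \pwfm maps have no asymmetrically hyperbolic periodic break-points. The degree-matching and topological-conjugacy statements are then automatic consequences of expansiveness.

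For part (1), the map $A_i$ is the Bowen-Series map of a Fuchsian group $\Gamma_i$ uniformizing a sphere with finitely many punctures and possibly one or two order-two orbifold points. In that case, every break-point of $A_i$ is an ideal vertex of its fundamental domain and therefore (by construction) a parabolic fixed point of a generator of $\Gamma_i$; in particular, each periodic break-point is symmetrically parabolic (the one-sided derivatives both equal $1$), so no periodic break-point is asymmetrically hyperbolic. Proposition~\ref{extension_david_normal_fibered}(1), applied to $A=A_i$ and $d=k_i$, then supplies a circle homeomorphism $H_i$ conjugating $z^{k_i}$ to $A_i$ that continuously extends to a David homeomorphism of $\D$.

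For part (2), by Corollary~\ref{hbs_first_return_cor} the compositions $A_{j,-}\circ A_{j,+}$ and $A_{j,+}\circ A_{j,-}$ are both higher Bowen-Series maps of punctured sphere Fuchsian groups. As shown in Section~\ref{sec-fold} (for instance in the proof of Theorem~\ref{thm-cfdmateable}), every break-point of a higher Bowen-Series map of such a group is a parabolic fixed point, hence symmetrically parabolic; in particular no periodic break-point is asymmetrically hyperbolic. The hypotheses of Proposition~\ref{extension_david_normal_fibered}(2) are therefore met with $k_+=k_-=d_j$ and $A_\pm = A_{j,\pm}$, producing homeomorphisms $H_j^\pm$ that conjugate the fiberwise doubling-type system $\mathbf{B}_j$ to $\mathbf{A}_j$ and extend continuously as David homeomorphisms of $\D$.

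No genuine obstacle is anticipated: the entire content is the verification that the break-points are parabolic on both sides, which is an immediate structural feature of Bowen-Series and higher Bowen-Series maps of torsion-free (or order-two-orbifold) punctured sphere groups, together with Lemma~\ref{no_mixed} ruling out mixed-type periodic break-points. Once these are noted, Proposition~\ref{extension_david_normal_fibered} delivers both statements directly.
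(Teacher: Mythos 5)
Your proof is correct and takes essentially the same route as the paper: both reduce to Proposition~\ref{extension_david_normal_fibered}, after observing that $A_i$ is a Bowen-Series map of a punctured sphere group and that $A_{j,\mp}\circ A_{j,\pm}$ are higher Bowen-Series maps (via Corollary~\ref{hbs_first_return_cor}), so all break-points are parabolic and in particular no periodic break-point is asymmetrically hyperbolic. The only cosmetic difference is that the paper fixes the specific conjugacies $H_i=\psi_i^{-1}\circ\pmb{\Phi}\circ\phi_i$ and $H_j^\pm=\psi_{j,\pm}^{-1}\circ\pmb{\Phi}\circ\phi_{j,\pm}$ (which are reused in the later mating construction), while you invoke the bare existence statement of the proposition; since circle conjugacies to an expansive covering differ only by deck transformations, this does not affect the lemma as stated.
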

\begin{proof}
1) Fix $i\in\{1,\cdots,m\}$, and set $H_i:=\psi_i^{-1}\circ\pmb{\Phi}\circ \phi_i:\bS^1\to\bS^1$. By construction, $H_i$ conjugates $z^{k_i}$ to $\widehat{A}_i$. As $A_i$ is the Bowen-Series map of a punctured sphere Fuchsian group, it is a \pwfm map without asymmetrically hyperbolic periodic break-points. The conclusion now follows from Proposition~\ref{extension_david_normal}.

2) Fix $j\in\{1,\cdots, n\}$, and set $H_j^\pm:=\psi_{j,\pm}^{-1}\circ\pmb{\Phi}\circ \phi_{j,\pm}:\bS^1\to\bS^1$. By construction, $\bS^1\times\{+,-\} \to\bS^1\times\{+,-\},\ (z,\pm)\mapsto (H_j^\pm(z), \pm)$ conjugates $\mathbf{B}_j$ to $\mathbf{A}_j$. Moreover, $A_{j,-}\circ A_{j,+}$ (respectively, $A_{j,+}\circ A_{j,-}$) is a higher Bowen-Series map of a punctured sphere Fuchsian group, and hence is 
a \pwfm map with no asymmetrically hyperbolic periodic break-point. Since $H_j^+$ (respectively, $H_j^-$) conjugates $z^{d_j^2}\vert_{\bS^1}$ to $A_{j,-}\circ A_{j,+}$ (respectively, $A_{j,+}\circ A_{j,-}$), Proposition~\ref{extension_david_normal} implies that $H_j^\pm$ can be continuously extended to David homeomorphisms of $\disk$.
\end{proof}

\noindent Abusing notation slightly, we will denote the David extensions of $H_j^\pm$ to $\D$ also by  $H_j^\pm$.

\noindent$\bullet$ The commutative diagram~\eqref{comm_diag_per_1} shows the conjugacies $\pmb{\Phi}$, $\psi_i, \phi_i$ and $H_i$ (associated with equatorial components). The map $\psi_i$ (respectively, $\phi_i$) conformally conjugates $\widehat{A}_i\vert_{\overline{\D}}$ to $\widehat{A}_{\Gamma, \mathrm{BS}}\vert_{\overline{V_i}}$ (respectively, $z^{k_i}\vert_{\overline{\D}}$ to $P_\Gamma\vert_{\overline{\mathcal{V}_i}}$); while $\pmb{\Phi}^{-1}:\partial V_i\to\partial\mathcal{V}_i$ conjugates $\widehat{A}_{\Gamma, \mathrm{BS}}$ to $P_\Gamma$, and $H_i=\psi_i^{-1}\circ\pmb{\Phi}\circ \phi_i$ is a topological conjugacy between $z^{k_i}$ and $\widehat{A}_i$ on $\bS^1$.

\begin{equation}
\begin{tikzcd}
\overline{V_i} \arrow[swap]{d}{\widehat{A}_{\Gamma, \mathrm{BS}}} & \overline{\D} \arrow[swap]{l}{\psi_i}  \arrow[swap]{d}{\widehat{A}_i} & \overline{\D}  \arrow[swap]{l}{H_i} \arrow[swap]{d}{z^{k_i}} \arrow{r}{\phi_i} & \overline{\mathcal{V}_i} \arrow[bend right=36,swap]{lll}{\pmb{\Phi}} \arrow{d}{P_\Gamma}\\
\overline{V_i} & \overline{\D} \arrow{l}{\psi_i} & \overline{\D} \arrow{l}{H_i} \arrow[swap]{r}{\phi_i} & \overline{\mathcal{V}_i} \arrow[bend left=36]{lll}{\pmb{\Phi}} .
\end{tikzcd}
\label{comm_diag_per_1}
\end{equation}

\noindent$\bullet$ The commutative diagram~\eqref{comm_diag_per_2} shows the conjugacies $\pmb{\Phi}, \psi_{j,\pm}, \phi_{j,\pm}$, and $H_j^\pm$ (associated with polar components). The maps $\psi_{j,\pm}$ (respectively, $\phi_{j, \pm}$) conformally conjugate $\overline{\D} \xleftrightarrows[\widehat{A}_{j,+}]{\widehat{A}_{j,-}} \overline{\D}$ to $\overline{U_j^+} \xleftrightarrows[\widehat{A}_{\Gamma, \mathrm{BS}}]{\widehat{A}_{\Gamma, \mathrm{BS}}} \overline{U_j^-}$ (respectively, $\overline{\D} \xleftrightarrows[z^{d_j}]{z^{d_j}}\overline{\D}$ to $\overline{\mathcal{U}_j^+} \xleftrightarrows[P_\Gamma]{P_\Gamma} \overline{\mathcal{U}_j^-}$). On the other hand, $H_j^\pm=\psi_{j,\pm}^{-1}\circ\pmb{\Phi}\circ \phi_{j,\pm}$ induce a topological conjugacy between $\bS^1 \xleftrightarrows[z^{d_j}]{z^{d_j}}\bS^1$ and $\bS^1 \xleftrightarrows[\widehat{A}_{j,+}]{\widehat{A}_{j,-}} \bS^1$.

\begin{equation}
\begin{tikzcd}
\overline{U_j^+} \arrow[swap]{d}{\widehat{A}_{\Gamma, \mathrm{BS}}} & \overline{\D} \arrow[swap]{l}{\psi_{j,+}}  \arrow[swap]{d}{\widehat{A}_{j,+}} & \overline{\D}  \arrow[swap]{l}{H_j^+} \arrow[swap]{d}{z^{d_j}} \arrow{r}{\phi_{j,+}} & \overline{\mathcal{U}_j^+} \arrow[bend right=36,swap]{lll}{\pmb{\Phi}} \arrow{d}{P_\Gamma}\\
\overline{U_j^-}  \arrow[swap]{u}{} & \overline{\D} \arrow[swap]{u}{\widehat{A}_{j,-}} \arrow{l}{\psi_{j,-}} & \overline{\D} \arrow{l}{H_j^-} \arrow[swap]{u}{z^{d_j}} \arrow[swap]{r}{\phi_{j,-}} & \overline{\mathcal{U}_j^-} \arrow[bend left=36]{lll}{\pmb{\Phi}} \arrow[swap]{u}{}.
\end{tikzcd}
\label{comm_diag_per_2}
\end{equation}

The strategy of the proof of Theorem~\ref{bers_bdry_mating_thm} can now be summarized as follows. We will start with the polynomial $P_\Gamma$, and topologically modify it to match the dynamics of $\widehat{A}_{\Gamma, \mathrm{BS}}$ and $P$. More precisely, we will replace the action of $P_\Gamma$ on its basin of infinity by the action of $P$ on its filled Julia set, and replace the $P_\Gamma$-action on each equatorial (respectively, polar) bounded Fatou component by $\widehat{A}_i$ (respectively, by $\widehat{A}_{j, \pm}$). This would produce a continuous map $\widetilde{F}$ defined on a subset of the topological $2$-sphere: the topological mating of $\widehat{A}_{\Gamma,\mathrm{BS}}$ and $P$. We will then equip this $2$-sphere with an $\widetilde{F}$-invariant almost complex stricture that satisfies the David condition~\eqref{david_cond}. This will allow us to uniformize the aforementioned almost complex structure by a David homeomorphism, and produce a complex-analytic map $F$ (conjugating $\widetilde{F}$ by the David homeomorphism) defined on a subset of $\widehat{\C}$. Finally, the construction of $\widetilde{F}$ will reveal that $F$ is a conformal mating of $\widehat{A}_{\Gamma, \mathrm{BS}}:K(\Gamma)\setminus R_\Gamma\to K(\Gamma)$ and $P:\mathcal{K}(P)\to\mathcal{K}(P)$.

\begin{proof}[Proof of Theorem~\ref{bers_bdry_mating_thm}]
In the interest of clarity, we will split the proof into various steps.

\noindent\textbf{The topological mating $\widetilde{F}$.}
We define a (partially defined) continuous map
$$
\widetilde{F}:=
\begin{cases}
\left(\phi_{P_\Gamma}\circ\eta^{-1}\right)\circ B\circ\left(\eta\circ\phi_{P_\Gamma}^{-1}\right),\ {\rm on\ } \mathcal{B}_\infty(P_\Gamma), \\
\left(\phi_i\circ H_i^{-1}\right)\circ \widehat{A}_i\circ\left(H_i\circ \phi_i^{-1}\right),\ {\rm on\ } \mathcal{V}_i\setminus \left(\phi_i\circ H_i^{-1}\right)(R_i), \\
\left(\phi_{j,\mp}\circ (H_j^\mp)^{-1}\right)\circ \widehat{A}_{j,\pm}\circ \left(H_j^\pm \circ (\phi_{j,\pm})^{-1}\right),\ {\rm on}\ \mathcal{U}_j^\pm\setminus \left(\phi_{j,\pm}\circ (H_j^\pm)^{-1}\right)(R_{j,\pm}),\\
P_\Gamma, \quad {\rm on\ } \mathcal{K}(P_\Gamma)\setminus\left(\bigcup_{i=1}^m \mathcal{V}_i\cup\bigcup_{j=1}^n\mathcal{U}_j^\pm\right),
\end{cases}
$$
where $i\in\{1,\cdots, m\}, j\in\{1,2,\dots, n\}$, and $R_i, R_{j,\pm}$ are the fundamental domains of $A_i, A_{j,\pm}$, respectively (see the commutative diagrams~\eqref{comm_diag_per_1},~\eqref{comm_diag_per_2}, and~\eqref{comm_diag_blaschke}). We will denote the domain of definition of $\widetilde{F}$ by $\mathrm{Dom}(\widetilde{F})$.

\noindent\textbf{An $\widetilde{F}$-invariant almost complex structure $\mu$.} 
Let $\mu\vert_{\mathcal{B}_\infty(P_\Gamma)}$ be the pullback to $\mathcal{B}_\infty(P_\Gamma)$ of the standard complex structure on $\widehat{\C}\setminus\overline{\D}$ under the map $\eta\circ\phi_{P_\Gamma}^{-1}$. As $B$ is complex-analytic (i.e., it preserves the standard complex structure), it follows that $\mu\vert_{\mathcal{B}_\infty(P_\Gamma)}$ is $\widetilde{F}$-invariant.

Next, for $i\in\{1,\cdots, m\}$, we set $\mu\vert_{\mathcal{V}_i}$ to be the pullback to $\mathcal{V}_i$ of the standard complex structure on $\D$ under $H_i\circ \phi_i^{-1}$. As in the previous paragraph, since $\widehat{A}_i$ is complex-analytic, we have that $\left(\widetilde{F}\vert_{\mathcal{V}_i}\right)^\ast(\mu\vert_{\mathcal{V}_i})=\mu\vert_{\mathcal{V}_i}$. 
On the other hand, for $j\in\{1,2,\dots, n\}$, we set $\mu\vert_{\mathcal{U}_j^\pm}$ to be the pullback to $\mathcal{U}_j^\pm$ of the standard complex structure on $\D$ under $H_j^\pm \circ (\phi_{j,\pm})^{-1}$. The fact that $\widehat{A}_{j,\pm}$ preserve the standard complex structure now implies that $\left(\widetilde{F}\vert_{\mathcal{U}_j^+}\right)^\ast(\mu\vert_{\mathcal{U}_j^-})=\mu\vert_{\mathcal{U}_j^+}$ and $\left(\widetilde{F}\vert_{\mathcal{U}_j^-}\right)^\ast(\mu\vert_{\mathcal{U}_j^+})=\mu\vert_{\mathcal{U}_j^-}$. 

We now use the iterates of $P_\Gamma$ to pull back the complex structure $\mu$ to all the strictly pre-periodic Fatou components of $P_\Gamma$. As the Julia set of a postcritically finite polynomial has zero area, this procedure defines an $\widetilde{F}$-invariant measurable complex structure $\mu$ on $\widehat{\C}$.

\noindent\textbf{$\mu$ is a David coefficient.}
We will now argue that $\mu$ is a David coefficient on $\widehat{\C}$; i.e., it satisfies condition~\eqref{david_cond} of Definition \ref{def-david}.
Since $P_\Gamma$ is postcritically finite with a connected Julia set, each Fatou component $\mathcal{V}_i, \mathcal{U}_j^\pm$ is a John domain \cite[\S 7, Theorem~3.1]{CG1}. By \cite[Proposition~2.5 (part iv)]{LMMN}, the map $H_{i}\circ\phi_{i}^{-1}:\mathcal{V}_i\to\D$ (respectively, $H_j^\pm \circ (\phi_{j,\pm})^{-1}:\mathcal{U}_j^\pm\to\D$) is a David homeomorphism, and hence, $\mu$ is a David coefficient on $\bigcup_{i=1}^m\mathcal{V}_i\cup\bigcup_{j=1}^n \mathcal{U}_j^\pm$. Moreover, since $\eta\circ\phi_{P_\Gamma}^{-1}:\mathcal{B}_\infty(P_\Gamma)\to\widehat{\C}\setminus\overline{\D}$ is a quasiconformal homeomorphism, we have that 
$\vert\vert\mu\vert_{\mathcal{B}_\infty(P_\Gamma)}\vert\vert_\infty <1.$ 
Therefore, there exist constants $C,\alpha, \varepsilon_0>0$ such that
\begin{align}
\sigma\left(\lbrace z\in \bigcup_{i=1}^m\mathcal{V}_i\cup\bigcup_{j=1}^n \mathcal{U}_j^\pm\colon \vert\mu(z)\vert\geq 1-\varepsilon\rbrace\right) \leq Ce^{-\alpha/\varepsilon}, \quad \varepsilon\leq \varepsilon_0,
\label{david_on_bounded_periodic}
\end{align}
and 
\begin{align}
\sigma\left(\lbrace z\in \mathcal{B}_\infty(P_\Gamma)\colon \vert\mu(z)\vert\geq 1-\varepsilon\rbrace\right) = 0, \quad \varepsilon\leq \varepsilon_0,
\label{david_on_basin_infty}
\end{align}
where $\sigma$ is the spherical measure.

It remains to check the David condition on the union of the strictly pre-periodic Fatou components of $P_\Gamma$. By Proposition~\ref{dyn_p_gamma_prop}, there is a neighborhood of the closure of the union of the strictly pre-periodic Fatou components of $P_\Gamma$ that is disjoint from the critical points of $P_\Gamma$. Hence, if a strictly preperiodic Fatou component $\mathcal{U}'$ lands on $\mathcal{V}_i$ or $\mathcal{U}_j^\pm$ under $P_\Gamma^{\circ r}$ (where $r$ is the smallest positive integer with this property), then by the Koebe distortion theorem, $P_\Gamma^{\circ r}\circ \lambda_{\mathcal{U}'}$ is an $L$-bi-Lipschitz map between $\frac{1}{\diam{\mathcal{U}'}}\mathcal{U}'$ and $P_\Gamma^{\circ r}(\mathcal{U}')$, for some absolute constant $L\geq1$, where $\lambda_{\mathcal{U}'}(z)=\diam{\mathcal{U}'}\cdot z$ is a scaling map. This implies that, given any $0<\varepsilon\leq\varepsilon_0$,  
\begin{align}
\sigma\left(\{z\in \mathcal{U}'\colon|\mu(z)|\geq 1-\varepsilon\}\right)\leq L^2(\diam{\mathcal{U}'})^2\sigma\left(\{z\in P_\Gamma^{\circ r}(\mathcal{U}')\colon|\mu(z)|\geq 1-\varepsilon\}\right).
\label{koebe_control}
\end{align}
Moreover, since all the Fatou components $\mathcal{U}'$ are uniform John domains \cite[Proposition~10]{Mihalache}, it follows from \cite[p. 444]{Dimitrios-semi-hyperbolic} that there exists a constant $C_1>0$ such that
\begin{align}
(\diam{\mathcal{U}'})^2\leq C_1\sigma\left(\mathcal{U}'\right),
\label{diam_area_inequality}
\end{align}
for all strictly pre-periodic Fatou components $\mathcal{U}'$ of $P_\Gamma$.

Putting inequalities~\eqref{david_on_bounded_periodic}, \eqref{david_on_basin_infty},~\eqref{koebe_control}, and~\eqref{diam_area_inequality} together, we obtain our desired exponential decay criterion:
\begin{align*}
\begin{split}
&\sigma\left(\{z\in\widehat{\mathbb C}\colon|\mu(z)|\geq 1-\varepsilon\}\right)\\
&=\sigma\left(\lbrace z\in \mathcal{F}_{\mathrm{per}}\colon \vert\mu(z)\vert\geq 1-\varepsilon\rbrace\right)+ 
\sum_{\mathcal{U}'\ \textrm{strictly\ preperiodic}}\sigma\left(\{z\in{\mathcal{U}'}\colon|\mu(z)|\geq 1-\varepsilon\}\right)\\
&\leq \left(L^2 C_1\left(\sum_{\mathcal{U}'\ \textrm{strictly\ preperiodic}}\sigma\left(\mathcal{U}'\right)\right)+1\right)\cdot
\left(\sigma\left(\lbrace z\in \mathcal{F}_{\mathrm{per}}\colon \vert\mu(z)\vert\geq 1-\varepsilon\rbrace\right)\right)\\
&\leq \left(L^2C_1\sigma\left(\widehat{\mathbb C}\right)+1\right)\cdot Ce^{-\alpha/\varepsilon},\quad \varepsilon\leq\varepsilon_0,
\end{split}
\end{align*}
where $\mathcal{F}_{\mathrm{per}}:=\bigcup_{i=1}^m\mathcal{V}_i\cup\bigcup_{j=1}^n \mathcal{U}_j^\pm$ is the union of the bounded periodic Fatou components of $P_\Gamma$.

\noindent\textbf{Straightening $\mu$.}
The David Integrability Theorem \cite{David88} \cite[Theorem~20.6.2, p.~578]{AIM09} now provides us with a David homeomorphism $\xi:\widehat{\C}\to\widehat{\C}$ such that the pullback of the standard complex structure under $\xi$ is equal to $\mu$. Conjugating $\widetilde{F}$ by $\xi$, we obtain the map 
$F:=\xi\circ\widetilde{F}\circ\xi^{-1}:\xi(\mathrm{Dom}(\widetilde{F}))\to\widehat{\C}.$
We set $\mathrm{Dom}(F):= \xi(\mathrm{Dom}(\widetilde{F}))$.

\noindent\textbf{Complex-analyticity of $F$.} 
Note that since $\mathcal{B}_\infty(P_\Gamma)$ is a John domain, its boundary $\mathcal{J}(P_\Gamma)$ is removable for $W^{1,1}$ functions \cite[Theorem~4]{Jones-Smirnov}. By \cite[Theorem~2.7]{LMMN}, $\xi(\mathcal{J}(P_\Gamma))$ is locally conformally removable. Hence, it suffices to show that $F$ is analytic on the interior of $\mathrm{Dom}(F)\setminus\xi(\mathcal{J}(P_\Gamma))$.

To this end, first observe that both the maps $\eta\circ\phi_{P_\Gamma}^{-1}$ and $\xi$ are David homeomorphisms on $\mathcal{B}_\infty(P_\Gamma)$ straightening $\mu\vert_{\mathcal{B}_\infty(P_\Gamma)}$ (the former map is, in fact, quasiconformal). By \cite[Theorem~20.4.19, p.~565]{AIM09}, $\eta\circ\phi_{P_\Gamma}^{-1}\circ\xi^{-1}$ is conformal. It now follows from the definitions of $\widetilde{F}$ and $F$ that $F$ is analytic on $\xi(\mathcal{B}_\infty(P_\Gamma))$. 

Again, for each $i\in\{1,\cdots, m\}$, both $H_i\circ \phi_i^{-1}$ and $\xi$ are David homeomorphisms on $\mathcal{V}_i$ straightening $\mu\vert_{\mathcal{V}_i}$. Hence by \cite[Theorem~20.4.19]{AIM09}, $H_i\circ\phi_i^{-1}\circ\xi^{-1}$ is conformal. Therefore, $F$ is analytic on $\xi(\bigcup_{i=1}^m\mathcal{V}_i)\cap\mathrm{Dom}(F)$. One similarly proves that $F$ is analytic on $\xi(\bigcup_{j=1}^n\mathcal{U}_j^\pm)\cap\mathrm{Dom}(F)$.

It remains to check that $F$ is analytic on $\xi(\mathcal{U}')$, where $\mathcal{U}'$ is a strictly pre-periodic Fatou component of $P_\Gamma$. Note that on such a component $\mathcal{U}'$, the map $\widetilde{F}$ is conformal. Hence, $\xi$ and $\xi\circ\widetilde{F}$ are David homeomorphisms on $\mathcal{U}'$ (that $\left(\xi\circ\widetilde{F}\right)\vert_{\mathcal{U}'}$ is David follows from \cite[Proposition~2.5, part (iv)]{LMMN}), and by $\widetilde{F}$-invariance of $\mu$, they both straighten $\mu$. As in the previous cases, this implies that $F\equiv \xi\circ\widetilde{F}\circ \xi^{-1}$ is analytic on $\xi(\mathcal{U}')$.

This completes the proof of the fact that $F$ is analytic on the interior of $\mathrm{Dom}(F)$.

\noindent\textbf{$F$ is a mating of $\widehat{A}_{\Gamma, \mathrm{BS}}$ and $P$.} 
The dynamical plane of $F$ can be written as the union of two invariant subsets
$\widehat{\C}=\xi\left(\overline{\mathcal{B}_\infty(P_\Gamma)}\right)\bigcup \xi\left(\mathcal{K}(P_\Gamma)\right).$
We will show that $P\vert_{\mathcal{K}(P)}$ is topologically semi-conjugate to $F\vert_{\xi\left(\overline{\mathcal{B}_\infty(P_\Gamma)}\right)}$, and $\widehat{A}_{\Gamma, \mathrm{BS}}: K(\Gamma)\setminus R_\Gamma\to K(\Gamma)$ is topologically conjugate to $F:\xi\left(\mathcal{K}(P_\Gamma)\right)\cap\mathrm{Dom}(F)\to\xi\left(\mathcal{K}(P_\Gamma)\right)$ such that the conjugacies are conformal on $\Int{\mathcal{K}(P)}$ and $\Int{K(\Gamma)}$ (respectively).

The arguments used in the previous step yield that $\xi\circ\phi_{P_\Gamma}\circ\eta^{-1}$ conformally conjugates $B\vert_{\widehat{\C}\setminus\overline{\D}}$ to $F\vert_{\xi\left(\mathcal{B}_\infty(P_\Gamma)\right)}$. Thus, 
$\mathfrak{X}_P:=\xi\circ\phi_{P_\Gamma}\circ\eta^{-1}\circ\kappa^{-1}:\Int{\mathcal{K}(P)}\to\xi\left(\mathcal{B}_\infty(P_\Gamma)\right)$ 
is a conformal conjugacy between $P$ and $F$ (see  commutative diagram~\eqref{comm_diag_blaschke}). Clearly, $\mathfrak{X}_P$ extends to a topological semi-conjugacy between $P\vert_{\mathcal{J}(P)}$ and $F\vert_{\xi\left(\mathcal{J}(P_\Gamma)\right)}$.

Recall that $\pmb{\Phi}^{-1}$ conjugates $\widehat{A}_{\Gamma, \mathrm{BS}}\vert_{\Lambda(\Gamma)}$ to $\widetilde{F}\vert_{\mathcal{J}(P_\Gamma)}$ (as $\widetilde{F}\equiv P_\Gamma$ on $\mathcal{J}(P_\Gamma)$). Hence, $\mathfrak{X}_\Gamma:=\xi\circ\pmb{\Phi}^{-1}:\Lambda(\Gamma)\to\xi\left(\mathcal{J}(P_\Gamma)\right)$ conjugates $\widehat{A}_{\Gamma, \mathrm{BS}}$ to $F$. We will complete the proof by arguing that $\mathfrak{X}_\Gamma$ continuously extends to $\Int{K(\Gamma)}$ as a conformal conjugacy between $\widehat{A}_{\Gamma, \mathrm{BS}}$ and $F$. 

To do so, let us first consider an equatorial component $V_i$, $i\in\{1,\cdots, m\}$. According to the commutative diagram~\ref{comm_diag_per_1}, $\pmb{\Phi}^{-1}\equiv\phi_i\circ H_i^{-1}\circ \psi_i^{-1}$ on $\partial V_i$. Also recall from the previous step that $\xi\circ\phi_i\circ H_i^{-1}:\D\to\xi(\mathcal{V}_i)$ is a conformal conjugacy between $\widehat{A}_i$ and $F$. Hence, the conformal map $\xi\circ\phi_i\circ H_i^{-1}\circ\psi_i^{-1}:V_i\to\xi(\mathcal{V}_i)$ continuously extends $\mathfrak{X}_\Gamma:\partial V_i\to\xi(\partial\mathcal{V}_i)$, and conjugates $\widehat{A}_{\Gamma, \mathrm{BS}}$ to $F$. We set $\mathfrak{X}_\Gamma\vert_{V_i}\equiv \xi\circ\phi_i\circ H_i^{-1}\circ\psi_i^{-1}$.

Similarly, the conformal map $\xi\circ\phi_{j,\pm}\circ \left(H_j^\pm\right)^{-1}\circ\psi_{j,\pm}^{-1}:U_j^\pm\to\xi(\mathcal{U}_j^\pm)$ continuously extends $\mathfrak{X}_\Gamma:\partial U_j^\pm\to\xi(\partial\mathcal{U}_j^\pm)$, and conjugates $\widehat{A}_{\Gamma, \mathrm{BS}}$ to $F$. We set $\mathfrak{X}_\Gamma\vert_{U_j^\pm}\equiv \xi\circ\phi_{j,\pm}\circ \left(H_j^\pm\right)^{-1}\circ\psi_{j,\pm}^{-1}$.

At this point, we have continuously extended $\mathfrak{X}_\Gamma:\Lambda(\Gamma)\to\xi\left(\mathcal{J}(P_\Gamma)\right)$ to the principal components of $\Omega(\Gamma)$ as a conformal conjugacy between $\widehat{A}_{\Gamma, \mathrm{BS}}$ and $F$. Let us now consider a non-principal component $U'$ of $\Int{K(\Gamma)}$. Then, there exists $p\in\mathbb{N}$ such that $\widehat{A}_{\Gamma, \mathrm{BS}}^{\circ p}$ maps $\overline{U'}$ homeomorphically onto the closure $\overline{U}$ of a principal component $U$ of $\Int{K(\Gamma)}$. Let $\mathcal{U}'$ (respectively, $\mathcal{U}$) be the strictly pre-periodic (respectively, periodic) Fatou component of $P_\Gamma$ such that $\pmb{\Phi}^{-1}(\partial U')=\partial\mathcal{U}'$ (respectively, $\pmb{\Phi}^{-1}(\partial U)=\partial\mathcal{U}$). Clearly, $F^{\circ p}$ maps $\xi(\overline{\mathcal{U}'})$ homeomorphically onto $\xi(\overline{\mathcal{U}})$. Let $^{\xi(\mathcal{U}')}F^{-p}$ be the (well-defined) inverse branch of $F^{\circ p}$ that maps $\xi(\overline{\mathcal{U}})$ onto $\xi(\overline{\mathcal{U}'})$. Since $\mathfrak{X}_\Gamma$ is a conjugacy on the whole limit set $\Lambda(\Gamma)$, we have 
$\mathfrak{X}_\Gamma\vert_{\partial U'}=\ ^{\xi(\mathcal{U'})}F^{-p}\circ\mathfrak{X}_\Gamma\vert_{\partial U}\circ \widehat{A}_{\Gamma, \mathrm{BS}}^{\circ p}:\partial U'\to\partial\xi(\mathcal{U}').$
We now (continuously) extend $\mathfrak{X}_\Gamma$ to $U'$ as the conformal map $^{\xi(\mathcal{U}')}F^{-p}\circ\mathfrak{X}_\Gamma\vert_{U}\circ \widehat{A}_{\Gamma, \mathrm{BS}}^{\circ p}: U'\to\xi(\mathcal{U}').$

Thus, we have extended the conjugacy $\mathfrak{X}_\Gamma$ (which was originally defined only on $\Lambda(\Gamma)$) conformally and equivariantly to all of $\Int{K_\Gamma}$. Since $\xi(\mathcal{J}(P_\Gamma))$ is locally connected, the diameters of the components of $\Int{\xi(\mathcal{K}(P_\Gamma))}$ tend to $0$. It is now easy to verify that the extension $\mathfrak{X}_\Gamma$ is a homeomorphism on $K(\Gamma)$. Consequently, $\mathfrak{X}_\Gamma$ is the desired topological conjugacy between $\widehat{A}_{\Gamma, \mathrm{BS}}: K(\Gamma)\setminus R_\Gamma\to K(\Gamma)$ and $F:\xi\left(\mathcal{K}(P_\Gamma)\right)\cap\mathrm{Dom}(F)\to\xi\left(\mathcal{K}(P_\Gamma)\right)$ such that the conjugacy is conformal on $\Int{\mathcal{K}(P)}$.
\end{proof}

Recall from Proposition~\ref{higher_b_s_inv_simp_closed_geod_lem} that if $\mathcal{L}^*$ consists of (simple, closed, non-peripheral) geodesics on $\disk/G_d$ represented by $g_2,\cdots, g_{d-1}$, then a group $\Gamma\in\partial\mathcal{B}(G_d)$ obtained by pinching $\mathcal{L}^*$ admits a higher Bowen-Series map that is orbit equivalent to $\Gamma$. For such a group $\Gamma$, one can construct a canonical extension of $A_{\Gamma, \mathrm{hBS}}$ to a suitable subset of $K(\Gamma)$, and describe conformal models of its dynamics on periodic components of $\Int{K(\Gamma)}$ as higher Bowen-Series maps of Fuchsian groups uniformizing spheres with a smaller number of punctures (in the spirit of Subsection~\ref{boundary_group_b_s_subsec}). Moreover, the proof of Theorem~\ref{bers_bdry_mating_thm} remains valid, mutatis mutandis, in the case of higher Bowen-Series maps producing conformal matings of higher Bowen-Series maps of Bers boundary groups and polynomials in principal hyperbolic components.

\section{Topological orbit equivalence rigidity}\label{sec-coe}
\begin{defn}\label{defn-toe} (see \cite{fisherwhyte_gafa} for instance)
	Let $\Gamma_1, \Gamma_2$ be   groups acting continuously on compact Hausdorff spaces $X_1, X_2$.  We say that $\Gamma_1, \Gamma_2$
	are \emph{topologically orbit equivalent}  if there exists a homeomorphism $\phi: X_1 \to X_2$ such that
	for  every $x\in X_1$, $\phi(\Gamma_1.x) = \Gamma_2.\phi(x)$. The homeomorphism $\phi$ is called a \emph{topological orbit equivalence} between $\Gamma_1$ and $\Gamma_2$.
\end{defn}

 If $\psi: \Gamma_1\to \Gamma_2$ is an isomorphism, and $\phi ( \gamma_1 . \phi^{-1}(y))= \psi (\gamma_1).y$ for all $\gamma_1 \in \Gamma_1$, we say that $\phi$ is a  \emph{topological conjugacy} between  the $\Gamma_1$ and $\Gamma_2$ actions. Note that a topological conjugacy is necessarily a topological orbit equivalence.
\begin{defn}\label{def-toe-rig}
	Let $\Gamma_1$ be   a group acting continuously on a
	compact Hausdorff space $X$. The action is said to be \emph{topological orbit equivalence rigid}, if any topological orbit equivalence $\phi$ between the action of $\Gamma_1$ on $X$ and 
	the 	action
	of a group $\Gamma_2$ on $Y$  is a conjugacy.
\end{defn}
Note that it follows from  Definition \ref{defn-toe}  that $\phi^{-1}(\Gamma_2.x) = \Gamma_1.\phi^{-1}(x)$. 
The maps $a: \Gamma_1 \times X \to \Gamma_2$ and $b: \Gamma_2 \times Y \to \Gamma_1$ are called \emph{cocycles associated to $\phi$} if they satisfy
\begin{equation}\label{eq-a}
	\phi(\gamma_1.x) = a(\gamma_1,x).\phi(x),\ \forall\ x \in X,\  \gamma_1 \in \Gamma_1,
\end{equation}
and
\begin{equation}\label{eq-b}
	\phi^{-1}(\gamma_2.y) = b(\gamma_2,y).\phi^{-1}(y),\ \forall\ y \in Y,\  \gamma_2 \in \Gamma_2.
\end{equation}

\begin{defn}\label{defn-coe} (see \cite{li-coe})
	Let $\Gamma_1, \Gamma_2, \phi$ be as in Definition \ref{defn-toe}. 
	If, in addition, the associated cocycles $a,b$ given by Equations \ref{eq-a}
	and \ref{eq-b}
	are continuous (where $\Gamma_i$ are equipped with the discrete topology), we say that $\Gamma_1, \Gamma_2$
	are \emph{continuously orbit equivalent} and $\phi$ is called a {continuous orbit equivalence}.
\end{defn}

For $\Gamma_1, \Gamma_2$ Fuchsian, the actions of $\Gamma_1, \Gamma_2$ on $X=Y=\bS^1$ are topologically free. It follows that 
\begin{enumerate}
	\item The maps $a,b$ are uniquely determined by Equations \ref{eq-a}
	and \ref{eq-b}
	\cite[Remark 2.7]{li-coe}.
	\item $a(\gamma\gamma', x) = a(\gamma, \gamma'.x)a(\gamma', x)$ 
	for all $\gamma, \gamma' \in \Gamma_1$ 
	by \cite[Lemma 2.8]{li-coe}.
\end{enumerate}

We note that when $\Gamma_1, \Gamma_2$ are Fuchsian, continuity of $a, b$
and density of $\Gamma_1, \Gamma_2$ orbits in $\bS^1$ immediately implies
that $a, b$ are constant on the $\bS^1$ factor. Hence,  $a(\gamma\gamma', x) = a(\gamma, \gamma'.x)a(\gamma', x)= a(\gamma, x)a(\gamma', x)$. Since $a$ is constant on the $\bS^1$ factor, we may write $a(\gamma, x)=a(\gamma)$,
for all $\gamma, \gamma' \in \Gamma_1$.  Thus,  
$a(\gamma\gamma') = a(\gamma)a(\gamma')$, i.e.,  $a$ is a homomorphism. Similarly, $b$ is a homomorphism. Equations \ref{eq-a}
and \ref{eq-b} now imply that	if $\Gamma_1, \Gamma_2$ are Fuchsian and $\phi$ is a continuous orbit equivalence, then  $\phi$ is a conjugacy. In particular, $\Gamma_1, \Gamma_2$ are isomorphic.

\subsection{Bi-orbit equivalence rigidity}
	Let $\alpha, \beta$ be  the induced actions  of $\Gamma_1, \Gamma_2$ on unordered pairs of points in $X_1, X_2$ (where 
	$X_1, X_2$ are as in Definition \ref{defn-toe}). 
	A topological orbit equivalence $\phi$ between $\Gamma_1, \Gamma_2$ as in Definition \ref{defn-toe} is said to be a \emph{topological bi-orbit equivalence} if the map $(\phi \times \phi)\big/\!\sim\,: (X_1 \times X_1)\big/\!\sim\,
	\longrightarrow (X_2 \times X_2)\big/\!\sim$ induced by $\phi$ is a topological orbit equivalence between $\alpha, \beta$. (Here $\sim$ denotes the equivalence relation that exchanges factors.) The actions of $\Gamma_1, \Gamma_2$ are said to be 
	topologically bi-orbit equivalent.

\begin{lemma}\label{lemma-pop} Let $\Gamma_1, \Gamma_2$ be Fuchsian lattices acting on $\bS^1$ and let $\phi$ be a topological orbit equivalence as in Definition \ref{defn-toe}.
	If $\phi$ is also a topological bi-orbit equivalence between the actions of $\Gamma_1, \Gamma_2$ (in the above sense), then $\phi$ is a conjugacy between the $\Gamma_i$ actions on $\bS^1$. Equivalently, the action of $\Gamma_1$ on  $(\bS^1 \times \bS^1)/\sim$ is 
	topological orbit equivalence rigid (in the sense of Definition \ref{def-toe-rig}).
\end{lemma}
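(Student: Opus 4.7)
The plan is to show that the cocycle $a:\Gamma_1\times\bS^1\to\Gamma_2$ defined by Equation~\eqref{eq-a} is, for each fixed $\gamma_1\in\Gamma_1$, constant in the $\bS^1$-variable. Once this is established, Remark~\ref{rmk-coe} (really, just the argument behind it: if $a(\gamma_1,\cdot)\equiv g$ then $\phi\circ\gamma_1=g\circ\phi$) yields the conclusion, since by symmetry $\phi^{-1}$ also gives the reverse containment $\Gamma_2\subseteq \phi\Gamma_1\phi^{-1}$.

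First I would select a base-point $x_0\in\bS^1$ whose $\Gamma_1$-stabilizer is trivial and such that $\phi(x_0)$ has trivial $\Gamma_2$-stabilizer; this excludes only a countable set since $\Gamma_1,\Gamma_2$ are finitely generated and stabilizers of points in $\bS^1$ under a Fuchsian lattice are cyclic. Fix $\gamma_1\in\Gamma_1$, set $g:=a(\gamma_1,x_0)\in\Gamma_2$, and let $E\subset\bS^1$ be the (countable) union of $\Gamma_1.x_0$ and the collection of all points whose $\Gamma_1$- or whose $\phi$-image $\Gamma_2$-stabilizer is nontrivial. For any $x\in\bS^1\setminus E$, apply the bi-orbit equivalence hypothesis to the pair $\{x_0,x\}\in\dbS$: there exists $h\in\Gamma_2$ with
\[
h\{\phi(x_0),\phi(x)\}=\{\phi(\gamma_1 x_0),\phi(\gamma_1 x)\}=\{g\,\phi(x_0),\,a(\gamma_1,x)\phi(x)\}.
\]
Two cases arise. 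In the \emph{straight} case, $h\phi(x_0)=g\phi(x_0)$ and $h\phi(x)=a(\gamma_1,x)\phi(x)$; triviality of the stabilizers forces $h=g$ and $a(\gamma_1,x)=g$. In the \emph{swapped} case, $h\phi(x_0)=a(\gamma_1,x)\phi(x)=\phi(\gamma_1 x)$; but then $\phi(\gamma_1 x)\in\Gamma_2.\phi(x_0)$, and orbit equivalence of $\phi$ (applied to $\phi^{-1}$ via Equation~\eqref{eq-b}) yields $\gamma_1 x\in\Gamma_1.x_0$, hence $x\in\Gamma_1.x_0\subset E$, a contradiction.

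Consequently, $a(\gamma_1,\cdot)\equiv g$ on the dense set $\bS^1\setminus E$. The two continuous self-maps $\phi\circ\gamma_1$ and $g\circ\phi$ of $\bS^1$ agree on this dense set, hence everywhere, so $\phi\gamma_1\phi^{-1}=g\in\Gamma_2$. This shows $\phi\Gamma_1\phi^{-1}\subseteq\Gamma_2$; applying the same argument to the bi-orbit equivalence $\phi^{-1}$ yields $\phi^{-1}\Gamma_2\phi\subseteq\Gamma_1$, so the containments are equalities and $\phi$ is the desired conjugacy.

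The main technical point to be careful about is the case analysis above: the swapped case has to be ruled out using the original (one-point) topological orbit equivalence, so the key is to exploit that bi-orbit equivalence is a \emph{strictly stronger} hypothesis than orbit equivalence on pairs considered in isolation — the two must be induced by the \emph{same} $\phi$. Everything else (choice of generic base-point, density extension, symmetry) is routine given how rigidly Fuchsian lattices act on $\bS^1$.
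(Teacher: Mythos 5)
Your proof is correct, but it takes a genuinely different route from the paper. The paper's proof is a two-line reduction to a general rigidity theorem: it observes that the fixed set of any nontrivial element acting on $(\bS^1\times\bS^1)/\sim$ consists of at most three points, hence is of codimension two, so \cite[Theorem 3.4]{fisherwhyte_gafa} applies to the induced actions $\alpha,\beta$ and shows that $(\phi\times\phi)/\sim$ is a conjugacy; since this map preserves the diagonal, $\phi$ itself is a conjugacy. You instead give a self-contained cocycle-rigidity argument on $\bS^1$: pick a generic base point $x_0$, play the pair $\{x_0,x\}$ against it, rule out the ``swapped'' matching using the one-point orbit equivalence (this is exactly where the hypothesis that the pair equivalence is induced by the \emph{same} $\phi$ enters), conclude $a(\gamma_1,\cdot)$ is constant off a countable set, and finish by density and continuity, with symmetry (or, alternatively, one-point orbit equivalence at a free point) giving surjectivity of $\gamma_1\mapsto\phi\gamma_1\phi^{-1}$. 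The trade-off: the paper's argument is shorter and reuses the same citation verbatim for the hyperbolic-group analogue (Lemma \ref{lemma-pophyp}), making transparent that the relevant hypothesis is codimension-two fixed sets — precisely what fails for the circle itself and motivates Section \ref{sec-toe-fail}; your argument avoids the external theorem entirely, uses only countability of fixed-point sets and topological freeness at generic points, and makes explicit the mechanism by which bi-orbit equivalence forces the cocycle to be independent of the space variable. Both are valid proofs of the lemma as stated.
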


\begin{proof}
	Note that for any $\gamma \in \Gamma_i$, $i=1,2$, the fixed set $F(\gamma)=\{p \in (\bS^1 \times \bS^1)/\sim : \gamma(p) = p \}$ consists of at most three points.
	In particular, $\{F(\gamma):\gamma\in\Gamma_i\}$ is of codimension two in $ (\bS^1 \times \bS^1)/\sim$, for $i=1,2$. By \cite[Theorem 3.4]{fisherwhyte_gafa}, $(\phi \times \phi)/\sim$
	conjugates $\alpha$ to $\beta$. Since $(\phi \times \phi)/\sim$ preserves the diagonal $\Delta:= \{ \{x,x\} : x \in \bS^1 \}$, it follows that $\phi$ is a conjugacy between the $\Gamma_i$ actions on $\bS^1$.
\end{proof}

Using the same argument as in the proof of Lemma \ref{lemma-pop}, we obtain:

\begin{lemma}\label{lemma-pophyp} Let $G_1, G_2$ be one-ended hyperbolic
	groups.  Let $\phi$ be a topological bi-orbit equivalence between
	the $G_1, G_2$ actions on the Gromov boundaries
	$\partial G_1$, $\partial G_2$.
	Then $\phi$ is a conjugacy. The same conclusion holds if we only assume
	a topological orbit equivalence between the $G_i$-actions
	on the set $\partial^2G_i = (\partial G_i \times \partial G_i \setminus \Delta)/\sim$ of \emph{distinct} unordered pairs on $\partial G_i$ (here $\Delta$ denotes the diagonal).
\end{lemma}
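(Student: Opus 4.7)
The plan is to mirror the proof of Lemma~\ref{lemma-pop}, replacing the circle $\bS^1$ by the Gromov boundary $\partial G_i$. The essential ingredient is again \cite[Theorem~3.4]{fisherwhyte_gafa}, which promotes a topological orbit equivalence between actions on locally compact Hausdorff spaces to a topological conjugacy provided the fixed-point sets of non-trivial group elements are of codimension at least two in the ambient space.

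First I would verify this codimension hypothesis for the induced $G_i$-action on $\partial^2 G_i$. Since $G_i$ is one-ended and hyperbolic, $\partial G_i$ is a connected, compact, metrizable space of topological dimension at least one (by Bestvina--Mess), so $\partial^2 G_i = (\partial G_i \times \partial G_i \setminus \Delta)/\!\sim$ has topological dimension at least two. Next, for any non-trivial $\gamma \in G_i$, the fixed set $F(\gamma) \subset \partial^2 G_i$ is finite. Indeed, if $\gamma$ has infinite order, then by north--south dynamics it fixes a unique pair $\{\gamma^+, \gamma^-\}$ pointwise on $\partial G_i$, and can only swap finitely many additional pairs (so $F(\gamma)$ is finite); if $\gamma$ has finite order, then by the convergence-group property its fixed set on $\partial G_i$ is finite, and again only finitely many swapped pairs contribute to $F(\gamma)$. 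In either case $F(\gamma)$ has codimension at least two in $\partial^2 G_i$.

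With the codimension hypothesis secured, for the first statement I would apply \cite[Theorem~3.4]{fisherwhyte_gafa} to the induced $G_i$-actions on $\partial^2 G_i$ to conclude that the bi-orbit equivalence $(\phi \times \phi)/\!\sim$ is a topological conjugacy. Since $(\phi \times \phi)/\!\sim$ preserves the diagonal $\Delta \subset (\partial G_i \times \partial G_i)/\!\sim$ by construction, the original map $\phi:\partial G_1 \to \partial G_2$ must itself be a conjugacy intertwining the $G_i$-actions. For the second statement, I would apply \cite[Theorem~3.4]{fisherwhyte_gafa} directly to the given topological orbit equivalence on $\partial^2 G_i$; here the bi-orbit equivalence hypothesis is not needed, and the conclusion is the same.

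The main technical obstacle will be making the fixed-point analysis on $\partial^2 G_i$ fully rigorous, in particular the treatment of torsion elements and the verification that swapped pairs form a finite set (which uses that the diagonal action of a virtually cyclic stabilizer on $\partial G_i^2$ has only finitely many orbits outside the obvious fixed points). Beyond this, the argument is a direct translation of the proof of Lemma~\ref{lemma-pop}.
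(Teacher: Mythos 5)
Your route is exactly the paper's route: the published proof is a three-line remark observing that one-endedness makes $\partial G_i$ connected, that by Bestvina--Mess $\partial G_i$ is locally connected and of topological dimension at least one, and that the conclusion then ``follows as before'' by \cite[Theorem 3.4]{fisherwhyte_gafa} --- i.e.\ by repeating the argument of Lemma~\ref{lemma-pop}: codimension-two fixed sets, conjugacy on the pair space, then preservation of the diagonal to descend to $\partial G_i$ in the bi-orbit case, and a direct application of the theorem in the $\partial^2 G_i$ case. In structure your proposal and the paper agree completely.

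Where your write-up goes beyond the paper is the explicit fixed-point analysis, and there it contains an error. It is not true that a finite-order element of a hyperbolic group has finite fixed set on the boundary ``by the convergence-group property'': a reflection in a cocompact hyperbolic Coxeter group is an involution fixing an entire circle in $\partial G\cong\bS^2$ (proper discontinuity on distinct triples is not violated, since the relevant triple stabilizers are merely finite). Moreover, for an involution $\gamma$ \emph{every} unordered pair $\{x,\gamma x\}$ with $\gamma x\neq x$ is fixed, so the swapped part of $F(\gamma)\subset\partial^2 G_i$ is a positive-dimensional set, never finite; your parenthetical appeal to virtually cyclic stabilizers only applies to infinite-order elements. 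What the Fisher--Whyte argument actually requires is not finiteness but that each $F(\gamma)$ have codimension at least two: for infinite-order $\gamma$ your north--south argument is correct and gives a single point of $\partial^2 G_i$, while for torsion one must make a genuine codimension count (dimension of $\mathrm{Fix}(\gamma)\subset\partial G_i$ plus the dimension of the swapped locus), and when $\dim\partial G_i=1$ and $G_i$ contains an involution acting freely on $\partial G_i$ (an order-two elliptic in a virtually Fuchsian group) the swapped locus has codimension one, so the hypothesis as invoked fails. To be fair, the paper's one-line proof --- like the ``at most three points'' count in Lemma~\ref{lemma-pop} --- silently ignores this same torsion issue; in the torsion-free case your argument is complete and coincides with the paper's.
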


\begin{proof}
	We only need to observe that, since $G_i$ are one-ended, $\partial G_i$
	are connected ($i=1,2$). By \cite{bes-mess}, $\partial G_i$ are  locally connected ($i=1,2$) and has (topological) dimension at least one. The conclusion now follows by \cite[Theorem 3.4]{fisherwhyte_gafa}.
\end{proof}

\subsection{Failure of topological orbit equivalence rigidity for Fuchsian groups}\label{sec-toe-fail}
Let $\Gamma_1, \Gamma_2$ be groups acting on $X_1, X_2$ as in Definition
\ref{defn-toe}.
In contrast to \cite{fisherwhyte_gafa}, we shall see now that topological orbit equivalence rigidity fails for Fuchsian groups. 
The following Lemma was kindly provided to us by an anonymous referee. It has allowed us to simplify
the proof of  Theorem \ref{thm-goe-fuchsian} and also strengthen the conclusion 
from an earlier version.
Identify the circle $\bS^1$ with $\R/\Z$, so that for any $d \geq 1$ the map $z \to z^d$ on the unit circle is conjugate to the map $m_d(x) = dx\, \mod\, 1$.

\begin{lemma}\label{lemma-referee}
 For any $d_1, d_2 \geq 2$, the maps $m_{d_1}$ and $m_{d_2}$ are topologically orbit equivalent. Hence $z\to z^{d_1}$ and $z\to z^{d_2}$
 are topologically orbit equivalent on $\bS^1$.
\end{lemma}
\begin{proof}
Fix $d\geq 2$. It suffices, by induction, to prove that $m_d$ is 
topologically orbit equivalent to $m_{d+1}$. Let $p = \frac{1}{d(d-1)}.$
Then $m_d(p) =m_d^2(p)$, and the restriction of $m_d$ to $[0, p] $ is an expanding homeomorphism onto its image. Set
\begin{equation*}
f(x) \ = \ \left\{\begin{array}{ll}
	m_d^2(x), & x \in [0,p], \\
	m_d(x)	, & x \in [p,1].
	\end{array}\right. 
\end{equation*}
Since $m_d(p) =m_d^2(p)$, $f$ is 
continuous. Further, since $f$ is an  expanding circle map of topological degree $d + 1$, $f$ is topologically conjugate to
$z \to z^{d+1}$. To prove that
$m_d$ is 
topologically orbit equivalent to $m_{d+1}$,  it then suffices to show that for any $x\in [0,1]$ we have equality of grand orbits
$\mathrm{GO}_{m_d} (x) = \mathrm{GO}_f (x).$
 By construction of $f$, $\mathrm{GO}_f (x) \subset \mathrm{GO}_{m_d} (x)$. To prove the reverse inclusion, it is enough to show that
 for any $x \in [0, 1)$, the points $x$ and $m_d(x)$ lie in $\mathrm{GO}_f (x)$. Consider the two sets $$A_0 = \{a\geq  0\, :\, \exists k \geq 0; f^k(x) = {m_d}^a(x)\}, \ {\rm and}$$
$$A_1 = \{a\geq  0\, :\, \exists k \geq  0; f^k(m_d(x)) = {m_d}^a(x)\} .$$
  By definition, $0 \in  A_0$ and $1 \in A_1$. If $A_0 \cap A_1 \neq \emptyset$, then $x$ and  $m_d (x)$ lie in $\mathrm{GO}_f(x)$, and we are done. Note that, for any natural number $n$, the set $\{n, n+1\}$  intersects both $A_0$ and $A_1$. Hence, $A_0 \cap A_1 =\emptyset$  only  if $A_0$ is the set of even numbers and $A_1$ is the set of odd numbers. However, this is possible only if $m_d^k(x) \in [0, p]$ for all $k \geq 0$. Since $f$ is an expanding homeomorphism onto its image on $[0, p]$, this forces $x= 0$.  Since $f(0) = m_d(0)$, the lemma follows.
\end{proof}

\begin{theorem}\label{thm-goe-fuchsian}
	Let $\Gamma_1,\Gamma_2$ be  punctured sphere Fuchsian groups, where the number of punctures are
	$k_1, k_2$  respectively. 
	Then the actions of   $\Gamma_1,\Gamma_2$ on $\bS^1$ are topologically orbit equivalent.
	Hence topological orbit equivalence rigidity fails for Fuchsian groups.
\end{theorem}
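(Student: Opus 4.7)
The plan is to exploit the fact that each punctured sphere Fuchsian group admits \emph{two} different mateable maps orbit equivalent to it (the Bowen-Series map and the higher Bowen-Series map), and that these have different degrees. A suitable arithmetic choice will force the Bowen-Series degree for $\Gamma_1$ to coincide with the higher Bowen-Series degree for $\Gamma_2$, and then expansivity will glue everything together.

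Concretely, I would first compute degrees. By Proposition~\ref{b_s_poly_conjugate_prop_1}, the Bowen-Series map $A_{\Gamma_1,\mathrm{BS}}$ of a Fuchsian group uniformizing $S_{0,k_1}$ is a \pwfm map of degree $2k_1-3$, topologically conjugate to $z\mapsto z^{2k_1-3}$ on $\bS^1$. By Equation~\eqref{eq-degcfm}, the higher Bowen-Series map $A_{\Gamma_2,\mathrm{hBS}}$ for $S_{0,k_2}$ has degree $(k_2-1)^2$, and being a \pwfm map is topologically conjugate to $z\mapsto z^{(k_2-1)^2}$. Plugging in $k_1=2(m^2-m+1)$ and $k_2=2m$ gives
\[
2k_1-3 \;=\; 4m^2-4m+1 \;=\; (2m-1)^2 \;=\; (k_2-1)^2,
\]
so both maps are topologically conjugate to $z\mapsto z^{(2m-1)^2}$.

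Composing these two topological conjugacies, I would obtain a homeomorphism $\phi\colon\bS^1\to\bS^1$ that conjugates $A_{\Gamma_1,\mathrm{BS}}$ to $A_{\Gamma_2,\mathrm{hBS}}$. A conjugacy of continuous self-maps of $\bS^1$ automatically carries grand orbits to grand orbits in the sense of Definition~\ref{grand_orbit_def}. By Proposition~\ref{b_s_poly_conjugate_prop_1}(2), the grand orbits of $A_{\Gamma_1,\mathrm{BS}}$ are exactly the $\Gamma_1$-orbits on $\bS^1$; by Proposition~\ref{prop-cfdhd-orbeq-gen}, the grand orbits of $A_{\Gamma_2,\mathrm{hBS}}$ are exactly the $\Gamma_2$-orbits on $\bS^1$. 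Hence $\phi$ satisfies $\phi(\Gamma_1\cdot x) = \Gamma_2\cdot\phi(x)$ for every $x\in\bS^1$, which is precisely the definition (Definition~\ref{defn-toe}) of a topological orbit equivalence between the actions of $\Gamma_1$ and $\Gamma_2$ on $\bS^1$.

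Finally, for any $m>1$ one has $k_1=2(m^2-m+1)\neq 2m=k_2$, so $\Gamma_1$ and $\Gamma_2$ are not even abstractly isomorphic, let alone conjugate in $\textrm{Aut}(\disk)$. Consequently $\phi$ cannot be a conjugacy, which exhibits the failure of topological orbit equivalence rigidity. There is no serious obstacle in this argument; the only point requiring care is the verification that the two ``different'' orbit-equivalent circle dynamics associated to the same Fuchsian group (namely $A_{\mathrm{BS}}$ and $A_{\mathrm{hBS}}$) really do have the claimed distinct degrees, which is exactly the content of Sections~\ref{bs_sec} and~\ref{sec-fold} and is recorded in Equation~\eqref{eq-degcfm}.
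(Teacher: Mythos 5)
Your proposal is correct and follows essentially the same route as the paper's own proof: matching the Bowen-Series degree $2k_1-3$ of $\Gamma_1$ with the higher Bowen-Series degree $(k_2-1)^2$ of $\Gamma_2$ via the arithmetic identity $4m^2-4m+1=(2m-1)^2$, composing the two topological conjugacies to $z^{(2m-1)^2}$, and invoking Propositions~\ref{b_s_poly_conjugate_prop_1} and~\ref{prop-cfdhd-orbeq-gen} for orbit equivalence, with non-isomorphism of $\Gamma_1,\Gamma_2$ giving the failure of rigidity. No gaps to report.
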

\begin{proof}
	Let $\Sigma_i = \D/\Gamma_i$, $i=1,2$. By Proposition~\ref{b_s_poly_conjugate_prop_1}, $\Gamma_i$ is orbit equivalent to its Bowen-Series map $A_{\Gamma_i}$, which  in turn is  topologically conjugate to $z^{d_i}\vert_{\bS^1}$ for some $d_i \geq 3$.
	Hence, by Lemma \ref{lemma-referee}, the actions of $A_{\Gamma_1}$ and $A_{\Gamma_2}$ on $\bS^1$
	are topologically orbit equivalent to each other. Hence, the actions of
	${\Gamma_1}$ and ${\Gamma_2}$ on $\bS^1$
	are topologically orbit equivalent to each other.
	If $k_1 \neq k_2$, then 
		$\Gamma_1,\Gamma_2$ are not isomorphic. In particular, the actions of
	$\Gamma_1,\Gamma_2$ on $\bS^1$ are not conjugate and  topological orbit equivalence rigidity fails.
\end{proof}

We conclude with the following question that remains to be resolved:
 Classify actions of Fuchsian lattices on $\bS^1$ up to topological orbit equivalence.

\newcommand{\etalchar}[1]{$^{#1}$}

\end{document}